\newtheorem{theorem}{Theorem}[section]
\newtheorem{thm}[theorem]{Theorem}
\newtheorem{prop}[theorem]{Proposition}
\newtheorem{lem}[theorem]{Lemma}
\newtheorem{fact}[theorem]{Fact}
\newtheorem{cor}[theorem]{Corollary}
\newtheorem{lemma}[theorem]{Lemma}
\newtheorem{conj}[theorem]{Conjecture}
\newtheorem{question}[theorem]{Question}
\theoremstyle{definition}
\newtheorem{defn}[theorem]{Definition}
\newtheorem{definition}[theorem]{Definition}
\newtheorem{example}[theorem]{Example}
\theoremstyle{remark}
\newtheorem{remark}[theorem]{Remark}
\newcommand{\la}{\langle}
\newcommand{\ra}{\rangle}
\newcommand{\CH}{{\cal H}}
\newcommand{\sub}{\subseteq}
\newcommand{\WM}{\widetilde{\cal M}}
\newcommand{\cldim}{\ensuremath{\textup{scl-dim}}}
\newcommand{\dcl}{\operatorname{dcl}}
\newcommand{\scl}{\operatorname{scl}}
\newcommand{\rk}{\ensuremath{\textup{rk}}}
\newcommand{\rank}{\ensuremath{\textup{rank}}}
\newcommand{\ldim}{\ensuremath{\textup{ldim}}}
\newcommand{\bb}[1]{\ensuremath{\mathbb{#1}}}
\newcommand{\cal}[1]{\ensuremath{\mathcal{#1}}}
\newcommand{\Cal}[1]{\ensuremath{\mathcal{#1}}}
\newcommand{\Lrarr}{\ensuremath{\Leftrightarrow}}
\newcommand{\Rarr}{\ensuremath{\Rightarrow}}
\newcommand{\rarr}{\ensuremath{\rightarrow}}
\newcommand{\res}{\ensuremath{\upharpoonright}}
\newcommand{\sm}{\setminus}
\newcommand{\Z}{\mathbb{Z}}
\newcommand{\N}{\mathbb{N}}
\newcommand{\Q}{\mathbb{Q}}
\newcommand{\R}{\mathbb{R}}
\title[Tame expansions of o-minimal structures]
{Structure theorems in tame expansions of o-minimal structures by a dense set}
\begin{document}

\author {Pantelis  E. Eleftheriou}

\address{Department of Mathematics and Statistics, University of Konstanz, Box 216, 78457 Konstanz, Germany}

\email{panteleimon.eleftheriou@uni-konstanz.de}

\thanks{The first author was supported by an Independent Research Grant from the German Research Foundation (DFG) and a Zukunftskolleg Research Fellowship. The second author was partially supported by TUBITAK Career Grant 113F119. The third author was partially supported by NSF grant DMS-1300402.}

\author {Ayhan G\"{u}naydin}

\address{Department of Mathematics, Bo\u{g}azi\c{c}i University, Bebek, Istanbul, Turkey}

\email{ayhan.gunaydin@boun.edu.tr}


\author{Philipp Hieronymi}

\address{Department of Mathematics, University of Illinois at Urbana-Champaign, 1409 West Green Street, Urbana, IL 61801, USA}

\email{phierony@illinois.edu}

\subjclass[2010]{Primary 03C64,  Secondary 22B99}
\keywords{definable groups, o-minimality, tame expansion, dense set, structure theorem, pregeometry}

\date{\today}
\begin{abstract} We study sets and groups definable in tame expansions of o-minimal structures.
Let $\cal {\widetilde M}= \la \cal M, P\ra$ be an expansion of an o-minimal $\cal L$-structure $\cal M$ by a dense set $P$.  
We impose three tameness conditions on $\cal {\widetilde M}$ and prove a structure theorem for definable sets and functions in analogy with the cell decomposition theorem known for o-minimal structures. The structure theorem advances the state-of-the-art in all known examples of such $\WM$, as it achieves a decomposition of definable sets into \emph{unions} of `cones', instead of only boolean combinations of them. The proofs  involve induction on the notion of `large dimension' for definable sets, an invariant which we herewith introduce and analyze. 
Applications of the cone decomposition theorem include: (i) the large dimension of a definable set coincides with a suitable pregeometric dimension,
  and it is invariant under definable bijections,  (ii) every definable map is given by an $\cal L$-definable map off a subset of the domain of smaller large dimension, and (iii) around generic elements of a definable group, the group operation is given by an $\cal L$-definable map.

\end{abstract}

\begin{abstract} 
  We study sets and groups definable in tame expansions of o-minimal structures.
Let $\cal {\widetilde M}= \la \cal M, P\ra$ be an expansion of an o-minimal $\cal L$-structure $\cal M$ by a dense set $P$.
We impose three tameness conditions on $\cal {\widetilde M}$ and prove a structure theorem for definable sets and functions in analogy with the cell decomposition theorem known for o-minimal structures. Furthermore, we introduce a structural dimension for every definable set, generalizing the usual topological dimension. The structure theorem advances the state-of-the-art in all known examples of $\WM$, as it achieves a decomposition of definable sets into \emph{unions} of `cones', instead of only boolean combinations of them.
Applications include:
(i) the structural dimension of a definable set coincides with a suitable pregeometric dimension,  and it is invariant under definable bijections,
  (ii) every definable map is given by an $\cal L$-definable map off a subset of the domain of smaller dimension, and (iii) around generic elements of a definable group, the group operation is given by an $\cal L$-definable map.
\end{abstract}

\begin{abstract}
  We study sets and groups definable in tame expansions of o-minimal structures.
Let $\cal {\widetilde M}= \la \cal M, P\ra$ be an expansion of an o-minimal $\cal L$-structure $\cal M$ by a dense set $P$, such that three tameness conditions hold. We prove a structure theorem for definable sets and functions in analogy with the influential cell decomposition theorem known for o-minimal structures.  The structure theorem advances the state-of-the-art in all known examples of $\WM$, as it achieves a decomposition of definable sets into \emph{unions} of `cones', instead of only boolean combinations of them. We also develop the right dimension theory in the tame setting.
Applications include:
(i) the dimension of a definable set coincides with a suitable pregeometric dimension,  and it is invariant under definable bijections,
  (ii) every definable map is given by an $\cal L$-definable map off a subset of its domain of smaller dimension, and (iii) around generic elements of a definable group, the group operation is given by an $\cal L$-definable map.
\end{abstract}
 \maketitle
\section{Introduction}

Definable groups in models of first-order theories have been at the core of model theory for at least a period of three decades (see, for example, \cite{bn, ot, poi}) and have been crucially used in important applications of model theory to other areas of mathematics (such as in \cite{hr}). An indispensable tool in their analysis has  been a structure theorem for  the definable sets and types: analyzability of types and the existence of a rank in the  stable category, and a cell decomposition theorem and the associated topological dimension in the o-minimal setting. In this paper we establish a structure theorem for definable sets and functions in tame expansions of o-minimal structures, introduce and analyze the relevant notion of dimension and establish a local theorem for definable groups in this setting. Our structure theorem is inspired by a cone decomposition theorem known for semi-bounded o-minimal structures (\cite{ed-str, el-sbdgroups, pet-str}), which was also vitally   used in the analysis of definable groups therein (\cite{ep-sel2}).
The structure theorem has opened the way to other applications of the tame setting, beyond the study of definable groups, such as  the point counting theorems  in \cite{el-pw}.








Let us  briefly discuss the tame setting.
O-minimal structures were introduced and first studied by van den Dries \cite{vdd-tarski} and Knight-Pillay-Steinhorn \cite{kps, ps} and have since provided a rigid framework to study real algebraic and analytic geometry. They have enjoyed a wide spectrum of applications reaching out even to number theory and Diophantine geometry (such as in Pila's solution of certain cases of the Andr\'e-Oort  Conjecture \cite{pila}). However, o-minimality can only be used to model phenomena that are at least locally finite, or more precisely, objects that have only finitely many connected components. Tame expansions of o-minimal structures can further model phenomena that escape from the o-minimal context, but yet exhibit tame geometric behavior. They have recently seen significant growth (\cite{bz, beg, bh,dms1,vdd-dense,dg,gh, ms}) and are by now divided into two important classes of structures: those where every open definable set is already definable in the o-minimal reduct and those where an infinite discrete set is definable. We establish our cone decomposition theorem in the former category. In the second category,  a relevant structure theorem has already been obtained in \cite{tycho}, benefiting largely by the presence of definable choice in that setting (absent here).

We now fix our setting and describe the results of this paper.
Let $\cal M$ be an o-minimal expansion of an ordered group with underlying language $\cal L$. Let $\widetilde{\cal M}=\la \cal M, P\ra$ be an expansion of \cal M by a dense set $P$ so that certain tameness conditions hold (those are listed in Section \ref{sec-assumptions}). For example, $\widetilde{\cal M}$ can be a dense pair (\cite{vdd-dense}), or $P$ can be an independent set (\cite{dms2}) or a multiplicative group with the Mann Property (\cite{dg}).  To establish our structure theorem below, we introduce a new invariant for definable sets, the `large dimension', which turns out to coincide with the combinatorial dimension coming from a pregeometry in \cite{beg}. These results are in the  spirit of some standard and recent  literature. In an o-minimal structure, the cell decomposition theorem (\cite{vdd-book, kps}) is used to show that the associated `topological dimension' equals the combinatorial dimension coming from the $\dcl$-pregeometry (\cite{pi-eq}). In a semi-bounded structure, the cone decomposition theorem (\cite{ed-str, el-sbdgroups, pet-str}) is used to show that the associated `long dimension' equals the dimension coming from the short closure pregeometry (\cite{el-sbdgroups}). In both settings, the equivalence of the two dimensions has proven extremely powerful in many occasions and in particular in the analysis of definable groups (see, for example, \cite{el-sbdgroups, ep-sel2, elst, pi-groups}). Here, we apply the strategy from the semi-bounded setting to that of tame expansions of o-minimal structures and establish the analogous results in $\widetilde{\cal M}$.



In Sections 2 and 3 we include some  preliminaries and do preparatory work for  what follows. In  Section \ref{sec-cones}, we introduce the notions of a \emph{cone} and \emph{large dimension}. Although the definitions appear to be rather technical, we show in subsequent work that they are in fact optimal (see Section \ref{sec-optimality}, Question \ref{productcones} and \cite{el-opt}). In Section \ref{sec-str-thms}, we prove the following theorem.\\

 \noindent\textbf{Structure Theorem (\ref{str-thm-sets}).}
\begin{enumerate}
  \item \emph{Let $X\sub M^n$ be an $A$-definable set. Then $X$ is a finite union of $A$-definable cones.}
  \item \emph{Let $f:X\to M$ be an $A$-definable function. Then there is a finite collection $\mathcal{C}$ of   $A$-definable cones, whose union is $X$ and such that $f$ is fiber $\cal L_A$-definable with respect to each cone in $\cal{C}$.}
\end{enumerate}

\noindent We then conclude that the large dimension is invariant under definable bijections (Corollary \ref{bijection}). The above Structure Theorem is a substantial improvement of the `near-model completeness' results established in known cases (such as \cite{bz, vdd-dense, dg}) in that it achieves a decomposition of definable sets into \emph{unions} (instead of boolean combinations) of cones.
It also includes definable maps $f:M^n\to M$ for any $n$ (instead of only $n=1$). To illustrate the last point, let us consider the following example of a map for $n=1$ from \cite{vdd-dense}. Consider a dense pair $\la \cal M, P\ra$ of real closed fields and let $\alpha\not\in P$. So $\cal M$ could be the real field, $P$ the field of real algebraic numbers, and $\alpha=\pi$. Let $f:M\to M$ be the definable map  given by
$$f(x)=\begin{cases}
r & \text{if $x=r+\alpha s$ for some (unique) $r,s\in P$}\\
0 & \text{otherwise.}
\end{cases}
$$
It is easy to see that the graph of $f$ is dense in $M^2$, and hence $f$ is not as tame as an $\cal L$-definable map. However, \cite[Theorem 3]{vdd-dense} establishes that every definable map $f:M\to M$ is given by an $\cal L$-definable map off a small set (here, the $\cal L$-definable map is 0 and the small set is $P+ \alpha P$). A far reaching application of our structure theorem is the following  generalization of this phenomenon.\\

 \noindent\textbf{ Theorem \ref{stII}.} {\em  Every $A$-definable map $f: M^n\to M$ is given by an $\Cal L_{A\cup P}$-definable map off a set of large dimension $ < n$.}\\

We expect that this theorem will be useful  in the future and already manifest one of its immediate corollaries here. Namely, we answer a question by Dolich-Miller-Steinhorn \cite{dms2}: in  dense pairs, the graph of a \emph{$\emptyset$-definable} unary function is nowhere dense (Proposition \ref{prop-ndg}). A further application is obtained in \cite{el-sl}, where Theorem \ref{stII} is used to prove that in the case of $\WM=\la \cal M, P\ra$ with $P$ independent,  every definable group is definably isomorphic to a group definable in \cal M.



In Section \ref{sec-equaldim}, we  compare the large dimension of a definable set to the $\scl$-dimension coming from \cite{beg}. In \cite{beg}, the authors work under three similar tameness conditions on $\widetilde {\cal M}$ and prove that the \emph{small closure} operator $\scl$ defines a pregeometry under further assumptions on $\cal M$ (\cite[Corollary 77]{beg}). Here, we observe that those further assumptions are in fact unnecessary (Corollary \ref{scl-preg}) and derive the equivalence of the two dimensions (Proposition \ref{equaldim}), always.

In Section \ref{sec-groups}, we exploit this equivalence and set forth the analysis of groups definable in $\widetilde{\cal M}$. Indeed, making use of desirable properties of `$\scl$-generic' elements (Fact \ref{generic}), we achieve the following result.\\

\noindent\textbf{Local theorem for definable groups (\ref{arounda}).}
\emph{Let $G=\la G, *\ra$ be a definable group of large dimension $k$. Then for every $\scl$-generic element $a$ in $G$,  there is a $2k$-cone $C\sub G\times G$, whose topological closure contains  $(a, a)$, and on which the operation
\[
(x, y)\mapsto x* a^{-1}* y
\]
is given by an $\cal L$-definable map.}\\

\noindent We note  that an analogous local theorem for semi-bounded groups was proved in \cite[Theorem 6.3]{el-sbdgroups} and was then vitally used in the global analysis of semi-bounded groups in \cite{ep-sel2}. 
We  expect that the present local theorem will be as  crucial in forthcoming analysis of  definable groups in $\widetilde{\cal M}$, and we list a series of open questions in the end of Section \ref{sec-groups}. The ultimate goal would be to understand definable groups in terms of $\cal L$-definable groups and small groups (Conjecture \ref{conjecture}). Note that \cal L-definable groups have been exhaustively studied and are well-understood,  some of the main  results being proved in \cite{cp, eo, ep-sel2, elst, hpp, hp, pps}.\smallskip


We next indicate some of the key aspects of this paper. Both the definition of the large dimension, as well as that of a cone, are based on the notion of a \emph{supercone} given in Section \ref{sec-cones}, which in its turn is based on the notion of a \emph{large} subset of $M$ coming from \cite{beg} or \cite{dg}. Namely, a supercone $J$ in $M^n$ is defined, recursively on $n$, as a union of a specific family of large fibers over a supercone in $M^{n-1}$. The large dimension of a definable set $X$ is then the maximum $k$ such that a supercone from $M^k$ can be \emph{embedded} into $X$. The nature of this embedding is crucial: while the definition of the large dimension is given via a strong notion of embedding, proving its invariance under definable bijections in Corollary \ref{bijection} requires an equivalent definition via a weaker notion of embedding. We establish that equivalence in Corollary \ref{embed}.

Let us now describe the main idea behind the proof of the Structure Theorem in Section \ref{sec-str-thms} that also explains  the role of  large dimension in it and motivates all the  preparatory work done in Sections \ref{sec-small} and \ref{sec-cones}. The notion of a large/small set is defined in Section \ref{sec-prelim} and that of a $k$-cone in Section \ref{sec-cones}. Roughly speaking, a $k$-cone is a set of the form
$$h\left(\bigcup_{g\in S} \{g\}\times J_g\right),$$
where $h$ is an $\cal L$-definable continuous map with each $h(g, -)$  injective, $S\sub M^m$ is a small set, and $\{J_g\}_{g\in S}$ a definable family of supercones in $M^k$. The proof of the Structure Theorem runs by simultaneous induction on $n$ for three statements, Theorem \ref{str-thm-sets} (1) - (3). For (1), in the inductive step, let $X\sub M^{n+1}$. By the inductive hypothesis, we may assume that the projection $\pi(X)$ onto the first $n$ coordinates is a $k$-cone, and by definability of smallness (Remark \ref{definability}(a)), we may separate two cases. If all fibers of $X$ above $\pi(X)$ are large, then we can simply follow the definition of a cone and, using $(2)_n$ and  Lemma \ref{cone+1},  we conclude that $X$ is a $k+1$-cone.
If all fibers of $X$ above $\pi(X)$ are small, then we first need to turn $X$ into a small union of ($\cal L$-definable images of) subsets $J_g\sub \pi(X)$ as above. This is achieved using Lemma \ref{philippcor} and it is illustrated in Example \ref{exa-philippcor}. Unfortunately, the sets $J_g$  obtained are not necessarily supercones, but we can remedy the situation by applying a uniform version of $(1)_{n-1}$, namely $(3)_{n-1}$. We derive $(3)_n$ from $(1)_n$ using a standard compactness argument. We derive $(2)_n$ from $(1)_n$ by first applying Corollary \ref{philipp2uniform} to obtain $\cal L$-definability of $f$ outside a subset of $\pi(X)$ of \emph{smaller large dimension}. We then conclude it by sub-induction on large dimension.




In Section \ref{sec-optimality}, we explore the optimality of our Structure Theorem. We prove that a stronger version where the notion of a cone is strengthened by requiring that $h$ is injective on $\bigcup_{g\in S}\{g\}\times J_g$ is not possible. This is essentially due to the lack of definable choice in our setting (see, for example, \cite[Section 5.5]{dms1}).   In Section \ref{sec-future}, however, we isolate a key `choice property' that implies a strengthened version of Lemma \ref{philippcor} (see Lemma \ref{philippcor2}),  which in turn guarantees a Strong Structure Theorem (\ref{strongstrthm}). This study suggests a new line of research where the behavior of  $\Cal L$-definable maps on small sets is pending to be explored. A list of open questions is included, whereas further optimality results are established in subsequent work \cite{el-opt}.

It is an important feature of this work that we keep  track of all  parameters. If $X$ is an $A$-definable set then, by Lemma \ref{defB} below, its topological closure is $\cal L_{A\cup P}$-definable. However, our Structure Theorem establishes that every $A$-definable set is a finite union of $A$-definable sets (the cones) whose closures are actually $\cal L_A$-definable. We warn the reader that we make a slight abuse of terminology in the interests of keeping the text succinct: an $A$-definable cone will be assumed to have its closure $\cal L_A$-definable; see Section \ref{subsec-cones} for more details.\\

\noindent\textbf{Acknowledgements.}
The authors wish to thank Chris Miller, Rahim Moosa  and Ya'acov Peterzil for taking the time to answer their questions.
 The first two authors also wish to thank the Center of Mathematics and Fundamental Applications (CMAF) at the University of Lisbon, where this work began, and the  Foundation of Science and Technology (FCT) in Portugal for its kind support.

\section{The setting}\label{sec-prelim}


Throughout this paper, we fix an o-minimal theory $T$ expanding the theory of ordered abelian groups with a distinguished positive element $1$. We also fix the language $\Cal L$ of $T$ and $\Cal L(P)$ the language $\Cal L$ augmented by a unary predicate symbol $P$. Let $\widetilde{T}$ be an $\Cal L(P)$-theory expanding $T$. If $\cal M=\la M, <, +, \dots\ra\models T$, then $\widetilde{\cal M}=\la \cal M, P\ra$ denotes an expansion of \cal M that models $\widetilde{T}$. By `$A$-definable' we mean `definable in $\widetilde{\cal M}$ with parameters from $A$'. By `$\cal L_A$-definable' we mean `definable in \cal M with parameters from $A$'. We omit the index $A$ if we do not want to specify the parameters.


For a subset $X\subseteq M$, we write $\dcl(X)$ for the definable closure of $X$ in $\Cal M$, and $\dcl_{\cal L(P)}(X)$ for the definable closure of $X$ in $\widetilde{\Cal M}$. By the o-minimality of $T$, the operation that maps $X\subseteq M$ to $\dcl(X)$ is a pregeometry on $M$. For an $\cal L$-definable set $X\sub  M^n$, we denote by $\dim(X)$  the corresponding pregeometric dimension.


The following definition is taken essentially from \cite{dg}.

\begin{defn}\label{def-small}
 Let $X\sub M^n$ be a definable set. We call $X$ \emph{large} if there is some $m$ and an $\cal L$-definable function $f:M^{nm}\to M$ such that $f(X^m)$ contains an open interval in $M$. We call $X$ \emph{small} if it is not large.
\end{defn}


Note that if $X\subseteq M$ is small and $I$ an interval in $M$, then $I\sm X$ is large (with a proof identical to that of \cite[Lemma 20]{beg}). We will use this observation throughout this paper. In Lemma \ref{lem:beg2}  and Corollary \ref{internal} below we prove that smallness is equivalent to $P$-internality, in the usual sense of geometric stability theory.

\begin{defn}
If $X, Z\sub M^n$ are definable, we say that $X$ is \emph{small in $Z$} if $X\cap Z$ is small. We say that $X$ is \emph{co-small in $Z$} if $Z\sm X$ is small.
\end{defn}

\subsection{Assumptions}\label{sec-assumptions} We assume that $\widetilde T$ satisfies  the following three tameness conditions: for every model $\widetilde{\cal M}\models \widetilde{T}$,
\begin{itemize}

\item [(I)]  $P$ is small.

\item [(II)] (Near model-completeness) Every $A$-definable set $X\sub M^n$ is a boolean combination of sets of the form
\[
\{x\in M^n: \exists z\in P^m \varphi(x, z)\},
\]
 where $\varphi(x, z)$ is an $\cal L_A$-formula.

\item [(III)] (Open definable sets are $\cal L$-definable) For every parameter set $A$ such that $A\setminus P$ is $\dcl$-independent over $P$, and for every $A$-definable set $V \subset M^s$, its topological closure $cl(V)\subseteq M^{s}$ is $\cal L_A$-definable.

\end{itemize}

\noindent\textbf{From now on, and unless stated otherwise, $\widetilde T$ satisfies Assumptions (I)-(III) and $\widetilde{\cal M}=\la \cal M, P\ra$ is a sufficiently saturated model of $\widetilde T$.}

\begin{remark} (i)
Assumptions (I)-(III) are analogous to Assumptions (1)-(3) from \cite[Theorem 3]{beg}. Here, however, we insist on having some control on the defining parameters. Moreover, an easy argument  shows that under our assumptions, (3) from \cite[Theorem 3]{beg} holds, but without the additional condition that the set $S$ mentioned there be $\emptyset$-definable.

(ii) Assumption (III) indeed guarantees that open definable sets are $\cal L$-definable,  see Lemma \ref{defB} below.

(iii) We do not know whether assumptions (I) and (III) imply (II).


\end{remark}

\subsection*{Notation-terminology} The topological closure of a set $X\sub M^n$ is denoted by $cl(X).$ If $X, Y \subseteq M$ and $b=(b_1,\dots, b_n)$, we sometimes write $X \cup b$ or $Xb$ for $X \cup \{b_1,\dots, b_n\}$, and $XY$ for $X\cup Y$. If $\varphi(x,y)$ is an $\Cal L(P)$-formula and $a\in M^n$, then we write $\varphi(M^m,a)$ for
\[
\{ b \in M^m \ : \ \widetilde{\Cal M} \models \varphi(b,a)\}.
\]
Similarly, given any subset $X\subseteq M^m \times M^n$ and $a\in M^n$, we write $X_a$ for
\[
\{ b \in M^m \ : \ (b,a) \in X\}.
\]
For convenience, we sometimes write $f(t, X)$ for $f(\{t\}\times X)$.  
If $m\le n$, then $\pi_m:M^n\to M^m$ denotes the projection onto the first $m$ coordinates. We write $\pi$ for $\pi_{n-1}$, unless stated otherwise.
By an open box in $M^k$, or a $k$-box, we mean a set $I_1\times\dots\times I_k\sub M^k$, where each $I_j\sub M$ is an open interval. By dimension of an $\cal L$-definable set we mean its usual o-minimal dimension, and the notions of $\dcl$-independence, $\dcl$-rank and $\dcl$-generics are the usual notions attached to the $\dcl$-pregeometry (see, for example, \cite{pi-groups}).
A family $\cal J=\{J_g\}_{g\in S}$ of sets is called definable if $\bigcup_{g\in S}\{g\}\times J_g$ is definable,  disjoint if every two elements of it are disjoint, and small if $S$ is small. We often identify $\cal J$ with $\bigcup_{g\in S}\{g\}\times J_g$.  If for each $t\in T$, $\cal J_t=\{J_{g, t}\}_{g\in S_t}$ is a family of sets, we call $\{\cal J_t\}_{t\in T}$ definable if $\bigcup_{t\in T, g\in S_t}\{(g, t)\}\times J_{g, t}$ is definable.

Our examples are often given for structures over the reals (such as Example \ref{exa-lou1} and the counterexample in Section \ref{sec-optimality}). But they can easily be adopted to the current, saturated setting, by moving to an elementary extension.

\subsection{Examples}\label{sec-examples} $ $

\subsection*{Dense pairs} The first example we wish to consider is dense pairs of o-minimal structures. A \emph{dense pair} $\la \Cal M,\Cal N \ra$ is a pair of models of $T$ such that $\Cal N \neq \Cal M$, but $\Cal N$ is dense in $\Cal M$. Let $\widetilde T= T^d$ be the theory of dense pairs in the language $\cal L(P)$.
By \cite{vdd-dense}, $T^d$ is complete and every model of $T^d$ satisfies (I) and (II) (\cite[Lemma 4.1]{vdd-dense} and \cite[Theorem 1]{vdd-dense}, respectively).

It is left to explain why (III) holds in dense pairs. Here we apply \cite[Corollary 3.1]{bh}. Let $A$ be a parameter set such that $A\setminus N$ is $\dcl$-independent over $N$. Set
\[
D := \{ a \in M \ : \ a \hbox{ is $\dcl$-independent over $N\cup A$}\}.
\]
It is easy to see that $D$ and $A$ satisfy Assumptions (1) and (2) of \cite[Corollary 3.1]{bh}. It is left to show that also the third assumption of that corollary holds. Towards that goal, recall the following notation from \cite{vdd-dense}. Given $\Cal M,\Cal N, \Cal O, \Cal Q\models T$ with $\Cal M \subseteq \Cal N \subseteq \Cal Q$ and $\Cal M \subseteq \Cal O \subseteq \Cal Q$, we say that $\Cal N$ and $\Cal O$ are \emph{free over $\Cal M$} (in $\Cal Q$) if every subset $Y\subseteq  N$ that is $\dcl$-independent over $\Cal M$ is also $\dcl$-independent over $\Cal O$.

\begin{prop}\label{prop:con3} Let $a\in D$. Then the $\cal L(P)$-type of $a$ over $A$ is implied by the $\cal L$-type over $A$ and the fact that $a \in D$.
\end{prop}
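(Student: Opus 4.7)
The plan is to reduce, via near model-completeness (Assumption (II)), to transferring realizations of $\exists\bar z\in P^m\,\phi$-statements from $a$ to $a'$, and then to carry out the transfer coordinate by coordinate using $\cal L$-cell decomposition, $\dcl$-independence, and density of $N$ in $M$. By (II), the $\cal L(P)$-type of $a$ over $A$ is implied by its $\cal L$-type over $A$ together with, for each $\cal L_A$-formula $\phi(x,\bar z)$ with $|\bar z|=m$, the truth value of $\exists\bar z\in P^m\,\phi(a,\bar z)$. Hence it suffices to show: if $a,a'\in D$ have the same $\cal L$-type over $A$ and there exists $\bar n\in N^m$ with $\phi(a,\bar n)$, then there exists $\bar n'\in N^m$ with $\phi(a',\bar n')$.

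To construct such $\bar n'$, I would apply $\cal L$-cell decomposition in $\cal M$ to $\{(x,\bar z):\phi(x,\bar z)\}$ compatibly with the coordinate projections and isolate the $\cal L_A$-definable cell $C$ containing $(a,\bar n)$. Then coordinate by coordinate I would build $n_j'\in N$ while maintaining that $(a,n_1,\dots,n_j)$ and $(a',n_1',\dots,n_j')$ have the same $\cal L$-type over $A$. For each $j$, the cell structure over the earlier coordinates presents the $z_j$-fibre either as an open interval $g_j(x,\bar z_{<j})<z_j<h_j(x,\bar z_{<j})$ with $\cal L_A$-definable continuous endpoints (\emph{cylinder case}), or as a single point $z_j=f_j(x,\bar z_{<j})$ with $f_j$ $\cal L_A$-definable (\emph{graph case}). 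In the cylinder case, the corresponding interval at $(a',\bar n'_{<j})$ is non-empty by sameness of $\cal L$-type, and density of $N$ in $M$ together with o-minimal cell decomposition of the interval over $A\cup\{a',\bar n'_{<j}\}$ produces an $n_j'\in N$ realizing the correct cut.

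The main obstacle will be the graph case: showing that the natural choice $n_j':=f_j(a',\bar n'_{<j})$ lies in $N$. This uses two nested local-constancy arguments. First, the $\cal L_{A\cup N}$-definable function $f_j(\cdot,\bar n_{<j})$ takes the $N$-value $n_j$ at $a$; since $a\in D$ is $\dcl$-independent over $N\cup A$, it cannot be strictly monotonic at $a$ (else $a\in\dcl(A\cup N)$ by inversion), so by o-minimality it is locally constant there. Cell decomposition yields an $\cal L_A$-definable cell $D\ni(a,\bar n_{<j})$ on which $f_j$ restricted to $D$ satisfies $f_j(x,\bar z)=\tilde{f}_j(\bar z)$ for some $\cal L_A$-definable $\tilde{f}_j$; by sameness of $\cal L$-type also $(a',\bar n'_{<j})\in D$. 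Second, writing $A_0:=A\cap N$, $A_1:=A\setminus N$ and $\tilde{f}_j(\bar z)=G(\bar z,\bar a_1)$ with $G$ $\cal L_{A_0}$-definable and $\bar a_1$ enumerating $A_1$, the $\cal L_N$-definable function $G(\bar n_{<j},\cdot)$ takes the $N$-value $n_j$ at $\bar a_1$, which is $\dcl$-independent over $N$; a dimension argument then shows that the fibre of this function over $n_j$ contains an open $\cal L_N$-definable ball around $\bar a_1$. The $\cal L_A$-formula ``$G(\bar z,\cdot)$ is locally constant at $\bar a_1$'' transfers to $\bar z=\bar n'_{<j}$, yielding another such open ball; picking $\bar y_0\in N^{|A_1|}$ in the intersection of these two balls (possible by density of $N$ in $M$) gives $\tilde{f}_j(\bar n'_{<j})=G(\bar n'_{<j},\bar y_0)$, and since $G$ is $\cal L_{A_0}$-definable with all arguments in $N$, this value lies in $N$ by elementarity $N\preceq M$.
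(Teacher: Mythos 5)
Your route is genuinely different from the paper's: the paper simply imports the back-and-forth system $\Gamma$ from \cite[Claim on p.\ 67]{vdd-dense}, checks the freeness conditions for the substructures generated by $a\cup A$ and $b\cup A$, and is done in a few lines; you instead reduce via Assumption (II) to transferring witnesses of $\exists \bar z\in P^m\,\phi$ and rebuild the witness coordinate by coordinate. Your reduction is legitimate (Theorem 1 of \cite{vdd-dense} is available at this point), and your graph-case argument is essentially correct: the two nested local-constancy steps (first in $x$ using $a\notin\dcl(A\cup N)$ and the monotonicity theorem, then in the $A\setminus N$-parameters using their $\dcl$-independence over $N$ and the fact that a full-rank point lies in the interior of any definable set containing it) do show that $f_j(a',\bar n'_{<j})\in N$. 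Minor points there: the cell on which $f_j$ is independent of $x$ should be taken with respect to the coordinate order $(\bar z,x)$ so that its $x$-fibres are intervals, and extracting $\tilde f_j$ uses definable choice, which holds since $T$ expands an ordered group.

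The genuine gap is in the cylinder case. Your induction needs $(a,\bar n_{\le j})$ and $(a',\bar n'_{\le j})$ to have the same $\cal L$-type over $A$ (otherwise you cannot guarantee at a later graph coordinate that $(a',\bar n'_{<k})$ lands in the $\cal L_A$-cell $D$ on which $f_k$ is independent of $x$, nor transfer the formula ``$G(\bar z,\cdot)$ is locally constant at $\bar a_1$''). But ``density of $N$ plus cell decomposition of the interval over $A\cup\{a',\bar n'_{<j}\}$'' does not produce an element realizing the transported type: a cell decomposition over a fixed finite parameter set is a finite partition, whereas the type of $n_j$ over $A\cup\{a\}\cup\bar n_{<j}$ is a cut in the full definable closure. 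Two sub-cases must be separated. If $n_j\notin\dcl(A\cup\{a\}\cup\bar n_{<j})$, the locus of the transported cut over $\dcl(A\cup\{a'\}\cup\bar n'_{<j})$ is an infinite convex set by saturation, hence contains an open interval and therefore a point of $N$ --- this is the argument you should make, and it uses saturation, not cell decomposition. If $n_j\in\dcl(A\cup\{a\}\cup\bar n_{<j})$, say $n_j=f(a,\bar n_{<j})$ with $f$ being $\cal L_A$-definable --- which can perfectly well happen inside a cylinder fibre of the cell decomposition of $\phi$, since that decomposition knows nothing about the particular witness --- then the cut has a unique realization and density is useless; you are forced to take $n_j'=f(a',\bar n'_{<j})$ and must prove it lies in $N$. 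Fortunately that is exactly your graph-case argument, which does not use that the cell fibre is a graph, only that $n_j$ is an $\cal L_A$-definable function of $(a,\bar n_{<j})$. So the correct dichotomy is membership of $n_j$ in $\dcl(A\cup\{a\}\cup\bar n_{<j})$ rather than the shape of the cell fibre; with that repair the proof goes through.
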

\begin{proof} Let $\la \Cal M, \Cal N\ra\models T^d$ be $\kappa$-saturated, where $\kappa > |T|$. Let $\Gamma$ be the set of all isomorphisms $i: \la\Cal M_1,\Cal N_1\ra \to \la\Cal M_2,\Cal N_2\ra$ between substructures of $\la\Cal M,\Cal N\ra$ such that $| M_1|< \kappa, | M_2|<\kappa$, $\Cal M_1$ and $\Cal N$ are free over $\Cal N_1$ and $\Cal M_2$ and $\Cal N$ are free over $\Cal N_2$. By \cite[Claim on p. 67]{vdd-dense}, $\Gamma$ has the back-and-forth property.
Let $a,b \in D$ such that $a$ and $b$ satisfy the same $\Cal L$-type over $A$. Then there is an $\Cal L$-isomorphism
\[
i : \dcl(a \cup A) \to \dcl(b \cup A).
\]
Since both $a$ and $b$ are $\dcl$-independent over $N\cup A$, the isomorphism expands to an isomorphism
\[
i : \big\la\dcl(a \cup A),\dcl(A)\cap \Cal N\big\ra \to \big\la\dcl(b \cup A),\dcl(A)\cap \Cal N\big\ra
\]
of substructures of $\la\Cal M,\Cal N\ra$. Since $a \cup (A \setminus N)$ is $\dcl$-independent over $N$, $\dcl(a \cup A)$ and $\Cal N$ are free over $\dcl(N)\cap \Cal N$. By the same argument $\dcl(b \cup A)$ and $\Cal N$ are free over $\dcl(A)\cap \Cal N$. Hence $i \in \Gamma$. Since $\Gamma$ is a back-and-forth system, $a$ and $b$ satisfy the same $\cal L(P)$-type over $A$.
\end{proof}

\subsection*{Groups with the Mann property} Let $\Gamma$ be a dense subgroup of $\mathbb{R}_{>0}$ that has the \emph{Mann property}, that is for every $a_1,\dots,a_n \in \mathbb{Q}^{\times}$, there are finitely many $(\gamma_1,\dots,\gamma_n) \in \Gamma^n$ such that $a_1\gamma_1+\dots+a_n\gamma_n = 1$ and $\sum_{i\in I} a_i\gamma_i \neq 0$ for every  nonempty subset $I$ of $\{1,\dots,n\}$.  Every multiplicative subgroup of finite rank in $\R_{>0}$ has the Mann property, see \cite{ess}.

We assume  that for every prime number $p$, the subgroup of $p$-th powers in $\Gamma$ has finite index in $\Gamma$. Let $\Cal L$ be the language of ordered rings augmented by a constant symbol for each $\gamma \in \Gamma$. Let $T$ be the theory of $\la\R,(\gamma)_{\gamma \in \Gamma}\ra$ in that language and let $\widetilde T= T(\Gamma)$ be the theory of $\la\R,(\gamma)_{\gamma \in \Gamma},\Gamma\ra$ in the language $\Cal L(P)$. By \cite[Theorem 7.5]{dg}, every model of $T(\Gamma)$ satisfies (II). A proof that every model satisfies (I) is in \cite[Proposition 2.9]{gh}.

 Again, we  show that (III) follows from \cite[Corollary 3.1]{bh}. Let $\la\Cal M, P\ra\models T(\Gamma)$. Let $A$ for every parameter set $A$ such that $A\setminus P$ is $\dcl$-independent over $P$. Set
\[
D := \{ a \in M \ : \ a \hbox{ is $\dcl$-independent over } P \cup A\}.
\]
One can check easily that assumptions (1) and (2) of \cite[Corollary 3.1]{bh} follow from the o-minimality of $T$. Finally it is easy to see that almost the same proof as for Proposition \ref{prop:con3}, just using the back-and-forth system in the proof of Theorem 7.1 in \cite{dg} instead of \cite[Claim on p. 67]{vdd-dense}, shows that assumption (3) of \cite[Corollary 3.1]{bh} is satisfied as well.\\

There are several other closely related examples. In \cite{hi} proper o-minimal expansions $\Cal R$ of the real field and finite rank subgroups $\Gamma$ of $\mathbb{R}_{>0}$ are constructed such that the structure $(\Cal R,\Gamma)$ satisfies Assumptions (I)-(III). Indeed, the fact that these structures satisfy Assumptions (I) and (II) is immediate from results in \cite{hi}. Assumption (III) follows by the same argument as above. In \cite{bz,gh} certain expansions of the real field by subgroups of either the unit circle or an elliptic curve are studied. One can easily show using the above argument that these structures satisfy Assumptions (I)-(III) after adjusting their statements for the fact that $P$ now lies in a $1$-dimensional semialgebraic set in $\R^2$. Since no significant new argument is involved, we leave it to the reader to verify that our main results also hold in this slightly more general setting.

\subsection*{Independent sets}  Let $\widetilde T= T^{\textrm{indep}}$ be an $\Cal L(P)$-theory extending $T$ by axioms stating that $P$ is dense and $\dcl$-independent. By \cite{dms2}, $T^{\textrm{indep}}$ is complete and every model of $T^{\textrm{indep}}$ satisfies (I) and (II) by \cite[2.1]{dms2} and \cite[2.9]{dms2}, respectively. As usual, we  show that (III) follows from \cite[Corollary 3.1]{bh}. Let $\la\Cal M, P\ra\models T^{\textrm{indep}}$. Let $A$ be a parameter set such that $A\setminus P$ is $\dcl$-independent over $P$. Set
\[
D := \{ a \in M \ : \ a \hbox{ is $\dcl$-independent over } P \cup A\}.
\]
From the o-minimality of $T$, assumptions (1) and (2) of \cite[Corollary 3.1]{bh} follow easily as above. By \cite[2.12]{dms2}, assumption (3) of \cite[Corollary 3.1]{bh} holds as well.

\subsection*{Non-examples}$ $

(1) By Assumption (III), $P$ must be dense in a finite union of open intervals and points. Indeed, the closure of $P$ has to be $\cal L$-definable. Therefore, tame expansions of $\cal M$ by discrete sets, such as $\la \R, 2^{\bb Z}\ra$, do not belong to this setting.

(2) We do not know whether the theory of every expansion $\la \cal M, P\ra$ of an o-minimal structure $\cal M$ with o-minimal open core \cite{dms1, ms} satisfies Assumptions (II) or (III). Assumption (I) does not hold in case $P$ is a generic predicate.

(3) If $\widetilde{\cal M}=\la M, <, +, \cal P\ra$ is semi-bounded, that is, a pure ordered group expanded by the structure of a real closed field $\cal P=\la P, \oplus, \otimes\ra$  on some bounded open interval $P\sub M$, then Assumptions (II) and (III) hold  by \cite{el-sbdgroups}, but (I) does not.\\

\subsection{$\Cal L$-definability}

In general, an $\cal L$-definable set $X$ which is also $A$-definable need not be $\cal L_A$-definable. For example, let $\widetilde{\cal M}=\la \cal M, P\ra$ be a dense pair of real closed fields, and $x,y\in M\sm P$ such that there are (unique) $g,h\in P$ with $x = g + hy$. Then $\{g\}$ is $\cal L$-definable and $\{x, y\}$-definable, but in general not $\cal L_{\{x, y\}}$-definable. The following lemma, however, implies, in particular, that every such $X$ is always $\cal L_{A\cup P}$-definable.



\begin{lem}\label{defB} Let $X\sub M^n$ be an $A$-definable set. Then there is a finite $B\sub A$ such that $X$ is $B\cup P$-definable and $B$ is $\dcl$-independent over $P$. Hence, by Assumption (III), $cl(X)$ is $\cal L_{A\cup P}$-definable. In particular, if  $X$ is closed (or open), then it is $\cal L_{A\cup P}$-definable.
\end{lem}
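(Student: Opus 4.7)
The plan is to locate a finite subset $B\subseteq A$ that is $\dcl$-independent over $P$ and such that $X$ is $B\cup P$-definable; once this is in hand, Assumption (III) applied directly to the parameter set $B\cup P$ delivers everything.

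First, since an $\cal L(P)$-formula mentions only finitely many parameters, fix a finite $B_0\sub A$ with $X$ being $B_0$-definable. Choose $B\sub B_0$ maximal subject to being $\dcl$-independent over $P$ (automatically $B\cap P=\emptyset$, since any element of $P$ lies in $\dcl(P)$). I would then show that $B_0\sub \dcl(P\cup B)$: for each $a\in B_0\sm B$, maximality forces $B\cup\{a\}$ to be $\dcl$-dependent over $P$, so either $a\in \dcl(P\cup B)$ outright, or some $b\in B$ lies in $\dcl\bigl(P\cup (B\sm\{b\})\cup\{a\}\bigr)$ while $b\notin \dcl\bigl(P\cup (B\sm\{b\})\bigr)$; in the latter case the exchange property of the $\dcl$-pregeometry yields $a\in \dcl(P\cup B)$ anyway.

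Next, each $a\in B_0$ is the unique realization of some $\cal L$-formula $\varphi_a(y,\bar b,\bar p)$ with $\bar b\in B$ and $\bar p$ a finite tuple from $P$. Plugging these uniqueness clauses into the defining $\cal L(P)$-formula of $X$ by existential quantification over the $a$'s produces an $\cal L(P)$-formula with parameters only from $B\cup P$, so $X$ is $B\cup P$-definable. Because $(B\cup P)\sm P = B$ is $\dcl$-independent over $P$, Assumption (III) with parameter set $B\cup P$ applies and gives that $cl(X)$ is $\cal L_{B\cup P}$-definable, hence $\cal L_{A\cup P}$-definable.

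For the final sentence, the closed case is immediate from $X=cl(X)$, and the open case is handled by applying the preceding argument to the closed $A$-definable set $M^n\sm X$ and taking complements. I do not anticipate any real obstacle: all the substantive work is outsourced to Assumption (III), and the only bookkeeping point is the maximality-plus-exchange step that justifies replacing the original parameters by ones drawn from $B\cup P$ without leaving $A\cup P$.
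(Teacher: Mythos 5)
Your proof is correct and takes essentially the same route as the paper's: reduce to finitely many parameters, pass to a maximal subset $B$ that is $\dcl$-independent over $P$, express the remaining parameters as values of $\cal L_\emptyset$-definable functions at tuples from $B\cup P$, and then invoke Assumption (III) for the parameter set $B\cup P$. The only difference is that you spell out the exchange argument and the open/closed cases, which the paper leaves implicit.
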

\begin{proof} Without loss of generality, we can assume that $A$ is finite. Let $B\subseteq A$ be a maximal subset of $A$ that is $\dcl$-independent over $P$. Suppose $B=\{b_1, \dots, b_k\}$. Hence for every $a\in A\setminus B$, there are $g_a \in P^l$ and  an $\cal L_{\emptyset}$-definable map $h: M^{l+k} \to M$ such that
$h(g_a, b_1,\dots,b_k) = a$. Set $H = \{ g_a \ : \ a \in A\setminus B\}$.  Since $X$ is $A$-definable, it is also $B\cup H$-definable.
\end{proof}

A positive answer to the following open question would give better control to the set of parameters (see also  after Corollary \ref{cor:corp1} below).

\begin{question}\label{question-parameters}
  For $X$ as above, are there finite $B\sub A$ and $H\subseteq P\cap \dcl_{\cal L(P)}(A)$, such that $X$ is $B\cup H$-definable and $B$ is $\dcl$-independent over $P$?
\end{question}
By \cite[2.26]{dms2}, Question \ref{question-parameters} admits a positive answer when $\widetilde{T}=T^{\textrm{indep}}$. However, we do not know the answer even when  $\widetilde{T}=T^d$.

The reader might wonder whether for every definable subset $X$ of $P^l$  there is an $\Cal L$-definable set $Y\subseteq M^n$ such that $X=Y\cap P^n$. While this is true for dense pairs by \cite[Theorem 2(2)]{vdd-dense}, this fails in examples arising from groups with the Mann property (see \cite[Proposition 57]{beg}).

Although all our known examples that satisfy Assumptions (I)-(III) have NIP (see \cite{bdo,gh2}), the following question stands open.

\begin{question}
Do Assumptions (I)-(III) imply that $\widetilde T$ has NIP?
\end{question}

\subsection{Basic facts for $\cal L$-definable and small sets}


We include some basic facts that will be used in the sequel.

\begin{fact}\label{fin1}
Let $f:X\sub M^m\to M^n$ be a finite-to-one $\cal L$-definable function. Then there is a finite partition $X=X_1\cup\dots\cup X_k$ into definable sets such that each $f_{\res X_i}$ is injective.
\end{fact}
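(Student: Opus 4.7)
The plan is to reduce to a uniform bound on fiber sizes and then stratify $X$ by the lexicographic position of each point inside its fiber.

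First I would invoke the standard uniform finiteness principle in o-minimal structures applied to the $\cal L$-definable family $\{f^{-1}(y)\}_{y\in f(X)}$. Since each fiber is finite, there is $N\in\N$ such that $|f^{-1}(y)|\le N$ for every $y\in f(X)$. (This follows from cell decomposition applied to the $\cal L$-definable set $\{(y,x)\in f(X)\times X : f(x)=y\}$ and the fact that, o-minimally, boundedness of the fiber sizes is uniform.)

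Next, equip $M^m$ with the lexicographic order $<_{\textup{lex}}$ induced by $<$ on $M$; this order is $\cal L_\emptyset$-definable. For each $x\in X$ define
\[
r(x)\;=\;\bigl|\{\,x'\in f^{-1}(f(x)) : x'<_{\textup{lex}} x\,\}\bigr|\;+\;1,
\]
so $r(x)\in\{1,\dots,N\}$ records the position of $x$ within its (finite) fiber. Since the relation ``$x'$ lies in $f^{-1}(f(x))$'' and the order $<_{\textup{lex}}$ are $\cal L$-definable, and since the cardinality is bounded by $N$, the predicate $r(x)=i$ is $\cal L$-definable for each $i\in\{1,\dots,N\}$.

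Finally, set $X_i:=\{x\in X:r(x)=i\}$ for $i=1,\dots,N$. Then $X=X_1\cup\cdots\cup X_N$ is a finite partition into $\cal L$-definable sets, and by construction if $x,x'\in X_i$ satisfy $f(x)=f(x')$ then both are the $i$-th element of the same fiber, forcing $x=x'$. Thus $f_{\res X_i}$ is injective, as required. There is no real obstacle here beyond citing the uniform finiteness fact; one could equally well prove the statement by an induction on cell decomposition, but the lexicographic-rank argument avoids having to keep track of the cell structure explicitly.
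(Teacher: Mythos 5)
Your proof is correct and fills in what the paper dismisses as ``Standard.'' The two ingredients you use --- uniform finiteness for the definable family of fibers, and stratification by lexicographic rank within each finite fiber --- constitute the usual o-minimal argument for this fact, and your verification that each $X_i$ is $\cal L$-definable (via the bound $N$ and the $\cal L_\emptyset$-definability of $<_{\textup{lex}}$) is exactly what is needed.
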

\begin{proof}
  Standard.
\end{proof}

\begin{fact}\label{fin2}
Let $f:A\sub M^m\to M^n$ be an $\cal L$-definable function. Let
$$X_f=\{a\in A: f^{-1}(f(a)) \text{ is finite}\}.$$
Then $\dim f(A\sm X_f)<\dim A$.
\end{fact}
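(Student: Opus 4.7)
The plan is to invoke the standard fiber dimension theorem for o-minimal structures (see, e.g., \cite{vdd-book}, Chapter 4): for any $\cal L$-definable $f:A\subseteq M^m\to M^n$ and any integer $d\geq 0$, the set
\[
A(d):=\{a\in A:\dim f^{-1}(f(a))=d\}
\]
is $\cal L$-definable and satisfies $\dim A(d)=\dim f(A(d))+d$.

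Set $Y:=A\setminus X_f$. By definition of $X_f$, for every $a\in Y$ the fiber $f^{-1}(f(a))$ is infinite, so by o-minimality it has dimension at least $1$. Hence $Y$ decomposes as the finite $\cal L$-definable union $Y=\bigcup_{d\geq 1}Y\cap A(d)$, and since $f(Y)=\bigcup_{d\geq 1}f(Y\cap A(d))$, we have
\[
\dim f(Y)=\max_{d\geq 1}\dim f\bigl(Y\cap A(d)\bigr).
\]
For each $d\geq 1$, applying the fiber dimension formula to $f$ restricted to $Y\cap A(d)$ (whose fibers all have dimension $d$) gives
\[
\dim f\bigl(Y\cap A(d)\bigr)=\dim\bigl(Y\cap A(d)\bigr)-d\leq \dim A-1.
\]
Taking the maximum over $d$ yields $\dim f(A\setminus X_f)\leq \dim A-1<\dim A$.

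There is no real obstacle here; the only point to be careful about is the application of the fiber dimension formula to the restriction $f\restriction Y\cap A(d)$, which is justified since $Y\cap A(d)$ is $\cal L$-definable and the fibers of $f\restriction Y\cap A(d)$ coincide (at each point of $f(Y\cap A(d))$) with intersections of the original fibers with $Y\cap A(d)$, of the same dimension $d$ by o-minimality.
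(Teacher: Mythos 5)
Your proof is correct and is essentially the same as the paper's: both reduce to the standard o-minimal fiber-dimension theorem applied to $f$ on $A\sm X_f$, where every fiber is infinite and hence of dimension $\ge 1$. The paper just uses the inequality $\dim(A\sm X_f)\ge \min_r \dim f^{-1}(r)+\dim f(A\sm X_f)$ in one step, whereas you partition by the exact fiber dimension $d$ and apply the equality form on each piece; the content is identical.
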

\begin{proof}
Let $R=f(A\sm X_f)$. By definition of $X_f$, for every $r\in R$, $f^{-1}(r)$ has dimension $>0$. Since $A\sm X_f$ equals the disjoint union $\bigcup_{r\in R} f^{-1}(r)$, we have by standard properties of dimension:
$$\dim(A\sm X_f)\ge \min_r \dim f^{-1}(r) +\dim R.$$
Hence,  $\dim A\ge 1+\dim R$ and $\dim R<\dim A$.
\end{proof}



\begin{fact}\label{fact2}

 If  $X, Z, I\sub M^m$ are definable sets, and $X$ is co-small in $Z$, then $X\cap I$ is co-small in $Z\cap I$.



\end{fact}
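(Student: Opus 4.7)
The plan is straightforward: this is essentially a set-theoretic identity combined with the fact that smallness is preserved under taking subsets. First I would observe the identity
\[
(Z\cap I)\setminus (X\cap I) \;=\; (Z\setminus X)\cap I \;\subseteq\; Z\setminus X,
\]
since an element of $Z\cap I$ fails to lie in $X\cap I$ exactly when it fails to lie in $X$.

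The second ingredient is that any definable subset of a small set is small. This is immediate from Definition \ref{def-small}: if $Y\subseteq Y'$ and $Y$ is large witnessed by an $\cal L$-definable $f:M^{nm}\to M$ with $f(Y^m)$ containing an open interval, then $f((Y')^m)\supseteq f(Y^m)$ also contains that open interval, so $Y'$ is large. Contrapositively, subsets of small sets are small.

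Combining the two: by hypothesis $Z\setminus X$ is small, hence by the inclusion above so is $(Z\cap I)\setminus(X\cap I)$, which is exactly the statement that $X\cap I$ is co-small in $Z\cap I$. There is no real obstacle here; the only thing to be careful about is that the subset appearing is definable (which it clearly is, as a Boolean combination of the definable sets $X$, $Z$, $I$), so that the notion of smallness applies to it.
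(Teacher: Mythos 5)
Your proof is correct and is exactly the argument the paper has in mind when it says the claim is ``immediate from the definitions'': the set identity $(Z\cap I)\setminus(X\cap I)=(Z\setminus X)\cap I$ together with the observation that definable subsets of small sets are small.
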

\begin{proof}
Immediate from the definitions.
\end{proof}

\section{Small sets}\label{sec-small}

In this section we establish properties of small sets that will be important in the proof of the Structure Theorem. The two most crucial results are Lemma \ref{philippcor} and Corollary \ref{philipp2uniform} below.

\subsection{Families of small sets and $P$-boundness}
With the exception of Lemma \ref{philippcor} below, the results of this section were either established in \cite{beg} or are minor improvements of  results in \cite{beg}. Since the assumptions in \cite{beg} differ from ours, we reprove the results here. Most of the proofs are direct adjustments from those in \cite{beg}, but are included  for the convenience of the reader. They often involve induction on formulas whose base step deals with a `basic' set defined next.\smallskip


\begin{defn} A subset $X \subseteq M^n$ is called \emph{basic over $A$} if it is of the form
\[
\bigcup_{g \in P^m} \varphi(M,g),
\]
for some $\Cal L_A$-formula. We say $X$ is \emph{basic} if it is basic over some parameter set $A$.
\end{defn}

 Note that by Assumption (II) every definable set is a boolean combination of basic sets.

\begin{lemma}\label{lem:uniform small} Let $p\in \N$. For $j=1,\dots,p$, let $\{S_{1,j,t}\}_{t\in M^l},\{S_{2,j,t}\}_{t\in M^l}$ be $A$-definable families of subsets of $P^{n}$. Let $f_1,\dots, f_p,h_1,\dots, h_p: M^{n+l}\to M$ be $A$-definable functions. Then there are $A$-definable families $\{Q_{j,t}\}_{t\in M^l},\{R_{j,t}\}_{t\in M^l}$ of subsets of $P^n$, for $j=1,\dots, p$, such that for every $t\in M^l$,
\begin{align*}
\bigcup_{j} f_j(S_{1,j,t},t) \cap \bigcup_{j} h_{j} (S_{2,j,t}, t)&= \bigcup_{j} f_j(Q_{j,t},t),\\
\Big(M\setminus \bigcup_{j} f_{j}(S_{1,j,t},t)\Big) \cup \bigcup_{j} h_{j} (S_{2,j,t}, t)&= M \setminus \bigcup_{j} f_j(R_{j,t},t).
\end{align*}
\end{lemma}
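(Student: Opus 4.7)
The plan is to define the families $Q_{j,t}$ and $R_{j,t}$ directly by restricting $S_{1,j,t}$ according to whether the image under $f_j(\cdot,t)$ lies inside, or outside, $\bigcup_k h_k(S_{2,k,t},t)$. Concretely, I would set
\[
Q_{j,t}:=\bigl\{s\in S_{1,j,t}\ :\ f_j(s,t)\in \bigcup_{k=1}^p h_k(S_{2,k,t},t)\bigr\},
\]
\[
R_{j,t}:=\bigl\{s\in S_{1,j,t}\ :\ f_j(s,t)\notin \bigcup_{k=1}^p h_k(S_{2,k,t},t)\bigr\}.
\]
Both families are subsets of $P^n$, and the defining conditions are uniformly expressible by an $\cal L_A(P)$-formula in the free variable $s$ with parameter $t$, so $\{Q_{j,t}\}_{t\in M^l}$ and $\{R_{j,t}\}_{t\in M^l}$ are $A$-definable as required.

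For the first identity, the plan is to observe that for each fixed $j$ and $t$, the elementary image--preimage identity $f_j(S,t)\cap Y=f_j(\{s\in S:f_j(s,t)\in Y\},t)$, applied with $S=S_{1,j,t}$ and $Y=\bigcup_k h_k(S_{2,k,t},t)$, gives
\[
f_j(Q_{j,t},t)=f_j(S_{1,j,t},t)\cap \bigcup_k h_k(S_{2,k,t},t).
\]
Taking the union over $j$ and distributing the intersection across the union in the first factor then yields exactly $\bigl(\bigcup_j f_j(S_{1,j,t},t)\bigr)\cap\bigl(\bigcup_j h_j(S_{2,j,t},t)\bigr)$, which is the left-hand side.

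For the second identity, the same image--preimage computation gives
\[
f_j(R_{j,t},t)=f_j(S_{1,j,t},t)\setminus \bigcup_k h_k(S_{2,k,t},t).
\]
Taking the union over $j$ and using the distributivity $(\bigcup_j A_j)\setminus B=\bigcup_j(A_j\setminus B)$ produces $\bigcup_j f_j(R_{j,t},t)=\bigl(\bigcup_j f_j(S_{1,j,t},t)\bigr)\setminus\bigl(\bigcup_j h_j(S_{2,j,t},t)\bigr)$; complementing in $M$ and applying $M\setminus(A\setminus B)=(M\setminus A)\cup B$ gives the desired equality.

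I do not anticipate a real obstacle in this proof: the lemma is a purely formal bookkeeping statement whose content is that the class of subsets of $M$ of the form $\bigcup_j f_j(S_{j,t},t)$ with $S_{j,t}\subseteq P^n$ is closed, uniformly in $t$, under binary intersection and relative complementation. The interest of the lemma is the normal form it provides: together with Assumption (II), this should allow any $A$-definable subset of $M$ to be presented, uniformly in parameters, in the form $\bigcup_j f_j(S_{j,t},t)$ up to one complement, which will presumably be useful later when reducing statements about arbitrary definable sets to statements about images of subsets of $P^n$.
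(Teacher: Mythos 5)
Your proposal is correct and coincides with the paper's proof: the sets $Q_{j,t}$ and $R_{j,t}$ you define are exactly those in the paper (your membership condition $f_j(s,t)\in\bigcup_k h_k(S_{2,k,t},t)$ is just the disjunction $\bigvee_i\exists g'\in S_{2,i,t}\,h_i(g',t)=f_j(s,t)$ written set-theoretically), and your image--preimage verification is the routine check the paper leaves to the reader.
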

\begin{proof} Set
\[
Q_{j,t} := \{ g \in S_{1,j,t} : \bigvee_{i=1}^p \exists g' \in  S_{2,i,t} \ h_i(g',t) = f_j(g,t) \}
\]
and
\[
R_{j,t} := \{ g \in S_{1,j,t} : \bigwedge_{i=1}^p \forall g' \in  S_{2,i,t} \ h_i(g',t) \neq f_j(g,t) \}.
\]
\end{proof}

\begin{lem}\label{lem:beg1} Let $\{X_t\}_{t\in M^l}$ be an $A$-definable family of subsets of $M$. Then there are $m,n,p \in \N$ and for each $i=1,\dots,m$ there are
\begin{itemize}
\item an $A$-definable family $\{S_{i,j,t}\}_{t\in M^l}$ of subsets of $P^n$, for each $j=1,\dots,p$,
\item $\Cal L_A$-definable functions $h_{i,1},\dots,h_{i,p} : M^{n+l} \to M$,
\item an $A$-definable function $a_i : M^l \to M\cup \{\infty\}$,
\end{itemize}
such that for $t \in M^l$,
\begin{itemize}
\item [(i)] $-\infty =a_0(t) \leq a_1(t) \leq \dots \leq a_m(t) =\infty$ is a decomposition of $M$, and
\item [(ii)] one of the following holds:
\begin{itemize}
  \item[(a)] $[a_{i-1}(t),a_i(t)] \cap X_t = V_{i,t}$ or
  \item[(b)] $[a_{i-1}(t),a_i(t)] \cap X_t = (M\setminus V_{i,t}) \cap [a_{i-1}(t),a_i(t)],$
\end{itemize}
where $V_{i,t} = \bigcup_{j} h_{i,j}(S_{i,j,t},t)$.
\end{itemize}
\end{lem}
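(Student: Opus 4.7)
The overall plan is to reduce to the case of a basic set using Assumption (II), handle that case with uniform o-minimal cell decomposition applied to the defining formula, and then close under boolean combinations using Lemma \ref{lem:uniform small}.

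First I would reduce to the basic case. By Assumption (II), the formula defining $\{X_t\}_{t\in M^l}$ is a boolean combination of $\Cal L_A(P)$-formulas of the form $\exists z\in P^n\,\varphi(x,z,t)$. If $Y_t=\bigcup_{g\in P^n}\varphi(M,g,t)$ and $Z_t=\bigcup_{g\in P^{n'}}\psi(M,g,t)$ already admit decompositions of the form described in the conclusion, then a common refinement of the partitions $a_i(t)$ for $Y_t$ and $Z_t$ gives a partition on each piece of which $Y_t$ and $Z_t$ each have one of the two asserted forms; Lemma \ref{lem:uniform small} then produces the corresponding $V_{i,t}$ for $Y_t\cap Z_t$ and for $M\setminus Y_t$, so that the class of families admitting such a decomposition is closed under intersection and complement, hence under boolean combinations.

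The heart of the proof is therefore the basic case $X_t=\bigcup_{g\in P^n}\varphi(M,g,t)$. Here I would apply uniform o-minimal cell decomposition to the $\Cal L_A$-formula $\varphi(x,z,t)$, viewed as a family of subsets of $M$ parameterised by $(z,t)\in M^{n+l}$. After further partitioning $M^{n+l}$ into finitely many $\Cal L_A$-cells so that the type of cell-decomposition of $\varphi(M,z,t)$ is constant on each piece, we obtain finitely many $\Cal L_A$-definable continuous functions $c_1(z,t)<\cdots<c_r(z,t)$ such that $\varphi(M,g,t)$ is a union of ``point cells'' $\{c_j(g,t)\}$ and ``interval cells'' $(c_j(g,t),c_{j+1}(g,t))$ whose indexing depends only on the piece containing $(g,t)$. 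Writing $S_{j,t}\subseteq P^n$ for the $\Cal L_A$-definable (in $t$) set of those $g$ whose cell-type puts $c_j$ into a point cell, the union of all point contributions is
\[
P_t:=\bigcup_j c_j(S_{j,t},t),
\]
which is of the desired form $V_{i,t}=\bigcup_j h_{i,j}(S_{i,j,t},t)$ and is small. The union $U_t$ of all interval contributions is open and $A$-definable in $t$; by Lemma \ref{defB} its closure is $\Cal L_{A\cup P}$-definable, so for each $t$ it is a finite union of intervals in $M$, and by compactness/saturation these finitely many endpoints can be chosen uniformly as $A$-definable functions of $t$.

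Using these endpoints, together with $\pm\infty$, as the $a_i(t)$, on each decomposition interval $[a_{i-1}(t),a_i(t)]$ exactly one of two things happens: either the interval lies in $M\setminus \mathrm{cl}(U_t)$, in which case $X_t\cap [a_{i-1}(t),a_i(t)]=P_t\cap[a_{i-1}(t),a_i(t)]$ is of the form $V_{i,t}$ by restricting $S_{i,j,t}$ to those $g$ with $c_j(g,t)\in[a_{i-1}(t),a_i(t)]$; or the interval lies in $U_t$, in which case $X_t\cap[a_{i-1}(t),a_i(t)]$ is that interval minus a small set of endpoints/gaps, which is again of the prescribed complementary form after applying Lemma \ref{lem:uniform small} once more. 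The main obstacle I anticipate is the bookkeeping around the uniformity in $(z,t)$: one has to refine the cell-decomposition of $M^{n+l}$ enough that the assignment ``which cells of $\varphi(M,g,t)$ are points versus intervals'' is uniform, and simultaneously keep the $c_j$'s $\Cal L_A$-definable and the discriminating sets $S_{j,t}\subseteq P^n$ $A$-definable in $t$. This uniformity, together with Assumption (III) used via Lemma \ref{defB} to control $\mathrm{cl}(U_t)$, is what makes the endpoint functions $a_i(t)$ honest $A$-definable functions rather than merely pointwise choices.
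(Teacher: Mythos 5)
Your proposal is correct and follows essentially the same route as the paper's proof: reduce to basic sets via Assumption (II) and close under boolean combinations with Lemma \ref{lem:uniform small}, then in the basic case split $\bigcup_{g\in P^n}\varphi(M,g,t)$ into point and interval cell contributions via uniform cell decomposition, use Assumption (III) to make the (closure of the) interval part a finite union of intervals with $At$-definable endpoints, and get uniformity of the number of endpoints by compactness. The only cosmetic differences are that you close under intersection rather than union, and that on a piece contained in $U_t$ the intersection is in fact the whole interval (form (b) with empty exceptional set), so no further appeal to Lemma \ref{lem:uniform small} is needed there.
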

\begin{proof} First consider a definable family of basic sets, say $(\mathbb{D}_t)_{t\in M^l}$, that is a definable family of the form
\[
\mathbb{D}_t = \bigcup_{g \in P^n} \varphi(M,g,t),
\]
where $\varphi(x,y,z)$ is an $\cal L_A$-formula and $t \in M^l$. By cell decomposition, there are two finite sets $J_1,J_2$, $\Cal L_A$-definable cells $(Y_{1,j})_{j\in J_1}$ and $(Y_{2,j})_{j\in J_2}$ in $M^{n+l}$ and $\Cal L_A$-definable functions
$(f_{1,j})_{j\in J_1}$, $(f_{2,j})_{j\in J_2}$ and $(f_{3,j})_{j\in J_2}$ from $M^{n+l}$ to $M$ such that
\[
\mathbb{D}_t = \bigcup_{j\in J_1} f_{1,j}(Y_{1,j,t} \cap P^n,t) \cup \bigcup_{j \in J_2} \bigcup_{g \in Y_{2,j,t} \cap P^n} \big(f_{2,j}(g,t),f_{3,j}(g,t)\big).
\]
Without loss of generality, we can assume that $|J_1|=|J_2|$. Set $p:= |J_1|$ and assume that $J_1=J_2=\{1,\dots,p\}$. Set $U_t:=\bigcup_{j \in J_2} \bigcup_{g \in Y_{2,j,t} \cap P^n} \big(f_{2,j}(g,t),f_{3,j}(g,t)\big)$. Note that $U_t$ is open. By Assumption (III), $U_t$ is a finite union of open intervals. Since finitely many intervals only have  finitely many endpoints and $U_t$ is $At$-definable, the endpoints of the intervals of $U_t$ are $At$-definable. Let $V_t$ be the topological closure of $\bigcup_{j\in J_1} f_{1,j}(Y_{1,j,t} \cap P^n,t)$. By Assumption (III) again, $V_t$ is $\Cal L$-definable. Hence it is a finite union of intervals and points. Since there are only finitely many endpoints and $V_t$ is $At$-definable, these endpoints are $At$-definable. Hence we have a decomposition of $M$
\[
-\infty =a_0(t) \leq a_1(t) \leq \dots \leq a_m(t) =\infty
\]
such that either
\begin{itemize}
\item $(a_{i-1}(t),a_i(t))\cap \mathbb{D}_t = (a_{i-1}(t),a_i(t))$ or
\item $(a_{i-1}(t),a_i(t))\cap \mathbb{D}_t = (a_{i-1}(t),a_i(t)) \cap \bigcup_{j\in J_1} f_{1,j}(Y_{1,j,t} \cap P^n,t)$.
\end{itemize}
In the first case set $S_{i,j,t}:=\emptyset$ and set $h_{i,j}(x,y)=0$ for all $(x,y) \in M^{n+l}$. In the second case set
\[
S_{i,j,t} := \{ g \in Y_{1,j,t} \cap P^n \ : \ f_{1,j}(g,t) \in   (a_{i-1}(t),a_i(t)) \},
\]
and set $h_{i,j}=f_{1,j}$. By compactness, we can find an $m\in \N$ that works for every $t \in M^l$. Hence (i)-(ii) holds for $(\mathbb{D}_t)_{t\in M^n}$.

By Assumption (II) it is enough to check that if the statement of the Lemma holds for two definable $(X_t)_{t\in M^l}$ and $(Z_t)_{t\in M^l}$, then it also holds for $(M\setminus X_t)_{t\in M^l}$ and $(X_t \cup Z_t)_{t\in M^l}$. So suppose that the statement holds for $(X_t)_{t\in M^l}$ and $(Z_t)_{t\in M^l}$. It is immediate that the conclusion holds for $(M\setminus X_t)_{t\in M^l}$ as well. It is easy to check that Lemma \ref{lem:uniform small} implies that the conclusion also holds for $(X_t \cup Z_t)_{t\in M^l}$. 
\end{proof}

\begin{remark}\label{definability}
The sets $V_{i,t}$ above are small, since $P$ is small (Assumption (I)). Hence:
\begin{enumerate}
\item[(a)]  the set
$$\{ t \in M^n \ : \ X_t \cap [a_{i-1}(t),a_i(t)] \hbox{ is small} \}$$ is equal to   $$\{ t \in M^n \ : \ X_t \cap [a_{i-1}(t),a_i(t)] = V_{i,t} \}.$$
Hence, it is $A$-definable. In particular, the set of all $t\in M^n$ such that $X_t$ is small is $A$-definable.

\item[(b)] the set of $(t, a_i(t))$ for which $X_t$ is small in $(a_{i-1}(t), a_{i}(t))$ is $A$-definable.

\end{enumerate}
\end{remark}

We will make use of the following consequence of Lemma \ref{lem:beg1}.

\begin{cor}\label{uniform small}
Let $\{X_t\}_{t\in I}$ be an $A$-definable family of subsets of $M$, where each $X_t\sub M$ is small and $I\sub M^n$. Then there are $m\in \N$, $\cal L_A$-definable continuous functions $h_j: V_j\sub M^{m+n}\to M$ and $A$-definable families $\{S_{j,t}\}_{t\in I}$ of sets $S_{j,t}\sub P^m$,  $j=1, \dots, p$, such that for every $t\in I$,  $X_t= \bigcup_{j} h_j (S_{j,t}, t)$.
\end{cor}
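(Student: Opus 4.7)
The plan is to invoke Lemma \ref{lem:beg1} and exploit the smallness hypothesis to eliminate its ``complement in an interval'' case. First, applying Lemma \ref{lem:beg1} with $l=n$ to the $A$-definable family $\{X_t\}_{t\in I}$ yields an integer $r\ge 1$ together with $A$-definable endpoint functions $a_0,\dots,a_r: M^n\to M\cup\{\pm\infty\}$, $\Cal L_A$-definable functions $h_{i,j}:M^{q+n}\to M$ for some $q,p\in\N$, and $A$-definable families $\{S_{i,j,t}\}$ of subsets of $P^q$, such that on each piece $[a_{i-1}(t),a_i(t)]$ the set $X_t$ agrees either with $V_{i,t}:=\bigcup_j h_{i,j}(S_{i,j,t},t)$ (case (a)) or with $(M\setminus V_{i,t})\cap[a_{i-1}(t),a_i(t)]$ (case (b)).

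Next, I would use the smallness of $X_t$ to rule out case (b) on non-degenerate intervals. Each $V_{i,t}$ is small, being a finite union of images of subsets of $P^q$ under $\Cal L$-definable maps, and $P$ is small by Assumption (I). Hence, by the remark following Definition \ref{def-small}, the complement $(M\setminus V_{i,t})\cap[a_{i-1}(t),a_i(t)]$ is large whenever $a_{i-1}(t)<a_i(t)$. Since a subset of a small set is small, case (b) on such an interval would make $X_t$ large, contradicting the hypothesis. Therefore, for every $t\in I$, each piece is either in case (a) or sits on a degenerate interval $\{a_{i-1}(t)\}$, contributing at most a single point to $X_t$.

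To assemble the desired description, I would first capture the case (a) contributions by restricting to $\tilde S_{i,j,t}:=\{g\in S_{i,j,t}: h_{i,j}(g,t)\in X_t\cap[a_{i-1}(t),a_i(t)]\}$, which is $A$-definable and yields $\bigcup_j h_{i,j}(\tilde S_{i,j,t},t)=X_t\cap[a_{i-1}(t),a_i(t)]$ in case (a). To handle the isolated singletons $\{a_{i-1}(t)\}$ arising from degenerate case (b) pieces, I would invoke Lemma \ref{defB} applied to the $A$-definable graph of $a_{i-1}$: this supplies a fixed finite tuple $\bar g_i\in P^{q'_i}$ and an $\Cal L_A$-formula $\phi_i(t,x,y)$ defining the graph, from which one extracts an $\Cal L_A$-definable function $h_i^\ast(g,t)$ (e.g., the least $x$ satisfying $\phi_i(t,x,g)$, or $0$ otherwise) together with an $A$-definable family $S_{i,t}^\ast$ equal to $\{\bar g_i\}$ when $a_{i-1}(t)\in X_t$ and $\emptyset$ otherwise; then $h_i^\ast(S_{i,t}^\ast,t)=\{a_{i-1}(t)\}\cap X_t$. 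Padding all the $P$-tuples to a common ambient dimension $m$ and collecting all contributions produces the desired description of $X_t$.

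Finally, to upgrade each $\Cal L_A$-definable function to a continuous one on an $\Cal L_A$-definable domain, I would apply $\Cal L$-cell decomposition to partition each function's ambient domain $M^{m+n}$ into finitely many $\Cal L_A$-definable cells on which the function is continuous, intersecting each $S_{j,t}$ with the $P^m$-slice of the corresponding cell. The main obstacle is the uniformity needed in the third step: we require a single $P$-tuple that parametrizes the singleton $\{a_{i-1}(t)\}$ uniformly across $t\in I$, rather than a $t$-dependent choice. Lemma \ref{defB} delivers exactly this uniformity, since it applies to the entire graph as an $A$-definable subset of $M^{n+1}$, not pointwise.
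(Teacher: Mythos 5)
Your overall strategy is the paper's: apply Lemma \ref{lem:beg1}, use the smallness of $X_t$ to force case (a), then use o-minimal cell decomposition to arrange continuity. Your first, second and fourth steps are correct and are exactly what the paper's (very terse) proof intends.

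Your third step, however, is both unnecessary and incorrectly justified. It is unnecessary because the degenerate case-(b) pieces never contribute anything that is not already captured: given $c\in X_t$, let $i$ be minimal with $c\le a_i(t)$; then $a_{i-1}(t)<c\le a_i(t)$, so the piece $[a_{i-1}(t),a_i(t)]$ contains $c$ and is non-degenerate, hence by your second step it must be in case (a), hence $c\in V_{i,t}$. Thus $X_t\sub \bigcup_{i,j}h_{i,j}(S_{i,j,t},t)$ outright, and your sets $\tilde S_{i,j,t}$ (taken over all $i,j$) already give $X_t=\bigcup_{i,j}h_{i,j}(\tilde S_{i,j,t},t)$; no separate treatment of the endpoints is needed. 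More importantly, the justification you give for that step is wrong: Lemma \ref{defB} does not supply an $\cal L_A$- (or even $\cal L_{A\cup P}$-) formula defining the graph of the $A$-definable function $a_{i-1}$; it only supplies one defining the \emph{closure} of that graph. The graph of an $A$-definable function is in general not $\cal L$-definable, and its closure can be strictly larger than the graph --- indeed it can be all of $M^{n+1}$, as with the function $f$ in the introduction whose graph is dense in $M^2$ --- so ``the least $x$ satisfying $\phi_i(t,x,g)$'' would not recover $a_{i-1}(t)$. Had the endpoint issue been genuine, this argument would not have closed it; since it is not genuine, deleting your third step leaves a correct proof matching the paper's.
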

\begin{proof} Without requiring the continuity of the $h_j$'s, the statement is immediate from Lemma \ref{lem:beg1}. Now, to get the continuity, apply the cell decomposition theorem for o-minimal structures to get, for each $j$, cells $V_{j,1}, \dots, V_{j, s(j)}$ such that $h_j$ is continuous on each $V_{j, i}$. Let $S'_{j, i, t}:=S_{j, t}\cap V_{j, i}\sub P^m$. We have
$$X_t=\bigcup_{j,i} h_j(S'_{j,i,t}, t),$$
as required.
\end{proof}

The following example shows that in the last corollary the set $S_{j,t}$ has to depend on $t$.


\begin{example}\label{ex1} Let $\widetilde{\cal M}\models T^d$. For every $a\in M^{>0}$, let $X_a=P\cap (0, a)$, and $$X=\bigcup_{a\in M^{>0}} \{a\} \times X_a.$$
Let $h_j$ and $S_{j,a}$ be as in Corollary \ref{uniform small}, and assume towards a contradiction that all $S_{j, a}$'s equal some $S_j$.
So for every $a\in M^{>0}$,
\begin{equation}
\tag{$\ast$} (0, a)\cap P= \bigcup_j h_j(S_j, a).
\end{equation}
Take $p \in S_j$. By o-minimality, $h_j (p, -)$ is eventually continuous close to $0$. Since $h_j(p,M^{>0})\subseteq P$ by ($\ast$) and $P$ is codense in $M$,
 $h_j(p,-)$ is eventually constant close to $0$. That is, there is $a_p>0$ and $c_p\in P$, such that for every $0<a<a_p$, $h_j(p, a)=c_p$. Thus, if $0<a< c_p$, we have $h_j(p, a)=c_p\not\in (0, a)\cap P$, a contradiction.
\end{example}

We now derive a few corollaries of the above results. The next lemma shows how to turn a family $X=\{X_a\}_{a\in C}$ of small sets into a small family of subsets $Z_{g}$ of $C$. This will be a crucial step in the proof of the Structure Theorem. There,  we will further need to replace $Z_{ig}$ by ``cones'', which are defined in Section \ref{sec-cones}.

\begin{lemma}\label{philippcor}
Let $X=\bigcup_{a\in C} \{a\}\times X_a$ be $A$-definable where each $X_a\sub M$ is small, non-empty, and $C\sub M^n$. Then there are $l, m\in \N$, and for each $i=1,\dots,l$,
\begin{itemize}
 \item an $\cal L_A$-definable continuous function $h_i: V_i \subseteq M^{m+n}\to M^{n+1}$,
 \item an $A$-definable small set $S_i\sub M^m$, and
 \item an $A$-definable set $Z_i\subseteq S_i \times C$ contained in $V_i$,
\end{itemize}
  such that for
\[
 U_i=h_i\left(\bigcup_{g\in S_i} \{g\}\times Z_{ig}\right)
 \]
we have
\begin{enumerate}
\item $X=U_1\cup\dots\cup U_l$ is a disjoint union,
\item for every $i$ and $g\in S$, $h_i(g, -):V_{ig}\sub M^n\to M^{n+1}$ is injective,
\item $C=\bigcup_{i, g} Z_{ig}.$
\end{enumerate}
\end{lemma}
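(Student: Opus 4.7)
The plan is to reduce the lemma to Corollary \ref{uniform small} by a ``swap'' of the parameterization: that corollary supplies small sets $S_{j,a}$ indexed by the base point $a\in C$, and I will regroup so that the $a$'s are indexed instead by the witnessing $g$. First I apply Corollary \ref{uniform small} to the $A$-definable family $\{X_a\}_{a\in C}$ of small sets, obtaining integers $m,p\in\N$, $\cal L_A$-definable continuous functions $h_j : V_j \sub M^{m+n} \to M$, and $A$-definable families $\{S_{j,a}\}_{a\in C}$ with $S_{j,a}\sub P^m$, such that $X_a = \bigcup_{j=1}^p h_j(S_{j,a},a)$ for every $a\in C$; in particular $(g,a)\in V_j$ whenever $g\in S_{j,a}$.

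Next I pad $h_j$ with the identity in the $a$-coordinate, setting $\tilde h_j(g,a):=(a,h_j(g,a))\in M^{n+1}$. Because the first $n$ coordinates of $\tilde h_j(g,a)$ read off $a$, the map $\tilde h_j(g,-)$ is automatically injective on $V_{jg}$, so condition (2) is settled for free. I then swap the parameterization by setting
\[
S_j := \bigcup_{a\in C} S_{j,a} \sub P^m, \qquad Z_{jg} := \{a\in C : g\in S_{j,a}\},
\]
so that $S_j$ is $A$-definable and small (as a subset of the small set $P^m$, smallness of $P^m$ being immediate from Assumption (I) and the definition) and $Z_j := \{(g,a)\in S_j\times C : g\in S_{j,a}\}$ lies in $V_j$. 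A direct unfolding gives
\[
\tilde h_j\Bigl(\bigcup_{g\in S_j}\{g\}\times Z_{jg}\Bigr) \;=\; \bigcup_{a\in C}\{a\}\times h_j(S_{j,a},a),
\]
so the union over $j$ is exactly $X$.

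To secure the disjointness in (1), I refine each $Z_{jg}$ to
\[
Z'_{jg} := \bigl\{a\in Z_{jg} : h_{j'}(g',a)\neq h_j(g,a) \text{ for all } j'<j \text{ and } g'\in S_{j',a}\bigr\},
\]
which is still $A$-definable; this refinement peels off exactly those points of $U_j$ that already appear in some earlier $U_{j'}$, so the resulting $U'_1,\dots,U'_p$ are pairwise disjoint with union still $X$. Condition (3) then follows from the nonemptiness of $X_a$: for each $a\in C$ choose $b\in X_a$ and let $j$ be the least index with $b\in h_j(S_{j,a},a)$; then any $g\in S_{j,a}$ with $h_j(g,a)=b$ witnesses $a\in Z'_{jg}$, so $C=\bigcup_{j,g} Z'_{jg}$. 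No serious obstacle arises here; the one trick worth flagging is the padding step, which turns the (not necessarily $a$-injective) $h_j$ into a map that is injective in $a$ simply by recording $a$ in the output, dispatching condition (2) at no cost.
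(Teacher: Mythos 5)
Your proof is correct and follows essentially the same route as the paper's: apply Corollary \ref{uniform small}, pad each $h_j$ with the identity in the $a$-coordinate to get injectivity of $h_j(g,-)$ for free, and swap the parameterization by setting $Z_{jg}=\{a\in C: g\in S_{j,a}\}$. The only cosmetic difference is that the paper arranges the disjointness by recursively shrinking the sets $S_{j,a}$ before the swap, whereas you shrink the $Z_{jg}$ after it — these are the same refinement viewed from the two sides of the parameterization.
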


\begin{proof}
 We first observe that there are $m, p\in \N$, $\cal L_A$-definable continuous functions $h_i:V_i\sub M^{m+n}\to M$ and $A$-definable families $Y_i$ of small sets $Y_{ia}\sub P^m$,  $i=1, \dots, p$, such that for every $a\in I$,
\begin{enumerate}
\item $X_a= \bigcup_{i} h_i (Y_{ia}, a)$
\item $\{h_i(Y_{ia}, a)\}_{i=1, \dots, p}$ are disjoint.
\end{enumerate}
Indeed, this follows from Corollary \ref{uniform small}; for (2), recursively replace $Y_{ia}$, $1<i\le p$, with the set consisting of all $z\in Y_{ia}$ such that $h_i(z, a)\not\in h_j(Y_{ja}, a)$, $0<j<i$.
We now have:
$$X=\bigcup_{a\in C} \{a\} \times X_a =\bigcup_i \bigcup_{a\in C}  \{a\} \times h_i (Y_{ia}, a).$$
For every $i$, let $S_i=P^m$. For every $i$ and $g\in P^m$, let
$$U_i=\bigcup_{a\in C}  \{a\} \times h_i (Y_{ia}, a),$$
which are also disjoint, and
$$Z_{ig}=\{a\in C: g\in Y_{ia}\}.$$
Since $h_i$ and $\{Y_{ia}\}_{a\in C}$ are $A$-definable, so are  $U_i$ and $\{Z_{ig}\}_{g\in S_i}$. We have $C=\bigcup_{i, g} Z_{ig}$. Consider now the $\cal L_A$-definable continuous map $\hat h_i: V_i\sub M^{m+n}\to M^{n+1}$ with
$$\hat h_i(g, a)=\left(a, h_i(g, a)\right).$$
Then
$$ U_i=\hat h_i\left(\bigcup_{g\in S_i} \{g\}\times Z_{ig}\right)$$
 works.
\end{proof}

\begin{remark}\label{SG}
As the last proof shows, in fact we obtain $S_i=P^m$. We decided, however, to keep the current formulation because the proof can then be adopted in similar situations (such as in Lemma \ref{philippcor2} below). Had we kept the stronger formulation ($S_i=P^m$), what follows would result to a  Structure Theorem \ref{str-thm-sets} where in Definition \ref{def-cone} of a cone we could require $S\sub P^m$. However, we recover this information anyway, see Remark \ref{rem-supercone}(7).
\end{remark}

Let us illustrate Lemma \ref{philippcor} with an example.

\begin{example}\label{exa-philippcor} Let $\widetilde{\cal M}\models T^d$.
For every $a\in M^{>0}$, let $X_a=P\cap (0, a)$, and
$$X=\bigcup_{a\in M^{>0}} \{a\} \times X_a.$$
Then we can turn $X$ into a small union of ($\cal L$-definable images of) large subsets of $M$, as follows. For every $g\in P$, let
$$J_g=\{a\in M: a>g\}.$$
Then
$$X=h\left(\bigcup_{g\in P} \{g\}\times J_g\right),$$
where $h:M^2\to M^2$ switches the coordinates, $h(x, y)=(y, x)$. In this case, $X$ is in fact seen to be $1$-cone (according to Definition \ref{def-cone} below).
\end{example}

We now turn to examine better the notion of smallness.

\begin{defn} A set $X \subseteq M^n$ is \emph{$P$-bound} over $A$, if there is an $\Cal L_A$-definable function $f: M^m \to M^n$ such that $X \subseteq f(P^m)$. We omit $A$ if we do not want to specify the parameters.
\end{defn}

\begin{lem}\label{lem:beg2} An $A$-definable set is small if and only if it is $P$-bound over $A$.
\end{lem}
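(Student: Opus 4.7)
The plan is to handle the two directions separately. The direction ``$P$-bound $\Rightarrow$ small'' follows directly from Assumption (I): if $X\subseteq f(P^m)$ for an $\cal L_A$-definable $f:M^m\to M^n$ and $X$ were large, witnessed by an $\cal L$-definable $g:M^{nm'}\to M$ sending $X^{m'}$ onto a superset of an open interval, then the $\cal L$-definable composite $g\circ(f\times\cdots\times f):M^{mm'}\to M$ would send $P^{mm'}$ onto a superset of that same interval, contradicting the smallness of $P$.

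For the converse, I would induct on the ambient dimension $n$. For the base case $n=1$, apply Lemma \ref{lem:beg1} to the trivial one-element family $\{X\}$, producing constants $-\infty=a_0\leq\dots\leq a_m=\infty$ such that on each $[a_{i-1},a_i]$ the set $X$ coincides either with $V_i=\bigcup_j h_{i,j}(S_{i,j})$ (with $h_{i,j}$ an $\cal L_A$-definable function and $S_{i,j}\subseteq P^{m}$) or with the complement of $V_i$ in $[a_{i-1},a_i]$. Since $V_i$ is small, the second alternative on a non-degenerate interval would produce a large subset of $X$, contradicting the hypothesis; so the second alternative occurs only on singleton intervals. Hence $X$ is a finite union of $\cal L_A$-definable images of subsets of $P^m$, together with finitely many $A$-definable singletons; each singleton is itself $P$-bound over $A$ via Lemma \ref{defB} (writing each $a_i$ as $\varphi(b)$ for $\cal L_A$-definable $\varphi$ and $b\in P^k$), and combining finitely many $\cal L_A$-definable functions into a single one (by adjoining an extra coordinate and switching between them over $\cal L_\emptyset$-definable thresholds, exploiting the density of $P$ in some $\cal L$-definable interval) delivers the conclusion.

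For the inductive step, let $\pi:M^n\to M^{n-1}$ be the projection onto the first $n-1$ coordinates. I would first observe that $\pi(X)$ and every fiber $X_a\subseteq M$ (for $a\in\pi(X)$) are small in their respective ambient spaces: the same compositional trick as in the easy direction shows that an $\cal L$-definable function witnessing largeness of $\pi(X)$ or of some $X_a$ yields, after precomposition with $\pi$ or evaluation at the $a$-slice, an $\cal L$-definable function witnessing largeness of $X$. By the inductive hypothesis, there is an $\cal L_A$-definable $g:M^{m'}\to M^{n-1}$ with $\pi(X)\subseteq g(P^{m'})$. Applying Corollary \ref{uniform small} to the $A$-definable family $\{X_a\}_{a\in\pi(X)}$ yields $\cal L_A$-definable continuous $h_j:V_j\subseteq M^{m+n-1}\to M$ and $A$-definable $S_{j,a}\subseteq P^m$ with $X_a=\bigcup_j h_j(S_{j,a},a)$. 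The $\cal L_A$-definable maps
\[
\Phi_j:M^{m+m'}\to M^n, \qquad \Phi_j(x,y):=\bigl(g(y),\,h_j(x,g(y))\bigr),
\]
then satisfy $X\subseteq\bigcup_j\Phi_j(P^{m+m'})$, and a final consolidation as in the base case produces a single $\cal L_A$-definable $f$ with $X\subseteq f(P^L)$.

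The two technical points requiring care are the inheritance of smallness by projections and fibers (handled uniformly by the compositional trick from the easy direction) and the bookkeeping in consolidating finitely many $\cal L_A$-definable functions into one, which is routine given density of $P$ in an $\cal L$-definable interval. The conceptual engine is Corollary \ref{uniform small}, which lets every fiber of $X$ be written uniformly as an $\cal L_A$-definable image of subsets of $P^m$; the parameter-control in that corollary is precisely what lets us avoid stepping outside $\cal L_A$-definability.
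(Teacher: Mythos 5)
Your proof is correct, but it takes a genuinely longer route than the paper's, and the difference is worth noting. You handle the ``small $\Rightarrow$ $P$-bound'' direction by induction on $n$, using Corollary~\ref{uniform small} applied to the family of one-dimensional fibers $\{X_a\}_{a\in\pi(X)}$ together with the inductive hypothesis on $\pi(X)$, and then splicing the witnessing functions together. The paper instead sidesteps the fiberwise argument entirely: project $X$ to each \emph{single} coordinate, observe that each $\pi_i(X)\subseteq M$ is small and hence $P$-bound over $A$ by the one-variable case (which is exactly Corollary~\ref{uniform small} applied to a constant family), note that a finite product of $P$-bound sets is $P$-bound, and conclude using the trivial inclusion $X\subseteq\prod_{i=1}^n\pi_i(X)$. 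This bypasses the induction, the uniform treatment of the fibers, the construction of the maps $\Phi_j$, and the consolidation bookkeeping. Your base case is also more labored than it needs to be: rather than invoking Lemma~\ref{lem:beg1} and then arguing that the co-small alternative can only occur on degenerate intervals (which forces you to handle the endpoint singletons separately via Lemma~\ref{defB}), you can simply apply Corollary~\ref{uniform small} to the one-element family, which directly writes a small $X\subseteq M$ as $\bigcup_j h_j(S_j)$ with $h_j$ $\Cal L_A$-definable and $S_j\subseteq P^m$. Both arguments are sound; what the paper's version buys is brevity and a much lighter parameter bookkeeping burden, since the product trick never requires you to uniformize anything over the parameter $a$.
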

\begin{proof} Since $P$ is small, it follows immediately that every $P$-bound set is small.  For the other direction, observe first that, by Corollary \ref{uniform small}, every $A$-definable small subset of $M$ is $P$-bound over $A$. Now let $X\sub M^n$ be $A$-definable, and let $\pi_i : M^n \to M$ be the projection onto the $i$-th coordinate. If $X$ is small, so is $\pi_i(X)$ for $i=1,\dots, n$. Since each $A$-definable small subset of $M$ is $P$-bound over $A$, so is $\pi_i(X)$. Hence $\prod_{i=1}^{n} \pi_i(X)$ is $P$-bound over $A$ and so is $X \subseteq \prod_{i=1}^{n} \pi_i(X)$.
\end{proof}

We  show that in the definition of largeness and $P$-boundedness, we can replace $\cal L$-definability by definability. Recall from geometric stability theory that given two definable sets $X\sub M^n$ and $Y\sub M^k$, $X$ is called \emph{$Y$-internal} over $A$ if there is an $A$-definable $f:M^{mk}\to M^n$ such that $X\sub f(Y^m)$.

 \begin{cor}\label{internal}  Let $X$ be a definable set.
\begin{enumerate}
\item  $X$ is $P$-bound over $A$ if and only if it is $P$-internal over $A$.
\item $X$ is large if and only if an open interval is $X$-internal.
\end{enumerate}
 \end{cor}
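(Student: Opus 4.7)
My plan is to prove both parts by reducing to results already established in this section. The forward direction of (1) is immediate because every $\cal L_A$-definable function is also $A$-definable in $\widetilde{\cal M}$. The forward direction of (2) is likewise immediate from Definition \ref{def-small}, which provides an $\cal L$-definable witness $g$ with $g(X^m)\supseteq I$; this witness is in particular $A$-definable in $\widetilde{\cal M}$.

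For the backward direction of (1), the key reduction is to show that if $f : M^m\to M^n$ is $A$-definable, then $f(P^m)$ is small; the desired conclusion then follows from Lemma \ref{lem:beg2}. Projecting through each coordinate $\pi_i\circ f : M^m\to M$ and using that a finite product of small sets is small (a direct consequence of Lemma \ref{lem:beg2}, as in its proof), one reduces to the case $n=1$. With $f : M^m\to M$, I then apply Corollary \ref{uniform small} to the $A$-definable family of singletons $\{X_{\bar t}\}_{\bar t\in M^m}$, where $X_{\bar t}=\{f(\bar t)\}$ is trivially small. This produces $\cal L_A$-definable continuous functions $h_j : V_j\subseteq M^{k+m}\to M$ and $A$-definable families $\{S_{j,\bar t}\}_{\bar t\in M^m}$ of sets $S_{j,\bar t}\subseteq P^k$ such that $\{f(\bar t)\}=\bigcup_j h_j(S_{j,\bar t},\bar t)$ for every $\bar t\in M^m$. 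Taking the union over $\bar t\in P^m$ and noting that $\{(s,\bar t)\;:\;\bar t\in P^m,\, s\in S_{j,\bar t}\}$ lies inside $P^{k+m}$, I obtain
\[
f(P^m)\;\subseteq\;\bigcup_j h_j(P^{k+m}),
\]
so $f(P^m)$ is $P$-bound over $A$, and therefore small by Lemma \ref{lem:beg2}.

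For the backward direction of (2), I argue by contrapositive. Suppose $X$ is small and $f$ is any $A$-definable function. By Lemma \ref{lem:beg2} there is an $\cal L_A$-definable $g$ with $X\subseteq g(P^l)$, hence $X^m\subseteq g^m(P^{lm})$ and $f(X^m)\subseteq (f\circ g^m)(P^{lm})$. Since $f\circ g^m$ is $A$-definable in $\widetilde{\cal M}$, part (1) implies that $(f\circ g^m)(P^{lm})$ is small, so it contains no open interval. Thus no open interval is $X$-internal over $A$.

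The main technical point is the backward direction of (1): namely, the step from an $A$-definable $f$, whose defining formula may involve $P$ nontrivially, to a uniformly $\cal L_A$-definable description of its image on $P^m$. The device of viewing the restriction of $f$ to $P^m$ as a trivial $A$-definable family of singleton small sets, and feeding this family into Corollary \ref{uniform small}, is what converts the extra flexibility of $\widetilde{\cal M}$-definability back into an $\cal L_A$-definable parametrization by a subset of a Cartesian power of $P$. Once this is in place, part (2) is a short formal consequence.
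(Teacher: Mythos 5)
Your proof is correct, and the overall architecture matches the paper's: the forward directions are trivial, (2) is derived from (1), and all the work is in showing that an $A$-definable image $F(P^k)$ is $P$-bound over $A$. Where you diverge is in how that key step is carried out. The paper argues directly: after arranging (via Lemma \ref{defB}) that $A\setminus P$ is $\dcl$-independent over $P$, each singleton $\{F(g)\}$, $g\in P^k$, equals its own closure, so Assumption (III) makes it $\cal L_{A\cup g}$-definable; compactness then produces finitely many $\cal L_A$-definable functions $F_1,\dots,F_l$ with $F(g)=F_i(g)$ for some $i$, whence $F(P^k)\subseteq\bigcup_i F_i(P^k)$. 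You instead reduce to $n=1$ by coordinate projections and feed the graph of $f$, viewed as the $A$-definable family of singleton small sets $\{f(\bar t)\}_{\bar t\in M^m}$, into Corollary \ref{uniform small}; the bookkeeping ($S_{j,\bar t}\subseteq P^k$ and $\bar t\in P^m$, so everything lands in $h_j(P^{k+m})$) is right, and there is no circularity since Corollary \ref{uniform small} precedes this corollary. The trade-off: the paper's route is lighter, using only Assumption (III) plus compactness and needing no reduction to $n=1$ (since (III) applies to subsets of $M^s$ for any $s$), whereas your route leans on Corollary \ref{uniform small}, which is built on Lemma \ref{lem:beg1} and hence also on Assumption (II); on the other hand, your approach recycles an already-packaged uniformity statement rather than re-running the compactness argument by hand.
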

 \begin{proof}
 By Lemma \ref{lem:beg2}, Definition \ref{def-small} and Assumption (I), it is easy to see that (1) implies (2). For (1), let $F: M^k \to M^n$ be $A$-definable such that  $X \subseteq F(P^k)$.  Without loss of generality, we may assume that $ A\setminus P$ is $\dcl$-independent over $P$. For each $g\in P^k$, the singleton $\{ F(g) \}$ equals its topological closure. Since $F(g)$ is definable over $A \cup g$ and $(A\cup g)\setminus P$ is $\dcl$-independent over $P$, we get by Assumption (III) that $\{F(g)\}$ is $\cal L_{A\cup g}$-definable. Hence, by compactness, there are finitely many $\cal L_A$-functions $F_1, \dots, F_l$ such that for all $g\in P^k$, $F(g) = F_i(g)$ for some $i$. Hence
$$F(P^k) \subseteq \bigcup_i F_i(P^k).$$
However, the right hand side is $P$-bound over $A$, and hence so is $F(P^k)$.
 \end{proof}

The following is then immediate.
\begin{cor}\label{fact1}
Let $f:X\to M^n$ be a definable injective function. Then $X$ is small if and only if $f(X)$ is small.
\end{cor}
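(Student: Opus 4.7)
The plan is to deduce this directly from Corollary \ref{internal}(1), which characterises smallness as $P$-internality (i.e., definable coverage by the image of $P^m$ under a definable map). This is why the authors say ``immediate'': once we have smallness $=$ $P$-internality, both directions are one-line compositions of definable maps, up to a small technicality about the inverse.

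For the forward direction, suppose $X$ is small and $A$-definable (say over the same $A$ as $f$). By Corollary \ref{internal}(1), there is a definable map $F: M^m \to M^k$ such that $X \subseteq F(P^m)$. Then $f \circ F$ is a partially defined definable map, which I would extend to all of $M^m$ by setting it to some fixed value outside $F^{-1}(X)$; call the result $\phi$. Then $f(X) \subseteq \phi(P^m)$, so $f(X)$ is $P$-internal and therefore small by another application of Corollary \ref{internal}(1).

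For the converse, suppose $f(X) \subseteq G(P^m)$ for some definable $G: M^m \to M^n$. Since $f$ is definable and injective, the partial inverse $f^{-1}: f(X) \to X$ is definable. I would define $\psi: M^m \to M^k$ by
\[
\psi(g) = \begin{cases} f^{-1}(G(g)) & \text{if } G(g) \in f(X), \\ 0 & \text{otherwise.} \end{cases}
\]
For any $x \in X$, the point $f(x) \in f(X) \subseteq G(P^m)$, so there exists $g \in P^m$ with $G(g) = f(x)$, and then $\psi(g) = f^{-1}(f(x)) = x$ by injectivity. Thus $X \subseteq \psi(P^m)$, so $X$ is $P$-internal and hence small.

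The only mild obstacle is the fact that the ``inverse'' in the backward direction is only defined on $f(X)$, not on all of $M^n$, but this is immediately resolved by the case-split above, which is legitimate since both the condition $G(g) \in f(X)$ and the value $f^{-1}(G(g))$ are definable. Everything else reduces to composition of definable maps, using that the base parameter set can be enlarged harmlessly to cover both $f$ and the witnessing $F$ or $G$.
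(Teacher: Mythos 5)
Your proof is correct and is exactly the argument the paper has in mind when it labels Corollary \ref{fact1} as ``immediate'' from Corollary \ref{internal}: both directions are just compositions of definable maps turning a $P$-internality witness for $X$ into one for $f(X)$ (or conversely, via the definable partial inverse $f^{-1}$), followed by Corollary \ref{internal}(1) and Lemma \ref{lem:beg2} to convert $P$-internality back to smallness. The extension-by-a-default-value device you use to make the composed maps total is the standard and legitimate way to handle the fact that $f$ and $f^{-1}$ are only partially defined, so there is no gap.
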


A stronger version of the Corollary \ref{fact1} is provided by the invariance result in Corollary \ref{bijection} below. Here are three more corollaries of Lemma \ref{lem:beg2}.

\begin{cor}\label{smallunion} Let $Y\subseteq M^m$ be small and let $(X_t)_{t\in Y}$ be a definable family of small sets of $M^n$. Then
$\bigcup_{t \in Y} X_t$
is small.
\end{cor}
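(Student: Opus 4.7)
The plan is to combine the characterization of smallness as $P$-boundness (Lemma \ref{lem:beg2}) with the uniform representation of small families provided by Corollary \ref{uniform small}, after first reducing to the one-dimensional case $n=1$ by projecting to coordinates.

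For the reduction to $n=1$: for each $i \in \{1,\dots,n\}$, the projection $\pi_i(X_t) \subseteq M$ is small, being contained in the image of the small set $X_t$ under the coordinate projection. If the statement is known in dimension $1$, then $\bigcup_{t\in Y}\pi_i(X_t)$ is small for each $i$. Since
\[
\bigcup_{t\in Y}X_t \;\subseteq\; \prod_{i=1}^{n}\bigcup_{t\in Y}\pi_i(X_t),
\]
and a finite product of small subsets of $M$ is $P$-bound (by the product-of-maps argument used at the end of the proof of Lemma \ref{lem:beg2}), the general case follows from the case $n=1$.

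For $n=1$, apply Corollary \ref{uniform small} to the family $(X_t)_{t\in Y}$ to obtain $m_0, p \in \N$, $\cal L_A$-definable continuous maps $h_j : V_j \subseteq M^{m_0+m} \to M$ and $A$-definable families $(S_{j,t})_{t\in Y}$ with $S_{j,t}\subseteq P^{m_0}$ such that $X_t=\bigcup_{j=1}^{p} h_j(S_{j,t},t)$ for every $t\in Y$. Taking the union over $t$,
\[
\bigcup_{t\in Y}X_t \;=\; \bigcup_{j=1}^{p} h_j\!\left(\bigcup_{t\in Y} S_{j,t}\times\{t\}\right).
\]
Since $Y$ is small, Lemma \ref{lem:beg2} produces an $\cal L_A$-definable $F:M^k\to M^m$ with $Y\subseteq F(P^k)$. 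Then
\[
\bigcup_{t\in Y} S_{j,t}\times\{t\} \;\subseteq\; P^{m_0}\times F(P^k),
\]
and the right-hand side is $P$-bound via the $\cal L_A$-definable map $(p,q)\mapsto (p,F(q))$ from $M^{m_0+k}$ to $M^{m_0+m}$. Hence each $h_j$-image is $P$-bound, the finite union $\bigcup_{t\in Y} X_t$ is $P$-bound, and by Lemma \ref{lem:beg2} it is small.

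I do not expect any serious obstacle: the substantive content is already provided by the uniform parametrization in Corollary \ref{uniform small} and by the stability of $P$-boundness under subsets, images under $\Cal L_A$-definable maps, and finite unions. The only minor point requiring attention is that Corollary \ref{uniform small} is stated for families of subsets of $M$, which forces the preliminary projection step to handle the case $n>1$.
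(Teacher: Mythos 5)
Your proof is correct. The paper's own argument is shorter and stays in dimension $n$ throughout: it applies Lemma \ref{lem:beg2} together with compactness to produce, directly, a single $\cal L$-definable family $(f_t)$ with $X_t\subseteq f_t(P^k)$, covers $Y$ by $g(P^l)$ via Lemma \ref{lem:beg2} once more, and then observes that $\bigcup_t X_t\subseteq h(P^{k+l})$ where $h(x,y)=f_{g(y)}(x)$ — so the whole union is exhibited as a single $P$-bound set in one stroke, and no reduction to $n=1$ and no appeal to finite unions of $P$-bound sets being $P$-bound is needed. Your route substitutes Corollary \ref{uniform small} for that compactness step, which is a perfectly reasonable choice since that corollary already packages the uniformity; the price is the coordinate-projection reduction to $n=1$ (Corollary \ref{uniform small} is stated only for fibers in $M$) and the extra observation that a finite union of $P$-bound sets is $P$-bound. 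Both steps are easy, but you should be aware that the second one is usually recorded only after this corollary (Corollary \ref{cor-union} derives it from the present statement), so if you want to cite it rather than argue it directly you should note it only needs the trivial direction of Lemma \ref{lem:beg2} and density of $P$, not the present corollary, to avoid circularity.
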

\begin{proof} By Lemma \ref{lem:beg2} and compactness, there is a definable family of $\Cal L$-definable functions $(f_t)_{t \in Y}$ such that $X_t \sub f_t(P^k)$ for each $t\in Y$. Again by Lemma \ref{lem:beg2}, there is also an $\Cal L$-definable function $g : P^l \to M^m$ such that $Y \subseteq g(P^l)$. Set $h: M^{k+l} \to M^{n}$ be the function that takes $(x, y)$ to $f_{g(y)}(x)$. Then $\bigcup_{t \in Y} X_t \subseteq h(P^{k+l})$ and hence is $P$-bound.
\end{proof}

\begin{cor}\label{cor-union}
The union and cartesian product of  finitely many small sets is small.
\end{cor}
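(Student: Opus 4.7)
The plan is to reduce to the $P$-boundedness characterization of smallness from Lemma \ref{lem:beg2}, after which both statements become essentially formal. Throughout, assume $X_1, \dots, X_r \subseteq M^n$ are small sets, so that by Lemma \ref{lem:beg2} we may fix $\cal L$-definable functions $f_i : M^{k_i} \to M^n$ (definable over suitable parameters) with $X_i \subseteq f_i(P^{k_i})$ for $i=1,\dots, r$.

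For the \emph{cartesian product} $X_1 \times \dots \times X_r \subseteq M^{nr}$, I would just observe that
\[
X_1 \times \dots \times X_r \;\subseteq\; f_1(P^{k_1}) \times \dots \times f_r(P^{k_r}) \;=\; F(P^{k_1+\dots+k_r}),
\]
where $F := f_1 \times \dots \times f_r : M^{k_1+\dots+k_r} \to M^{nr}$ is the $\cal L$-definable function sending $(x_1, \dots, x_r)$ to $(f_1(x_1), \dots, f_r(x_r))$. Hence the product is $P$-bound, and small by Lemma \ref{lem:beg2}.

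For the \emph{finite union}, the cleanest route is to invoke Corollary \ref{smallunion} with the indexing set $Y = \{1, \dots, r\}$. To do so, I first need to check that any finite subset of $M^m$ is small; this is immediate since a singleton $\{a\} \subseteq M^m$ is $P$-bound (take a constant $\Cal L_a$-definable function into $M^m$), and then a finite union of singletons is visibly $P$-bound by combining finitely many constant functions, realizing $\{a_1, \dots, a_r\}$ as the image of $P$ under a piecewise constant $\Cal L$-definable function whose pieces cover $P$. Writing $\bigcup_i X_i = \bigcup_{t \in Y} X_t$ with $Y$ finite (hence small) and each $X_t$ small, Corollary \ref{smallunion} gives smallness of the union.

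The argument involves no real obstacle: the only mildly delicate point is that ``finite set is small'' uses parameters from the set itself, but since the definition of $P$-bound places no restriction on the parameters of the $\Cal L$-definable function, this is harmless. Alternatively, one can bypass Corollary \ref{smallunion} entirely and argue directly by combining the bounding maps $f_i$ via case distinction on an extra coordinate, but going through Corollary \ref{smallunion} is shorter and matches the style of the surrounding material.
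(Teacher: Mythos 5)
Your proof is correct and matches the paper's intended argument, which simply cites Lemma \ref{lem:beg2}, Corollary \ref{smallunion}, and the definitions. Handling the cartesian product by taking the product of the $P$-bounding maps, and the finite union by invoking Corollary \ref{smallunion} over a finite (hence small) indexing set, is exactly the route the authors have in mind.
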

\begin{proof}
Immediate from Lemma \ref{lem:beg2}, Corollary \ref{smallunion} and the definitions.
\end{proof}


In the case of dense pairs, we obtain the following interesting result.

\begin{cor}\label{small0} Assume $\widetilde{\cal M}=\la \cal M, P\ra$ is a dense pair. Then every $\emptyset$-definable small set $X\sub M^n$ is contained in $P^n$.
\end{cor}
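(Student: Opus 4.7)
The plan is to combine Lemma \ref{lem:beg2} with the defining property of a dense pair, namely that $P$ is the universe of an elementary $\cal L$-substructure of $\cal M$.

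First, I would apply Lemma \ref{lem:beg2} to $X$: since $X$ is $\emptyset$-definable and small, it is $P$-bound over $\emptyset$, so there exist $m\in \N$ and an $\cal L_\emptyset$-definable function $f:M^m\to M^n$ with $X\subseteq f(P^m)$. Note that no parameters from $A$ appear in $f$, which is the crucial feature distinguishing the $\emptyset$-definable case.

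Next, I would invoke the fact that in a dense pair $\la \cal M, P\ra \models T^d$, the predicate $P$ is (by definition of $T^d$) the underlying set $N$ of an elementary $\cal L$-substructure $\cal N$ of $\cal M$. In particular, $P$ is closed under $\cal L_\emptyset$-definable functions; equivalently, $\dcl(P)=P$ where $\dcl$ is taken in the $\cal L$-structure $\cal M$. Writing $f=(f_1,\dots,f_n)$, each $f_i$ is an $\cal L_\emptyset$-definable partial function $M^m \to M$, so $f_i(P^m) \subseteq P$. Therefore $f(P^m)\subseteq P^n$, and hence $X\subseteq P^n$, as required.

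There is no real obstacle here: the whole statement is essentially a one-line consequence of Lemma \ref{lem:beg2} together with the elementarity of $\cal N \preceq \cal M$ in a dense pair. The only subtlety worth highlighting is that the conclusion fails if parameters are allowed (as the example of $f(x)$ in the introduction shows: the singleton $\{g\}$ there is $\{x,y\}$-definable and small but need not lie in $P$), and this failure is reflected in our proof by the necessity that $f$ be $\cal L_\emptyset$-definable rather than merely $\cal L_A$-definable for some nontrivial $A$.
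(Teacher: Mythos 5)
Your proof is correct and follows essentially the same route as the paper: apply Lemma \ref{lem:beg2} to write $X\subseteq f(P^m)$ with $f$ an $\cal L_\emptyset$-definable function, then observe $f(P^m)\subseteq P^n$. The only difference is that the paper justifies this last inclusion by citing \cite[Lemma 3.1]{vdd-dense}, whereas you derive it directly from the $\dcl$-closedness of the elementary substructure $\cal N\preceq\cal M$, which is a perfectly valid substitute.
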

\begin{proof} By Corollary \ref{lem:beg2}, there is an $\cal L_\emptyset$-definable $f:M^m\to M^n$, such that $X\sub f(P^m)$. By \cite[Lemma 3.1]{vdd-dense}, $X\sub P^n$.
\end{proof}

\subsection{Definable functions outside small sets}\label{sec-functions-small}

 In this section we  analyze the behavior of definable functions outside small, or rather \emph{low}, sets. 
Note that Assumption (II) is not used in this section.


\begin{defn}
 We denote by $I_n(A)\subseteq M^n$ the set of all tuples $a=(a_1,\dots,a_n)\in M^n$ that are $\dcl$-independent over $P\cup A$.
\end{defn}

\begin{remark} (1)  Note that $I_n(A)$ is $\Cal L(P)_A$-type definable. Indeed, $a\in I_n(A)$ if and only if for all $0\le i<n$, $m, l\in \N$ and $\Cal L_A$-$(l+i)$-formula $\varphi(x, y)$, $a$ satisfies:
$$ \forall g\in P^l \, [ \text{if $\varphi(g, a_1, \dots, a_{i-1}, -)$  has $m$ realizations, then  $\models \neg\varphi(g, a_1, \dots, a_{i})$}].$$
(2) It is obvious that $I_n(A) = I_n(A\cup P)$ and $I_n(B) \supseteq I_n(A)$ for $B\subseteq A$.
\end{remark}

\begin{lem}\label{lem:plem1} Let $A\sub M$ that $A\setminus P$ is $\dcl$-independent over $P$, and let $\varphi(x,y,z)$ be an $\Cal L(P)_A$-formula. Then there are $\Cal L_A$-formulas $\psi_1(x,y,z),\dots,\psi_k(x,y,z)$ such that for all $a\in I_m(A)$ and $b\in P^n$ there is $i\in \{1,\dots,k\}$ with
\[
cl(\varphi(a,b,M^l)) = \psi_i(a,b,M^l).
\]
\end{lem}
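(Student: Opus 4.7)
The plan is to prove the lemma by combining a pointwise application of Assumption (III) with a compactness/saturation argument to extract a uniform finite list.

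First, I would fix an arbitrary $a\in I_m(A)$ and $b\in P^n$, and verify that the parameter set $B:=A\cup a\cup b$ falls under Assumption (III), i.e.\ that $B\setminus P$ is $\dcl$-independent over $P$. Since $a$ is $\dcl$-independent over $P\cup A$, no coordinate of $a$ lies in $P$; hence $B\setminus P=(A\setminus P)\cup a$. The hypothesis that $A\setminus P$ is $\dcl$-independent over $P$, combined with the fact that $a$ is $\dcl$-independent over $P\cup A\supseteq P\cup(A\setminus P)$, yields by transitivity of independence in the $\dcl$-pregeometry that $(A\setminus P)\cup a$ is $\dcl$-independent over $P$. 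The set $\varphi(a,b,M^l)$ is $B$-definable, so by Assumption (III) its topological closure is $\Cal L_B$-definable; equivalently, there is some $\Cal L_A$-formula $\psi_{a,b}(x,y,z)$ (depending on the pair) with $cl(\varphi(a,b,M^l))=\psi_{a,b}(a,b,M^l)$.

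Second, I would encode the situation inside $\widetilde{\Cal M}$. Using the standard $\varepsilon$-quantifier description of closure available in any expansion of an ordered group, I would write down an $\Cal L(P)_A$-formula $\theta(x,y,z)$ expressing ``$z\in cl(\varphi(x,y,M^l))$''. The task then becomes: produce finitely many $\Cal L_A$-formulas $\psi_1,\dots,\psi_k(x,y,z)$ such that for every $a\in I_m(A)$ and $b\in P^n$ some $\psi_i$ satisfies $\psi_i(a,b,M^l)=\theta(a,b,M^l)$.

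Third, suppose for contradiction that no such finite list exists. Consider the partial $\Cal L(P)_A$-type $\pi(x,y)$ consisting of the type-definable condition $x\in I_m(A)$, the formula $y\in P^n$, and one formula $\exists z\,[\theta(x,y,z)\leftrightarrow\neg\psi(x,y,z)]$ for each $\Cal L_A$-formula $\psi(x,y,z)$. Any finite subset of $\pi$ involves only finitely many $\psi_i$ and finitely many conjuncts of $I_m(A)(x)$; by the failure hypothesis that finite list $\psi_1,\dots,\psi_k$ is insufficient, so there exists $(a,b)$ with $a\in I_m(A)$, $b\in P^n$, and for each $i$ a witness $z_i$ in the symmetric difference $\psi_i(a,b,M^l)\triangle\theta(a,b,M^l)$. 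Hence $\pi$ is finitely satisfiable, and by sufficient saturation it is realized by some $(a^\ast,b^\ast)$; this realization contradicts the pointwise conclusion of the first paragraph applied to $(a^\ast,b^\ast)$.

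The main obstacle is bookkeeping in the first step: one must verify precisely the independence condition on $A\cup a\cup b$ (using $a\cap P=\emptyset$ and transitivity of $\dcl$-independence over $P$) in order to legitimately invoke Assumption (III); once the pointwise step is secured, the compactness/saturation conclusion is routine.
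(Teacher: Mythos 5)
Your proof is correct and follows essentially the same route as the paper's: verify pointwise that $(A\cup a\cup b)\setminus P$ is $\dcl$-independent over $P$ so that Assumption (III) applies, then use that $I_m(A)$ is $\Cal L(P)_A$-type-definable and $P$ is definable to extract a uniform finite list by compactness. You have simply written out in full the compactness step that the paper leaves implicit.
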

\begin{proof} Let $a=(a_1,\dots,a_m)\in I_m(A)$ and $b=(b_1,\dots,b_n)\in P^n$. It follows that
\[
\left(A \cup \{a_1,\dots,a_m\}\cup \{b_1,\dots,b_n\}\right)\setminus P
\]
 is $\dcl$-independent over $P$. Since $I_m(A)$ is $\Cal L(P)_A$-type definable and $P$ is definable, the statement of the lemma follows from compactness and Assumption (III).
\end{proof}

\begin{prop}\label{lem:pprop1} Let $F:M^m \times M^n \to M$ be $A$-definable. Then there are $\Cal L_{A\cup P}$-definable continuous functions $F_i: Z_i \subseteq M^m \times M^n \to M$, $i=1,\dots,k$, such that
for all $a\in I_m(A)$ and $b\in P^n$ there is $i\in \{1,\dots,k\}$ with $(a,b)\in Z_i$ and $$F(a,b) = F_i(a,b).$$
Moreover, if $A$ is $\dcl$-independent over $P$, then the $F_i$'s can be chosen to be $\cal L_A$-definable.
\end{prop}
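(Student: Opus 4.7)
The plan is to apply Lemma \ref{lem:plem1} to the formula $F(x,y)=z$ defining the graph of $F$, read off candidate $\Cal L$-definable partial functions from the resulting $\Cal L$-formulas, and finally refine via cell decomposition to achieve continuity.

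First I would arrange the parameters so that Lemma \ref{lem:plem1} is applicable. By Lemma \ref{defB}, there is a finite $B \subseteq A$ that is $\dcl$-independent over $P$, and a finite tuple $h \in P$, such that $F$ is $(B\cup h)$-definable. Since $B$ is $\dcl$-independent over $P$ we have $B \cap P = \emptyset$, so $(B \cup P)\setminus P = B$ is $\dcl$-independent over $P$; thus Lemma \ref{lem:plem1} applies to $\Cal L(P)$-formulas with parameters from $B \cup P$. Note also that $I_m(A) = I_m(A \cup P) \subseteq I_m(B \cup P)$.

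Next, I would apply Lemma \ref{lem:plem1} to the $\Cal L(P)_{B\cup P}$-formula $\varphi(x,y,z) :\equiv F(x,y)=z$. Since the set $\varphi(a,b,M) = \{F(a,b)\}$ is a singleton, it equals its topological closure, so Lemma \ref{lem:plem1} yields $\Cal L_{B\cup P}$-formulas $\psi_1,\dots,\psi_k$ such that for each $(a,b) \in I_m(B\cup P) \times P^n$ some $\psi_i$ satisfies $\psi_i(a,b,M) = \{F(a,b)\}$. For each $i$ I would then set
\[
Z_i^0 := \{(x,y) \in M^{m+n} : \psi_i(x,y,M) \text{ is a singleton}\},
\]
an $\Cal L_{A\cup P}$-definable set, and let $F_i^0 : Z_i^0 \to M$ send $(x,y)$ to the unique $z$ with $\psi_i(x,y,z)$. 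Then each $F_i^0$ is $\Cal L_{A\cup P}$-definable, and by construction every $(a,b) \in I_m(A) \times P^n$ lies in some $Z_i^0$ with $F_i^0(a,b) = F(a,b)$.

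Finally, to secure continuity, I would invoke o-minimal cell decomposition to partition each $Z_i^0$ into finitely many $\Cal L_{A\cup P}$-definable cells on which $F_i^0$ restricts to a continuous function; replacing the original collection by these restrictions produces the required $F_i : Z_i \to M$. For the moreover clause, observe that if $A$ itself is $\dcl$-independent over $P$ then $A \cap P = \emptyset$, so $A \setminus P = A$ satisfies the hypothesis of Lemma \ref{lem:plem1}; applying that lemma directly with parameter set $A$ makes the $\psi_i$'s (and hence the $F_i$'s) $\Cal L_A$-definable. The only subtlety is the parameter bookkeeping in the general case, which is handled cleanly by the Lemma \ref{defB} reduction to $B \cup P$; no conceptual obstacle beyond this is expected, since Lemma \ref{lem:plem1} does the essential work of passing from the $\Cal L(P)$-formula $F(x,y)=z$ to $\Cal L$-formulas on the independence locus.
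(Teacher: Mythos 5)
Your proposal is correct and follows essentially the same route as the paper's proof: reduce the parameters to $B\cup P$ via Lemma \ref{defB}, apply Lemma \ref{lem:plem1} to the graph formula, use that $\varphi(a,b,M)$ is a singleton to turn each $\psi_i$ into a partial function, and refine by cell decomposition for continuity. The only (immaterial) differences are that the paper defines each $F_i$ on all of $M^{m+n}$ with default value $0$ rather than on the singleton locus, and phrases the moreover clause as "$A\setminus P$ is $\dcl$-independent over $P$".
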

\begin{proof} By Lemma \ref{defB}, there is a finite $B\sub A$ such that $B$ is $\dcl$-independent over $P$ and $F$ is $B\cup P$-definable. So $(B\cup P)\sm P$ is also $\dcl$-independent over $P$. Let $\varphi(x,y,z)$ be an $\cal L(P)_{B\cup P}$-formula that defines the graph of $F$.  Hence by Lemma \ref{lem:plem1} there are $\Cal L_{B\cup P}$-formulas $\psi_1(x,y,z),\dots, \psi_k(x,y,z)$ such that for all $a\in I_m(B)$ and $b\in P^n$ there is $i\in \{1,\dots,k\}$ with
\[
cl({\varphi(a,b,M)}) = \psi_i(a,b,M).
\]
Since $\varphi(a,b,M)$ is a single point, we have $\varphi(a,b,M)=\psi_i(a,b,M)$. Define $F_i : M^{m+n} \to M$ such that $F_i(a,b)$ is the unique $c\in M$ with $\psi_i(a,b,c)$ if such $c$ exists, and $0$ otherwise. Since $\psi_i$ is an $\Cal L_{B\cup P}$-formula, $F_i$ is $\Cal L_{B\cup P}$-definable.
Thus we have $\Cal L_{A\cup P}$-definable functions $F_1,\dots,F_k: M^{m+n} \to M$, such that for all $a\in I_m(A)$ and $b\in P^n$ there is $i\in \{1,\dots,k\}$ such that $F(a,b) = F_i(a,b)$. Using cell decomposition in o-minimal structures, we can find an $\Cal L_{A\cup P}$-cell decomposition $C_1,\dots,C_l$ of $M^{m+n}$ such that each $F_i$ is continuous on each $C_j$. The conclusion of the lemma now holds with the $kl$-many functions of the form $F_i|_{C_j}$, where $i=1,\dots,k$ and $j=1,\dots,l$.

For the `moreover' clause, if $A\sm P$ is $\dcl$-independent over $P$,  we need not replace $A$ by $B\cup P$ in the above proof, which then shows that no further parameters from $P$ are needed.
\end{proof}

\begin{cor}\label{cor:pprop1} Let $F:P^n \to M$ be $A$-definable. Then there are $t\in \N$,  $\Cal L_{A}$-definable continuous functions $F_i: Z_i \subseteq M^{t+n} \to M$ with $Z_i$ a cell, $i=1,\dots,k$, and $u \in P^t$, such that for all $b\in P^n$ there is $i\in \{1,\dots,k\}$ with $(u,b)\in Z_i$ and $$F(b) = F_i(u,b).$$
\end{cor}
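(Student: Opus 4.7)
The plan is to mimic the proof of Proposition \ref{lem:pprop1}, but to absorb the necessary $P$-parameters into the extra argument $u\in P^t$ rather than into the parameter set used to define the $F_i$. Note that one cannot simply invoke the ``moreover'' clause of Proposition \ref{lem:pprop1}: Lemma \ref{defB} reduces $A$-definability of $F$ to $(B\cup P)$-definability for some $B\sub A$ that is $\dcl$-independent over $P$, but absorbing the needed $P$-parameters into the parameter set would destroy that independence. Instead, they will become components of $u$.

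First, I apply Lemma \ref{defB} to the graph of $F$ to obtain a finite $B\sub A$, $\dcl$-independent over $P$, with $F$ definable over $B\cup P$. Since definability is by a single formula mentioning only finitely many parameters, a compactness argument yields $u\in P^t$ and an $\cal L(P)_B$-formula $\theta(x,y,z)$ with $|x|=t$, $|y|=n$, $|z|=1$, such that for all $b\in P^n$ and $c\in M$,
$$F(b)=c \ \Longleftrightarrow \ \theta(u,b,c).$$

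Now I view $(x,y)$ as a single tuple of length $t+n$ and apply Lemma \ref{lem:plem1} with parameter set $B$, with $m=0$, and with the $b$-variable of the lemma being this combined tuple ranging over $P^{t+n}$. This produces $\cal L_B$-formulas $\psi_1,\ldots,\psi_k$ such that for every $(v,b')\in P^{t+n}$ some $\psi_i$ satisfies $\psi_i(v,b',M)=cl(\theta(v,b',M))$. Specializing to $v=u$ and $b'\in P^n$, the set $\theta(u,b',M)=\{F(b')\}$ is a singleton and therefore equals its own closure, so some $\psi_i(u,b',M)=\{F(b')\}$. Defining $F_i(v,b')$ to be the unique $c$ with $\psi_i(v,b',c)$ (and $0$ if no such unique $c$ exists) produces $\cal L_B$-definable, hence $\cal L_A$-definable, functions; a final o-minimal cell decomposition refines the domain of each $F_i$ into cells $Z_{ij}$ on which $F_i$ is continuous, yielding the required presentation after relabelling.

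The point worth checking carefully is the legitimacy of the above application of Lemma \ref{lem:plem1}: its hypothesis is that $B\setminus P$ is $\dcl$-independent over $P$, which holds by choice of $B$, and its conclusion only requires the $b$-tuple of the lemma to live in $P^{t+n}$, which our combined tuple $(u,b')$ does. The broader conceptual reason the whole strategy works is that Assumption (III) applies to the parameter set $B\cup u\cup b'$ precisely because $(B\cup u\cup b')\setminus P = B$ remains $\dcl$-independent over $P$ no matter how much we enlarge the $P$-part; this is exactly what allows us to package the awkward $P$-parameters into the $u$-variable without sacrificing $\cal L_A$-definability of the $F_i$.
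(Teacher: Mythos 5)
Your argument is correct, and reaches the same destination by a somewhat different path. The paper does not need (and does not use) the ``moreover'' clause of Proposition \ref{lem:pprop1}: it simply invokes the main statement (with $m=0$) to get continuous $\Cal L_{A\cup P}$-definable $H_i: Y_i\sub M^n\to M$ with $F(b)=H_i(b)$ for $b\in P^n$, then picks $u\in P^t$ so that each $H_i$ is $\Cal L_{Au}$-definable, and lifts each $H_i$ to an $\Cal L_A$-definable $F_i: Z_i\sub M^{t+n}\to M$ with $(Z_i)_u=Y_i$ and $F_i(u,b)=H_i(b)$, followed by a cell decomposition. You instead re-run the internals of Proposition \ref{lem:pprop1}: isolate the $\dcl$-independent $B\sub A$ via Lemma \ref{defB}, pull out the residual $P$-parameters as $u$, move them to the variable side, and then apply Lemma \ref{lem:plem1} with parameter set $B$ so that the resulting $\psi_i$, and hence the $F_i$, are $\Cal L_B$- (hence $\Cal L_A$-) definable from the outset, with no post-hoc replacement of parameters by variables. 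Both proofs are correct and of comparable length; the paper's is slightly slicker because it reuses Proposition \ref{lem:pprop1} as a black box, while yours is more self-contained and makes transparent why enlarging the $P$-part of the tuple leaves the independence hypothesis of Lemma \ref{lem:plem1} intact. One minor point: no compactness is actually needed to obtain $u$ and $\theta$ --- a single defining $\Cal L(P)$-formula over $B\cup P$ for the graph of $F$ already mentions only finitely many $P$-parameters, which you collect into $u$.
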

\begin{proof} By Proposition \ref{lem:pprop1} there are  $\Cal L_{A\cup P}$-definable continuous function $H_i: Y_i \subseteq M^{n} \to M$, $i=1,\dots, k$,  such that for every $b\in P^n$ there is $i\in \{1,\dots,k\}$ with $b\in Y_i$ and $F(b) = H_i(b)$. Now take $u \in P^t$ such that each $H_i$ is $\Cal L_{Au}$-definable.  For  $i=1,\dots,k$, pick an $\Cal L_A$-definable function $F_i : Z_i \subseteq M^{t+n}\to M$ such that $(Z_i)_u= Y_i$ and $F_i(u,b)=H_i(b)$ for each $b\in  Y_i$. By applying cell decomposition to $M^{t+n}$, we may further assume that each $F_i$ is continuous and $Z_i$ is a cell.
\end{proof}

A slightly weaker version of Corollary \ref{cor:pprop1} is known for dense pairs \cite[Theorem 3(3)]{vdd-dense}.

\begin{defn}\label{def-low} We call $X\subseteq M^n$, $n>0$, \emph{low over $B$} if there is $i\in\{1,\dots,n\}$ and $\Cal L_B$-definable function $f: M^{n-1} \times M^l \to M$ such that
\[
X = \{ (a_1,\dots,a_n) \in M^n \ : \ \exists g \in P^l \ f(a_{-i},g) = a_i\},
\]
where $a_{-i}=(a_1,\dots,a_{i-1},a_{i+1},\dots,a_n)$.
\end{defn}

Note that if a set $X\subseteq M$ is low, then it is small and co-dense in $M$. Generalizations of this statement are obtained in Lemmas \ref{low1} and \ref{nonlow} below.

\begin{cor}\label{cor:corp1} Let $F:M^n \to M$ be $A$-definable. Then there are $k, m, t\in\N$ and
\begin{itemize}
\item sets $X_j\subseteq M^n$ low over $A$, $j=1,\dots,k$,
\item  $\Cal L_{A}$-definable continuous functions $F_i: Z_i \subseteq M^{t+n}  \to M$, $i=1,\dots,m$,
\item $u \in P^t$, 
\end{itemize}
such that  for every $a\in M^n \setminus \bigcup_{j=1}^{k} X_j$, there is $i\in \{1,\dots,m\}$ with $a \in Z_i$ and
$$F(a) = F_i(u,a).$$
\end{cor}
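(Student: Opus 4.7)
The plan is to apply Proposition \ref{lem:pprop1} to $F: M^n \to M$ viewed as a function of $m=n$ free $M$-variables and zero $P$-variables. That proposition yields $\Cal L_{A\cup P}$-definable continuous functions $G_1, \dots, G_k$ with domains $Y_j \subseteq M^n$ such that for every $a \in I_n(A)$ there is some $j$ with $a \in Y_j$ and $F(a) = G_j(a)$. Exactly as in the proof of Corollary \ref{cor:pprop1}, I would then collect all of the $P$-parameters used to define $G_1, \dots, G_k$ into a single tuple $u \in P^t$ and write $G_j(a) = F_j(u, a)$, where $F_j$ is an $\Cal L_A$-definable partial function on some $Z_j \subseteq M^{t+n}$ with $(Z_j)_u = Y_j$. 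A final application of cell decomposition in $\Cal M$ refines the $Z_j$'s into cells on which each $F_j$ is continuous.

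It remains to show that the exceptional set
\[
D = \{a \in M^n : \text{for every } j,\ (u,a) \notin Z_j \text{ or } F(a) \neq F_j(u, a)\}
\]
is contained in a finite union of sets low over $A$. By construction, $D \cap I_n(A) = \emptyset$. Unraveling the type-definable description of $I_n(A)$ given in the remark preceding Lemma \ref{lem:plem1}, the complement $M^n \setminus I_n(A)$ is the union, over all $i \in \{1, \dots, n\}$ and all $\Cal L_A$-definable partial functions $f: M^{n-1} \times M^l \to M$, of sets of the form
\[
X_{f,i} = \{(a_1, \dots, a_n) \in M^n : \exists g \in P^l,\ f(a_{-i}, g) = a_i\};
\]
concretely such $f$'s arise by selecting the $k$-th realization of an $\Cal L_A$-formula $\varphi(g, a_{-i}, -)$ conditional on it having a specified number of realizations, which is a definable operation in $\Cal M$. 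By Definition \ref{def-low}, each $X_{f,i}$ is low over $A$.

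Since $\widetilde{\Cal M}$ is sufficiently saturated, the set $D$ is $\Cal L(P)_{A\cup\{u\}}$-definable, and the collection $\{X_{f,i}\}$ is an $\Cal L(P)_A$-definable family whose union contains $D$; a standard compactness argument then yields finitely many $X_{f_1,i_1}, \dots, X_{f_k,i_k}$ whose union contains $D$. Setting these to be $X_1, \dots, X_k$ completes the proof, since for every $a \in M^n \setminus \bigcup_j X_j$ we have $a \notin D$, hence some $i$ with $(u, a) \in Z_i$ and $F(a) = F_i(u, a)$. The subtlest point is verifying that the compactness step delivers a cover by sets low \emph{over $A$} rather than over $A \cup P$; this is fine because the individual $X_{f,i}$'s are $\Cal L(P)_A$-definable even though $D$ itself involves the auxiliary parameter $u$.
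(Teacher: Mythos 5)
Your proposal is correct and follows essentially the same route as the paper: apply Proposition \ref{lem:pprop1} (with no $P$-variables), observe that $M^n\setminus I_n(A)$ is exactly the union of the sets low over $A$, use saturation/compactness to extract a finite subcover of the exceptional set, and absorb the $P$-parameters into a tuple $u\in P^t$ as in Corollary \ref{cor:pprop1}. The only difference is the order of the last two steps, which is immaterial.
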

\begin{proof} Note that $a \notin I_n(A)$ if and only if there are $i\in \{1,\dots, n\}$, an $\Cal L_A$-definable function $f: M^{l+(n-1)} \to M$ and $g \in P^l$,  such that $f(a_{-i},g) = a_i$. Hence $a\notin I_n(A)$ if and only if there is $X$ low over $A$ such that $a \in X$. By compactness and Proposition \ref{lem:pprop1}, there are $k,m\in \N$ and
\begin{itemize}
\item  $\Cal L_{A\cup P}$-definable functions $H_i: Y_i \subseteq M^{n} \to M$, $i=1,\dots,m$
\item sets $X_j \in M^n$ low over $A$, $j=1,\dots,k$,
\end{itemize}
such that for every $a\in M^n\sm (\bigcup_{j=1}^k X_j)$ there is $i\in \{1,\dots,m\}$ with $a\in Y_i$ and $F(a) = H_i(a)$. Now take $u \in P^t$ such that each $H_i$ is $\Cal L_{Au}$-definable, and continue as in the proof of Corollary \ref{cor:pprop1}.
\end{proof}

\begin{remark} It is natural to ask whether the extra parameter $u\in P^t$ in Corollary \ref{cor:corp1} can be chosen to be in $\dcl_{\cal L(P)}(A)$. When the answer to Question \ref{question-parameters} is positive, then the same proof gives that $u$ is $\cal L_{A\cup H}$-definable, for some  $H\sub P\cap \dcl_{\cal L(P)}(A)$. So in particular, this holds when $\tilde{T}=T^{\textrm{indep}}$ (independent set). When $\tilde{T}=T^d$ (dense pairs), we do not know the answer.
\end{remark}

\begin{remark}\label{Aind}
If $A\sm P$ is $\dcl$-independent over $P$, then using the `moreover' clause of Proposition \ref{lem:pprop1}, we can see that in Corollaries  \ref{cor:pprop1} and \ref{cor:corp1}, we obtain $t=0$ and $u$ be the empty tuple.
\end{remark}

Since low subsets of $M$ are small, we can easily get the following corollary of \ref{cor:corp1}. This corollary is  already known for $\widetilde T=T^d$ by \cite{vdd-dense}, with the aforementioned  control in parameters also established in \cite[Lemma 5]{tyne}. We omit its proof since it is in fact a special case of Theorem \ref{stII}(2) below. 


\begin{cor}\label{unary} Let $f:M\to M$ be $A$-definable. Then $f$ agrees off some small set with an $\cal L_{A\cup P}$-definable function $F:M\to M$.
\end{cor}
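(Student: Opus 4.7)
The plan is to apply Corollary~\ref{cor:corp1} to $f$ and then, using the small/co-small dichotomy from Lemma~\ref{lem:beg1}, assemble $F$ piecewise on finitely many $\Cal L_{A\cup P}$-definable subintervals of $M$ by picking, on each, one of finitely many $\Cal L_{A\cup P}$-definable continuous candidate branches for $f$.

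First I invoke Corollary~\ref{cor:corp1} with $n=1$ to obtain low sets $X_1,\dots,X_k\subseteq M$, $\Cal L_A$-definable continuous functions $F_i:Z_i\subseteq M^{t+1}\to M$ ($i=1,\dots,m$), and $u\in P^t$, such that for every $a\in M\setminus S$, where $S:=\bigcup_j X_j$, some $F_i$ satisfies $(u,a)\in Z_i$ and $f(a)=F_i(u,a)$. Setting $h_i(a):=F_i(u,a)$, I get partial $\Cal L_{Au}$-definable continuous functions $h_i$ on $M$; since low subsets of $M$ are small (remark after Definition~\ref{def-low}) and finite unions of small sets are small by Corollary~\ref{cor-union}, the exceptional set $S$ is small. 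Using o-minimality of $\Cal L$, I partition $M$ into finitely many $\Cal L_{Au}$-definable open intervals and points so that on each interval $J$ a fixed subcollection $\{h_i:i\in I_J\}$ is defined throughout $J$ and its members are pairwise distinct on $J$ (after merging branches that coincide on $J$). The sets $D_i:=\{a\in J:f(a)=h_i(a)\}$ for $i\in I_J$ are then pairwise disjoint (up to finite overlap) and their union covers $J\setminus S$.

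Applying Lemma~\ref{lem:beg1} to each $D_i$ individually and taking a common refinement, I further subdivide $J$ into subintervals $J_1,\dots,J_N$ with $Au$-definable endpoints such that on each $J_l$ every $D_i\cap J_l$ is either small or co-small in $J_l$. The crucial combinatorial step, which I regard as the main obstacle, is to show that on each $J_l$ exactly one $D_{i_l}$ is co-small in $J_l$: existence holds because $J_l\setminus S\subseteq\bigsqcup_{i\in I_J}(D_i\cap J_l)$ is co-small in $J_l$ while a finite union of small sets is small by Corollary~\ref{cor-union}, ruling out that every $D_i\cap J_l$ is small; uniqueness holds because two co-small subsets of $J_l$ have co-small (hence infinite) intersection, contradicting the essentially disjoint arrangement of the $D_i$.

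Having made these choices, define $F:M\to M$ by $F(a):=h_{i_l}(a)$ for $a\in J_l$ and $F(a):=0$ off $\bigcup_l J_l$. By Lemma~\ref{defB} each endpoint of each $J_l$ is an $\Cal L_{A\cup P}$-definable singleton, and each $h_{i_l}$ is $\Cal L_{Au}$-definable with $u\in P^t$; hence, although the index assignment $l\mapsto i_l$ is driven by $f$ and is not itself $\Cal L$-definable, the resulting finite piecewise combination $F$ is $\Cal L_{A\cup P}$-definable. On each $J_l$, $f$ and $F$ agree on $D_{i_l}$ and differ only on the small set $J_l\setminus D_{i_l}$, so $\{a\in M: F(a)\ne f(a)\}$ is contained in $S$ together with finitely many small sets, hence is small by Corollary~\ref{cor-union}.
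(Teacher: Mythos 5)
Your proof is correct. Note that the paper does not actually write out a proof of Corollary \ref{unary}: it remarks that the statement follows ``easily'' from Corollary \ref{cor:corp1} and then formally obtains it as the special case $n=1$ of Theorem \ref{stII}(2), whose proof depends on the full Structure Theorem (via Theorem \ref{stII}(1) and Lemma \ref{closure of cones}). What you have written is, in effect, the direct elementary argument the paper alludes to but omits: you use only Section 3 material --- Corollary \ref{cor:corp1}, the small/co-small dichotomy coming from Lemma \ref{lem:beg1} and Remark \ref{definability}, Corollary \ref{cor-union}, and Lemma \ref{defB} to see that the $Au$-definable endpoints of your subintervals are $\cal L_{A\cup P}$-definable. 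Both routes implement the same underlying idea: on each interval of a suitable decomposition, some $\cal L_{A\cup P}$-definable branch is followed by $f$ co-smally, and the finite branch-selection data, although extracted from $f$, does not affect $\cal L$-definability of the glued function $F$. Your version buys a self-contained proof at this point of the paper, at the cost of not generalizing to $n>1$, where ``co-small in an interval'' must be replaced by the supercone and large-dimension apparatus (for $n=1$ a supercone is exactly a co-small subset of an interval, and your uniqueness step is the $n=1$ shadow of the disjointness of interiors of closures provided by Lemma \ref{closure of cones}). One small point of care: ``pairwise distinct on $J$'' should be arranged in the strong sense that $h_i(a)\neq h_j(a)$ for every $a\in J$ and $i\neq j$; by o-minimality one refines $J$ so that any two branches either coincide identically or disagree at every point, and merges the coincident ones. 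With that refinement the sets $D_i$ are genuinely disjoint (not merely ``up to finite overlap''), which is what your uniqueness argument needs --- although for constructing $F$ only the existence of one co-small branch on each $J_l$ is actually required.
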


The Structure Theorem below is intended, among others,  to generalize this corollary to arbitrary definable maps $f:X\sub M^n\to M$ (see Theorem \ref{stII}(2)). For the moment, using compactness, we directly get the following uniform version of Corollary \ref{cor:corp1}.

\begin{cor}\label{philipp2uniform} Let $f:Z\times M^n \subseteq M^{m+n}\to M$ be an $A$-definable map.  Then there are $p,t \in \N$ and for each $i=1,\dots,p$ there are
\begin{itemize}
\item an $A$-definable family $\{X^i_{z}\}_{z\in Z}$ of low subsets of $M^n$,
\item an $\Cal L_A$-definable continuous function $f_i : Z_i \subseteq M^m \times P^t \times M^n \to M$,
\end{itemize}
such that for all $z\in Z$ there is $u \in P^t$ such that for all $a\in M^n \sm \bigcup_{i} X^i_{z}$, there is $i\in \{1,\dots, p\}$ with
\[
f(z, a) = f_i(z,u,a).
\]
\end{cor}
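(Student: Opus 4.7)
The plan is to derive this uniform statement from Corollary \ref{cor:corp1} by a saturation argument.

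For each $z_0\in Z$, Corollary \ref{cor:corp1} applied to the $A\cup\{z_0\}$-definable function $a\mapsto f(z_0,a)$ yields a finite packet of data---numbers $p,t\in\N$, finitely many subsets of $M^n$ low over $A\cup\{z_0\}$, and finitely many $\cal L_{A\cup\{z_0\}}$-definable continuous functions---that witnesses the desired conclusion at $z=z_0$ with some $u\in P^t$. Since $z_0$ enters only as a parameter, each packet is in fact specified by a finite tuple of $\cal L_A$-formulas. So any such packet (henceforth a \emph{scheme} $S$) determines an $A$-definable family $\{X^{i,S}_z\}_{z\in Z}$ of low sets, $\cal L_A$-definable continuous functions $f^S_i:Z^S_i\subseteq M^m\times P^{t_S}\times M^n\to M$, and the $A$-definable subset
\[
Z_S\ :=\ \{z\in Z : \exists u\in P^{t_S}\ \forall a\in M^n\setminus{\textstyle\bigcup_i}\, X^{i,S}_z\ \exists i\ f(z,a)=f^S_i(z,u,a)\}.
\]
There are only $|A|+\aleph_0$ such schemes, and Corollary \ref{cor:corp1} says exactly that $Z=\bigcup_S Z_S$.

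Next I would show that the family $\{Z_S\}_S$ is directed under inclusion. Given schemes $S_1$ and $S_2$, form $S^*$ by concatenating the low-set families (setting $p_{S^*}=p_{S_1}+p_{S_2}$) and the parameter spaces ($t_{S^*}=t_{S_1}+t_{S_2}$), and reinterpreting each original $f^{S_j}_i(z,u_j,a)$ as an $\cal L_A$-definable continuous function of $(z,(u_1,u_2),a)$ that ignores the coordinates from $P^{t_{3-j}}$. For $z\in Z_{S_j}$ with witness $u_j$, the tuple $(u_1,u_2)$ with $u_{3-j}$ chosen to be any fixed element of $P^{t_{3-j}}$ (such elements exist since $P\neq\emptyset$) then witnesses $z\in Z_{S^*}$: the enlarged exceptional set $\bigcup_i X^{i,S^*}_z$ only shrinks the set of $a$'s we must control, so the condition for $z\in Z_{S_j}$ still applies. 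Hence $Z_{S^*}\supseteq Z_{S_1}\cup Z_{S_2}$. This alignment of parameter tuples across schemes is the main bookkeeping step of the proof.

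Finally, I would invoke saturation of $\widetilde{\cal M}$ (sufficient to realize types over $|A|+\aleph_0$ parameters). If no single scheme $S^*$ had $Z_{S^*}=Z$, then by directedness no finite union $Z_{S_1}\cup\cdots\cup Z_{S_N}$ would cover $Z$ either, so the partial $\cal L(P)_A$-type $\{z\in Z\}\cup\{z\notin Z_S : S\text{ a scheme}\}$ would be finitely satisfiable, hence realized by some $z^*\in Z$. But Corollary \ref{cor:corp1} applied to $f(z^*,-)$ produces a scheme $S$ with $z^*\in Z_S$, a contradiction. Thus some $S^*$ satisfies $Z_{S^*}=Z$, giving the required uniform decomposition once one pads to a common index $p$ for both the low-set family and the function family.
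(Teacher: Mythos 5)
Your argument is a correct and careful elaboration of exactly what the paper asserts (namely, that the statement ``follows easily from compactness and Corollary~\ref{cor:corp1}''): parametrize the output of Corollary~\ref{cor:corp1} by $\cal L_A$-schemes, observe the associated $A$-definable sets $Z_S$ are directed under concatenation with padding, and use saturation to conclude a single scheme covers $Z$. Same approach as the paper, just written out in full.
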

\begin{proof} The corollary follows easily from compactness and Corollary \ref{cor:corp1}.
\end{proof}

\section{Cones and large dimension}\label{sec-cones}

In this section, we introduce and analyze the two main objects of the paper, cones and large dimension.

\subsection{Cones}\label{subsec-cones} As mentioned in the introduction, the notion of a cone is based on that of a supercone, which in its turn generalizes the notion of being co-small in an interval. Both notions, supercones and cones, are unions of specific families of sets, which not only are definable, but they are so in a very uniform way. The definitions appear to be quite technical in the beginning, but as it turns out they are in fact optimal in several ways (see Section \ref{sec-optimality}, Question \ref{productcones} and \cite{el-opt}).

\begin{defn}[Supercones]\label{def-supercone}
A \emph{supercone} $J\sub M^k$, $k\ge 0$, and its \emph{shell} $sh(J)$ are defined recursively as follows:
\begin{itemize}
\item $M^{0}=\{0\}$ is a supercone, and $sh(M^{0})=M^{0}$.

\item A definable set $J\sub M^{n+1}$ is a supercone if $\pi(J)\sub M^n$ is a supercone and there are  $\cal L$-definable continuous maps $h_1, h_2: sh(\pi(J))\to M\cup \{\pm\infty\}$ with $h_1<h_2$, such that for every $a\in \pi(J)$, $J_a$ is contained in $(h_1(a), h_2(a))$ and it is co-small in it. We let $sh(J)=(h_1, h_2)_{sh(\pi(J))}$.
\end{itemize}
Abusing terminology, we say that a supercone $J$ is \emph{$A$-definable} if $J$ is an $A$-definable set and its closure is $\Cal L_A$-definable.
\end{defn}

Note that $sh(J)$ is the unique open cell in $M^k$ such that $cl(sh(J))=cl(J)$. That is, $sh(J)$ is the interior of $cl(J)$. Moreover, for every $a\in \pi(J)$, $J_a$ is contained in $U_a$ and it is co-small in it.


We remind the reader that in our notation we identify a family $\cal J=\{J_g\}_{g\in S}$ with $\bigcup_{g\in S}\{g\}\times J_g$. In particular, $cl(\cal J)$ and $\pi_n(\cal J)$ denote the closure and a projection of that set, respectively.

\begin{defn}[Uniform families of supercones]\label{def-uniform}
Let $\Cal J = \bigcup_{g\in S} \{g\}\times J_g\sub M^{m+k}$ be a definable family of supercones. We call $\Cal J$ \emph{uniform} if there is a cell $V\sub M^{m+k}$ containing $\cal J$, such that for every $g\in S$ and  $0<j\le k$,
$$cl(\pi_{m+j}(\cal J)_g)=cl(\pi_{m+j}(V)_g).$$
We call such a $V$ a \emph{shell} for $\cal J$. Abusing terminology, we call a uniform family \emph{$A$-definable}, if it is an $A$-definable  family of sets and  has an $\cal L_A$-definable shell.
\end{defn}

A shell for $\cal J$ need not be unique. It is, however, canonical in the sense of Lemma \ref{shell-canonical} below. Note also that if $\cal J$ is uniform, then so is each projection $\pi_{m+j}(\cal J)$. 








\begin{defn}[Cones]\label{def-cone}
A set $C\sub M^n$ is a \emph{$k$-cone}, $k\ge 0$,  if there are a definable small $S\sub M^m$, a uniform family $\cal J=\{J_g\}_{g\in S}$ of supercones in $M^k$, and an $\cal L$-definable continuous function $h:V \subseteq M^{m+k}\to M^n$, where $V$ is a shell for $\cal J$, such that
\begin{enumerate}
\item $C=h(\cal J)$, and
\item for every $g\in S$, $h(g, -): V_g\sub M^k\to M^n$ is injective.
\end{enumerate}
A \emph{cone} is a $k$-cone for some $k$. 
Abusing terminology, we call a cone $h(\cal J)$ \emph{$A$-definable} if $h$ is $\Cal L_A$-definable and  $\Cal J$ is  $A$-definable.
\end{defn}


\begin{defn}[Fiber $\cal L$-definable maps]\label{defn:fiber}
Let $C=h(\cal J)\sub M^n$ be a $k$-cone with $\cal J\sub M^{m+k}$, and $f: D\to M$ a definable function with $C\sub D$. We say that $f$ is \emph{fiber $\cal L$-definable with respect to $C$} if there is an $\Cal L$-definable continuous function $F: V\sub M^{m+k}\to M$, where $V$ is a shell for $\cal J$, such that
\begin{itemize}
\item $(f\circ h)(x)=F(x)$, for all $x\in \cal J$.
\end{itemize}
We call $f$ \emph{fiber $\cal L_A$-definable with respect to $C$} if $F$ is $\cal L_A$-definable. 
\end{defn}

\begin{remark}\label{rem-supercone}$ $
\begin{enumerate}
   \item If $J\sub M^n$ is a supercone, then $\pi_m(J)$ is a supercone, and for every $t\in \pi_m(J)$, $J_t$ is a supercone with closure $cl(J)_t$.


\item Let $\{X_t\}_{t\in Z}$ be an $A$-definable family of subsets of $M^n$, $\{U_t\}_{t\in Z}$ an $\cal L_A$-definable family of subsets of $M^n$, and $\{C_t\}_{t\in Z}$  an $A$-definable family of cones in $M^n$.  Using Remark \ref{definability}(a), it is not hard to see that the sets
    \begin{itemize}
\item $\{ t \in Z \ : \ X_t \hbox{ is a supercone with closure $cl(U_t)$}\}$
\item $\{ t \in Z \ : X_t \hbox{ is a cone}\}$
    \end{itemize}
    are both $A$-definable.

  \item The $0$-cones are exactly the small sets. Low subsets of $M^n$ (Definition \ref{def-low}) are $n-1$ cones, but not every $(n-1)$-cone is low.


  \item The terminology of $f$ being fiber $\cal L_A$-definable with respect to $C=h(\cal J)$ is justified by the fact that, in that case, for every $g\in \pi(\cal J)$, $f$ agrees on $h(g, J_g)$ with an $\cal L_{Ag}$-definable map; namely $F\circ h(g, -)^{-1}$. But we require further that the family of  these $\cal L_{Ag}$-definable maps is actually $\cal L_A$-definable and continuous. We illustrate this last point with Example \ref{exa-fiber} below. The same example also shows that the notion of being fiber $\cal L$-definable depends on $h$ and $\cal J$.

\item
It is easy to see that if $C=h(\Cal J)$ is an $A$-definable $k$-cone and $f: C\to M$ fiber $\Cal L_A$-definable with respect to $C$, then the graph of $f$ is an $A$-definable $k$-cone. We will not make use of this fact.

\item The closure of an $A$-definable cone $h(\cal J)$ is $\cal L_A$-definable. Indeed, if $h:V\sub M^{m+k}\to M^n$ is as in Definition \ref{def-cone}, then it is easy to check that $cl(h(\cal J))=cl(h(cl(\cal J)\cap V))$.

\item We may replace $S$ by a definable subset of $P^l$ in the definition of a cone $C$. Indeed, let $h:V\sub M^{m+k}\to M^n$ be as in that definition. Since
    $S$ is $P$-bound, there is an $\cal L$-definable $f:M^l\to M^m$ with $f(P^l)\supseteq S$. By partitioning $C$ into finitely many cones, we may assume that for some cell $Z\sub M^l$, $f:Z\to \pi(V)$ is  continuous and $S\sub f(Z\cap P^l)$. 
    So we may  replace $S$ by $S':=f^{-1}(S)\cap Z\sub P^l$, and $h$ by $H: \bigcup_{g\in Z} \{g\}\times V_{f(g)}\to M^n$ with $H(x, y)=h(f(x), y)$. We decided, however, to keep the current definition because we can then adopt it in similar situations (such as Theorem \ref{strongstrthm} below). See also Remark \ref{SG}.
\end{enumerate}
\end{remark}
\begin{example}\label{exa-fiber}
Consider a dense pair $\la \cal M, P\ra$ of real closed fields and let $S=P+aP$ for some $a\not\in P$. The following map is taken from \cite{vdd-dense}.  Let $f:S\to M$ be the $a$-definable map  given by $f(x)=r$, where $x=r+sa$ for some (unique) $r,s\in P$.  Then, clearly, for every $x=r+sa\in S$, $f(x,-):M^0\to M$ agrees with the $\cal L_{r}$-definable map $H_r$  map given by $H_r(x, -)=r$. However, the family of maps $H_r$ is not $\cal L$-definable. Now re-write $S$ as the $a$-definable cone
$$S\times M= h(P^2)$$
where $h(p,q)=p+aq$, and let $F:M^2\to M$ be the projection onto the first coordinate. Then,  for every $(p, q)\in P^2$, we have
$$f(p+aq)=p=F(p,q),$$
witnessing that  $f$ is fiber $\cal L_a$-definable with respect to $h(P^2)$.
\end{example}

 We next observe several easy consequences of the definitions that will be used in the proof of the Structure Theorem. The first lemma  draws a  connection between cones and the $\dcl$-$\rank$ over tuples over $P$. Further results of this sort will be explored in Section \ref{sec-equaldim}. 

\begin{lem}\label{lem-rankcone} Let $a \in M^n$ and $A\sub M$. Then
$$\dcl\text{-}\rank(a/AP)=\min \{k\in \N: \text{$a$ is contained in an $A$-definable $k$-cone}\}.$$
\end{lem}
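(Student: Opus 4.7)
The plan is to prove the equality by establishing two inequalities.

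For the upper bound $\le$, I suppose $a$ lies in an $A$-definable $k$-cone $C=h(\cal J)$ with $\cal J=\bigcup_{g\in S}\{g\}\times J_g\sub M^{m+k}$, $S$ small and $A$-definable, and $h$ is $\cal L_A$-definable. Write $a=h(g,b)$ for some $g\in S$ and $b\in J_g\sub M^k$. Applying Lemma~\ref{lem:beg2} to $S$ yields an $\cal L_A$-definable $f$ with $S\sub f(P^l)$, so $g\in\dcl(AP)$, whence $a\in\dcl(APgb)=\dcl(APb)$. Since $b\in M^k$, the standard pregeometric estimate $\dcl\text{-}\rank(a/AP)\le\dcl\text{-}\rank(b/AP)\le k$ follows.

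For the lower bound $\ge$, set $k:=\dcl\text{-}\rank(a/AP)$. By the exchange property of the $\dcl$-pregeometry I select a subtuple $b=(a_{i_1},\dots,a_{i_k})$ of $a$ that is $\dcl$-independent over $AP$, so every coordinate of $a$ lies in $\dcl(APb)$. This produces an $\cal L_A$-definable function $F:M^{k+m}\to M^n$ and a tuple $g\in P^m$ with $a=F(b,g)$. Moreover, since $k$ of the component functions of $F(y,z)$ can be taken to be the projections $y\mapsto y_j$, the map $y\mapsto F(y,z)$ is injective for every $z\in M^m$, which will give property~(2) of Definition~\ref{def-cone} for free.

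I would then apply $\cal L_A$-cell decomposition with coordinate order $(z_1,\dots,z_m,y_1,\dots,y_k)$ to obtain a cell $W\sub M^{m+k}$ containing $(g,b)$ on which $F'(z,y):=F(y,z)$ is continuous. Because $b$ is $\dcl$-independent over $Ag$, it lies in the interior of the $\cal L_{Ag}$-definable fiber $W_g\sub M^k$, which is therefore a $k$-dimensional open cell; by the structure of cells in this coordinate order, the ``openness in each $y_j$'' is inherited by every fiber, so $W_{g'}$ is a $k$-dimensional open cell for every $g'\in\pi_m(W)$, and hence a supercone by induction from Definition~\ref{def-supercone}. Setting $S:=P^m\cap\pi_m(W)$, $J_{g'}:=W_{g'}$ for $g'\in S$, $V:=W$, $\cal J:=\bigcup_{g'\in S}\{g'\}\times J_{g'}$ and $h:=F'\!\res V$, the set $V$ serves as an $\cal L_A$-definable shell for the uniform family $\cal J$ (since $J_{g'}=V_{g'}$), $S$ is small and $A$-definable, $h$ is $\cal L_A$-definable continuous with each $h(g',-)$ injective, and $a=h(g,b)\in h(\cal J)$; thus $C:=h(\cal J)$ is an $A$-definable $k$-cone containing $a$.

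The main technical obstacle will be verifying that \emph{every} fiber $W_{g'}$, not just $W_g$, is a $k$-dimensional open cell, so that $\{W_{g'}\}_{g'\in S}$ really consists of supercones with a common shell. I expect this to follow from the chosen coordinate ordering of the cell decomposition together with the $\dcl$-genericity of $b$ over $Ag$: independence forces $W_g$ to be open, and the rigid cylindrical shape of cells over their base then propagates this openness to every fiber above $\pi_m(W)$.
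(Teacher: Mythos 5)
Your proof is correct and follows essentially the same route as the paper's: the upper bound via $P$-boundedness of $S$ (Lemma~\ref{lem:beg2}), and the lower bound by writing $a$ as an $\mathcal L_A$-definable image of a $\dcl$-independent subtuple together with a tuple from $P$, then cell-decomposing so that the fibers over the base are open cells (hence supercones) and injectivity comes from the projection components. The "technical obstacle" you flag is not one: a cell in the coordinate order $(z,y)$ has fibers over its base of uniform type, so openness of the single fiber $W_g$ (forced by $\dcl\text{-}\rank(b/Ag)=k$) automatically holds for all fibers $W_{g'}$, exactly as the paper's proof asserts.
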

\begin{proof} ($\le$). This follows easily from the definition of a $k$-cone.

\noindent ($\ge$). Let $a=(a_1,\dots,a_n)$ and set $k=\dcl$-$\rank(a/AP)$. We will find an $A$-definable $k$-cone that contains $a$. Without loss of generality, we may assume that \linebreak$\dcl$-$\rank((a_1,\dots, a_k)/AP) = k.$ Hence there are an $\Cal L_A$-definable $Z \subseteq M^{l+n}$ and $s \in P^l$ such that
$(s,a) \in Z$ and $\dim Z_s = k$. By cell decomposition in o-minimal structures, there are an $\Cal L_A$-definable cell $X\subseteq M^{l+k}$ and a continuous $\Cal L_A$-definable function $h : X \to M^{n}$ such that
\begin{itemize}
\item $\{(x, h(x,y)): (x,y)\in X\}\subseteq Z$
\item $(s,a_1,\dots,a_k)\in X$ and $h(s,a_1,\dots,a_k)=a$,
\item $X_y$ is an open cell and $h(y,-)$ is injective for each $y\in \pi_l(X)$.
\end{itemize}
In particular, $(X_y)_{y\in\pi_l(X)}$ is a uniform family of supercones with closure $cl(X)$. Thus
\[
h\left(\bigcup_{g \in P^l\cap \pi_l(X)}\{ g \} \times X_g\right)
\]
is a $k$-cone containing $a$.
\end{proof}




\begin{lem}\label{lem:2n0} Let $C$ be an $A$-definable $0$-cone in $M^n$ and $f: C\to M$ be $A$-definable. Then there is a finite collection $\Cal C$ of $A$-definable $0$-cones whose union is $C$ and such that $f$ is fiber $\Cal L$-definable with respect to each cone in $\Cal C$.
\end{lem}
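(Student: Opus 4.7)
The plan is to massage the presentation of the $0$-cone $C=h(\mathcal J)$ so that the small indexing set lies inside a Cartesian power of $P$, then apply Corollary \ref{cor:pprop1} to $f\circ h$, and finally absorb the auxiliary $P$-parameter produced by that corollary into the indexing set of a refined cone.

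\smallskip\noindent\textbf{Step 1 (reduce to $S\subseteq P^l$).} By Remark \ref{rem-supercone}(3) a $0$-cone is of the form $C=h(S)$ with $S\subseteq M^m$ small and $A$-definable and $h\colon V\to M^n$ an $\mathcal L_A$-continuous map on a cell $V\supseteq S$. Lemma \ref{lem:beg2} gives an $\mathcal L_A$-definable $\phi\colon M^l\to M^m$ with $S\subseteq\phi(P^l)$. Setting $S_1=\{w\in P^l:\phi(w)\in S\}$ and $h_1=h\circ\phi$, cell decomposition of $M^l$ partitions $S_1$ into finitely many pieces on each of which $h_1$ is $\mathcal L_A$-continuous. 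Splitting $C$ into the corresponding finite union of $0$-cones, we may assume $S\subseteq P^l$.

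\smallskip\noindent\textbf{Step 2 (apply Corollary \ref{cor:pprop1}).} Define $\tilde F\colon P^l\to M$ by $\tilde F(b)=f(h(b))$ for $b\in S$ and $\tilde F(b)=0$ otherwise; this is $A$-definable. By Corollary \ref{cor:pprop1} there exist $t\in\mathbb N$, $\mathcal L_A$-definable continuous maps $F_i\colon Z_i\subseteq M^{t+l}\to M$ with each $Z_i$ a cell, and a tuple $u\in P^t$, such that for every $b\in P^l$ there is $i$ with $(u,b)\in Z_i$ and $\tilde F(b)=F_i(u,b)$.

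\smallskip\noindent\textbf{Step 3 (absorb the parameter $u$).} The main obstacle is that $u$ is not an $A$-definable parameter, so $F_i(u,-)$ cannot be used directly as a fiber-$\mathcal L_A$ witness. We remedy this by enlarging the index set: put $S^*=P^t\times S\subseteq M^{t+l}$, which is $A$-definable and small by Corollary \ref{cor-union}, and let $h^*\colon M^t\times V\to M^n$ be the $\mathcal L_\emptyset$-continuous map $h^*(w,b)=h(b)$. Then $h^*(S^*)=h(S)=C$. For each $i$ set
\[
S_i^*\;:=\;\bigl\{(w,b)\in Z_i\cap(P^t\times S)\ :\ F_i(w,b)=f(h(b))\bigr\},
\]
which is $A$-definable and small, and let $V_i$ be a cell obtained by cell-decomposing $Z_i\cap(M^t\times V)$ and choosing a piece containing $S_i^*$ (further splitting $S_i^*$ into finitely many pieces if necessary so that each $V_i$ is a single cell serving as a shell). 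Then $C_i:=h^*(S_i^*)$ is an $A$-definable $0$-cone, and $F_i|_{V_i}$ is an $\mathcal L_A$-definable continuous witness that $f$ is fiber $\mathcal L_A$-definable with respect to $C_i$, since for $(w,b)\in S_i^*$ we have $(f\circ h^*)(w,b)=f(h(b))=F_i(w,b)$.

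\smallskip\noindent\textbf{Step 4 (covering).} It remains to verify $C=\bigcup_i C_i$. Inclusion $\supseteq$ is clear. For $\subseteq$, given $b\in S\subseteq P^l$, the conclusion of Step 2 yields some $i$ with $(u,b)\in Z_i$ and $F_i(u,b)=\tilde F(b)=f(h(b))$; hence $(u,b)\in S_i^*$ and $h(b)=h^*(u,b)\in C_i$. The collection $\mathcal C=\{C_i\}$ is the required finite cover.
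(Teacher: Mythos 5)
Your proof is correct and is essentially the same as the paper's: reduce to $S\subseteq P^l$ using $P$-boundedness (Lemma \ref{lem:beg2}), apply Corollary \ref{cor:pprop1} to $f\circ h$, and absorb the resulting tuple $u\in P^t$ into the cone's small index set via the map $(w,b)\mapsto h(b)$. You are somewhat more explicit than the paper about two technicalities — extending $f\circ h$ to a total function on $P^l$ before invoking Corollary \ref{cor:pprop1}, and refining $Z_i\cap(M^t\times V)$ into cells so the shell condition is met — but the underlying decomposition, the key lemma, and the covering argument are identical.
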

\begin{proof}
Let $S$ be $A$-definable small and $h: Z \sub M^m\to M^n$ be $\Cal L_A$-definable and continuous such that $h(S)=C$. We may assume that $S\sub P^l$, for some $l$. Indeed, since $S$ is $P$-bound over $A$, one can easily see that $S$ is a finite union of sets $\sigma(S')$, where $S'\sub P^l$ is $A$-definable and $\sigma: W\to M^m$ is an $\cal L_A$-definable map. So $C$ is a finite union of $0$-cones of the form $h \circ \sigma(S')$.

Now, by Corollary \ref{cor:pprop1} there are $k,t\in \N$ and, for $i=1,\dots,k$, an $\Cal L_{A}$-definable continuous function $F_i : Z_i  \subseteq P^t \times M^{l} \to M$ with $Z_i$ a cell,  and $s \in P^t$, such that for all $g \in S$ there is $i\in \{1,\dots,k\}$ with $(f\circ h)(g)=F_i(s,g)$. Now set
\[
S_i := \{ (s,g) \in P^t \times S \ : \ (s,g) \in Z_i, (f\circ h)(g)=F_i(s,g)\}.
\]
Set $\tau : M^t \times Z \to M^n$ to map $(x,y)$ to $h(y)$. Then $\tau(S_i)$ is an $A$-definable $0$-cone and $f$ is fiber $\Cal L_A$-definable with respect to $\tau(S_i)$. Moreover, $C = \bigcup_{i=1}^k \tau(S_i)$.
\end{proof}

Our next goal is to prove Lemmas \ref{cone+1} and \ref{largefibers} below, which will be used in the proof of the Structure Theorem (1)$_n$, Cases I and II, respectively. First, a lemma about shells.

\begin{lemma}\label{shell-canonical}
Let $\cal J\sub M^{m+k}$ be an $A$-definable uniform family of supercones with an  $\cal L_A$-definable shell $V$. Assume that $Z\sub M^{m+k}$ is an $\cal L_A$-definable cell containing $\cal J$. Then there are disjoint $A$-definable uniform families of supercones $\cal J_1, \dots, \cal J_n$ such that
$$\cal J=\cal J_1 \cup \dots\cup\cal J_n,$$
and each $\cal J_i$ has an $\cal L_A$-definable shell $V_i \sub V\cap Z$.
\end{lemma}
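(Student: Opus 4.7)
The plan is to apply the o-minimal cell decomposition theorem to the $\mathcal{L}_A$-definable set $V\cap Z\subseteq M^{m+k}$, compatibly with the projections $\pi_{m+j}$ for $0\le j\le k-1$ and with the cells $V$ and $Z$, obtaining finitely many $\mathcal{L}_A$-definable cells $W_1,\dots,W_p$ that partition $V\cap Z$ and on each of which the fiber-type over every intermediate projection is fixed.

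For each $i$ I would set $\mathcal{J}_i := \mathcal{J}\cap W_i$ and $V_i := W_i$, so that the $\mathcal{J}_i$'s are $A$-definable, pairwise disjoint, cover $\mathcal{J}$, and each $V_i$ sits inside $V\cap Z$. After discarding the empty pieces and re-indexing, the remaining task is to verify that each $\mathcal{J}_i$ is a uniform family of supercones with shell $W_i$. For a given $g\in S_i := \pi_m(\mathcal{J}_i)$, the shell condition on $V$ together with $J_g$ being $k$-dimensional forces $V_g$ to be an open cell in $M^k$, and $J_g\subseteq Z_g$ similarly forces $Z_g$ to be an open cell, so $V_g\cap Z_g$ is open in $M^k$. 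Assuming $(W_i)_g$ is itself an open cell in $M^k$, one checks that $(\mathcal{J}_i)_g = J_g\cap (W_i)_g$ is a supercone by induction on $k$: since $J_g$ is co-small in $V_g\supseteq (W_i)_g$, it is also co-small in $(W_i)_g$, and compatibility of the decomposition with $\pi_{m+k-1}$ lets one pass the same argument to the projection one coordinate lower. The closure equalities $cl(\pi_{m+j}(\mathcal{J}_i)_g) = cl(\pi_{m+j}(W_i)_g)$ for $0<j\le k$ then follow from the density of $\mathcal{J}_i$ in $W_i$ at every projection level, again a consequence of co-smallness.

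The main obstacle is that the cell decomposition can produce cells $W_i$ whose fibers $(W_i)_g$ are lower-dimensional internal walls in $M^k$ (graph-type at some coordinate in $\{m+1,\dots,m+k\}$), and such a wall could in principle meet $\mathcal{J}$. I plan to address this by refining the decomposition using the $\mathcal{L}_A$-definable structure of $V$ itself, whose fibers over $g\in S$ already coincide with the canonical open cell of $J_g$: inside the open set $V_g\cap Z_g$ every point of $J_g$ has an open neighborhood in $M^k$ that stays in $V\cap Z$, so any wall cell meeting $\mathcal{J}$ can be absorbed into a slightly enlarged band cell $V_i'\subseteq V\cap Z$ surrounding it. A greedy construction then restores the disjointness of the $\mathcal{J}_i$'s, and the resulting shells all have open $k$-cell fibers, completing the verification.
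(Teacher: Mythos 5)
Your strategy of decomposing the whole set $V\cap Z$ into cells compatibly with the intermediate projections runs into exactly the obstacle you name, and the proposed repair does not close it. If $W_j$ is a cell of the decomposition that is graph-type in one of the last $k$ coordinates, its fibers $(W_j)_g\subseteq M^k$ have dimension $<k$, and such a cell can genuinely meet $\mathcal{J}$: the complement of $J_g$ in $V_g$ is only small, so a lower-dimensional wall (say a point of $V_g=(0,1)$) need not lie in the removed set. The piece $\mathcal{J}\cap W_j$ then has fibers of dimension $<k$, so it cannot be a uniform family of supercones in $M^k$ no matter which shell you assign to it: the shell condition requires $cl((\mathcal{J}_i)_g)=cl((V_i)_g)$ with $(V_i)_g$ an open $k$-cell, which is impossible for a lower-dimensional fiber. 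Enlarging $W_j$ to a band $V_j'$ does not change the family $\mathcal{J}\cap W_j$, and redistributing the wall points to the neighbouring open cells destroys the supercone property there, since the resulting fiber is no longer contained in, and co-small in, an open interval (Definition \ref{def-supercone}). So the wall pieces can neither be kept nor absorbed, and the partition cannot be completed along these lines.

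The missing idea is that no decomposition in the fiber direction is needed at all. Since $V$ is a shell for $\mathcal{J}$, for every $g\in\pi_m(\mathcal{J})$ one has $cl(V_g)=cl(J_g)\subseteq cl(Z_g)$; as $V_g$ is an open cell and $Z_g$ is a cell, this forces $V_g\subseteq Z_g$, so $(V\cap Z)_g=V_g$ is already an open cell over every relevant base point. It therefore suffices to partition only the $\mathcal{L}_A$-definable set $D=\{g : (V\cap Z)_g \hbox{ is an open cell}\}\supseteq \pi_m(\mathcal{J})$ into cells $D_1,\dots,D_n$ and to set $\mathcal{J}_i=\mathcal{J}\cap(D_i\times M^k)$ with shell $(V\cap Z)\cap(D_i\times M^k)$; the closure identities at every projection level then follow at once because over $D_i\cap\pi_m(\mathcal{J})$ the fibers of this shell coincide with those of $V$. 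This is the route the paper takes, and it bypasses the wall problem entirely.
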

\begin{proof} First observe that for every $g\in \pi(\cal J)$, $V_g\sub Z_g$. Indeed,
$cl(V_g)=cl(\cal J_g)\sub cl(Z_g)$. Since $V_g$ is an open cell, and $Z_g$ is a cell too, this implies that $V_g\sub Z_g$.

Now let
$$D=\{g\in M^k : (V\cap Z)_g \text{ is an open cell}\}.$$
This set is $\cal L_A$-definable. Moreover, since for every $g\in \pi(\cal J)$, $(V\cap Z)_g=V_g$, we obtain  $\pi(\cal J)\sub D$. Let
$$D=D_1\cup\dots\cup D_n$$
be a partition of $D$ into $\cal L_A$-definable cells, and, for each $i$,
$$\cal J_i=\cal J \cap (D_i\times M^k)$$
and
$$Z_i=(V\cap Z)\cap (D_i\times M^k).$$
Since both $V, Z$ are cells and $D_i\sub D$, it is not hard to see that each $Z_i$ is a cell.
It clearly also contains $\cal J_i$. Finally, for every  $g\in D_i$ and  $0<j\le k$, we have
$$cl(\pi_{m+j}(\cal J_i)_g)= cl(\pi_{m+j}(V)_g)= cl(\pi_{m+j}(V\cap Z)_g)=  cl(\pi_{m+j}(Z_i)_g),$$
showing that $Z_i$ is a shell for $\cal J_i$.
\end{proof}



We now prove that a suitable family of large subsets of $M$ ranging over a $k$-cone gives rise to a $k+1$-cone.

\begin{lemma}\label{cone+1}
Let $C\subseteq M^n$ be an $A$-definable $k$-cone, let $\{X_a\}_{a\in C}$ be an $A$-definable family of subsets of $M$. Assume that $h_1, h_2: C\to M\cup\{\pm\infty\}$ are fiber $\cal L_A$-definable with respect to $C$, and such that for all $a\in C$, $X_a$ is contained in $(h_1(a), h_2(a))$ and it is co-small in it. Then $\bigcup_{a\in C} \{a\}\times X_a$ is a finite disjoint union of $A$-definable $k+1$-cones. 
\end{lemma}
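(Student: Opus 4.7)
The plan is to construct the desired $(k+1)$-cones by extending the data witnessing $C$ as a $k$-cone by one more coordinate. Write $C=h(\cal J)$ with $\cal J=\bigcup_{g\in S}\{g\}\times J_g\sub M^{m+k}$ a uniform $A$-definable family of supercones having $\Cal L_A$-definable shell $V\sub M^{m+k}$, $S\sub M^m$ a small $A$-definable set, and $h:V\to M^n$ an $\Cal L_A$-definable continuous map with each $h(g,-)$ injective. Using Definition \ref{defn:fiber}, and refining to a common shell via Lemma \ref{shell-canonical} if needed, fix $\Cal L_A$-definable continuous functions $H_1,H_2:V\to M\cup\{\pm\infty\}$ such that $h_i\circ h=H_i$ on $\cal J$. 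Set
\[
\cal J'=\bigcup_{(g,x)\in\cal J}\{(g,x)\}\times X_{h(g,x)},\qquad V'=\{(g,x,y)\in V\times M:H_1(g,x)<y<H_2(g,x)\},
\]
and define $h':V'\to M^{n+1}$ by $(g,x,y)\mapsto (h(g,x),y)$. Then $h'$ is $\Cal L_A$-definable and continuous, $h'(g,-,-)$ inherits injectivity from $h(g,-)$, and $h'(\cal J')=\bigcup_{a\in C}\{a\}\times X_a$ as sets.

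The core technical step is to show that $\cal J'$ is a uniform $A$-definable family of $(k+1)$-supercones with shell $V'$. Fiberwise, for each $g\in S$, $\pi(\cal J'_g)=J_g$ is a $k$-supercone by hypothesis, and each secondary fiber $(\cal J'_g)_x=X_{h(g,x)}$ is co-small in $(H_1(g,x),H_2(g,x))$ with bounding functions $H_i(g,-)$ $\Cal L$-definable and continuous in $x$, exhibiting $\cal J'_g$ as a $(k+1)$-supercone. For the uniformity condition $cl(\pi_{m+j}(\cal J')_g)=cl(\pi_{m+j}(V')_g)$: when $j\le k$, this is inherited from the uniformity of $\cal J$ with shell $V$, since $\pi_{m+j}$ drops the last coordinate; when $j=k+1$, the inclusion $\sub$ is immediate from $(\cal J'_g)_x\sub [H_1(g,x),H_2(g,x)]$, and the reverse inclusion uses that any small subset of an interval has dense complement in it (because the interval is large), so each $(\cal J'_g)_x$ is dense in $(H_1(g,x),H_2(g,x))$; combined with density of $J_g$ in $V_g$ (from uniformity of $\cal J$, as shell fibers are open cells) and continuity of $H_1,H_2$, a two-step approximation argument yields density of $\cal J'_g$ in $V'_g$.

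The main obstacle is that $V'$ is only guaranteed to be a cell when $H_1<H_2$ holds throughout the shell, whereas a priori this only holds on $\cal J$. To handle this I will let $V^*=\{(g,x)\in V:H_1(g,x)<H_2(g,x)\}$, an $\Cal L_A$-definable open subset of $V$ containing $\cal J$, and refine $\cal J$ along an $\Cal L_A$-cell decomposition of $\pi_m(V^*)$ (adapting the method of proof of Lemma \ref{shell-canonical}, with further refinement along fibers if needed) into finitely many $A$-definable uniform subfamilies of supercones whose shells lie inside $V^*$, on which $H_1<H_2$ holds strictly. The construction above applied to each refined piece then produces an $A$-definable $(k+1)$-cone, and the union of these cones equals $\bigcup_{a\in C}\{a\}\times X_a$. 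Disjointness of the resulting cones is a further subtlety (distinct pieces of $\cal J$ can have overlapping $h$-images), to be secured by exploiting that distinct fibers $\{a\}\times X_a$ are automatically disjoint in their first $n$ coordinates and refining the covering of $C$ accordingly.
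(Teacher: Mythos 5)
Your proposal is correct and follows essentially the same route as the paper's proof: lift the cone data by one coordinate, replace $h_1,h_2$ by $\cal L_A$-definable continuous $H_1,H_2$ on a common shell obtained via Lemma \ref{shell-canonical}, and verify that the lifted family $\cal J'$ is a uniform family of supercones with shell $V'$. The two technical points you flag --- ensuring $H_1<H_2$ on the whole shell rather than just on $\cal J$, and disjointness when the refinement produces several pieces --- are glossed over in the paper's presentation (which reduces to a single cone after the ``we may assume'' step), and your proposed fixes are sound.
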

\begin{proof}
Suppose that $C=h(\Cal J)$ for some uniform family $\cal J=\{J_g\}_{g\in S}$ of supercones in $M^k$ with shell $V$ and $\Cal L_{A}$-definable continuous $h: U \subseteq M^{m+k}\to M^n$, where $U$ is a cell containing $\cal J$. By the assumption on $h_1$ and $h_2$, there are $\Cal L_A$-definable continuous functions $H_1,H_2: Z \subseteq M^{m+k} \to M$, where $Z$ is a cell containing $\cal J$, such that for every $g \in S$ and $t\in J_g$,
\[
H_1(g,t)=h_1(h(g,t)) \hbox{ and } H_2(g,t)=h_2(h(g,t)).
\]
By Lemma \ref{shell-canonical}, we may assume that $V\sub U\cap Z$.
Now set
\[
V':=\{ (g,t,x) \ : \ (g,t) \in V, \ H_1(g,t) < x < H_2(g,t)\}
\]
and
\[
J'_g:=\bigcup_{t\in J_g} \{t\}\times X_{h(g, t)}.
\]
It is easy to check that $\{J'_g\}_{g\in S}$ is a uniform family of supercones in $M^{k+1}$ with closure $cl(V')$. Let $\tau: V\times M \to M^{n+1}$ map $((g,t),x)$ to $(h(g, t), x)$. For each $g\in S$ the function $\tau(g, -)$ is injective, because so is $h(g, -)$. Thus
\[
\bigcup_{a\in C} \{a\}\times X_a =\tau\left(\bigcup_{g\in S} \{g\}\times J'_g\right)
\]
is a $k+1$-cone.
\end{proof}

The proof of the Structure Theorem will run in parallel with its own uniform version (see Theorem \ref{str-thm-sets}(3) below), which prompts the following definition.

\begin{defn}[Uniform families of cones]\label{def-unifcones} Let $\Cal C:=\{C_t\}_{t\in X\subseteq M^m}$ be a definable family of $k$-cones in $M^n$. We call $\Cal C$ \emph{uniform} if there are
\begin{itemize}
\item an $\Cal L$-definable continuous function $h: Z \subseteq M^{m+l+k} \to M^n$,
\item a definable family  $\{S_t\}_{t\in X}$ of small subsets of $M^l$,
\item a  uniform definable family of supercones $Y= \{ Y_{t,g}\}_{t\in X, g \in S_t}$ in $M^k$
\end{itemize}
such that  $Y\sub Z$ and
\begin{itemize}
\item[(i)] $h(t,g,-) : Z_{t,g}\subseteq M^k \to M^n$ is injective for each $g\in S_t$,
\item[(ii)] $C_t=h\left(\{t\} \times \left(\bigcup_{g\in S_t} \{g\}\times Y_{t,g}\right)\right)$.
\end{itemize}
Abusing terminology, we call $\cal C$ \emph{$A$-definable} if it is an $A$-definable family of sets, $h$ is $\cal L_A$-definable, and  $\{S_t\}_{t\in X}$ and $\{Y_{t, g}\}_{t\in X, g \in S_t}$ are $A$-definable.
\end{defn}

We now prove that the union of a small uniform family of $k$-cones under a suitable map results again in a $k$-cone.

\begin{lemma}\label{largefibers}
Let $\{C_t\}_{t\in K}$ be an $A$-definable uniform family of $k$-cones in $M^n$, with $K\sub M^m$ small, and let $\tau:W \subseteq M^{m+n}\to M^p$ be an $\cal L_A$-definable continuous map such that for each $t\in K$
\begin{itemize}
\item $\{t\}\times C_t\subseteq W$,
\item $\tau(t, -):M^n\to M^p$ is injective.
\end{itemize}
Then $\tau\left(\bigcup_{t\in K} \{t\}\times C_t\right)$ is an $A$-definable $k$-cone in $M^p$.
\end{lemma}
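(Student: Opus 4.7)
My strategy is simply to flatten the two–layered parameter structure of the uniform family $\{C_t\}_{t\in K}$ into a single small parameter set and compose with $\tau$. By Definition \ref{def-unifcones}, I first unpack the given uniform family: there are an $\Cal L_A$-definable continuous $h: Z\subseteq M^{m+l+k}\to M^n$, a definable family $\{S_t\}_{t\in K}$ of small subsets of $M^l$, and a uniform family of supercones $Y=\{Y_{t,g}\}_{t\in K,\,g\in S_t}$ in $M^k$ (with $\Cal L_A$-definable shell $V\subseteq M^{m+l+k}$), such that $h(t,g,-)$ is injective on $Z_{t,g}$ and $C_t = h\bigl(\{t\}\times\bigcup_{g\in S_t}\{g\}\times Y_{t,g}\bigr)$.

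Next, I define the flattened parameter set
\[
S' \;:=\; \bigcup_{t\in K}\{t\}\times S_t \;\subseteq\; M^{m+l}.
\]
Since $K$ is small and $\{S_t\}_{t\in K}$ is a definable family of small sets, Corollary \ref{smallunion} (together with $S'\subseteq K\times\bigcup_{t\in K}S_t$ and Corollary \ref{cor-union}) gives that $S'$ is small and $A$-definable. Re-indexing $Y$ by $(t,g)\in S'$, the family $\{Y_{t,g}\}_{(t,g)\in S'}$ of supercones in $M^k$ remains uniform: the same set $V\subseteq M^{(m+l)+k}$ serves as a shell, now with parameter space $M^{m+l}$ instead of $M^m\times M^l$, because the condition $cl(\pi_{(m+l)+j}(Y)_{(t,g)}) = cl(\pi_{(m+l)+j}(V)_{(t,g)})$ is just a reformulation of the original shell condition.

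Finally, I define the composition
\[
H: V\subseteq M^{(m+l)+k} \to M^p, \qquad H(t,g,y) := \tau\bigl(t,\,h(t,g,y)\bigr),
\]
which is $\Cal L_A$-definable and continuous because $\tau$ and $h$ are. For each $(t,g)\in S'$, the map $H(t,g,-)$ is the composition of the injections $h(t,g,-)$ and $\tau(t,-)$, hence injective on $V_{(t,g)}$. Moreover,
\[
H\!\left(\bigcup_{(t,g)\in S'}\{(t,g)\}\times Y_{t,g}\right) \;=\; \tau\!\left(\bigcup_{t\in K}\{t\}\times C_t\right),
\]
so the right-hand side matches Definition \ref{def-cone} of a $k$-cone witnessed by $S'$, $\{Y_{t,g}\}_{(t,g)\in S'}$, shell $V$, and map $H$. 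All data are $A$-definable, hence the result is an $A$-definable $k$-cone.

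The argument is essentially bookkeeping; the only thing to get right is the distinction between the ``parameter'' coordinates and the ``supercone'' coordinates when reinterpreting $V$ as a shell over $S'$, and verifying that the injectivity of $\tau(t,-)$ on $M^n$ combines with the injectivity of $h(t,g,-)$ to yield injectivity of $H(t,g,-)$. I do not anticipate a serious obstacle: no new analytical content is needed beyond the definitions and the small-set closure properties already established in Section~\ref{sec-small}.
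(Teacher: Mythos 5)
Your proof is correct and follows essentially the same route as the paper's: both flatten the parameters into $S=\bigcup_{t\in K}\{t\}\times S_t$ and compose $\tau$ with $h$ to get the witnessing map $(t,g,y)\mapsto\tau(t,h(t,g,y))$, with injectivity of the fibers coming from composing the two injections. Your additional remarks on smallness of $S$ and on reinterpreting the shell over the flattened parameter set are fine and only make explicit what the paper leaves as "straightforward to check."
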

\begin{proof} Let $h: Z \subseteq M^{m+l+k} \to M^n$ be an $\Cal L_A$-definable continuous function,   $\{S_t\}_{t\in K}$   an $A$-definable family of small subsets of $M^l$, and  $\{ Y_{t,g}\}_{t\in K, g \in S_t}$ an $A$-definable family of supercones that witness that $\{C_t\}_{t\in K}$ is a uniform family of $k$-cones. Let $\sigma: Z \subseteq M^{m+l+k}\to M^{p}$ be defined by $\sigma(t, g, a):=\tau(t, h(t,g,a))$. We see directly that $\sigma(t,g,-)$ is injective, since $\tau(t,-)$ and $h(t,g,-)$ are injective. Note also that $\sigma$ is $\Cal L_A$-definable and continuous, since both $h$ and $\tau$ are. Set
\[
S :=\bigcup_{t\in K} \{t\}\times S_t.
\]
It is then straightforward to check that
\[
\tau\left(\bigcup_{t\in K} \{t\}\times C_t\right)=\sigma\left(\bigcup_{(t,g)\in S} \{(t, g)\}\times  Y_{t,g}\right)
\]
is the desired $k$-cone.
\end{proof}

The following lemma will be used in the last step of the proof of the Structure Theorem, $(1)_{n} \,\Rarr\, (3)_{n}$. It follows easily from Definition \ref{def-uniform} and the next observations. Let $X\sub M^{m+n}$ be a set. Then for every $0<j\le n$   and $g\in \pi_{m}(X)$, we have
$$\pi_{m+j}(X)_g=\pi_j(X_g).$$
Let $X, Y\sub M^n$ and $0<j\le n$. Then
$$cl(X)=cl(Y) \,\Rarr\,\ cl(\pi_j(X))=cl(\pi_j(Y)).$$
Indeed, $\pi_j(X)\sub \pi_j(cl(Y))\sub cl(\pi_j(Y))$.

\begin{lemma}\label{uniform family} Let  $U\sub M^{m+l+k}$ be an $A$-definable cell. Let
$$\cal K=\{J_{t, g}\}_{t\in Y,\, g\in S_t}$$
be an $A$-definable family of supercones $J_{t,g}\sub M^k$, where $Y\sub M^m$ and $S_t\sub M^l$. Assume  that for every $0<j\le k$, $t\in Y$ and $g\in S_t$,
\begin{equation}
  cl(J_{t,g})=cl(U_{t,g}).\label{eq-unif}
\end{equation}
Then $U$ is a shell for $\cal K$. In particular,  $\cal K$ is an $A$-definable  uniform family of supercones.
\end{lemma}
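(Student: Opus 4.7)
The plan is to verify Definition \ref{def-uniform} directly for $\mathcal{K}$, viewed as an $A$-definable family of supercones indexed by pairs $(t,g)\in\bigcup_{t\in Y}\{t\}\times S_t\sub M^{m+l}$, with candidate shell $U\sub M^{m+l+k}$. First I would check that $U$ is an $\mathcal{L}_A$-definable cell containing $\mathcal{K}$; then I would verify the closure condition: for every base point $(t,g)$ and every $0<j\le k$, $cl(\pi_{m+l+j}(\mathcal{K})_{(t,g)})=cl(\pi_{m+l+j}(U)_{(t,g)})$.

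The closure condition is the easy half, reducible to the two preparatory observations stated just before the lemma. The first identifies $\pi_{m+l+j}(\mathcal{K})_{(t,g)}$ with $\pi_j(J_{t,g})$ and $\pi_{m+l+j}(U)_{(t,g)}$ with $\pi_j(U_{t,g})$; the second tells us that closure-equality is preserved under any coordinate projection. Thus the hypothesis $cl(J_{t,g})=cl(U_{t,g})$ directly yields $cl(\pi_j(J_{t,g}))=cl(\pi_j(U_{t,g}))$ for every $0<j\le k$, which is exactly the required closure identity after re-applying the first observation.

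The containment $\mathcal{K}\sub U$ is the only step requiring some care. Each $J_{t,g}$ is a supercone, so by the note following Definition \ref{def-supercone} it lies in the interior of its own closure, and hence, by hypothesis, inside the interior of $cl(U_{t,g})$. For $k>0$ this interior is nonempty and of dimension $k$, which forces $U_{t,g}$---itself a cell with the same closure---to be an open cell and therefore equal to the interior of its closure; so $J_{t,g}\sub U_{t,g}$ follows. The ``in particular'' clause is then immediate from the abuse of terminology in Definition \ref{def-uniform}, since $U$ is $\mathcal{L}_A$-definable by hypothesis and $\mathcal{K}$ is $A$-definable. The only real obstacle is this containment step; once it is in place, the closure condition is a short bookkeeping exercise with the two preceding observations.
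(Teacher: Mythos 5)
Your proof is correct, and its core --- deriving the closure condition from the two observations stated just before the lemma --- is exactly the paper's own (one-line) proof. The additional step you give for the containment $\cal K\sub U$ (via $\dim U_{t,g}=k$, hence $U_{t,g}$ is an open cell equal to the interior of its closure, which contains $J_{t,g}$) is sound and is in fact left implicit in the paper, where the lemma is only ever applied to fibers of the form $J'_{t,g}\cap U_{t,g}$, for which the containment is automatic.
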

\begin{proof}  For every $t\in Y$ and $g\in S_t$, we have
$$cl(\cal J_{t,g})=cl(U_{t,g})\,\Rarr\, cl(\pi_j(\cal J_{t,g}))=cl(\pi_j(U_{t,g}))\,\Rarr$$ $$\Rarr\, cl(\pi_{m+l+j}(\cal J)_{t,g})=cl(\pi_{m+l+j}(U)_{t,g}),$$
as required.
\end{proof}
 We finally include two lemmas that will be useful in the discussion of `large dimension' in Section \ref{sec-large dimension} below.

\begin{lemma}\label{low1}
Let $J\sub M^n$, $n>0$, be a supercone and $X\sub M^n$   a low set. Then  $J\setminus X$ contains a supercone.
\end{lemma}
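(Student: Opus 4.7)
The proof is by induction on $n$. For the base case $n = 1$, a supercone $J \sub M$ is by definition co-small in some open interval $(h_1, h_2)$, and a low subset of $M$ has the form $f(P^l)$ for some $\cal L$-definable $f : M^l \to M$, hence is small by Lemma \ref{lem:beg2}. Thus $J \setminus X$ is still co-small in $(h_1, h_2)$, and so it is itself a supercone.

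For the inductive step, let $J \sub M^{n+1}$ be a supercone, write $K := \pi(J) \sub M^n$ (a supercone by definition), and suppose $X \sub M^{n+1}$ is low via some coordinate $i$ and an $\cal L$-definable function $f$. The plan is to find a supercone $K' \sub K$ such that $X_a$ is small for every $a \in K'$. Once this is achieved, set $J' := J \cap (K' \times M) \setminus X$; for each $a \in K'$ the fiber $J'_a = J_a \setminus X_a$ is co-small in $(h_1(a), h_2(a))$ (as $J_a$ is co-small there and $X_a$ is small), hence nonempty, so $\pi(J') = K'$, and the recursive clause for supercones applies to $J'$ with the same bounding functions $h_1, h_2$. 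Then $J'$ is a supercone inside $J \setminus X$, as desired.

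To produce $K'$, I would consider the set $A := \{a \in M^n : X_a \text{ is large}\}$; it suffices to find a supercone inside $K \setminus A$. The key claim is that $A$ is contained in a finite union of low sets $A_1, \ldots, A_N$ in $M^n$; granting this, I apply the induction hypothesis $N$ times---first to $K$ and $A_1$ to obtain a supercone $K_1 \sub K \setminus A_1$, then to $K_1$ and $A_2$, and so on---ending with a supercone $K' := K_N \sub K \setminus (A_1 \cup \cdots \cup A_N) \sub K \setminus A$. If $i = n+1$ then $X_a = f(a, P^l)$ is always small and $A = \emptyset$. If $i \leq n$, then $X_a = \{b : \exists g \in P^l,\, f(a_{-i}, b, g) = a_i\}$, and a short o-minimality analysis shows that $X_a$ is large iff there exists $g \in P^l$ with $\{b : f(a_{-i}, b, g) = a_i\}$ containing an interval: in the contrary case uniform cell decomposition bounds $|\{b : f(a_{-i}, b, g) = a_i\}|$ by some fixed $N$ uniformly in $g$, so $X_a$ is a finite union of $\cal L_a$-definable images of $P^l$ and hence small by Lemma \ref{lem:beg2}.

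The main technical step is then a uniform o-minimal cell decomposition of $f$ viewed as a function of $(y, b, g) \in M^{n-1} \times M \times M^l$: on the cells where $f$ is constant in $b$, its common value on the resulting interval is given by $\cal L$-definable functions $\psi_1, \ldots, \psi_N : M^{n-1+l} \to M$, and the condition above translates to $a_i \in \{\psi_1(a_{-i}, g), \ldots, \psi_N(a_{-i}, g)\}$ for some $g \in P^l$. Setting $A_j := \{a \in M^n : \exists g \in P^l,\, \psi_j(a_{-i}, g) = a_i\}$, each $A_j$ is low in $M^n$ and $A \sub A_1 \cup \cdots \cup A_N$, as required. The main obstacle is organizing the uniform cell decomposition so that the ``constant-value'' functions $\psi_j$ can be extracted cleanly (extending them arbitrarily off their natural cell of definition, so that the definition of $A_j$ remains in the form required to be low); once this is in place, the iterated application of the induction hypothesis concludes the proof.
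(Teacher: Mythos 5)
Your proof is correct and follows exactly the route the paper indicates: it is a direct induction on $n$ following the definitions, and the paper's own proof consists only of the remark ``Easy, following the definitions, by induction on $n$.''

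Since the paper gives no details, let me just confirm the one step that is genuinely not obvious and which you correctly isolate: that the set $A=\{a\in M^n : X_a \text{ is large}\}$ lies in a finite union of low subsets of $M^n$. Your dichotomy is the right one: writing $X_a=\bigcup_{g\in P^l}\{b: f(a_{-i},b,g)=a_i\}$, by o-minimal uniform finiteness either every fiber $\{b: f(a_{-i},b,g)=a_i\}$ has size $\le N$ (so $X_a$ is $P$-bound over $a$ and hence small by Lemma \ref{lem:beg2}), or some fiber is infinite and hence contains an interval (so $X_a$ is large). By the uniform monotonicity/cell-decomposition theorem, the second alternative is detected by $a_i=\psi_j(a_{-i},g)$ for one of finitely many $\cal L$-definable ``constant-value'' functions $\psi_j$, and after extending each $\psi_j$ off its cell one recovers exactly the format of Definition \ref{def-low}. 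Iterating the inductive hypothesis over these finitely many low sets gives the supercone $K'\subseteq K$ on which all fibers $X_a$ are small, and then $J':=\bigcup_{a\in K'}\{a\}\times (J_a\setminus X_a)$ is a supercone with the same $h_1,h_2$ by Corollary \ref{cor-union}. The base case and the case $i=n+1$ (where $X_a=f(a,P^l)$ is automatically small) are handled exactly as you say. This is a complete and correct fleshing-out of the paper's one-line proof.
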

\begin{proof} Easy, following the definitions, by induction on $n$.
\end{proof}

\begin{lemma}\label{coneunion1}
Let $J\sub M^n$ be a supercone and $\{X_s\}_{s\in S}$ a small definable family of subsets of $M^n$ such that $J= \bigcup_{s\in S} X_s$. Then some $X_s$ contains a supercone in $M^n$.
\end{lemma}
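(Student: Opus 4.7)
The plan is to induct on $n$. The base case $n=0$ is trivial since $M^0=\{0\}$ is the only supercone, and any cover forces some $X_s=\{0\}$. For the inductive step, let $J\sub M^{n+1}$ be a supercone with $\pi(J)\sub M^n$ a supercone and $\Cal L$-definable continuous bounds $h_1<h_2$, and let $\{X_s\}_{s\in S}$ be a small $A$-definable family with $J=\bigcup_s X_s$.

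For each $a\in\pi(J)$ the fiber $J_a$ is large; since $J_a=\bigcup_s(X_s)_a$ is a small union, Corollary~\ref{smallunion} forces some $(X_s)_a$ to be large. The set $Y_s:=\{a\in\pi(J):(X_s)_a\text{ is large}\}$ is $A$-definable by Remark~\ref{definability}(a), and $\pi(J)=\bigcup_s Y_s$ is a covering of the supercone $\pi(J)$ by a small family. The inductive hypothesis then yields some $s^*\in S$ and a supercone $J^0\sub Y_{s^*}$. Writing $X:=X_{s^*}$, apply Lemma~\ref{lem:beg1} to the family $\{X_a\}_{a\in J^0}$ to obtain $A$-definable endpoints $a_0(a)\le\dots\le a_m(a)$ such that each slice $X_a\cap[a_{i-1}(a),a_i(a)]$ is either small or co-small (given by $[a_{i-1}(a),a_i(a)]\setminus V_{i,a}$ with $V_{i,a}$ small). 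Largeness of $X_a$ forces the co-small case at some index for each $a\in J^0$, so a finite (hence small) partition of $J^0$ combined with another use of the inductive hypothesis produces a supercone $J^1$ and a single index $i^*$ with $X_a$ co-small in $(a_{i^*-1}(a),a_{i^*}(a))$ for every $a\in J^1$.

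The main obstacle is that the bounds $a_{i^*-1},a_{i^*}$ are merely $A$-definable, whereas the supercone definition requires $\Cal L$-definable continuous bounds. To resolve this, apply Corollary~\ref{cor:corp1} to each of the two $A$-definable functions $a_{i^*-1},a_{i^*}:M^n\to M$: each agrees, off finitely many low subsets of $M^n$, with one of finitely many $\Cal L_A$-definable continuous functions $F^{(k)}_l(u,-)$ for some fixed $u\in P^t$. Iterating Lemma~\ref{low1} over this finite list of low sets lets us pass to a supercone $J^2\sub J^1$ disjoint from all of them; a further finite partition of $J^2$ according to which pair $(l_1,l_2)$ realizes the two bounds, plus one more application of the inductive hypothesis, yields a supercone $J^3$ together with $\Cal L_{A\cup P}$-definable continuous functions $G_1<G_2$ satisfying $a_{i^*-1}(a)=G_1(a)$ and $a_{i^*}(a)=G_2(a)$ for all $a\in J^3$.

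Finally, set
\[
J^*:=\bigcup_{a\in J^3}\{a\}\times\bigl(X_a\cap(G_1(a),G_2(a))\bigr).
\]
Then $J^*\sub X_{s^*}$, its projection $\pi(J^*)=J^3$ is a supercone, and for each $a\in J^3$ the fiber $J^*_a=X_a\cap(G_1(a),G_2(a))$ is co-small in the $\Cal L$-definable interval $(G_1(a),G_2(a))$, since $X_a\cap[G_1(a),G_2(a)]=[G_1(a),G_2(a)]\setminus V_{i^*,a}$ is co-small there. Hence $J^*$ is a supercone contained in $X_{s^*}$, completing the induction.
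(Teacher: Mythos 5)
Your proof is correct and follows essentially the same strategy as the paper's: project to $M^n$, apply the inductive hypothesis to the cover $\{Y_s\}$ of $\pi(J)$, then make the bounding functions $\Cal L$-definable and continuous on a sub-supercone using Corollary~\ref{cor:corp1} and repeated applications of Lemma~\ref{low1}. The only real difference is in how you handle the choice of interval and the choice of candidate $\Cal L$-definable function: the paper appeals directly to Remark~\ref{definability}(b) (which amounts to a definable choice of index), while you instead introduce explicit finite partitions and re-apply the inductive hypothesis to fix a single index $i^*$ and a single pair of candidate functions $(G_1,G_2)$. This is a valid and in fact slightly more careful way to carry out the same reduction, at the modest cost of a few extra invocations of the hypothesis at level $n$; the paper's phrasing is terser but leaves the fixing of indices implicit.
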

\begin{proof}
By induction on $n$. If $n=0$, it is obvious. If $n>0$, for every $s\in S$, let
$$Y_s:= \{t\in \pi(X_s): \text{ the fiber } (X_s)_t \text{ is large}\}.$$
By Remark \ref{definability}(a), $\{Y_s\}_{s\in S}$ is a definable family of sets. By Corollary \ref{smallunion}, we have $\pi(J)= \bigcup_{s\in S} Y_s$. By Inductive Hypothesis, some $Y_s$ contains a supercone $K$. Since for every $t\in K$, $(X_s)_t$ is large, Remark \ref{definability}(b) provides us with definable functions $h_1, h_2: M^{n-1}\to M\cup\{\pm\infty\}$ such that for every $t\in \pi(X_s)$, $(X_s)_t$ is co-small in $(h_1(t), h_2(t))$. By Corollary \ref{cor:corp1}, there are finitely many low sets in $M^{n-1}$ off whose union $h_1, h_2$ are both $\cal L$-definable and continuous. Hence, by repeated use of Lemma \ref{low1}, we obtain a supercone $K'$ contained in $K$ on which $h_1, h_2$ are both $\cal L$-definable. Therefore, the set
$$\bigcup_{t\in K'} \{t\} \times (X_s)_t\cap (h_1(t), h_2(t))$$
is a supercone contained in $X_s$.
\end{proof}

\subsection{\cal L-definable functions on supercones}

The goal  of this section (Proposition \ref{k}(1) below) is to show that a supercone from $M^m$ cannot be `embedded' into $M^n$, for $n<m$. This will make meaningful the notion of `large dimension'  we introduce in  Section  \ref{sec-large dimension}.

\begin{lem}\label{open-supercone}
 Let $J\sub M^n$ be an $A$-definable supercone and $S\sub cl(J)$ an open $\Cal L_A$-definable cell.  Then $S\cap J$ is an $A$-definable supercone with closure $cl(S)$.
\end{lem}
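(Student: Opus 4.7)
The proof is by induction on $n$. For the base case $n=1$, an $A$-definable supercone $J\subseteq M$ is co-small in some interval $(h_1,h_2)$, which by density yields $cl(J)=cl((h_1,h_2))$. Any open $\mathcal{L}_A$-definable cell $S\subseteq cl(J)$ is an open interval $(a,b)$, and Fact \ref{fact2} shows that $S\cap J$ is co-small in $(a,b)=S$, exhibiting $S\cap J$ as an $A$-definable supercone with closure $cl(S)$.

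For the inductive step, let $J\subseteq M^{n+1}$ be an $A$-definable supercone with bounds $h_1,h_2$ (so $J_a\subseteq(h_1(a),h_2(a))$ is co-small for every $a\in\pi(J)$), and let $S\subseteq cl(J)$ be an open $\mathcal{L}_A$-definable cell. Since $S$ is open, $S\subseteq\mathrm{int}(cl(J))$, so writing $S=\{(a,y):a\in\pi(S),\ g_1(a)<y<g_2(a)\}$ with $g_1<g_2$ the $\mathcal{L}_A$-definable continuous defining functions of the cell, we have $h_1\le g_1<g_2\le h_2$ on $\pi(S)$. Since $\pi(cl(J))\subseteq cl(\pi(J))$, the open $\mathcal{L}_A$-definable cell $\pi(S)$ lies in $cl(\pi(J))$, so the inductive hypothesis applied to $\pi(J)$ and $\pi(S)$ yields that $\pi(S)\cap\pi(J)$ is an $A$-definable supercone with closure $cl(\pi(S))$.

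I next verify $\pi(S\cap J)=\pi(S)\cap\pi(J)$: for $a$ in the right-hand side, Fact \ref{fact2} makes $J_a\cap S_a$ co-small in the nonempty open interval $S_a$, hence nonempty. The same fact gives that $(S\cap J)_a$ is co-small in $(g_1(a),g_2(a))$ for every $a\in\pi(S\cap J)$. Extending $g_1,g_2$ to continuous $\tilde g_1<\tilde g_2:M^n\to M\cup\{\pm\infty\}$ via a cell decomposition of $M^n$ compatible with $\pi(S)$ (using $\pm\infty$ outside $\pi(S)$ where needed), we conclude that $S\cap J$ is an $A$-definable supercone.

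Finally, for $cl(S\cap J)=cl(S)$, the inclusion $\subseteq$ is immediate, and for the converse it suffices to show $S\subseteq cl(S\cap J)$. Given $(a_0,y_0)\in S$ and any open box $V\times W\subseteq S$ containing it, the density of $\pi(S)\cap\pi(J)$ in $cl(\pi(S))$ provides $a\in V\cap\pi(S\cap J)$; since $V\times W\subseteq S$ forces $W\subseteq(g_1(a),g_2(a))$, the co-smallness of $(S\cap J)_a$ in $(g_1(a),g_2(a))$ combined with Fact \ref{fact2} supplies $y\in W\cap(S\cap J)_a$, yielding $(a,y)\in(V\times W)\cap(S\cap J)$. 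The only mildly delicate technical step is the continuous extension of $g_1,g_2$ to all of $M^n$ while preserving $\tilde g_1<\tilde g_2$; this is routine o-minimal bookkeeping, and the geometric substance of the argument resides in the two density observations (from the inductive hypothesis and from each fiber's co-smallness in its ambient interval) combined with Fact \ref{fact2}.
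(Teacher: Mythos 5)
Your proof is correct and follows essentially the same route as the paper's: induction on $n$, applying the inductive hypothesis to $\pi(S)\subseteq cl(\pi(J))$ and then using Fact \ref{fact2} to see that each fiber $(S\cap J)_a$ is co-small in $S_a$. The extra details you supply (the identity $\pi(S\cap J)=\pi(S)\cap\pi(J)$, the density argument for $cl(S\cap J)=cl(S)$, and the extension of the cell bounds $g_1,g_2$) are points the paper leaves implicit, and your handling of them is fine.
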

\begin{proof} We work by induction on $n$. For $n=0$ it is obvious. Assume we know the statement for subsets of $M^k$, $k<n$, and let $J\sub M^{n}$ be a supercone and $S\sub cl(J)$ be an open $\Cal L_A$-definable cell. Since $\pi(S)\sub \pi(cl(J))\sub cl(\pi(J))$, the inductive hypothesis gives that $\pi(S)\cap \pi(J)$ is an $A$-definable supercone $K\sub M^{n-1}$ with closure $cl(\pi(S))$. Since for every $t\in K$, $J_t$ is co-small in $cl(J)_t$, we have that $(S\cap J)_t=S_t\cap J_t$ is co-small in $S_t$. Hence $S\cap J=\bigcup_{t\in K} \{t\} \times (S\cap J)_t$ is a supercone with closure $cl(S)$.
\end{proof}

\begin{lemma}\label{U-K}
Let $K\sub M^{n+1}$ be a supercone. Then $cl(K)\sm K$ is a finite union of sets of the form
$$\bigcup_{g\in P^m}h(g, Z_{g}),$$
where $Z\sub P^m\times M^n$ is definable, $h:M^{m+n}\to M^{n+1}$ is $\cal L$-definable and each $h(g, -):Z_g\to M^{n+1}$ is injective.
\end{lemma}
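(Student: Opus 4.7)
The plan is to decompose $cl(K)\sm K$ into three pieces and handle each separately, with induction on $n$ needed only for one of them. Set $V:=cl(K)$, which is $\Cal L$-definable by the supercone hypothesis, and let $U$ denote the interior of $V$. By the observation following Definition \ref{def-supercone}, $K\subseteq U$, so we have the disjoint decomposition
\[
cl(K)\sm K \;=\; (V\sm U) \;\sqcup\; B \;\sqcup\; C,
\]
where $B := U\cap \bigl((\pi(U)\sm\pi(K))\times M\bigr)$ and $C := \bigcup_{a\in \pi(K)}\{a\}\times(U_a\sm K_a)$.

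The frontier $V\sm U$ is $\Cal L$-definable of dimension at most $n$, hence by cell decomposition it splits into finitely many $\Cal L$-definable cells $D$ of dimension $d\leq n$. Each such cell carries an injective $\Cal L$-definable continuous parametrization $h_0: D'\to D$ from an open cell $D'\subseteq M^d$; embedding $M^d$ into $M^n$ as the first $d$ coordinates gives an $\Cal L$-definable continuous $h: M^n\to M^{n+1}$ and a set $Z_*:=D'\times\{0\}^{n-d}\subseteq M^n$ on which $h$ is injective, exhibiting $D$ in the required form with $m=0$. For the piece $C$, every fiber $U_a\sm K_a$ over $a\in\pi(K)$ is small by the supercone assumption, and after restricting to the definable subset of $\pi(K)$ where this fiber is nonempty (using Remark \ref{definability}(a)) Lemma \ref{philippcor} together with Remark \ref{SG} writes $C$ as a finite union of sets $h_i\bigl(\bigcup_{g\in P^{m_i}}\{g\}\times Z_{ig}\bigr)$ with each $h_i(g,-)$ injective on $Z_{ig}$, which is exactly the required form.

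The main obstacle is $B$, since its fibers over $\pi(U)\sm\pi(K)$ are generically full open intervals and hence not small, so Lemma \ref{philippcor} is not directly applicable. Here I induct on $n$; at the base case $n=0$ we have $\pi(K)=\{0\}=\pi(U)$ and $B=\emptyset$. For $n\geq 1$, $\pi(K)\subseteq M^n$ is itself a supercone by Remark \ref{rem-supercone}(1), and continuity of $\pi$ gives $\pi(U)\subseteq\pi(cl(K))\subseteq cl(\pi(K))$, hence $\pi(U)\sm\pi(K)\subseteq cl(\pi(K))\sm\pi(K)$. The inductive hypothesis expresses $cl(\pi(K))\sm\pi(K)$ as a finite union of sets $T_i=\bigcup_{g\in P^{m_i}} h_i(g,Z_{ig})$ with $Z_i\subseteq P^{m_i}\times M^{n-1}$ and each $h_i(g,-)$ injective. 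Each intersection $(T_i\times M)\cap U$ is then brought into the required form in $M^{n+1}$ by appending a coordinate: set $h'_i(g,y,t):=(h_i(g,y),t)$ and $Z'_{ig}:=\{(y,t)\in Z_{ig}\times M : (h_i(g,y),t)\in U\}$, observing that injectivity of $h'_i(g,-)$ on $Z'_{ig}$ follows from that of $h_i(g,-)$ on $Z_{ig}$. Unioning the decompositions of $V\sm U$, $B$, and $C$ gives the desired finite union for $cl(K)\sm K$.
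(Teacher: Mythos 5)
Your proof is correct and uses the same core ideas as the paper: split $cl(K)\sm K$ according to whether the projection onto the first $n$ coordinates lies in $\pi(K)$; apply Lemma~\ref{philippcor} to the piece over $\pi(K)$, whose fibers are small; and reduce the piece lying over $cl(\pi(K))\sm\pi(K)$ to the inductive hypothesis by appending a coordinate --- your $h'_i(g,y,t)=(h_i(g,y),t)$ is exactly the paper's $h'(g,v,u)=(h(g,v),u)$. The only real difference is that the paper works directly with $cl(K)$ and obtains a two-piece decomposition, whereas you additionally peel off the topological frontier $cl(K)\sm U$ (for $U$ the interior of $cl(K)$) as a third piece, parametrized by o-minimal cell decomposition with $m=0$. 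Your three-piece variant is slightly cleaner around the edges: you only need the inclusion $\pi(U)\sm\pi(K)\subseteq cl(\pi(K))\sm\pi(K)$ rather than an implicit identification of $\pi(cl(K))$ with $cl(\pi(K))$, and your fibers $U_a\sm K_a$ are literally the co-small complements of $K_a$ in the open interval $U_a$, so the nonemptiness hypothesis of Lemma~\ref{philippcor} is handled transparently via Remark~\ref{definability}(a). The modest cost is the separate cell-decomposition argument for the frontier, which the paper instead absorbs into the fibers of its two pieces by taking closed fibers $cl(K)_t$ throughout.
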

\begin{proof}
By induction on $n$. Denote $U=cl(K)$. For $n=0$, this is clear since $U\sm K$ is a small set and can be written as $h(P^m)$ with $h$ as above. Now assume we know the statement for $k<n$, let $K\sub M^{n+1}$ be as above. We have:
\begin{equation}
U\sm K=\left(\bigcup_{t\in \pi(K)}\{t\}\times (U_t\sm K_t)\right)\cup\left(\bigcup_{t\in \pi(U)\sm \pi(K)}\{t\}\times U_t\right).\label{eqU-K}
\end{equation}
By inductive hypothesis the second part is a finite union of sets of the form
$$T=\bigcup_{t\in X}\{t\}\times U_t,$$
where $X=\bigcup_{g\in P^m} h(g, Z_g)$, for suitable $h$. Observe  that then
$$T=\bigcup_{g\in P^m} h'(g, W_g),$$
where $W_g=\bigcup_{v\in Z_g} \{v\}\times U_{h(g, u)}$ and $h'(g, v,u)=(h(g, v), u)$, as required.

The first part of the union in (\ref{eqU-K}) is of the right form, as it follows immediately by applying Lemma \ref{philippcor}.
\end{proof}

Before proving Proposition \ref{k}, we  illustrate it with an example.

\begin{example} Consider the function $f:M^2\to M$ with $f(x_1, x_2)=x_1+x_2$. Let $J_1=M\sm P$ and for all $t\in J_1$, $J_t= J_1\cap (t, \infty)$. Let $J=\bigcup_{t\in J_1} \{t\}\times J_t$. 
We will show that $f_{\res J}$ is not injective.
 The proof is inspired by an example in \cite[page 5]{beg}. Assume towards a contradiction that  $f_{\res J}$ is injective.  Pick any two distinct $t_0> t\in J$. Since $f_{\res J}$ is injective, for every $b\in t_0+J_{t_0}$, we have $b\not\in t+J_t$.  But $b\in t+cl(J_t)$, so $b\in t+P$. Since this holds for every $b\in t_0+ J_{t_0}$, we have that $t_0+J_{t_0}\sub t+P$, which is a contradiction, since a large set cannot be contained in a small one.
\end{example}


\begin{prop}\label{k} Let $f:M^m\to M^n$ be an $\cal L$-definable function and  $J\sub M^m$ a supercone, such that $f_{\res J}$ is injective. Then
\begin{enumerate}
\item $m\le n$.

\item there is an $\cal L$-definable $X\sub cl(J)$ such that $\dim(cl(J)\sm X)<m$ and $f_{\res X}$ is finite-to-one. Namely, $X=X_f\cap cl(J)$, with notation from Fact \ref{fin2}.
    \item If  $K\sub M^{n}$ is another supercone and $f:cl(J)\to cl(K)$ is injective, then $f(J)\cap K\ne \emptyset$.
\end{enumerate}
In particular,  by (2), there is an open $\cal L$-definable $X\sub cl(J)$ such that $f_{\res X}$ is injective.
\end{prop}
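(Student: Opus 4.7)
The plan is to establish the three parts by induction on $m$, proving (2) (together with the ``in particular'' clause) first, deriving (1) as an immediate corollary, and finally handling (3). The base case $m = 0$ is trivial: every supercone in $M^0$ equals $\{0\}$, and all three statements hold by inspection. For the inductive step, observe that for any supercone $J \sub M^m$ one has $\dim cl(J) = m$, by a short induction on the definition of supercone.

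For (2), set $X = X_f \cap cl(J)$, which is $\cal L$-definable, and apply Fact \ref{fin2} with $A = cl(J)$ to obtain $\dim f(cl(J) \sm X) < m$. Suppose for contradiction that $\dim(cl(J) \sm X) = m$; then $cl(J) \sm X$ contains an open cell $U$ of $M^m$ with $U \sub cl(J)$. Refine $U$ via $\cal L$-definable cell decomposition to an open sub-cell $U'$ on which $f|_{U'}$ has constant fiber dimension $d$. If $d = 0$, then $f|_{U'}$ is finite-to-one, so $\dim f(U') = m$; but $f(U') \sub f(cl(J) \sm X)$ has dimension less than $m$, a contradiction. If $d \ge 1$, use the generic $\cal L$-definable trivialization coming from o-minimal cell decomposition to find, after possibly shrinking, an open $U'' \sub U'$ and an $\cal L$-definable homeomorphism $\phi \colon U'' \to f(U'') \times D$, with $D$ an $\cal L$-definable cell of dimension $d$, such that $f|_{U''} = \pi_1 \circ \phi$. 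By Lemma \ref{open-supercone}, $U'' \cap J$ is a supercone with closure $cl(U'')$ of dimension $m$. The injectivity of $f|_{U'' \cap J}$ forces $\phi(U'' \cap J)$ to be a ``graph'' in $f(U'') \times D$ with at most one point per fiber over $f(U'')$, while the density of $U'' \cap J$ in $U''$ together with the homeomorphism property of $\phi$ forces this graph to be dense in the dim-$m$ cell $f(U'') \times D$; since $\dim D = d \ge 1$, the contradiction is extracted by projecting onto $D$ on a suitable coordinate slice and invoking the inductive hypothesis (specifically (1) at dimension $m - 1$) applied to the resulting $\cal L$-definable injection of a supercone in $M^{m-1}$ into $M^{m-1-d}$.

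For (1) and the ``in particular'' clause, invoke (2) to obtain $\dim X = m$ with $f|_X$ finite-to-one, then apply Fact \ref{fin1} to partition $X$ into finitely many $\cal L$-definable pieces on each of which $f$ is injective. At least one piece has dimension $m$ and therefore contains an open cell $X' \sub cl(J)$ of $M^m$ on which $f|_{X'}$ is $\cal L$-definable and injective; thus $\dim f(X') = \dim X' = m$, forcing $m \le n$.

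For (3), first use (1) to get $m \le n$; the substantive case is $m = n$. Here $\dim f(cl(J)) = m = \dim cl(K)$, so $f(cl(J)) \cap cl(K)$ contains an open cell $W$ of $M^n$; set $V = f^{-1}(W)$, an open $\cal L$-definable subset of $cl(J)$ on which $f \colon V \to W$ is an $\cal L$-definable homeomorphism between open sets in $M^m$. By Lemma \ref{open-supercone}, both $V \cap J$ and $W \cap K$ are supercones. The required non-empty intersection $f(V \cap J) \cap (W \cap K) \ne \emptyset$ is extracted by iterated cell decomposition of $f$ into coordinate-compatible pieces, reducing to the one-dimensional statement that two co-small subsets of an interval must intersect, since the union of their complements is small by Corollary \ref{cor-union}. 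The main technical obstacle throughout is the case $d \ge 1$ in the proof of (2): making the density-versus-graph argument precise, and arranging the projection so that the induction on $m$ proceeds without circular dependence between (1) and (2) at the current dimension.
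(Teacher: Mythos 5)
Your architecture inverts the paper's and, in doing so, concentrates the entire burden of the proposition on the one step you leave vague. The paper proves $(1)_m$--$(3)_m$ by simultaneous induction with the \emph{easy} implications $(1)_m \Rightarrow (2)_m$ and $(1)_m \Rightarrow (3)_m$, and all the work goes into $(2)_m\ \&\ (3)_m \Rightarrow (1)_{m+1}$. You instead try to prove $(2)_m$ from data at dimension $m-1$, and the case $d\geq 1$ is where this fails --- specifically $d=1$, which is the essential case. Slicing $U''\cap J$ by a coordinate gives supercones $J_t\subseteq M^{m-1}$ on which $f(t,-)$ is injective with image inside $f(U'')$, a set of dimension $m-d$; for $d\geq 2$ this does contradict $(1)_{m-1}$, but for $d=1$ the image has dimension $m-1$ and no contradiction results. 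The target $M^{m-1-d}$ you invoke is not produced by any projection you describe, and ``projecting onto $D$ on a suitable coordinate slice'' cannot be made into an argument by density/dimension counting alone: a dense, fiberwise-singleton graph of exactly this shape genuinely exists (this is the phenomenon of Examples \ref{exa-fiber} and \ref{exa-lou1}), so the supercone structure must be used. The paper's resolution compares two independent slices: Claims 1 and 2 produce a box $I\subseteq f(t,V_t)$ for all $t$ in an interval, injectivity of $f_{\res J}$ forces $I\cap f(t_0,J_{t_0})\subseteq f(t,V_t\setminus J_t)$ for distinct $t_0,t$, and then statement $(3)_m$ --- a supercone cannot be injected into $cl(K)\setminus K$ --- gives the contradiction. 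Nothing in your proposal plays the role of $(3)$ at this point.

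Your proof of $(3)$ has a second gap. The reduction ``by iterated cell decomposition into coordinate-compatible pieces \dots to the one-dimensional statement that two co-small subsets of an interval intersect'' presupposes that the homeomorphism $f:V\to W$ respects the coordinate fibrations with respect to which $J$ and $K$ are supercones; a general $\cal L$-definable homeomorphism does not, and cell-decomposing $f$ does not repair this, since the fibers $W_t$ pull back to sets transverse to the fibers of $V$. You also may not appeal to $\ldim(cl(K)\setminus K)<n$ (Corollary \ref{clJ-J}), as that is deduced from the very statement being proved. The paper instead writes $cl(K)\setminus K$ as a small union of injective $\cal L$-definable images of subsets of $M^{m-1}$ (Lemma \ref{U-K}), applies Lemma \ref{coneunion1} to find a supercone inside one member of that covering, and contradicts $(1)_m$. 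The portions of your proposal that do work --- the base case, $\dim cl(J)=m$, the $d=0$ case of $(2)$ via Fact \ref{fin2}, and the deduction of $(1)$ from $(2)$ via Fact \ref{fin1} --- are the routine ones.
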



\begin{proof} The last clause follows from Fact \ref{fin1}.\vskip.2cm

We write (1)$_m$ - (3)$_m$ for the above statements, and prove them simultaneously by induction on $m$. 
Statement $(1)_1$ is clear. Let $m\ge 1$.

\vskip.2cm \noindent    $\mathbf{(1)_{m}\, \Rarr (2)_{m}.}$ Denote
$$X_f=\{a\in cl(J): f^{-1}(f(a)) \text{ is finite}\}.$$
We claim that $\dim(cl(J)\sm X_f)<m$. Assume not. Let $I\sub cl(J)\sm X_f$ be an open box. By Lemma \ref{open-supercone}, $I\cap J$ contains a supercone $K\sub M^{m}$. By Fact \ref{fin2}, $f(I)$ has dimension $l<m$. In particular, $f(I)$ is in definable bijection with a subset of $M^l$ via the restriction of an $\cal L$-definable map $h:M^n\to M^l$. Consider now $g=h\circ f:M^{m}\to M^l$. Then $g$ is $\cal L$-definable and  injective on $K$. We have contradicted $(1)_{m}$.

\vskip.2cm \noindent $\mathbf{(1)_{m}\Rarr (3)_{m}.}$  Let $K\sub M^{m}$ be a supercone and assume that $f:cl(J)\to cl(K)$ is injective.  Suppose now for a contradiction that $f(J)\sub cl(K)\sm K$. By Lemma \ref{U-K} and Corollary \ref{cor-union}, $cl(K)\sm K$ is contained in the union of a small definable family of sets each of the form $h(g, Z_g)$ (for finitely many   $h$'s), with each $Z_g\sub M^{m-1}$ and each $h(g, -):Z_g\to M^{m}$ being $\cal L$-definable and injective. In particular,  $J$ is the union of a small definable family of sets of the form $f^{-1}h(g, Z_g)\cap J$. By Lemma \ref{coneunion1}, one of those sets must contain a supercone $L\sub M^n$. By Lemma \ref{open-supercone}, $T:=\text{interior of } cl(L))\cap J$ is a supercone in $M^m$. But then the map $F=h(g, -)^{-1} \circ f : cl(T)\to M^{m-1}$ is an $\cal L$-definable map that is injective on $T$, contradicting (1)$_{m}$.

\vskip.2cm \noindent  $\mathbf{ (2)_m \,\&\, (3)_m \Rarr  (1) _{m+1}.}$ Let $f:M^{m+1}\to M^n$ be an $\cal L$-definable function and $J\sub M^{m+1}$ a supercone with closure $V$ such that $f_{\res J}$ is injective. Assume towards a contradiction that $m\ge n$.
Let $J_1=\pi_1(J)$ be the projection of $J$ onto the first coordinate, and $V_1=\pi_1(V)$.
By $(2)_m$, for every $t\in J_1$, there is an open box  $X_t\sub Y_t$ on which $f(t, -)$ is injective. By cell decomposition in o-minimal structures, and since $J_1$ is dense in $V_1$, there is an open cell $U\sub V$, such that for every $t\in \pi_1(U)$, $f(t, -)$ is injective on $U_t$.
By Lemma \ref{open-supercone}, $U\cap J$ is a supercone with closure $cl(U)$. We may thus replace $J$ by $U\cap J$, and $V$ by $cl(U)$,  and assume from now on that
for every $t \in V_1$, $f(t,-)$ is injective on $V_t$. \vskip.2cm

\noindent\textbf{Claim 1.} \emph{There is an  open interval $I_1\sub V_1$ and an open box $I\sub M^n$, such that for every $t\in I_1$, $I\sub f(t, V_t)$.}

\begin{proof}[Proof of Claim 1.] Since  for every $t\in V_1$, $f(t, -)$ is injective on $V_t$, it follows that the dimension of the $\cal L$-definable set $$Z =\bigcup_{t\in V_1} \{t\}\times f(t, V_t)$$ is $n+1$. By cell decomposition, there is an open interval $I_1\sub V_1$ and an open box $I\sub M^n$ 
such that $I_1\times  I\sub Z$. In particular, for all $t\in I_1$, $I\sub f(t, V_t)$.
\end{proof}

By Claim 1, we can pick two distinct $t_0, t\in J_1$ such that
$$I\sub f(t_0, V_{t_0})\cap f(t, V_t)$$
has dimension $n$. Since $f_{\res J}$  is injective, for any $b\in I\cap f(t_0, J_{t_0})$, we have $b\not\in f(t, J_t)$, and hence $b\in f(t, V_t\sm J_t)$. Since this holds for every $b\in I\cap f(t_0, J_{t_0})$, we have that
$$I\cap f(t_0, J_{t_0})\sub f(t, V_t\sm J_t).$$

\vskip.1cm

\noindent\textbf{Claim 2.} \emph{There is a supercone $T\sub V_{t_0}$ such that $f(t_0, T)\sub I\cap f(t_0, J_{t_0})$.}
\begin{proof}
Denote $f_{t_0}(-)=f(t_0, -)$. So $f_{t_0}$ is injective on $V_{t_0}$. Since $I\sub f(t_0, V_{t_0})$, we have $f_{t_0}^{-1}(I)\sub V_{t_0}$. Let $I'\sub f_{t_0}^{-1}(I)$ be an open cell. By Lemma \ref{open-supercone}, $T:=I'\cap J_{t_0}$ is a supercone, as required.
\end{proof}

We conclude that the  map $f(t, -)^{-1} \circ f(t_0, -):V_{t_0}\to V_t$ is an injective $\cal L$-definable map that maps $T$ into $V_{t}\sm J_{t}$, contradicting (3)$_m$.
\end{proof}


We show with an example that the assumption on $J$ being a supercone (and not just satisfying  $\dim(cl(J))=m$) is necessary.

\begin{example}\label{exa-lou1} Let $f$ be the function from Example \ref{exa-fiber}. The usual projection map $\pi:\R^2\to\R$ is injective on $Graph(f)$ but of course not injective on any open subset of $cl(Graph(f))=\R^2$.
\end{example}

The next definition and corollary will be useful when we discuss the notion of large dimension in Section  \ref{sec-large dimension}.
\begin{defn}
Let $f:M^k\to M^n$ be an $\cal L$-definable map, $J\sub M^k$ a supercone and $X\sub M^n$ a definable set. We say that
\begin{itemize}
  \item $f$ is a \emph{strong embedding of $J$ into $X$} if $f$ is injective and $f(J)\sub X$.

  \item $f$ is a \emph{weak embedding of $J$ into $X$} if $f_{\res J}$ is injective and $f(J)\sub X$.
\end{itemize}
\end{defn}


\begin{cor}\label{embed} Let $X\sub M^n$ be a definable set. The following are equivalent:
\begin{enumerate}
  \item there is a weak embedding of a supercone $J\sub M^k$ into $X$.

 \item there is a supercone $K\sub M^k$ and an \cal L-definable $f:M^k\to M^n$, injective on $cl(K)$, with $f(K)\sub X$.

  \item there is a strong embedding of a supercone $L\sub M^k$ into $X$.
 \end{enumerate}
\end{cor}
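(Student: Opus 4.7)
The implications $(3) \Rightarrow (2) \Rightarrow (1)$ are immediate: a globally injective $\Cal L$-definable $f$ is in particular injective on $cl(L)$ (take $K = L$), and injectivity on $cl(K)$ restricts to $K$ (take $J = K$). I prove the main direction $(1) \Rightarrow (3)$ in two steps.

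For $(1) \Rightarrow (2)$, given $\Cal L$-definable $f : M^k \to M^n$ with $f_{\res J}$ injective and $f(J) \subseteq X$, the ``in particular'' clause of Proposition \ref{k} provides an open $\Cal L$-definable $Y \subseteq cl(J)$ on which $f$ is injective. I then choose an open box $U$ meeting $J$ with $cl(U) \subseteq Y$ (possible because $Y$ is open in $M^k$ and $J$ is dense in the interior of $cl(J)$, by the recursive supercone definition). Lemma \ref{open-supercone} gives that $K := U \cap J$ is a supercone with closure $cl(U) \subseteq Y$, so $f$ is injective on $cl(K)$ and $f(K) \subseteq X$.

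For $(2) \Rightarrow (3)$, given a supercone $K \subseteq M^k$ and $\Cal L$-definable $f$ injective on $cl(K)$ with $f(K) \subseteq X$, I would shrink $K$ to a supercone inside an open box and then apply a rectilinear coordinate change. A straightforward induction on $k$, using continuity of the supercone bounds $h_1, h_2$, shows that every point of $K$ lies in the interior of $cl(K)$. Fix such a point and pick an open box $B = I_1 \times \dots \times I_k$ around it with $cl(B) \subseteq cl(K)$. By Lemma \ref{open-supercone}, $L_0 := B \cap K$ is a supercone with closure $cl(B)$. Fix $\Cal L$-definable increasing homeomorphisms $\phi_j : M \to I_j$, set $\phi := \phi_1 \times \dots \times \phi_k : M^k \to B$, and let $g := f \circ \phi : M^k \to M^n$. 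Then $g$ is $\Cal L$-definable and globally injective, since $\phi$ is a bijection onto $B \subseteq cl(K)$ and $f$ is injective on $cl(K)$; moreover $g(\phi^{-1}(L_0)) = f(L_0) \subseteq X$.

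It remains to verify that $L := \phi^{-1}(L_0)$ is a supercone, which I would check by induction on $k$. The projection $\pi(L) = (\phi_1 \times \dots \times \phi_{k-1})^{-1}(\pi(L_0))$ is a supercone by the inductive hypothesis, and for $a \in \pi(L)$, the fiber $L_a = \phi_k^{-1}((L_0)_{(\phi_1 \times \dots \times \phi_{k-1})(a)})$ is co-small in $\phi_k^{-1}(I_k) = M$, because smallness is preserved under $\Cal L$-definable bijections by Corollary \ref{fact1}. I expect this verification to be the main obstacle, and the essential trick is to use a \emph{rectilinear} (coordinate-wise) homeomorphism $\phi$, so that $\phi^{-1}$ respects the recursive, projection-based definition of a supercone; a general $\Cal L$-definable homeomorphism would not.
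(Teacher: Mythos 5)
Your proof is correct. The implications $(3)\Rightarrow(2)\Rightarrow(1)$ and your $(1)\Rightarrow(2)$ coincide with the paper's argument: both invoke the final clause of Proposition \ref{k} to find an open $\cal L$-definable subset of $cl(J)$ on which $f$ is injective, and then apply Lemma \ref{open-supercone} to cut $J$ down to a supercone whose closure sits inside that set. Where you diverge is in $(2)\Rightarrow(3)$. The paper keeps the supercone $L=S\cap K$ fixed and simply asserts that $f_{\res S}$, for a suitable open $S\sub cl(K)$, \emph{extends} to a globally injective $\cal L$-definable map $F:M^k\to M^n$, leaving that extension unjustified. You instead change the domain: you reparametrize an open box $B$ inside the interior of $cl(K)$ by a coordinate-wise homeomorphism $\phi:M^k\to B$ and take $g=f\circ\phi$, which is automatically globally injective, at the cost of having to check that $\phi^{-1}(B\cap K)$ is again a supercone. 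That verification is the real content of your route, and it goes through: $B\cap K$ is a supercone with closure $cl(B)$ by Lemma \ref{open-supercone}, the projections behave correctly because $\phi$ is rectilinear, and the fibers are co-small in $M$ because smallness is preserved under $\cal L$-definable injections (Corollary \ref{fact1}). Your insistence on a \emph{rectilinear} $\phi$ is exactly the right precaution, since the recursive definition of a supercone is tied to the coordinate projections. In short, your argument supplies a complete justification for a step the paper treats as obvious, in exchange for one extra (routine) induction.
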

\begin{proof} (3)\Rarr (1) is obvious.

(1)\Rarr (2). Let $f:M^k\to M^n$ be an $\cal L$-definable map, injective on $J$, with $f(J)\sub X$. By Proposition \ref{k}, there is an open definable $S\sub cl(J)$ such that $f_{\res cl(S)}$ is injective. By Lemma \ref{open-supercone}, $J\cap S$ contains a supercone $K$.

(2)\Rarr (3). Let $S\sub cl(K)$ be open so that $f_{\res S}$ can be extended to an injective $\cal L$-definable map $F:M^k\to M^n$. By Lemma \ref{open-supercone} again, $S\cap K$ contains a supercone $L$.
\end{proof}

\subsection{Large dimension}\label{sec-large dimension}

We introduce an invariant for every definable set $X$ which tends to measure `how large' $X$ is. This invariant will be used in the inductive proof of the Structure Theorem in Section \ref{sec-str-thms}. 

\begin{defn}\label{def-large}
Let $X\sub M^n$ be definable. If $X\ne \emptyset$, the \emph{large dimension} of $X$ is the maximum $k\in \bb N$ such that $X$ contains a $k$-cone. Equivalently, it is the maximum  $k\in \bb N$ such that there is a strong embedding of a supercone $J\sub M^k$ into $X$. We also define the large dimension of the empty set to be $-\infty$. We denote the large dimension of $X$ by  $\ldim(X)$.
\end{defn}
Clearly, the large dimension of a subset of $M^n$ is bounded by $n$. In view of Corollary \ref{embed}, the large dimension of $X$ is the maximum  $k\in \bb N$ such that there is a weak embedding of a supercone $J\sub M^k$ into $X$.
In Section \ref{sec-equaldim}, we will prove that the large dimension equals the `$\scl$-dimension' arising from a relevant pregeometry in \cite{beg}. Here we establish some of its basic properties. The first lemma is obvious.

\begin{lemma}
  For every definable $X, Y\sub M^n$, if $X\sub Y$, then $\ldim(X)\le \ldim(Y)$.
\end{lemma}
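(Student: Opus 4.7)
The plan is to argue directly from Definition \ref{def-large}. If $X = \emptyset$, then $\ldim(X) = -\infty$, and there is nothing to do. So assume $X \neq \emptyset$, and let $k = \ldim(X)$, witnessed by a $k$-cone $C \subseteq X$. Since $X \subseteq Y$, we have $C \subseteq Y$, so $Y$ also contains a $k$-cone, whence $\ldim(Y) \geq k = \ldim(X)$.

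Equivalently, using the characterization via strong embeddings, any strong embedding $f : M^k \to M^n$ of a supercone $J \subseteq M^k$ into $X$ is automatically a strong embedding of $J$ into $Y$, since $f(J) \subseteq X \subseteq Y$. Taking the maximum over such $k$ then yields the inequality.

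There is no obstacle here; the statement is immediate from the definition of $\ldim$ as a maximum over a collection that is monotone in the ambient set.
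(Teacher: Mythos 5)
Your argument is correct and is exactly the intended one: the paper itself states this lemma without proof, calling it obvious, precisely because $\ldim$ is defined as a maximum over cones (equivalently, supercone embeddings) contained in the set, which is monotone under inclusion. Your handling of the empty-set convention is also fine.
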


\begin{lemma}\label{coneunion2}
Let  $\{Z_s\}_{s\in S}$ be a  small definable family of sets. Then
$$\ldim\left( \bigcup_{s\in S} Z_s\right)=\max \ldim Z_s.$$
\end{lemma}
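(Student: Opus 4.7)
The inequality $\ldim\left(\bigcup_{s\in S} Z_s\right)\ge \max_s \ldim Z_s$ is immediate from monotonicity of $\ldim$ under inclusion, so the content is the reverse inequality.

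Set $k=\ldim\left(\bigcup_{s\in S} Z_s\right)$. The plan is to witness $k$ by a strong embedding of a supercone into $\bigcup_s Z_s$, pull that supercone back along the fibers of the small family $\{Z_s\}_{s\in S}$, and then extract a supercone inside some single $Z_s$ using the machinery we have already developed.

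In detail, pick a supercone $J\sub M^k$ and an $\cal L$-definable map $f:M^k\to M^n$ which is injective on $cl(J)$ (or at least on $J$) with $f(J)\sub \bigcup_{s\in S} Z_s$; this is available by Definition \ref{def-large} and Corollary \ref{embed}. For each $s\in S$ set $W_s:=J\cap f^{-1}(Z_s)$. Then $\{W_s\}_{s\in S}$ is a definable family of subsets of $M^k$, it is small (its index set is $S$), and $J=\bigcup_{s\in S} W_s$. By Lemma \ref{coneunion1}, some $W_{s_0}$ contains a supercone $K\sub M^k$. Since $f$ is injective on $J\supseteq K$ and $f(K)\sub Z_{s_0}$, the map $f$ weakly embeds $K$ into $Z_{s_0}$. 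Applying Corollary \ref{embed} once more, we obtain a strong embedding of some supercone $L\sub M^k$ into $Z_{s_0}$, so $\ldim(Z_{s_0})\ge k$, as required.

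The one mildly delicate point is making sure that the hypotheses of Lemma \ref{coneunion1} are met by the pulled-back family $\{W_s\}_{s\in S}$; this is immediate from the definability of $\{Z_s\}_{s\in S}$ and of $f$, together with the smallness of $S$. Everything else is straightforward bookkeeping between the strong and weak embedding formulations of large dimension.
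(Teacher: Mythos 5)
Your proof is correct and takes essentially the same route as the paper: pull the covering family back along $f$ to get a small definable family covering the supercone $J$, invoke Lemma \ref{coneunion1} to find a supercone $K$ inside some single pullback, and push forward to conclude $\ldim(Z_{s_0})\ge k$. The only cosmetic difference is that the paper starts directly from a globally injective $f$ (a strong embedding), whereas you start from a weak embedding and invoke Corollary \ref{embed} at the end to upgrade it; both are fine.
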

\begin{proof}  ($\le$). Assume $f:M^n\to M^m$ is an $\cal L$-definable injective map, $J\sub M^n$ is a supercone, and $f(J)\sub  \bigcup_{s\in S} Z_s$. We show that for some $s\in S$, $\ldim(Z_s)\ge n$. For every $s\in S$, let $X_s:=f^{-1}(Z_s).$ Then $\{X_s\cap J\}_{s\in S}$ is a definable family of subsets of $M^n$ that cover $J$, and by Lemma \ref{coneunion1}, one of them must contain a supercone $K\sub M^n$. Since $f(K)\sub Z_s$, we have that $\ldim(Z_s)\ge n$.

($\ge$). This is clear.
\end{proof}

In particular, we obtain the following standard property that holds for any good notion of dimension.

\begin{cor}\label{union}   Let $X_1, \dots, X_l$ be definable sets. Then
$$\ldim(X_1\cup\dots\cup X_l)=\max\{\ldim(X_1), \dots, \ldim(X_l)\}.$$
\end{cor}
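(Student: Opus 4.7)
The plan is to derive this immediately from Lemma \ref{coneunion2} by viewing a finite union as a union indexed by a finite (hence small) set. First I would check that any finite subset of $M$ is small: a singleton is $P$-bound (as the image of the constant map on $P$), and by Corollary \ref{cor-union} a finite union of small sets is small. Hence the index set $S := \{1, \dots, l\} \subseteq M$ is small.

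Next I would package the finite union as a small definable family. Define $Z_s := X_s$ for $s \in S$; then $\bigcup_{s\in S} \{s\} \times Z_s$ is definable (being a finite union of definable sets $\{s\}\times X_s$), so $\{Z_s\}_{s\in S}$ is a small definable family of definable sets in the sense used by Lemma \ref{coneunion2}.

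Applying Lemma \ref{coneunion2} to this family directly yields
\[
\ldim(X_1 \cup \dots \cup X_l) = \ldim\!\left(\bigcup_{s\in S} Z_s\right) = \max_{s\in S} \ldim(Z_s) = \max\{\ldim(X_1), \dots, \ldim(X_l)\},
\]
which is the desired equality. I do not expect a real obstacle here; the whole content lies in Lemma \ref{coneunion2}, and the only thing to verify is that finite index sets are small, which is immediate from Corollary \ref{cor-union}.
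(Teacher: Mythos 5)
Your proof is correct and matches the paper's intended argument: Corollary \ref{union} is stated as an immediate consequence of Lemma \ref{coneunion2}, obtained exactly by treating the finite union as a union over a finite (hence small) index set. Your extra verification that finite subsets of $M$ are small via Corollary \ref{cor-union} is a reasonable bit of bookkeeping the paper leaves implicit.
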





About supercones and cones we have:

\begin{cor}\label{conedim}
If $C\sub M^n$ is a $k$-cone, then $\ldim(C)=k$.
 \end{cor}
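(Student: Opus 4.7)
The plan is to prove both $\ldim(C)\ge k$ and $\ldim(C)\le k$ starting from the standard presentation $C=h(\cal J)$ with $\cal J=\bigcup_{g\in S}\{g\}\times J_g$, where $S\sub M^m$ is small, each $J_g\sub M^k$ is a supercone, and $h:V\sub M^{m+k}\to M^n$ is an $\cal L$-definable continuous map with each $h(g,-)$ injective on $V_g$, as in Definition \ref{def-cone}.

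For the lower bound, I would fix any $g\in S$ and extend $h(g,-)$ to a total $\cal L_g$-definable map $f:M^k\to M^n$ (for instance by setting $f$ to $0$ outside $V_g$). Condition (2) of Definition \ref{def-cone} makes $f$ injective on the supercone $J_g$, and $f(J_g)=h(g,J_g)\sub C$ by construction. This exhibits a weak embedding of $J_g$ into $C$, so by Corollary \ref{embed} and Definition \ref{def-large} we conclude $\ldim(C)\ge k$.

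For the upper bound, I would argue by contradiction: if $\ldim(C)\ge k+1$, there exist a supercone $K\sub M^{k+1}$ and an $\cal L$-definable injective $F:M^{k+1}\to M^n$ with $F(K)\sub C$. Since $C=\bigcup_{g\in S}h(g,J_g)$, the supercone $K$ is covered by the small definable family $\{K\cap F^{-1}(h(g,J_g))\}_{g\in S}$. The key step is to apply Lemma \ref{coneunion1} to trap a supercone $L\sub M^{k+1}$ inside one preimage $K\cap F^{-1}(h(g_0,J_{g_0}))$. I would then form the composition $h(g_0,-)^{-1}\circ F$, extended to a total $\cal L_{g_0}$-definable map $\phi:M^{k+1}\to M^k$: on $L$ this map is well-defined (since $F(L)\sub h(g_0,J_{g_0})$ and $h(g_0,-)$ is injective on $V_{g_0}\supseteq J_{g_0}$) and injective (by injectivity of $F$ together with injectivity of $h(g_0,-)$). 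Such a map is forbidden by Proposition \ref{k}(1), which would force $k+1\le k$.

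The main obstacle is the upper bound; the crucial maneuver is exactly the application of Lemma \ref{coneunion1}, which reduces a small-parameter cover of $K$ to a single fiber containing a supercone, after which Proposition \ref{k}(1) delivers the contradiction.
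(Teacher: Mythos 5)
Your proof is correct and follows essentially the same route as the paper: the paper's one-line proof invokes Lemma \ref{coneunion2}, whose own proof is exactly your upper-bound maneuver (pull the small cover back through the embedding, apply Lemma \ref{coneunion1} to trap a supercone in one fiber, then contradict Proposition \ref{k}(1)), while the lower bound is immediate from the definition in both versions. You have simply inlined Lemma \ref{coneunion2} rather than citing it.
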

 \begin{proof}
By Lemma \ref{coneunion2} and the definition of a cone it suffices to show that every supercone in $M^k$ has large dimension $k$. But this is clear.
 \end{proof}

\begin{cor}\label{clJ-J}
Let $n>0$ and $J\sub M^{n}$ be a supercone. Then $\ldim(cl(J)\sm J)<n$.
\end{cor}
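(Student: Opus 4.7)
The plan is to combine Lemma \ref{U-K} with Proposition \ref{k}(1), using the two dimension-comparison results (Corollary \ref{union} and Lemma \ref{coneunion2}) to reduce the statement to a single fiber of a bounded map.

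First I would apply Lemma \ref{U-K} to the supercone $J \sub M^n = M^{(n-1)+1}$. This expresses $cl(J) \setminus J$ as a finite union of sets of the form
\[
W \;=\; \bigcup_{g \in P^m} h(g, Z_g),
\]
where $Z \sub P^m \times M^{n-1}$ is definable and each fiber map $h(g,-) : Z_g \to M^n$ is an $\cal L$-definable injection from a subset of $M^{n-1}$ into $M^n$. By Corollary \ref{union} it suffices to bound $\ldim(W)$ for each such $W$ separately. Since $P^m$ is small by Assumption (I) together with Corollary \ref{cor-union}, Lemma \ref{coneunion2} applied to the small definable family $\{h(g, Z_g)\}_{g \in P^m}$ gives $\ldim(W) = \max_{g \in P^m} \ldim(h(g, Z_g))$. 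The task therefore reduces to showing $\ldim(h(g, Z_g)) < n$ for every fixed $g \in P^m$.

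For this last step I would argue by contradiction. Suppose $\ldim(h(g, Z_g)) \geq n$. By Definition \ref{def-large} there exists a supercone $J' \sub M^n$ and an injective $\cal L$-definable map $f : M^n \to M^n$ with $f(J') \sub h(g, Z_g)$. Since $h(g,-)$ is an $\cal L$-definable injection on $Z_g$, its inverse is $\cal L$-definable on $h(g, Z_g)$, so the partial composition $F := h(g,-)^{-1} \circ f$ is an $\cal L$-definable map, defined and injective on $J'$, taking values in $Z_g \sub M^{n-1}$. Extending $F$ to a total $\cal L$-definable function $M^n \to M^{n-1}$ (for instance by setting $F(x) = 0$ outside its natural domain) and applying Proposition \ref{k}(1) yields $n \leq n-1$, a contradiction.

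The main obstacle is essentially bookkeeping of dimensions: one must notice that Lemma \ref{U-K} drops the ambient dimension by exactly one (the injections produced have domain in $M^{n-1}$ rather than $M^n$), because this is exactly what makes Proposition \ref{k}(1) output the strict inequality $n - 1 < n$ rather than an equality. The remaining technical points — that $h(g,-)^{-1}$ inherits $\cal L$-definability from the injectivity clause of Lemma \ref{U-K}, and that a partial $\cal L$-definable function extends to a total one without affecting injectivity on $J'$ — are straightforward.
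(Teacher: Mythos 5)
Your proof is correct, but it takes a different (and noticeably longer) route than the paper's. The paper derives Corollary~\ref{clJ-J} as an immediate consequence of Proposition~\ref{k}(3), which says precisely that a supercone cannot be mapped injectively into the boundary $cl(K)\sm K$ of another supercone $K$: if $\ldim(cl(J)\sm J)\ge n$, a strong embedding of a supercone $L\sub M^n$ into $cl(J)\sm J$ would violate Proposition~\ref{k}(3) with $f(L)\cap J=\emptyset$. What you do instead is essentially \emph{unfold} the internal proof of Proposition~\ref{k}(3) in the special case at hand: you invoke Lemma~\ref{U-K} to describe $cl(J)\sm J$ as a finite union of small families of $\cal L$-definable images from $M^{n-1}$, then Corollary~\ref{union} and Lemma~\ref{coneunion2} to reduce to a single fiber, and finally compose with the inverse of the fiber map and hit Proposition~\ref{k}(1). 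These are exactly the ingredients in the paper's proof of $(1)_m\Rightarrow(3)_m$ (with Lemma~\ref{coneunion1} in place of Lemma~\ref{coneunion2}), so while valid, your route re-derives a packaged result rather than citing it, and does not add anything beyond it.

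One imprecision worth flagging: you claim that because $h(g,-)$ is an $\cal L$-definable injection \emph{on $Z_g$}, its inverse is $\cal L$-definable on $h(g,Z_g)$. That inference is not correct as stated, since $Z_g$ itself is merely definable in $\widetilde{\cal M}$, not $\cal L$-definable, so writing down ``the unique $y\in Z_g$ with $h(g,y)=f(x)$'' is not an $\cal L$-formula. What actually saves the argument (and what the paper itself silently relies on in proving $(1)_m\Rightarrow(3)_m$) is the stronger property, visible in the proof of Lemma~\ref{U-K} via Lemma~\ref{philippcor}, that $h(g,-)$ is injective on an $\cal L_g$-definable cell fiber $V_g$ \emph{containing} $Z_g$; then $h(g,-)^{-1}$ is genuinely $\cal L_g$-definable on $h(g,V_g)\supseteq h(g,Z_g)$ and the composition with $f$ extends to a total $\cal L$-definable map. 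Your ``main obstacle'' paragraph identifies the correct dimension bookkeeping, but it misattributes the $\cal L$-definability of the inverse to the statement of Lemma~\ref{U-K} rather than to its proof; you should make the appeal to the cell-fiber injectivity explicit.
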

\begin{proof}
Immediate from Proposition \ref{k}(3) and the definitions.
\end{proof}

\begin{lemma}\label{ldimproj}
Let $X\sub M^{n+1}$ be a definable set, such that for every $t\in \pi(X)$, $X_t$ is small. Then $\ldim(X)=\ldim(\pi(X))$.
\end{lemma}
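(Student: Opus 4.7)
The plan is to apply Lemma~\ref{philippcor} to $X$ and then reduce both sides of the desired equality to the same quantity by means of Lemma~\ref{coneunion2}. Since every fiber $X_a$, $a\in\pi(X)$, is small and non-empty, Lemma~\ref{philippcor} yields integers $l,m$ and, for $i=1,\dots,l$, an $\cal L$-definable continuous map $h_i\colon V_i\subseteq M^{m+n}\to M^{n+1}$, a small set $S_i\subseteq M^m$, and a definable family $\{Z_{ig}\}_{g\in S_i}$ of subsets of $\pi(X)$ such that
\[
X=\bigcup_{i=1}^{l} U_i, \qquad U_i=h_i\Big(\bigcup_{g\in S_i}\{g\}\times Z_{ig}\Big),
\]
each $h_i(g,-)$ is injective on its domain, and $\pi(X)=\bigcup_{i,g}Z_{ig}$.

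The index set $\bigcup_{i=1}^l\{i\}\times S_i$ is a finite union of small sets, hence small by Corollary~\ref{cor-union}. Applying Lemma~\ref{coneunion2} both to this small-family covering of $\pi(X)$ and to the covering $U_i=\bigcup_{g\in S_i}h_i(g,Z_{ig})$, and combining with Corollary~\ref{union} for the finite union $X=U_1\cup\dots\cup U_l$, we obtain
\[
\ldim(\pi(X))=\max_{i,g}\ldim(Z_{ig})\qquad\text{and}\qquad \ldim(X)=\max_{i,g}\ldim(h_i(g,Z_{ig})).
\]
It therefore suffices to prove $\ldim(Z_{ig})=\ldim(h_i(g,Z_{ig}))$ for each pair $(i,g)$, that is, the invariance of large dimension under the $\cal L$-definable bijection $h_i(g,-)\res Z_{ig}$.

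To verify this invariance I would observe that if $K=h'(\cal J')$ is a $k$-cone contained in $Z_{ig}$, then $h_i(g,K)=(h_i(g,-)\circ h')(\cal J')$ is a $k$-cone contained in $h_i(g,Z_{ig})$: the composition $h_i(g,-)\circ h'$ is still $\cal L$-definable and continuous, and is injective in each fiber of $\cal J'$ as a composition of two fiber-wise injections, while the uniform family of supercones $\cal J'$ and its shell are unchanged. The reverse direction is symmetric, using that the inverse of the $\cal L$-definable injection $h_i(g,-)\res V_{ig}$ is itself $\cal L$-definable (its graph is the coordinate swap of the graph of $h_i(g,-)$).

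No serious obstacle arises; once the $\cal L$-definable fiber-wise bijection structure supplied by Lemma~\ref{philippcor} is in hand, the argument is essentially a bookkeeping exercise in combining the union lemma with the invariance of large dimension under $\cal L$-definable bijections. The only mild technicality is that $h_i(g,-)\circ h'$ is $\cal L$-definable over a parameter set containing $g$, which is harmless since Definition~\ref{def-cone} places no restriction on the parameters of the defining $\cal L$-definable map.
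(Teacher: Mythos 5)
Your proof is correct and follows essentially the same route as the paper's: apply Lemma~\ref{philippcor}, reduce both $\ldim(\pi(X))$ and $\ldim(X)$ to $\max_{i,g}\ldim(Z_{ig})$ via Lemma~\ref{coneunion2} and Corollary~\ref{union}, and use that the $\cal L$-definable fiber-wise injections $h_i(g,-)$ preserve large dimension. The paper states this last step more tersely (citing Lemma~\ref{philippcor}(2)); note that the cleanest way to carry it out, which sidesteps your shell/domain bookkeeping for the composite $h_i(g,-)\circ h'$, is to use the weak-embedding characterization of $\ldim$ from Corollary~\ref{embed} rather than pushing cone structures through the composition.
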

\begin{proof} Let $U_i$, $S_i$, $h_i$ and $Z_{ig}$ be as in Lemma \ref{philippcor}. In particular,
\begin{equation}
U_i=h_i\left(\bigcup_{g\in S_i} \{g\}\times Z_{ig}\right).\label{eq-ldimproj}
\end{equation}

($\ge$).  By Lemma \ref{philippcor}(3), we have $\pi(X)=\bigcup_{i,g} Z_{ig}$. By Lemma \ref{coneunion2}, for some $i, g$, we have $\ldim(Z_{ig})=\ldim(\pi(X))$. By Equation (\ref{eq-ldimproj}) and Lemma \ref{philippcor}(1), we obtain
$$\ldim(Z_{ig})\le \ldim(U_i)\le \ldim(X).$$

($\le$). By Corollary \ref{union},  $\ldim(X)= \max_i \ldim(U_i)$. By Equation (\ref{eq-ldimproj}),  Lemma \ref{philippcor}(2) and Lemma \ref{coneunion2}, for every $i$, $\ldim(U_i)=\max_g \ldim(Z_{ig})$. But $Z_{ig}\sub \pi(X)$, so $\ldim(X)\le \ldim(\pi(X))$.
\end{proof}

\begin{cor}\label{smallprojection}
Let $X\sub M^n$ be a definable set. Then $\ldim(X)=0$ if and only if $X$ is small.
\end{cor}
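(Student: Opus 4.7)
The argument hinges on two simple observations about $1$-supercones. First, any nonempty $1$-supercone $J\subseteq M$ is large: it is co-small in some nonempty open interval $I$, so if $J$ were also small, then $I = J\cup (I\setminus J)$ would be a union of two small sets, hence small by Corollary \ref{cor-union}, contradicting the fact that $I$ is large. Second, any non-small set $Y\subseteq M$ contains a nonempty $1$-supercone: apply Lemma \ref{lem:beg1} to $Y$ with trivial parameter space; in the resulting decomposition of $M$ some piece must be co-small in a nondegenerate interval, since otherwise $Y$ would be a finite union of small pieces and hence small.

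For the reverse direction ($X$ small $\Rightarrow \ldim(X)=0$), if $X$ is nonempty then any singleton inside it is a $0$-cone by Remark \ref{rem-supercone}(3), so $\ldim(X)\geq 0$. Suppose for contradiction $\ldim(X)\geq 1$, so $X$ contains a nonempty $k$-cone $C = h\bigl(\bigcup_{g\in S}\{g\}\times J_g\bigr)$ with $k\geq 1$. Pick $g_0\in S$ with $J_{g_0}\neq\emptyset$ and any $a\in\pi_{k-1}(J_{g_0})$ (nonempty since $J_{g_0}$ is); then $(J_{g_0})_a\subseteq M$ is a nonempty $1$-supercone, hence large by the first observation. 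The map $y\mapsto h(g_0,a,y)$ is injective as a restriction of $h(g_0,-)$, and it sends $(J_{g_0})_a$ into $X$; by Corollary \ref{fact1} its image is large, forcing $X$ itself to be large --- a contradiction.

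For the forward direction ($\ldim(X)=0\Rightarrow X$ small), I would induct on $n$. The base case $n=1$: if $X$ were not small, the second observation yields a $1$-supercone $J\subseteq X$; but $J$ is itself a $1$-cone (take $S=\{0\}$ with shell $\{0\}\times(h_1,h_2)$ and $h(0,y)=y$), so $\ldim(X)\geq 1$, a contradiction. Inductive step $n>1$: applying the same reasoning to each fiber $X_t\subseteq M$, if some $X_t$ were non-small it would contain a $1$-supercone $J$, whence $\{t\}\times J\subseteq X$ is a $1$-cone (via $h(0,y)=(t,y)$) giving $\ldim(X)\geq 1$ again. Hence every fiber $X_t$ is small; Lemma \ref{ldimproj} then gives $\ldim(\pi(X))=0$, the induction hypothesis makes $\pi(X)$ small, and Corollary \ref{smallunion} yields that $X=\bigcup_{t\in\pi(X)}\{t\}\times X_t$ is small. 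The delicate step is just the first observation about $1$-supercones; everything else is a clean assembly of the structural tools already developed.
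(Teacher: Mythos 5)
Your proof is correct and follows essentially the same route as the paper: induction on $n$, using Lemma \ref{lem:beg1} to see that non-smallness of a fiber produces a $1$-supercone (hence a $1$-cone) inside $X$, then Lemma \ref{ldimproj} to transfer $\ldim=0$ to the projection. The only cosmetic difference is at the end of the inductive step, where you invoke Corollary \ref{smallunion} to conclude $X$ is a small union of small sets, whereas the paper observes $X$ sits inside the product of its (small) coordinate projections; both finishes are equally valid.
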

\begin{proof}
Right-to-left is immediate from the definitions of a small set and large dimension. For the left-to-right, we use induction on $n$. If $n=1$, the statement is clear by Lemma \ref{lem:beg1}. Assume we know the statement for all $l\le n$ and let $X\sub M^{n+1}$. \vskip.2cm

\noindent\textbf{Claim.} \emph{The projection of $X$ onto any of its coordinates is small.}
\begin{proof}[Proof of Claim]
Without loss of generality we may just prove that the projection $\pi(X)$ onto the first $n$ coordinates is small. Since $\ldim(X)=0$, using Lemma \ref{lem:beg1}, we see that for every $t\in \pi(X)$, $X_t$ is small. By Lemma \ref{ldimproj}, $\ldim(\pi(X))=\ldim(X)=0$. By Inductive Hypothesis, $\pi(X)$ is small.
\end{proof}

Since $X$ is contained in the product of its coordinate projections, it is again small.
\end{proof}

In Definition \ref{def-low}, we introduced low sets. We are now able to determine their large dimension.

\begin{lemma}\label{nonlow}
Let $X\sub M^n$ be a low definable set. Then $\ldim(X)=n-1$.
\end{lemma}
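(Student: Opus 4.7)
The plan is to prove $\ldim(X)\le n-1$ and $\ldim(X)\ge n-1$ separately. Fix the index $i\in\{1,\dots,n\}$ and the $\cal L$-definable function $f:M^{n-1}\times M^l\to M$ witnessing lowness of $X$. Since a coordinate permutation is an $\cal L$-definable homeomorphism, it carries cones to cones and therefore preserves large dimension, so I may assume $i=n$ throughout.

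For the upper bound, observe that for every $a\in \pi(X)\subseteq M^{n-1}$ the fiber $X_a=f(a,P^l)\subseteq M$ is $P$-bound, hence small by Lemma \ref{lem:beg2}. Lemma \ref{ldimproj} then yields directly $\ldim(X)=\ldim(\pi(X))\le n-1$, since $\pi(X)\subseteq M^{n-1}$.

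For the lower bound, I will exhibit an $(n-1)$-cone contained in $X$. Fix any $g_0\in P^l$ and set $F(a):=f(a,g_0)$. By o-minimal cell decomposition applied to $F$, there is an open box $B\subseteq M^{n-1}$ on which $F$ is continuous. A routine induction on $k$ from Definition \ref{def-supercone} shows every open box in $M^k$ is a supercone (each fiber being an open interval, which is co-small in itself, with $h_1,h_2$ the constant bounding functions). Now take $S=\{g_0\}\subseteq P^l$ (a singleton, hence small), the trivially uniform family $\cal J=\{B\}$ of supercones with shell $V=\{g_0\}\times B$, and the $\cal L$-definable continuous map $h:V\to M^n$ given by $h(g_0,a)=(a,F(a))$. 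Then $h(g_0,-)$ is injective on $B$ (the first $n-1$ coordinates recover $a$) and $h(\cal J)\subseteq X$ by the very definition of $X$, so $h(\cal J)$ is an $(n-1)$-cone inside $X$ and $\ldim(X)\ge n-1$.

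The argument is quite direct and I do not anticipate any serious obstacle. The only detail worth checking carefully is that the construction in the lower bound genuinely satisfies Definition \ref{def-cone}, but each of its requirements---smallness of $S$, supercone structure of $B$ together with a shell $V$, fiberwise injectivity and continuity of $h$---holds transparently.
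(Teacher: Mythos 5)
Your proof is correct, but it takes a genuinely different route from the paper's, which simply cites Remark \ref{rem-supercone}(3) (asserting without detail that low subsets of $M^n$ are $(n-1)$-cones) together with Corollary \ref{conedim}. Where the paper goes via ``low set $=$ $(n-1)$-cone'', you instead sandwich: the upper bound comes from fiber smallness via Lemma \ref{ldimproj}, the lower bound from exhibiting a single $(n-1)$-cone inside $X$ (the graph of $f(-,g_0)$ over a box). Both are legitimate; yours is arguably more self-contained, since it avoids the cell-decomposition and uniformity bookkeeping needed to verify the Remark's claim that the \emph{whole} low set is a cone, and it only uses the trivial direction of the fact that coordinate permutations (being $\cal L$-definable homeomorphisms composed with the cone data) carry $k$-cones to $k$-cones --- which is immediate and does not require the later Corollary \ref{bijection}. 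One small observation: since $f$ is a total function on $M^{n-1}\times M^l$, the projection $\pi(X)$ is in fact all of $M^{n-1}$, so $\ldim(\pi(X))=n-1$ on the nose; your Lemma \ref{ldimproj} step therefore already yields equality, and the lower-bound construction, while correct, is redundant.
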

\begin{proof}
By Remark \ref{rem-supercone}(3) and Corollary \ref{conedim}.
\end{proof}


\begin{remark}
We observe that the converse of Lemma \ref{nonlow} does not hold, even if we allow finite unions of low definable sets.
For example, let $X:= (M\setminus P) \times P$. One can see that $X$ is a 1-cone. Suppose $X$ is the finite union of low sets. Then the image
of $X$ under at least one of the coordinate projections has interior. But the images of $X$ under the two coordinate projections are $M\setminus P$ and $P$. Neither of these two sets has nonempty interior.
\end{remark}



\section{Structure theorem}\label{sec-str-thms}
We are now ready to prove the main result of this paper, which consists of statements (1) and (2) below. The proof runs by simultaneous induction along with statement  (3). The latter is a uniform version of (1).

\begin{theorem}[Structure Theorem]\label{str-thm-sets}$ $

\begin{enumerate}
  \item  Let $X\sub M^n$ be an $A$-definable set. Then $X$ is a finite union of $A$-definable cones. 

  \item  Let $f:X\sub M^n\to M$ be an $A$-definable function. Then there is a finite collection $\cal C$ of $A$-definable cones whose union is $X$ and such that $f$ is fiber $\cal L_A$-definable with respect to each $C\in \cal C$.
\item Let $\{X_t\}_{t \in M^m}$ be an $A$-definable family of subsets of $M^n$. Then there is $p\in \N$ and for every $i\in\{1,\dots,p\}$,
\begin{itemize}
\item an $A$-definable subset $Y_i\subseteq M^m$,
\item $k_i \in \N$,
\item an $A$-definable uniform family of $k_i$-cones $\{C^i_t\}_{t\in Y_i}$,
\end{itemize}
such that for all $t \in M^m$
\[
X_t = \bigcup \left\{C^i_t:  t \in Y_i\right\}.
\]
\end{enumerate}
\end{theorem}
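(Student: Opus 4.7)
The plan is to prove (1), (2), (3) simultaneously by induction on $n$, with the case $n=0$ trivial. Assuming all three statements for indices up to $n$, I derive (1)$_{n+1}$ from (1)$_n$, (2)$_n$ and (3)$_{n-1}$; then obtain (3)$_{n+1}$ from (1)$_{n+1}$ by a compactness argument; and finally deduce (2)$_{n+1}$ from (1)$_{n+1}$ by a sub-induction on the large dimension of the domain.

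For the core step, (1)$_{n+1}$, let $X\sub M^{n+1}$. By Remark \ref{definability}(a), the sets $Y_s:=\{t\in \pi(X): X_t \text{ small}\}$ and $Y_l:=\pi(X)\sm Y_s$ are $A$-definable; applying (1)$_n$ to each and working cone by cone, I reduce to the case where $\pi(X)=h(\cal J)$ is a single $A$-definable $k$-cone and either all $X_t$ are small (\emph{Case I}) or all are large (\emph{Case II}). In \emph{Case II}, Remark \ref{definability}(b) together with Lemma \ref{lem:beg1} furnishes $A$-definable endpoint functions $h_1 < h_2$ such that $X_t$ is co-small in $(h_1(t), h_2(t))$ for each $t\in \pi(X)$; applying (2)$_n$ refines $\pi(X)$ into cones on which $h_1, h_2$ become fiber $\cal L_A$-definable, and Lemma \ref{cone+1} realizes $X$ as a finite union of $A$-definable $(k{+}1)$-cones. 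In \emph{Case I}, Lemma \ref{philippcor} produces $\cal L_A$-definable continuous maps $\hat h_i$, $A$-definable small sets $S_i$, and $A$-definable families $\{Z_{i,g}\}_{g\in S_i}$ of subsets of $\pi(X)\sub M^n$ with
\[
X = \bigcup_i \hat h_i\Big(\bigcup_{g\in S_i}\{g\}\times Z_{i,g}\Big),
\]
each $\hat h_i(g,-)$ injective. The $Z_{i,g}$ are not a priori supercones; pulling them back through $h$ to subsets of a shell of $\cal J$ and applying a uniform version of (1) at a lower index (namely (3)$_{n-1}$, invoked after restriction to the supercone fibers via Lemma \ref{open-supercone}) yields a uniform small family of $k$-cones exhausting each $Z_{i,g}$. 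Lemma \ref{largefibers} then assembles the $\cal L_A$-definable images back into $k$-cones, exhibiting $X$ as a finite union of $A$-definable $k$-cones.

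For (3)$_{n+1}$, given an $A$-definable family $\{X_t\}_{t\in M^m}$, apply (1)$_{n+1}$ fiberwise; by Remark \ref{rem-supercone}(2), the assertion that prescribed $A$-definable data witness a uniform cone decomposition of $X_t$ is an $A$-definable condition in $t$, and a standard compactness argument partitions $M^m$ into finitely many $A$-definable pieces on each of which a single decomposition works. For (2)$_{n+1}$, let $f: X\sub M^{n+1}\to M$ and sub-induct on $d:=\ldim(X)$. The base case $d=0$ is Corollary \ref{smallprojection} combined with Lemma \ref{lem:2n0}. In general, apply (1)$_{n+1}$ to write $X=\bigcup_j C_j$ with $C_j=h_j(\cal J_j)$; for each $j$, apply Corollary \ref{philipp2uniform} to $f\circ h_j$ (viewed as a family over the small index set of $\cal J_j$) to obtain $\cal L_A$-definable continuous $F_{j,i}$, a parameter $u_j\in P^{t_j}$, and an exceptional subset $E_j\sub \cal J_j$ off which $f\circ h_j$ agrees with some $F_{j,i}(u_j,-)$. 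Absorbing $u_j$ into the small index set of $\cal J_j$ (cf.\ the proof of Lemma \ref{lem:2n0}) yields fiber $\cal L_A$-definability of $f$ on $h_j(\cal J_j\sm E_j)$; the residual set $\bigcup_j h_j(E_j)$ is a finite union of $\cal L_A$-definable images of low sets, hence has large dimension strictly less than $d$ by Lemma \ref{nonlow} and Corollary \ref{union}, and the sub-inductive hypothesis concludes.

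The main obstacle is the careful bookkeeping of parameters and shells throughout. In Case I of (1)$_{n+1}$, one must verify that the uniform cone decomposition of the $Z_{i,g}$ coming from (3)$_{n-1}$ really fits into the format required by Lemma \ref{largefibers} -- specifically, that the shells and small index sets align so that the composition with $\hat h_i$ is again a cone in the strict sense of Definition \ref{def-cone}. In (2)$_{n+1}$, one must check that the auxiliary $P$-parameters $u_j$ from Corollary \ref{philipp2uniform} can be absorbed into the cone's small index set without destroying the $A$-definability of $\cal J_j$ or the $\cal L_A$-definability and continuity of the cone map. Both points rest on Lemma \ref{shell-canonical} for rigidity of shells, Lemma \ref{lem:beg2} identifying small sets with $P$-bound sets, and the canonicity noted in Remark \ref{rem-supercone}(7).
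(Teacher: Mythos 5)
Your proposal follows essentially the same route as the paper's proof: the same simultaneous induction on $n$, the same case split on small versus large fibers via Remark \ref{definability}, Lemma \ref{philippcor} followed by (3) at a lower index and Lemma \ref{largefibers} in the small-fiber case, Lemma \ref{cone+1} after an application of (2) at a lower index in the large-fiber case, a compactness argument for (3), and a sub-induction on large dimension using Corollary \ref{philipp2uniform} and Lemma \ref{lem:2n0} for (2). The one step you gloss over is in (2): the parameter $u_j$ from Corollary \ref{philipp2uniform} varies with the element $g$ of the small index set, and $h_j(\cal J_j\sm E_j)$ is not yet a union of cones, so the pieces on which $f\circ h_j$ agrees with a fixed $F_{j,i}(u,\cdot)$ must themselves be re-decomposed via (3) at a lower index and reassembled with Lemma \ref{largefibers} (exactly the move you already make in your Case I), and the bound on the large dimension of the residual set is a small, not finite, union, so it needs Lemma \ref{coneunion2} rather than Corollary \ref{union}.
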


\begin{proof}
We write (1)$_n$ - (3)$_n$ for the above statements. We will now show by induction on $n$ that (1$)_n$ - (3$)_n$ hold. Statements (1)$_0$ - (3)$_0$ are trivial. Suppose now that $n>0$ and (1)$_l$ - (3)$_l$ hold for every $l< n$. It is left to show  (1)$_{n}$ - (3)$_{n}$.

\vskip.2cm \noindent $\mathbf{(1)_n}.$ Let $X\sub M^n$. By Remark \ref{definability}(b), we may assume that there are $A$-definable $h_1, h_2:M^{n-1}\to M\cup\{\pm\infty\}$ such that for every $a\in \pi(X)$, $X_a$ is contained in $(h_1(a), h_2(a))$, and it is either small in it for all $a\in \pi(X)$, or co-small in it for all $a\in \pi(X)$. We handle the two cases separately.\\

\noindent\textbf{Case I:} For every $a\in \pi(X)$, $X_a$ is co-small in $(h_1(a), h_2(a))$. \newline
By  (2)$_{n-1}$, we may assume that $\pi(X)$ is an $A$-definable cone, such that $h_1, h_2$ are fiber $\cal L_A$-definable with respect to it.  By Lemma \ref{cone+1}, $X$ is a finite union of $A$-definable cones.\\

\noindent\textbf{Case II:} For every $a\in \pi(X)$, $X_a$ is small in $(h_1(a), h_2(a))$. \newline
By Lemma \ref{philippcor}, we may assume that there are an $\cal L_A$-definable continuous function $h:Y \subseteq M^{m+n-1}\to M^n$, and $A$-definable small set $S\sub M^m$, and an $A$-definable family $\{Z_g\}_{g\in S}$ with $Z_g\sub \pi(X)$ such that
\begin{itemize}
\item $X=h\left(\bigcup_{g\in S} \{g\}\times Z_g\right)$, and
\item for every $g\in S$, $h(g, -):M^{n-1} \to M^{n}$ is injective.
\end{itemize}
By ($3)_{n-1}$, there is $p\in \N$ and for every $i\in\{1,\dots,p\}$,
\begin{itemize}
\item an $A$-definable subset $Y_i\subseteq S$,
\item $k_i \in \N$,
\item an $A$-definable uniform family of $k_i$-cones $\{C^i_g\}_{g\in Y_i}$,
\end{itemize}
such that for all $g \in S$,
\[
Z_g = \bigcup \left\{C_g^j : g\in Y_j\right\}.
\]
By Lemma \ref{largefibers}, we have that for each $j\in \{1,\dots,p\}$,
\[
h\left(\bigcup_{g\in Y_j} \{g\}\times C^j_g\right)
\]
is an $A$-definable $k_j$-cone. Thus $X$ is a finite union of $A$-definable cones.

\vskip.2cm \noindent   $\mathbf{(1)_{n} \,\Rarr\, (2)_{n}.}$ Let $f:X\sub M^{n}\to M$ be an $A$-definable function. We prove (2)$_n$ by sub-induction on $\ldim(X)$. Suppose first that $\ldim(X)=0$. By (1$)_n$ we can assume that $X$ is a $0$-cone. By Lemma \ref{lem:2n0} we can find a finite collection $\cal C$ of $A$-definable cones whose union is $X$ and such that $f$ is fiber $\cal L_A$-definable with respect to each $C\in \cal C$. So we can now assume that $\ldim(X)=k>0$ and ($2)_n$ holds for all definable functions whose domain has $\ldim<k$. By (1)$_{n}$, we may assume $X\sub M^{n}$ is an $A$-definable $k$-cone, say $X=h(\cal J)$. Let $S=\pi(\cal J)$. We now apply Corollary \ref{philipp2uniform} to $f\circ h : \bigcup_{g\in S} \{g\}\times J_g \to M$ to get  $p,t\in \N$ and for every $i\in \{1,\dots,p\}$
 \begin{itemize}
 \item an $A$-definable family $\{X_{g}\sub M^k\}_{g\in S}$ with $\ldim(X_g)<k$,
 \item an $\Cal L_A$-definable continuous function  $f^i : Z_i \subseteq M^{l+t+k} \to M$
\end{itemize}
 such that for every $g\in S$ there is $u \in P^{t}$ such that
\begin{itemize}
\item[(A)] for all $a\in J_g \sm X_g$ there is $i \in \{1,\dots,p\}$ such that $(f\circ h)(g,a) = f^i(g,u,a)$.
\end{itemize}
We denote the set of all pairs $(g,u)\in S\times P^t$ that satisfy (A) by $K$. For each $i\in \{1,\dots,p\}$ we define for $(g,u) \in K$,
\[
B^i_{g,u} = \{ a \in J_g \sm X_g \ : \ (f\circ h)(g,a)=f^i(g,u,a)\}.
\]
Note that for $g\in S$,
\[
\bigcup_{u \in K_g} \left( J_g\setminus \bigcup_i B^i_{g,u} \right)\subseteq X_g.
\]
Therefore
\[
\ldim \left(\bigcup_{u \in K_g} \left(J_g\setminus \bigcup_i B^i_{g,u}\right)\right)<k.
\]
Since $h(g,-)$ is injective on $J_g$ and $\Cal L$-definable,
\[
\ldim \ h\left(\{g\} \times \bigcup_{u \in K_g} (J_g\setminus \bigcup_i B^i_{g,u})\right)<k.
\]
By Lemma \ref{coneunion2}
\[
\ldim \ h\left(\bigcup_{g\in S} \{g\}\times \left(\bigcup_{u \in K_g} (J_g\setminus \bigcup_i B^i_{g,u})\right)\right)<k.
\]
By sub-induction hypothesis, it is only left to show that the restriction of $f$ to each
\[
h\left(\bigcup_{g\in S}\{ g\} \times \bigcup_{u \in K_g} B^i_{g,u}\right)
\]
satisfies the conclusion of ($2)_n$. Let $i\in \{1,\dots,p\}$. Let $h': M^{s+t+k} \to M^n$ map $(g,u,a)$ to $h(g,a)$.
Then
\[
h\left(\bigcup_{g\in S}\{ g\} \times \bigcup_{u \in K_g}  B^i_{g,u}\right)
=h'\left(\bigcup_{(g,u)\in K}\{ (g,u)\} \times  B^i_{g,u}\right).
\]
By ($3)_{n-1}$,  there is $q \in \N$ such that for every $j\in \{1,\dots,q\}$ there are an $A$-definable subset $K^j$ of $K$,  $k_{j} \in \{0,\dots,n\}$ and an $A$-definable uniform family of $k_j$-cones $\{Y_{g,u}^{j}\}_{(g,u)\in K^j}$ such that for each $(g,u) \in K$
\[
B^i_{g,u} = \bigcup \left\{Y_{g,u}^{j} :  (g,u) \in K^j\right\}.
\]
By Lemma \ref{largefibers}, we have that for each $j\in \{1,\dots,q\}$
\[
h'\left(\bigcup_{(g,u)\in K^j} \{(g,u)\}\times (Y^{j}_{g,u})\right)
\]
is an $A$-definable $k_{j}$-cone $h'(\cal Y^j)$, where $\cal Y^j$ denotes the inside family. Since
$$(f\circ h')(g, u, -)= (f\circ h)(g,-)=f^j(g,u,-)$$ on $Y_{g, u}^{j}$, we have that $f$ is fiber $\cal L_A$-definable with respect to $h'(\cal Y^{j})$.

\vskip.2cm \noindent   $\mathbf{(1)_{n} \,\Rarr\, (3)_{n}.}$ This is by a standard (but lengthy) compactness argument, which we  include for completeness. Let $\{X_t\}_{t\in M^m}$ be an $A$-definable family of subsets of $M^n$. Suppose that (3$)_n$ fails. Then for every finite collection $\{C_{t}^1\}_{t \in Y_1},\dots,\{C_t^p\}_{t \in Y_p}$ of $A$-definable uniform families of cones, there are $t\in M^m$ and $z\in M^n$ such that
\[
z \in X_t\setminus \left( \bigcup_{i=1}^p C_t^i \right).
\]
Since $\widetilde{\Cal M}$ is sufficiently saturated, there is $x\in M^m$ and $z\in X_x$ such that for every $A$-definable uniform family of cones $\{C_t\}_{t \in Y}$ either $x\notin Y$ or $z\notin C_x$. For the rest of the proof, we fix this $x$ and $z$.
By (1$)_n$ there is an $Ax$-definable $k$-cone $E\subseteq X_x$ with $z \in E$. This is not yet a contradiction, because we do not have a uniform family of cones such that $E$ is one element of this family. Let $k'= \dcl$-$\rank(z/AxP)$. By Lemma \ref{lem-rankcone}, there is an $Ax$-definable $k'$-cone $E'$ such that $z \in E'$. By (1$)_n$, there is an $Ax$-definable cone $F\subseteq E \cap E'$ such that $z\in F$. By Lemma \ref{lem-rankcone}, $F$ is a $k'$-cone. Therefore we can assume that $F=E$ and $k=k'$. It is left to show that there is an $A$-definable uniform family of $\{C_t\}_{t\in Y}$ such that
\begin{itemize}
\item[(I)] $C_t \subseteq X_t$ for each $t\in Y$,
\item[(II)] $x \in Y$ and $E=C_x$.
\end{itemize}
 Let $\cal J=\{J_g\}_{g\in S}$ be an $Ax$-definable uniform family of supercones in $M^k$, and $h: Z\subseteq M^{l+k} \to M^n$ an $\Cal L_{Ax}$-definable map, such that $E=h(\Cal J)$. Fix an $s\in S$ and $y \in J_s$ such that $h(s,y)=z$.\newline

\noindent Pick an $\Cal L_A$-definable function $h': Z' \subseteq M^{m+l+k} \to M^n$ such that $h'(x,-,-)=h$. Thus in particular, $Z'_x=Z$. Let $U \subseteq M^{m+l+k}$ be an $\Cal L_A$-definable cell such that $h'$ is continuous on $U$ and $(x,s,y) \in U$. Since $\dcl$-$\rank(z/AxP)=k$ 
we have that $\dim U_{x,s} = k$. By Lemma \ref{open-supercone}, $J_s \cap U_{x,s}$ is a supercone with closure $cl(U_{x,s})=cl(U_x)_{s}$. We now take
\begin{itemize}
\item an $A$-definable family  $\{S_t\}_{t\in M^m}$  of small subsets of $M^l$,
\item an $A$-definable family of $\{J'_{t,g}\}_{t\in M^m, g \in S_t}$ of subsets of $M^k$,
\end{itemize}
such that $S_x=S$ and $J'_{x,g} = J_g$ for all $g\in S$. Note that we make no further claims about the objects just defined, in particular we do not claim that they directly give rise to a family of cones satisfying (I) and (II). Let
\[
S'_{t} := \{ g \in S_t\ : \ J'_{t,g} \cap U_{t,g} \hbox{ is a supercone with closure } cl(U_{t,g}) \}.
\]
By Remark \ref{rem-supercone}(2), $(S'_{t})_{t\in Y}$ is an $A$-definable family. Let $Y' \subseteq M^m$ be the set of all $t\in Y$ such that $S'_t \neq \emptyset$ and
\[
h'\left(t,\bigcup_{g\in S'_t} \{ g\} \times (J'_{t,g} \cap U_{t,g})\right) \subseteq X_t.
\]
This set is $A$-definable. 
It is not hard to check that $s \in S'_x$ and hence $x \in Y'$. Denote
$$\cal K=\{J'_{t,g}\cap U_{t,g}\}_{t \in Y',g \in S'_t}.$$
By Lemma \ref{uniform family},
 $\cal K$ is an $A$-definable uniform family of supercones and
\[
\left\{h'\left(t,\bigcup_{g \in S'_t} \{ g \} \times (J'_{t,g}\cap U_{t,g})\right)\right\}_{t \in Y'}
\]
is an $A$-definable uniform family of $k$-cones satisfying (I) and (II).
\end{proof}

\begin{remark} $ $
\begin{enumerate}
  \item The proof of the Structure Theorem uses our standing assumption that $\widetilde{\Cal M}$ is sufficiently saturated. However, by Remark \ref{rem-supercone}(2), the Structure Theorem holds for any $\widetilde{\cal M}\models \widetilde T$.
  \item Using a standard compactness argument, the reader can verify that the following uniform version of (2) easily follows (from (2)): let $\{X_t\}_{t\in M^m}$ be an $A$-definable family of subsets of $M^n$ and $\{f_t:X_t\to M\}_{t\in M^m}$ an $A$-definable family of maps. Then the conclusion of (3) holds with every $f_t$ being fiber $\cal L_{At}$-definable with respect to $C_t^i$.
  \item We do not know whether we can have disjointness of the cones in the Structure Theorem. However, under one additional assumption, we do obtain it; see Theorem \ref{strongstrthm} below.
\end{enumerate}
\end{remark}

\subsection{Corollaries of the Structure Theorem}\label{sec-invariance}
We collect a few important corollaries of the Structure Theorem. The main result we are aiming for is Theorem \ref{stII}, a generalization of Corollary \ref{unary}.  We start with showing the invariance of the large dimension under definable bijections. Recall from Section \ref{sec-large dimension} that that the large dimension of a definable set $X\sub M^n$ is  the maximum $k\in \bb N$ such that there is a weak embedding of a supercone $J\sub M^k$ into $X$.

\begin{cor}[Invariance of large dimension]\label{bijection}
Let $f:X\to M^n$ be a definable injective function. Then $\ldim(X)=\ldim f(X)$.
\end{cor}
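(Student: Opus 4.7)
The plan is to establish, for every $A$-definable set $Y \subseteq M^n$, the rank-theoretic identity
\[
\ldim(Y) \;=\; \max_{a \in Y} \dcl\text{-}\rank(a/AP),
\]
which I will call $(\star)$, and then derive the invariance of large dimension by transporting ranks across $f$. Granted $(\star)$, the corollary falls out quickly: enlarging $A$ so that both $X$ and $f$ are $A$-definable, the inverse $f^{-1} : f(X) \to X$ is also $A$-definable (because $f$ is injective), so for every $a \in X$ with $b = f(a)$ one has $b \in \dcl(Aa)$ and $a \in \dcl(Ab)$, yielding $\dcl\text{-}\rank(a/AP) = \dcl\text{-}\rank(b/AP)$. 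Pairing $a \leftrightarrow f(a)$ and applying $(\star)$ to both $X$ and $f(X)$ then gives $\ldim(X) = \ldim(f(X))$.

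For the inequality $\leq$ in $(\star)$, I would invoke Theorem \ref{str-thm-sets}(1) to write $Y = C_1 \cup \cdots \cup C_p$ with each $C_i$ an $A$-definable $k_i$-cone; by Corollaries \ref{union} and \ref{conedim} one has $\max_i k_i = \ldim(Y)$. For any $a \in Y$, pick $i$ with $a \in C_i$; Lemma \ref{lem-rankcone} then forces $\dcl\text{-}\rank(a/AP) \leq k_i \leq \ldim(Y)$.

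The main technical step is the inequality $\geq$ in $(\star)$. Set $k = \ldim(Y)$ and pick an $A$-definable $k$-cone $C = h(\Cal J) \subseteq Y$, with $\Cal J = \{J_g\}_{g \in S}$ a uniform $A$-definable family of supercones in $M^k$. Since $S$ is small and $A$-definable, Lemma \ref{lem:beg2} ensures $g \in \dcl(AP)$ for every $g \in S$, so $\dcl(AgP) = \dcl(AP)$; fix such a $g$ with $J_g \neq \emptyset$. Because $cl(J_g)$ is an $\Cal L_{Ag}$-definable open cell of o-minimal dimension $k$, by saturation I may choose $a \in cl(J_g)$ with $\dcl\text{-}\rank(a/AgP) = k$. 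The hard part will be to show $a \in J_g$: the set $cl(J_g)\setminus J_g$ is $Ag$-definable of large dimension strictly less than $k$ by Corollary \ref{clJ-J}, so the already-established easy direction of $(\star)$, applied to this smaller set, forces every point in $cl(J_g)\setminus J_g$ to have $\dcl\text{-}\rank < k$ over $AgP$; hence $a \in J_g$. Finally, $h(g,-)$ is $\Cal L_{Ag}$-definable and injective on its domain, so $b := h(g,a) \in C \subseteq Y$ is interdefinable with $a$ over $Ag$, whence $\dcl\text{-}\rank(b/AP) = \dcl\text{-}\rank(b/AgP) = \dcl\text{-}\rank(a/AgP) = k$. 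The main obstacle is precisely this ``boundary'' argument, which leans crucially on Corollary \ref{clJ-J} and employs the easy direction of $(\star)$ in a self-referential but non-circular way.
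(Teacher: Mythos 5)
Your route is genuinely different from the paper's. The paper proves the corollary in four lines from Structure Theorem \ref{str-thm-sets}(2): decompose $X$ into $A$-definable cones with respect to each of which $f$ is fiber $\cal L$-definable, note that one of them, say $h(\cal J)$, is a $k$-cone with $k=\ldim(X)$, and observe that $(f\circ h)(g,-)$ then agrees on the supercone $J_g\sub M^k$ with an injective $\cal L$-definable map into $f(X)$, i.e.\ is a weak embedding, so Corollary \ref{embed} gives $\ldim f(X)\ge k$. You instead first establish the rank identity $(\star)$, which is essentially Proposition \ref{equaldim} together with Lemma \ref{prop-rank}(2) (both proved later, in Section \ref{sec-equaldim}), and then transport ranks across $f$. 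Your proof of $(\star)$ is sound and close in spirit to the paper's proof of Proposition \ref{equaldim}: the $\le$ direction via Lemma \ref{lem-rankcone} is fine, and the boundary argument via Corollary \ref{clJ-J} to move the generic point from $cl(J_g)$ into $J_g$ is correct (the paper packages this as Fact \ref{rankJ}). This route buys the stronger statement $\ldim=\cldim$, at the cost of more work than the corollary itself requires.

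There is, however, one genuine gap in the final transport step, and it sits exactly where the difficulty of this setting lives. For $b=f(a)$ with $f$ merely definable in $\WM$, the claim that $b\in\dcl(Aa)$ is false in general: $\dcl$ is the closure in the o-minimal reduct $\cal M$, and definable maps are typically not $\cal L$-definable (consider the map $f(x)=r$ for $x=r+\alpha s$ from the Introduction, whose graph is dense in $M^2$). What is true is $b\in\dcl_{\cal L(P)}(Aa)$, and to conclude $\dcl\text{-}\rank(b/AP)\le\dcl\text{-}\rank(a/AP)$ you need $b\in\dcl(AaP)$, i.e.\ the identity $\dcl_{\cal L(P)}(AaP)=\dcl(AaP)$. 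That identity is Lemma \ref{scl-dcl}, and its proof is itself an application of Structure Theorem (2); so your transport step secretly re-uses the fiber $\cal L$-definability of $f$ on cones, which is precisely the ingredient the paper's proof leads with. Once you either cite Lemma \ref{scl-dcl} (whose proof does not use this corollary, so there is no circularity) or apply Structure Theorem (2) to $f$ directly, the argument closes.
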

\begin{proof} Assume that $k\le \ldim(X)$. It suffices to show $k\le \ldim f(X)$. By the Structure Theorem, $X$ is the union of finitely many cones such that $f$ is fiber $\cal L$-definable with respect to each of them. By Corollary \ref{union}, one of them, say $h(\cal J)$ must be a $k$-cone. Pick any $g\in \pi(\cal J)$. Then $(f\circ h)(g, -):J\to M^n$ agrees with an $\cal L$-definable map on $J$ and it is injective. Therefore, 
$k\le \ldim f(X)$.
\end{proof}

The following is an easy  consequence of Structure Theorem (3).

\begin{cor}\label{uniform str2}
Let $D\sub M^m\times M^n$ an $A$-definable set. Then $D$ is a finite union of $A$-definable sets of the form
$$\bigcup_{t\in \Gamma} \{t\}\times C_t,$$
where $\Gamma\sub M^m$ is an $A$-definable cone and there is $k$ such that $\{C_t\}_{t \in \Gamma}$ is an $A$-definable uniform family of $k$-cones in $M^n$.
\end{cor}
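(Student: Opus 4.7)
The plan is to apply Structure Theorem \ref{str-thm-sets}(3) to the fiber family of $D$, and then part (1) to the resulting parameter sets. First I would view $D$ as $\bigcup_{t\in M^m}\{t\}\times D_t$, where $D_t$ denotes the fiber of $D$ over $t$ (with $D_t=\emptyset$ when $t\notin\pi_m(D)$). Applying (3) to the $A$-definable family $\{D_t\}_{t\in M^m}$ produces $p\in\N$ and, for each $i=1,\dots,p$, an $A$-definable $Y_i\subseteq M^m$, a natural number $k_i$, and an $A$-definable uniform family of $k_i$-cones $\{C^i_t\}_{t\in Y_i}$ with $D_t = \bigcup\{C^i_t : t\in Y_i\}$ for every $t\in M^m$. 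Regrouping yields
$$D \;=\; \bigcup_{i=1}^p\bigcup_{t\in Y_i}\{t\}\times C^i_t.$$

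The only remaining issue is that each $Y_i$ is an arbitrary $A$-definable subset of $M^m$, whereas the corollary demands that the base be an $A$-definable cone. To fix this, I would apply Structure Theorem \ref{str-thm-sets}(1) to each $Y_i$ to decompose it as a finite union $\Gamma_{i,1}\cup\dots\cup\Gamma_{i,r_i}$ of $A$-definable cones. Restricting the parameter space of a uniform family of $k_i$-cones to an $A$-definable subset clearly yields again an $A$-definable uniform family of $k_i$-cones: inspecting Definition \ref{def-unifcones}, the witnessing data $(h,\{S_t\},\{Y_{t,g}\})$ simply restrict along the inclusion $\Gamma_{i,j}\hookrightarrow Y_i$. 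Hence each $\{C^i_t\}_{t\in\Gamma_{i,j}}$ remains an $A$-definable uniform family of $k_i$-cones, and combining the two steps gives
$$D \;=\; \bigcup_{i=1}^p\bigcup_{j=1}^{r_i}\bigcup_{t\in\Gamma_{i,j}}\{t\}\times C^i_t,$$
which is the desired finite union.

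No real obstacle arises: the argument is a direct assembly of parts (1) and (3) of the Structure Theorem together with the trivial observation that uniform families of cones restrict along $A$-definable subsets of their parameter spaces. This is exactly the reason the corollary is advertised as ``an easy consequence'' of Structure Theorem (3).
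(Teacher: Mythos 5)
Your argument is correct and is precisely the intended one: the paper leaves this proof to the reader as "an easy consequence of Structure Theorem (3)," and your assembly of part (3) for the fibers with part (1) for the index sets $Y_i$, together with the observation that uniform families of cones (and the underlying uniform families of supercones, whose shell condition is fiberwise) restrict along $A$-definable subsets of the parameter space, fills in exactly what was omitted.
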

\begin{proof} Left to the reader.
\end{proof}

We now establish certain desirable properties of large dimension. 


\begin{cor}\label{fibers}
 Let $X\sub M^{m+n}$ be an $A$-definable set and let $\pi_m(X)$ be its projection onto the first $m$ coordinates. Then
\begin{enumerate}
  \item For every $k\in \N$, the set of all $t\in \pi_m(X)$ such that $\ldim(X_t)=k$ is $A$-definable.
  \item Assume that for every $t\in \pi_m(X)$, $\ldim(X_t)=k$. Then $$\ldim(X)=\ldim(\pi_m(X))+k.$$
\end{enumerate}
\end{cor}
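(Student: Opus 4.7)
The plan is to derive both statements from Structure Theorem \ref{str-thm-sets}(3) applied to the $A$-definable family $\{X_t\}_{t \in M^m}$. This yields finitely many $A$-definable uniform families of $k_i$-cones $\{C^i_t\}_{t \in Y_i}$, $i = 1, \ldots, p$, with $X_t = \bigcup\{C^i_t : t \in Y_i\}$ for every $t$. By Corollaries \ref{conedim} and \ref{union}, $\ldim(X_t) = \max\{k_i : t \in Y_i,\, C^i_t \neq \emptyset\}$. For part (1), each $Y_i$ is $A$-definable and the condition ``$C^i_t \neq \emptyset$'' is $A$-definable by Remark \ref{rem-supercone}(2), so $\{t \in \pi_m(X) : \ldim(X_t) = k\}$ is $A$-definable.

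For part (2), set $E^i := \bigcup_{t \in Y_i \cap \pi_m(X)}\{t\} \times C^i_t$, so $X = \bigcup_i E^i$. The hypothesis $\ldim(X_t) = k$ for all $t \in \pi_m(X)$ forces $k_i \leq k$ whenever $Y_i \cap \pi_m(X) \neq \emptyset$ (else some fiber of $X$ would contain a cone of dimension $>k$), and $\pi_m(X) = \bigcup_{i \in I}(Y_i \cap \pi_m(X))$ where $I := \{i : k_i = k\}$. The reduction rests on the intermediate claim: \emph{for an $A$-definable uniform family $\{C_t\}_{t \in Y}$ of $k'$-cones, $\ldim(\bigcup_{t \in Y}\{t\} \times C_t) = \ldim(Y) + k'$}. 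Given this, Corollary \ref{union} yields $\ldim(X) = \max_i(\ldim(Y_i \cap \pi_m(X)) + k_i)$; applying Corollary \ref{union} again to the decomposition of $\pi_m(X)$ gives $L := \ldim(\pi_m(X)) = \max_{i \in I}\ldim(Y_i \cap \pi_m(X))$. Hence the summands for $i \in I$ attain $L + k$ (achieved for some $i$) and those for $i \notin I$ are at most $L + (k-1)$, so $\ldim(X) = L + k$.

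The intermediate claim is the substantive part. Unpacking, $E = \{(t, h(t,g,a)) : t \in Y, g \in S_t, a \in Y_{t,g}\}$ with $\{Y_{t,g}\}$ a uniform family of supercones in $M^{k'}$, $\{S_t\}$ small, and $h$ $\Cal L$-definable continuous with $h(t,g,-)$ injective. For the upper bound, induct on $k'$: the base case $k' = 0$ states that sets with small fibers over $\pi_m$ preserve large dimension, and follows by iterating Lemma \ref{ldimproj} one coordinate at a time (fibers of small sets are small by Corollaries \ref{smallprojection} and \ref{bijection}); the inductive step peels off the last supercone coordinate using Lemma \ref{cone+1}, whose co-small-in-interval structure supplies $\Cal L$-definable bounds. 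For the lower bound, build an $(L+k')$-cone inside $E$ explicitly: choose an $L$-cone $K \subseteq Y$ by Structure Theorem (1), then an $L$-cone $\tilde K \subseteq \bigcup_{t \in K}\{t\} \times S_t$ (using the base case, since this set has small fibers over $K$), write $\tilde K = h_{\tilde K}(\{\tilde J_r\}_{r \in \tilde S})$, and verify inductively that $\{\bigcup_{c \in \tilde J_r}\{c\} \times Y_{h_{\tilde K}(r,c)}\}_{r \in \tilde S}$ is a uniform family of supercones in $M^{L+k'}$ (using uniformity of $\{Y_{t,g}\}$), before mapping via $(r,c,a) \mapsto (\pi_m(h_{\tilde K}(r,c)), h(h_{\tilde K}(r,c), a))$ into $E$. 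The main obstacle is injectivity of this composite on fibers over $r$, since distinct $g$'s above a common $t$ may collapse under $h$; this is managed by refining $\tilde K$ further via Structure Theorem (3) so that $\pi_m|_{\tilde K}$ behaves compatibly with the fiber-injectivity of $h$.
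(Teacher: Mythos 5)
Your route is genuinely different from the paper's: you apply Structure Theorem \ref{str-thm-sets}(3) to the fiber family and try to prove a general fibration formula $\ldim(\bigcup_{t\in Y}\{t\}\times C_t)=\ldim(Y)+k'$ for uniform families of $k'$-cones over an \emph{arbitrary} definable base $Y$. The paper instead invokes \cite[Proposition 1.4]{vdd-dim} to reduce both statements to $n=1$, where part (1) is immediate from Remark \ref{definability} and part (2) splits into $k=0$ (Lemma \ref{ldimproj}) and $k=1$ (upper bound: Structure Theorem (1) applied to $\pi_m(X)$ plus Lemma \ref{cone+1}; lower bound: Structure Theorem (2) plus Lemma \ref{cone+1}). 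Your part (1) is correct, and modulo the bookkeeping about which $C^i_t$ are empty (you must restrict each $Y_i$ to $\{t: C^i_t\neq\emptyset\}$ before invoking Corollary \ref{conedim}), your reduction of part (2) to the intermediate claim is sound.

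The gap is in the lower bound of the intermediate claim, and it is exactly the obstacle you name: after choosing $\tilde K=h_{\tilde K}(\{\tilde J_r\}_{r\in\tilde S})\sub\bigcup_{t\in K}\{t\}\times S_t$, the map $(c,a)\mapsto(\pi_m(h_{\tilde K}(r,c)),h(h_{\tilde K}(r,c),a))$ fails to be injective on the fiber over $r$ whenever $\tilde J_r$ contains $c\neq c'$ with $h_{\tilde K}(r,c)=(t,g)$, $h_{\tilde K}(r,c')=(t,g')$ over the same base point $t$ and $h(t,g,-)$, $h(t,g',-)$ have overlapping images. Fixing this amounts to choosing a definable section $t\mapsto g(t)\in S_t$ compatible with the cone structure of $K$, which is precisely the definable-choice problem that Section \ref{sec-optimality} shows cannot be solved in general; and Structure Theorem (3) does not help, since it only re-decomposes a family into uniform families of cones and neither produces a section nor makes $\pi_m\circ h_{\tilde K}(r,-)$ injective. (This is why the paper's Lemma \ref{conekl} is stated only for the single-$g$ slice $\{\tau_t(J_t)\}$ of a uniform family.) The claim itself is true and your construction can be repaired without solving the choice problem: the map $(c,a)\mapsto(h_{\tilde K}(r,c),\,h(h_{\tilde K}(r,c),a))$ \emph{into $M^{m+l+n}$} (keeping the $g$-coordinates) is injective on the fiber over $r$, so the graph-like set $E'=\bigcup_{(t,g)\in\tilde K}\{(t,g)\}\times h(t,g,Y_{t,g})$ has $\ldim(E')\geq L+k'$; since the fibers of the coordinate projection $E'\to E$ are contained in the small sets $S_t$, Corollary \ref{bijection} and an iterated Lemma \ref{ldimproj} give $\ldim(E)=\ldim(E')\geq L+k'$. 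Without this (or an equivalent) detour, the step as written does not go through.
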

\begin{proof}
We observe that by \cite[Proposition 1.4]{vdd-dim}, we only need to prove both statements for $n=1$. Statement (1) is then immediate by Lemma \ref{lem:beg1} and Remark \ref{definability}(a).

(2). For $k=0$, this is by Lemma \ref{ldimproj}. For $k=1$, assume that $\ldim(\pi_m(X))=l$. By Structure Theorem (1), $\pi_m(X)$ is the finite union of cones $J_1, \dots, J_p$.  Assume that $J_i$  is a $k_i$-cone. By Lemma \ref{cone+1}, $T_i=J_i\times M$ is a finite union of $k_i+1$-cones, and by Corollary \ref{conedim}, each of them has large dimension $k_i+1$. Since $X$ is contained in $T_1\cup \dots \cup T_p$, it follows from Corollary \ref{union} that $\ldim(X)\le \max_i k_i+1=l+1.$

On the other hand, let $C$ be an $l$-cone contained in $\pi_m(X)$. By Remark \ref{definability}(b), there are definable $h_1, h_2:M^m\to M$ such that for every $t\in \pi_m(X)$, $X_t$ is co-small in $(h_1(t), h_2(t))$. By  Structure Theorem (2), $\pi_m(X)$ contains an $l$-cone $C'$ on which $h_1, h_2:M^m\to M$ are both fiber $\cal L$-definable. By Lemma \ref{cone+1}, it follows that $X$ contains an $l+1$-cone.
\end{proof}

\begin{lemma}\label{closure of cones}
Let $J_1, J_2\sub M^k$ be two supercones and $h_1:Z_1 \to M^n$, $h_2: Z_2\to M^n$  two $\cal L$-definable continuous injective maps, where $Z_i=sh(J_i)$ is the shell of $J_i$, $i=1, 2$.  
Then
\[
\dim\Big(h_1(Z_1)\cap h_2(Z_2)\Big)=k\,\,\,\implies\,\,\, \ldim\Big(h_1(J_1)\cap h_2(J_2)\Big)=k.
\]
\end{lemma}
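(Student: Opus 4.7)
The upper bound $\ldim(h_1(J_1) \cap h_2(J_2)) \le k$ is immediate: taking $m=0$, $S = \{0\}$ (which is small) and the single-fiber uniform family $\Cal J = \{J_1\}$ with shell $Z_1$, we see that $h_1(J_1)$ is a $k$-cone, hence has large dimension $k$ by Corollary \ref{conedim}. For the lower bound, the plan is to descend to matching open cells $U_1 \sub Z_1$ and $U_2 \sub Z_2$ on which $\psi := h_2^{-1} \circ h_1$ is an $\Cal L$-definable homeomorphism $U_1 \to U_2$. The set $h_1^{-1}(h_2(Z_2)) \cap Z_1$ is an $\Cal L$-definable subset of $Z_1 \sub M^k$, and it has o-minimal dimension $k$ since $h_1$ is injective on $Z_1$ and $\dim(h_1(Z_1) \cap h_2(Z_2)) = k$. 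Cell decomposition then supplies an open cell $U_1$ inside it. A second application of cell decomposition, this time compatible with $\psi$ and its inverse on $\psi(U_1)$, allows me to shrink $U_1$ (and correspondingly $U_2 = \psi(U_1)$) so that $U_2$ is also an open cell and $\psi$ is a homeomorphism between them.

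With $U_1, U_2, \psi$ in hand, Lemma \ref{open-supercone} gives that $J_1 \cap U_1$ is a supercone with closure $cl(U_1)$ and that $K := J_2 \cap U_2$ is a supercone with closure $cl(U_2)$. I then aim to show that
\[
A := (J_1 \cap U_1) \cap \psi^{-1}(K)
\]
has large dimension $k$. The key observation is that the bad part $(J_1 \cap U_1) \sm A$ is contained in $\psi^{-1}(U_2 \sm K)$. Since $U_2 \sub Z_2 \sub cl(J_2) = cl(U_2)$, we have $U_2 \sm K \sub cl(U_2) \sm K$, and Corollary \ref{clJ-J} gives $\ldim(U_2 \sm K) < k$. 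Corollary \ref{bijection} (invariance of large dimension under definable bijections), applied to the definable bijection $\psi^{-1}$, then yields $\ldim(\psi^{-1}(U_2 \sm K)) < k$. Since $J_1 \cap U_1$ is a supercone in $M^k$, it has large dimension $k$ by Corollary \ref{conedim}, so Corollary \ref{union} forces $\ldim(A) = k$.

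To conclude, I note that $h_1(A) \sub h_1(J_1) \cap h_2(J_2)$: for $x \in A$, one has $x \in J_1$ so $h_1(x) \in h_1(J_1)$, and $\psi(x) \in K \sub J_2$ with $h_2(\psi(x)) = h_1(x)$, so $h_1(x) \in h_2(J_2)$. Since $h_1$ is injective on $U_1 \supseteq A$, Corollary \ref{bijection} gives $\ldim(h_1(A)) = \ldim(A) = k$, so $\ldim(h_1(J_1) \cap h_2(J_2)) \ge k$, completing the proof. The main delicate point is the first paragraph: arranging open cells $U_1, U_2$ on which $\psi$ becomes a genuine homeomorphism, so that Lemma \ref{open-supercone} applies cleanly on both sides and Corollary \ref{bijection} can be invoked. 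The rest is a transparent application of the invariance of $\ldim$ under definable bijections combined with Corollary \ref{clJ-J}.
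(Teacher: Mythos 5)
Your approach follows the paper's in spirit: work with $K_1 := h_1^{-1}(h_1(Z_1)\cap h_2(Z_2))\subseteq Z_1$, which has dimension $k$, push a large piece of $J_1\cap K_1$ across via $\psi := h_2^{-1}\circ h_1$, intersect with $J_2$, and control the complement using Corollary \ref{clJ-J}. The paper's version is more economical: it applies Lemma \ref{open-supercone} once to obtain a supercone $J\subseteq K_1\cap J_1$, notes that $\psi(J)$ is a $k$-cone contained in $(Z_2\setminus J_2)\cup J_2$, concludes $\ldim(\psi(J)\cap J_2)=k$ from Corollaries \ref{clJ-J} and \ref{union}, and finishes by applying $h_2$; it never needs Corollary \ref{bijection}, only the definition of $\ldim$ and Corollary \ref{conedim}.

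The weak point in your argument is the cell-matching step, which you flag as delicate, and which I do not think can be carried out as stated: a definable homeomorphism in an o-minimal structure does not in general carry open cells to open cells, and the back-and-forth shrinking you describe need not terminate in a matched pair. Fortunately, the extra structure you attempt to arrange is unnecessary. You do not need $U_2$ to be a cell, nor $K=J_2\cap U_2$ to be a supercone. Take $U_1\subseteq K_1$ any open cell on which $\psi$ is continuous, let $U_2:=\psi(U_1)$ (open by invariance of domain, contained in $Z_2\subseteq cl(J_2)$), and observe that $U_2\setminus J_2\subseteq cl(J_2)\setminus J_2$, so Corollary \ref{clJ-J}, applied to $J_2$ rather than to $K$, already gives $\ldim(U_2\setminus J_2)<k$; the rest of your argument then goes through with $K$ replaced by $J_2\cap U_2$ throughout. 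Finally, the equality $cl(J_2)=cl(U_2)$ that you assert is false whenever $U_2$ is a proper open subcell of $Z_2$; you presumably meant $cl(K)=cl(U_2)$ (which Lemma \ref{open-supercone} would give, were $U_2$ an open cell), but as explained, neither equality is needed.
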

\begin{proof}
Let
$$K_1=h_1^{-1} \big(h_1(Z_1)\cap h_2(Z_2)\big).$$ Then $K_1\sub Z_1$ and $\dim(K_1)=k$. By Lemma \ref{open-supercone}, $K_1\cap J_1$ contains a supercone $J$. Since $J\sub K_1$, we have
$$h_2^{-1}  h_1  (J)\sub Z_2.$$
Observe that $h_2^{-1}  h_1  (J)$ has large dimension $k$ and it is contained in the union of  $Z_2\sm J_2$ and $J_2$.  By Corollary \ref{clJ-J}, $Z_2\sm J_2$ has large dimension $<k$. Hence
\[
\ldim\big(h_2^{-1}  h_1  (J)\cap J_2\big)=k.
\]
Then  $\ldim \left( h_2(h_2^{-1}  h_1  (J)\cap J_2)\right)=k$. We observe
\[
h_2(h_2^{-1}  h_1  (J)\cap J_2)\sub h_1(J)\cap h_2(J_2) \sub h_1(J_1)\cap h_2(J_2),
\]
 proving that $h_1(J_1)\cap h_2(J_2)$ has large dimension $k$.
\end{proof}

We are now ready to prove the main result of this section. Statement (2) below is a higher dimensional analogue of Corollary \ref{unary}. To our knowledge, it has not been known even in the special case of dense pairs of o-minimal structures.

\begin{thm}\label{stII}$ $
\begin{itemize}
\item[(1)] Let $X\subseteq M^n$ be $A$-definable. Then there are disjoint $AP$-definable supercones $J_1,\dots,J_p \sub X$ such that
\[
 \ldim \left(X \sm \bigcup_{i=1}^p J_i \right) < n.
\]
\item[(2)] Every $A$-definable map $f: M^n\to M$ is given by an $\Cal L_{AP}$-definable map $F:M^n\to M$ off an $AP$-definable set of large dimension $ < n$.
\end{itemize}
Moreover, if $A\sm P$ is $\dcl$-independent over $P$, then in both statements the parameters from $P$ can be omitted.
\end{thm}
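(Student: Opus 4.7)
The plan is to prove (1) first by induction on $n$, and then to deduce (2) by combining (1) with the Structure Theorem (2) and a choice-of-parameter argument.

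\emph{For (1):} The base case $n=1$ follows by applying Lemma \ref{lem:beg1} to the singleton family $\{X\}$, decomposing $X$ into a small piece and finitely many sets of the form $X\cap (a_{i-1},a_i)$ that are co-small in the intervals $(a_{i-1},a_i)$; since the $a_i\in M$ are $A$-definable, they are $\mathcal{L}_{AP}$-definable by Lemma \ref{defB}, making each such set an $AP$-definable supercone. For the inductive step $n\geq 2$, apply Lemma \ref{lem:beg1} to the family $\{X_t\}_{t\in\pi(X)}$, and by Remark \ref{definability}(a) partition $\pi(X)$ into $Y_{\mathrm{sm}}$ (small fibers) and $Y_{\mathrm{la}}$ (large fibers). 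The small-fiber part contributes $\ldim\le n-1$ by Lemma \ref{ldimproj}. On $Y_{\mathrm{la}}$, invoke the inductive hypothesis to obtain disjoint $AP$-definable supercones $K_1,\dots,K_r\subseteq Y_{\mathrm{la}}$ covering it up to $\ldim<n-1$; the uncovered part contributes $\ldim<n$ via Corollary \ref{fibers}(2). On each $K_j$, apply Corollary \ref{cor:corp1} to each boundary function $a_i$ and iteratively apply Lemma \ref{low1} to pass to a sub-supercone $K'_j\subseteq K_j$ on which the $a_i$ agree with $\mathcal{L}_{AP}$-definable continuous functions (after further cell decomposition). The resulting sets $J_{j,i}=\{(t,x)\in X:t\in K'_j,\ a_{i-1}(t)<x<a_i(t)\}$, for the co-small indices $i$, are the required $AP$-definable supercones in $X$; disjointness follows by keeping the $K'_j$'s disjoint and noting that the $i$-intervals are disjoint for each fixed $t$.

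\emph{For (2):} apply the Structure Theorem (2) to decompose $M^n=\bigcup_{j=1}^q C_j$ with $f$ fiber $\mathcal{L}_A$-definable with respect to each $A$-definable cone $C_j$; cones with $\ldim<n$ pass directly into the error. Each $n$-cone $C_j=h_j(\mathcal{J}_j)$ comes with a witness $F_j$, and by Remark \ref{rem-supercone}(7) we may take $S_j\subseteq P^{l_j}$. Apply (1) to $C_j$ to obtain $AP$-definable supercones $\tilde J_{j,k}\subseteq C_j$ covering it up to $\ldim<n$. For each $\tilde J_{j,k}$, the decomposition $\tilde J_{j,k}=\bigcup_{g\in S_j}\big(\tilde J_{j,k}\cap h_j(g,J_{j,g})\big)$ is a small union; by Lemma \ref{coneunion2} combined with saturation of $\WM$, one extracts finitely many $g_{j,k,s}\in S_j\subseteq P^{l_j}$ whose slices cover $\tilde J_{j,k}$ up to $\ldim<n$. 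On each $\mathcal{L}_{AP}$-definable open set $h_j(g_{j,k,s},V_j^{g_{j,k,s}})$ (open by invariance of domain in o-minimal structures), set $\tilde F_{j,k,s}(y)=F_j\bigl(g_{j,k,s},\,h_j(g_{j,k,s},-)^{-1}(y)\bigr)$, which is $\mathcal{L}_{AP}$-definable continuous and equals $f$ on the relevant slice. Piece these $\tilde F_{j,k,s}$ together by a least-index convention on their $\mathcal{L}_{AP}$-definable domains to obtain a single $\mathcal{L}_{AP}$-definable $F:M^n\to M$; the set $\{f\neq F\}$ is then $AP$-definable of $\ldim<n$ by construction.

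The main obstacle is the finite-cover step in (2): showing that finitely many slices $h_j(g,J_{j,g})$ cover each supercone $\tilde J_{j,k}$ up to $\ldim<n$. I expect this to follow from saturation of $\WM$ together with the small-union property of Lemma \ref{coneunion2}, though some care is needed because $\ldim$ is not a first-order invariant; an alternative is to induct on the number of witnesses $F_i$ produced by Corollary \ref{cor:corp1} applied to $f$ on each supercone. For the \emph{moreover} clause, when $A\setminus P$ is $\dcl$-independent over $P$, Remark \ref{Aind} allows omission of the auxiliary parameters $u\in P^t$ from Corollary \ref{cor:corp1} throughout (1); and in (2), a parallel argument lets the parameters $g_{j,k,s}$ be chosen from $\dcl(A)$ rather than from $P$, yielding both statements without parameters from $P$.
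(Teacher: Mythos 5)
Your plan decouples (1) from (2), but the paper proves $(1)_n$ and $(2)_n$ by a genuine \emph{simultaneous} induction on $n$, and this coupling is not optional. In $(1)_n$, after restricting to the large-fiber part, the boundary functions $a_i\colon M^{n-1}\to M\cup\{\infty\}$ from Lemma~\ref{lem:beg1} are $A$-definable but typically not $\mathcal L$-definable for $n\ge 2$; the paper invokes $(2)_{n-1}$ to replace them by $\mathcal L_{AP}$-definable $H_i$ off a set of $\ldim<n-1$, and \emph{then} applies $(1)_{n-1}$ to $\pi(X)\setminus Z$. Your substitute --- Corollary~\ref{cor:corp1} plus Lemma~\ref{low1} --- does not yield a sub-supercone on which $a_i$ equals a \emph{single} $\mathcal L_{AP}$-definable map: Corollary~\ref{cor:corp1} only says that at each point off the low sets, $a_i$ agrees with \emph{some} $F_\mu(u,-)$, and the sets $\{a_i=F_\mu(u,-)\}$ are only $AP$-definable, not $\mathcal L$-definable, so ``further cell decomposition'' cannot split them off. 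To repair this you would have to partition by which $F_\mu$ matches and re-apply $(1)_{n-1}$ to each piece --- which is precisely re-proving $(2)_{n-1}$ inline. That can be made to work, but as written there is a gap.

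The gap in your proof of (2) is more serious and, I believe, unfixable along your proposed route. The ``finite-cover step'' --- that finitely many slices $h_j(g,J_{j,g})$ cover a supercone $\tilde J_{j,k}\subseteq C_j$ up to $\ldim<n$ --- is simply false in general. Take $\widetilde T = T^d$, $n=1$, $S=P^{>0}$, $J_g=(g,2g)$, and $h(g,x)=x$; this is a valid $1$-cone with $C=\bigcup_{g\in P^{>0}}(g,2g)=M^{>0}$. The supercone $\tilde J=(0,1)\setminus P$ lies in $C$, but for any finite $F\subseteq P^{>0}$ the complement $\tilde J\setminus\bigcup_{g\in F}(g,2g)$ contains $(0,\min F)\setminus P$, which has $\ldim=1=n$. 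Lemma~\ref{coneunion2} gives you \emph{one} slice of full large dimension, not a finite cover up to lower dimension; saturation does not help because $\ldim$ is not first-order. The paper's actual argument for $(2)_n$ sidesteps all of this: apply Corollary~\ref{cor:corp1} to $f$ directly to obtain low sets and finitely many $\mathcal L_{AP}$-definable $f_i$; set $X_i=\{a: f(a)=f_i(a), f(a)\neq f_j(a) \text{ for } j<i\}$; apply $(1)_n$ to each $X_i$ to obtain disjoint $AP$-definable supercones $J_{ik}\subseteq X_i$; then use Lemma~\ref{closure of cones} to show that the interiors $V_{ik}$ of their closures are pairwise disjoint (so that one can define $F$ on $\bigcup V_{ik}$ without conflict). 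This is exactly the ``alternative'' you gesture at in your last paragraph, but you do not carry it out, and it is the route that actually works. The Structure Theorem (2) with cones and ``fiber $\mathcal L$-definability'' is not the right tool here: a cone is an image over a \emph{small family} of slices, and near-coverage by finitely many slices is the content of the (false) strong structure/Choice-property phenomena the paper explicitly warns about in Section~\ref{sec-optimality}.
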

\begin{proof}
We again denote the above two statements by (1$)_n$ and (2$)_n$, and proceed by simultaneous induction on $n$. For $n=0$, they are both trivial. Suppose now that $n>0$ and (1)$_l$ and (2)$_l$ hold for every $l< n$. It is left to show  (1)$_{n}$ and (2)$_{n}$.

\vskip.2cm \noindent $\mathbf{(1)_{n}}$: Let $X\subseteq M^n$ and  $\pi : M^n \to M^{n-1}$ be the usual projection onto the first $n-1$ coordinates. By Corollary \ref{fibers}, the set
\[
\{ t \in X \ : \ \ldim(X_{\pi(t)}) = 0\}
\]
is $A$-definable and has $\ldim < n$. Therefore, we can reduce to the case that $\dim X_a = 1$ for all $a \in \pi(X)$. By Remark \ref{definability}(b), we may further assume that there are $A$-definable functions $h_1, h_2:M^{n-1}\to M\cup\{\pm\infty\}$ such that for every $a\in \pi(X)$, $X_a$ is co-small and contained in $(h_1(a), h_2(a))$. By (2$)_{n-1}$ there are $\Cal L_{AP}$-definable functions $H_1,H_2: M^{n-1} \to M$ and an $AP$-definable set $Z\subseteq M^{n-1}$ such that $\ldim(Z)<n-1$ and $H_1=h_1$ and $H_2=h_2$ on $M^{n-1}\sm Z$. By (1$)_{n-1}$ there are disjoint $AP$-definable supercones $J_1,\dots,J_p$ of $M^{n-1}$ such that $J_i \subseteq \pi(X)\sm Z$,
\begin{equation}\label{eq:st2}\tag{$\ast$}
\ldim \left((\pi(X) \sm Z)\sm \bigcup_{i=1}^p J_i \right) < n-1.
\end{equation}
By Lemma \ref{open-supercone} and cell decomposition in o-minimal structures, we can assume that $h_1,h_2$ are continuous on each $sh(J_i)$. Then each $K_i := \bigcup_{t \in J_i} \{t\} \times X_t$ is an $AP$-definable supercone. It follows immediately from Corollary \ref{fibers} and \eqref{eq:st2} that $\ldim( X \setminus \bigcup_{i=1}^p K_i) < n$, and that $K_1,\dots, K_p$ are disjoint.

\vskip.2cm \noindent   $\mathbf{(1)_{n} \,\Rarr\, (2)_{n}}$: Let $f: M^n\to M$ be $A$-definable. By Corollaries \ref{cor:corp1}  and \ref{nonlow}, there are $m\in \N$ and
\begin{itemize}
\item an $A$-definable set $Z \subseteq M^n$ with $\ldim (Z) < n$,
\item $\Cal L_{A \cup P}$-definable functions $f_i : Z_i \to M$ for $i=1,\dots,m$,
\end{itemize}
such that for each $a\in M^n\sm Z$ there is $i\in \{1,\dots,m\}$ such that $a\in Z_i$ and  $f(a)=f_i(a)$. Set
\[
X_i := \{ a \in M^n \ : \ f(a)=f_i(a) \wedge f(a)\neq f_j(a) \hbox{ for } j<i\}.
\]
Note that $X_i \cap X_j=\emptyset$ for $i\neq j$ and $\ldim(M^n \setminus \bigcup_{i=1}^m X_i) < n$. By (1$)_n$, for each $i=1,\dots,m$,  there are $AP$-definable supercones $J_{ik}\sub X_i$, $k=1,\dots,p_i$, such that $\ldim(X_i \sm \bigcup_{k=1}^{p_i} J_{ik}) < n$.
Note that $J_{ik} \cap J_{jl}=\emptyset$ for $i,j \in \{1,\dots,m\}$ with $i\neq j$ and $k=1,\dots,p_i$, $l=1, \dots, p_j$. Denote $V_{ik}= sh(J_{ik})$. By Lemma \ref{closure of cones},  for such $i,j,k$ and $l$, $V_{ik} \cap V_{jl}$ has dimension $<n$, and hence, since $V_{ik}$ and $V_{jl}$ are open, empty. Thus define $F: M^n \to M$ to map $x \in V_{ik}$ to $f_i(a)$ and  $x \notin \bigcup_{i}\bigcup_k V_{ik}$ to $0$. Note that this function is well-defined and $\Cal L_{AP}$-definable, since all $f_i$ and $V_{ik}$ are. Moreover, $F$ agrees with $f$ outside a set of large dimension $<n$; namely $X\sm \bigcup_{i=1}^m \bigcup_{k=1}^{p_i} J_{ik}$.\smallskip

The `moreover' clause follows from the above proof and Remark \ref{Aind}.
\end{proof}

We expect that Theorem \ref{stII} will find many applications in the future, and illustrate one here in the case of dense pairs.  Namely, we answer the following question from Dolich-Miller-Steinhorn \cite[page 702]{dms2}:  in dense pairs, is the graph of every $\emptyset$-definable unary map  nowhere dense? This property is known to fail if we allow parameters, as the example in Introduction shows. In \cite{dms1} the above authors isolate this property and examine it in the context of structures with o-minimal open core. 

\begin{prop}\label{prop-ndg} Let $\widetilde {\cal M}=\la \cal M, P\ra$ be a dense pair. Then the graph of every $\emptyset$-definable map $f:X\sub M\to M$ is nowhere dense.
\end{prop}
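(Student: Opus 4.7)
The plan is to apply Theorem \ref{stII}(2)---using its ``moreover'' clause, since $\emptyset$ is vacuously $\dcl$-independent over $P$---to $f$ (extended by $0$ to all of $M$ if necessary). This yields an $\mathcal{L}_\emptyset$-definable $F:M\to M$ and an $\emptyset$-definable set $Y\subseteq M$ with $\ldim(Y)<1$, hence small by Corollary \ref{smallprojection}, such that $f=F$ on $X\setminus Y$. I would then split $\mathrm{Graph}(f)=G_1\cup G_2$, where $G_1=\mathrm{Graph}(f|_{X\setminus Y})$ and $G_2=\mathrm{Graph}(f|_{X\cap Y})$, and reduce the proposition to showing that each piece is nowhere dense in $M^2$.

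The piece $G_1$ is immediate: it is contained in $\mathrm{Graph}(F)$, an $\mathcal{L}$-definable subset of $M^2$ of o-minimal dimension at most $1$, hence nowhere dense. For $G_2$, note first that $Y$ is $\emptyset$-definable and small, so Corollary \ref{small0} gives $Y\subseteq P$. Since $G_2$ has singleton fibers over $X\cap Y$, Lemma \ref{ldimproj} yields $\ldim(G_2)=\ldim(X\cap Y)\leq\ldim(Y)=0$, so $G_2$ is small and $\emptyset$-definable. Applying Corollary \ref{small0} a second time forces $G_2\subseteq P^2$, and then \cite[Theorem 2(2)]{vdd-dense} (recalled in the excerpt) provides an $\mathcal{L}$-definable set $W\subseteq M^2$ with $G_2=W\cap P^2$.

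The decisive step is to exploit that $G_2$ is a function graph. For each $p\in P$, the slice $W_p\cap P$ has at most one element; by o-minimality $W_p$ is a finite union of points and intervals, and any interval in $W_p$ would meet the dense set $P$ infinitely often. Hence $W_p$ is finite for every $p\in P$. The $\mathcal{L}$-definable set $\{p\in M:W_p\text{ is finite}\}$ then contains the dense set $P$, so by o-minimality it is cofinite in $M$, giving $\dim W\leq 1$. Thus $W$---and therefore $G_2$---is nowhere dense in $M^2$. I expect the main obstacle to be precisely this dimension bound for $W$, which weds the function-graph nature of $G_2$ to the density of $P$; it is also the point where the $\emptyset$-definability hypothesis (fed in through Corollary \ref{small0}) is essential, since the introductory counterexample with parameters shows that without it the argument collapses.
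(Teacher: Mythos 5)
Your proof is correct, and it follows the paper's own strategy for the bulk of the argument: both begin by invoking Theorem~\ref{stII} (implicitly or explicitly using the `moreover' clause with $A=\emptyset$) to write $f=F$ off a $\emptyset$-definable small set, dispose of the large piece via the $\mathcal{L}$-definable graph of $F$, and reduce to the graph over the small part, which lies in $P$ by Corollary~\ref{small0}. Where you diverge is in the final step. The paper applies \cite[Lemma~3.1]{vdd-dense} to get $f(S)\subseteq P$ and then \cite[Theorem~3(3)]{vdd-dense} to conclude that $f|_S$ is piecewise $\mathcal{L}$-definable, after which the graph is a finite union of pieces of $\mathcal{L}$-definable one-dimensional sets. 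You instead apply Corollary~\ref{small0} a second time, directly to the graph $G_2$ (justified via Lemma~\ref{ldimproj}), to land in $P^2$, then quote \cite[Theorem~2(2)]{vdd-dense} to write $G_2=W\cap P^2$ with $W$ $\mathcal{L}$-definable, and finish with a hands-on o-minimal dimension count using the function-graph property of $G_2$ and the density of $P$ to force $\dim W\leq 1$. Both routes rest on genuine dense-pair facts from \cite{vdd-dense} that are specific to $T^d$; the paper's is a bit shorter because the piecewise-$\mathcal{L}$-definability of functions on $P$ does the dimension bookkeeping in one stroke, while yours trades that black box for an elementary fiber argument (slices $W_p$ are finite for $p\in P$, hence for cofinitely many $p$, hence $\dim W\le 1$). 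Your variant has the small pedagogical virtue of not needing the statement about functions on $P$, only the statement about subsets of $P^n$.
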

\begin{proof}
By Theorem \ref{stII}, $f$ agrees off a $\emptyset$-definable small set $S\sub X$ with an $\cal L_\emptyset$-definable function $F$. Clearly, the graph of $f_{\res X\sm S}$ is nowhere dense. We  therefore only need to prove that the graph of $f_{\res S}$ is nowhere dense. By Lemma \ref{small0}, $S\sub P$. By \cite[Lemma 3.1]{vdd-dense}, $f(S)\sub P$. By \cite[Theorem 3(3)]{vdd-dense}, $f$ is piecewise given by $\cal L$-definable functions, and hence its graph is nowhere dense.
\end{proof}


\subsection{Optimality of the Structure Theorem}\label{sec-optimality}
In this section, we prove that our Structure Theorem is in a certain sense optimal.

\begin{defn}
A \emph{strong cone} is a cone $h(\cal J)$ which, in addition to the properties of Definition \ref{def-cone}, satisfies:
\begin{itemize}
\item $h: \cal J\to M^n$ is injective.
\end{itemize}
\end{defn}

By \emph{Strong Structure Theorem} we mean the Structure Theorem where cones are replaced everywhere by strong cones. Below we give a counterexample to the Strong Structure Theorem and in the next section we point out a `choice property' that implies it. We will need the following lemma.

\begin{lemma}\label{embedsupercone_cont}
Let $J\sub M^n$ be a supercone  and $S\sub M^m$ small. Assume that $f:Z\sub M^n\to M^{m}$ is an $\cal L$-definable continuous  map with $J\sub Z$ that satisfies $f(J)\sub S$. Then $f_{\res J}$ is constant.
\end{lemma}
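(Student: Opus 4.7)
The proof will proceed by induction on $n$. The base case $n=0$ is immediate, since $J=\{0\}$ and there is nothing to prove. For the inductive step, I will first establish a one-variable version of the lemma, then reduce the $n$-dimensional case to a situation where the inductive hypothesis applies to the projection $\pi(J)\sub M^{n-1}$, which is itself a supercone by Remark \ref{rem-supercone}(1).

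The first main step is the one-variable claim: if $I\sub M$ is an open interval, $K\sub I$ is co-small in $I$, and $g:I\to M^m$ is $\cal L$-definable and continuous with $g(K)$ contained in a small set $S'$, then $g$ is constant on $I$. The argument is standard o-minimality combined with the theory of small sets: if $g$ were non-constant, then by o-minimal cell decomposition some coordinate $g_i$ would be strictly monotonic on a subinterval $I'\sub I$, so $g_i(I')$ would contain an open interval, hence be large. On the other hand $g_i(I') = g_i(K\cap I')\cup g_i(I'\setminus K)$, where $g_i(K\cap I')\sub \pi_i(S')$ is small and $g_i(I'\setminus K)$ is small as the $\cal L$-definable image of a small set by Lemma \ref{lem:beg2}. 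By Corollary \ref{cor-union} their union is small, contradicting largeness of $g_i(I')$.

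The second main step is the inductive step. Write $J\sub M^n$ as a supercone with $\pi(J)\sub M^{n-1}$ a supercone and with $\cal L$-definable continuous $h_1,h_2$ such that each fiber $J_t$ is co-small in $(h_1(t),h_2(t))$. For each $t\in\pi(J)$, apply the one-variable claim to the partial function $y\mapsto f(t,y)$ on the interval $(h_1(t),h_2(t))$ (restricted to its intersection with $Z_t$, which contains $J_t$): it is $\cal L_t$-definable, continuous, sends $J_t$ into $S$, and $J_t$ is co-small in the interval, so $f(t,-)$ is constant on $(h_1(t),h_2(t))$ (and in particular on $J_t$). Let $g(t)$ denote this constant value.

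To apply the inductive hypothesis to the projected problem, choose an $\cal L$-definable continuous selection $c:\pi(Z)\to M$ with $c(t)\in(h_1(t),h_2(t))\cap Z_t$ (possible by cell decomposition on the $\cal L$-definable cell $Z$, after refining if necessary), and set $G(t):=f(t,c(t))$. Then $G$ is $\cal L$-definable and continuous on $\pi(Z)\supseteq\pi(J)$, and for $t\in\pi(J)$ we have $G(t)=g(t)\in S$ by the previous paragraph. Thus $G(\pi(J))\sub S$ is small, and by the inductive hypothesis applied to $G$ and the supercone $\pi(J)$, $G$ is constant on $\pi(J)$. It follows immediately that $f$ is constant on $J$.

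\textbf{Main obstacle.} The technical heart of the argument is the one-variable claim: it is essential that the image under a continuous $\cal L$-definable function of a co-small subset of an interval cannot lie in a small set unless the function is actually constant on that interval. The slick way to see this is the coordinate-by-coordinate monotonicity argument above, leveraging the fact that small sets are closed under $\cal L$-definable images (Lemma \ref{lem:beg2}) and finite unions (Corollary \ref{cor-union}), contrasted with the largeness of an open interval. This is where the interplay between the o-minimal $\cal L$-structure and the tame expansion $\widetilde{\cal M}$ enters decisively.
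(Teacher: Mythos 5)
Your proof is correct in substance, but it runs the induction in the transposed direction from the paper's. The paper slices off the \emph{first} coordinate: it applies the inductive hypothesis to the maps $f(t,-)$ on the $(n-1)$-dimensional supercone fibres $J_t$ (Remark \ref{rem-supercone}(1)), packages the resulting constants into an $\cal L$-definable map $h(t)=c_t$ on $\pi_1(Z)$, and finishes with a one-variable argument on the supercone $\pi_1(J)\sub M$: if $h$ were non-constant it would be injective on a subinterval $I$, and then $h(I\cap \pi_1(J))$ would be a large set sitting inside the small set $S$. You slice off the \emph{last} coordinate instead: you dispose of the one-dimensional fibres directly with your one-variable claim — whose monotonicity-plus-smallness contradiction is essentially the same as the paper's leaf argument, phrased as a decomposition of $g_i(I')$ into two small pieces rather than via ``an injective $\cal L$-definable image of a large set is large'' (Corollary \ref{fact1}) — and push the inductive hypothesis down to $\pi(J)$ through the value map $G(t)=f(t,c(t))$. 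The extra cost of your ordering is the continuous $\cal L$-definable selection $c$, which exists only piecewise on a cell decomposition of $\pi(Z)$ and so forces $G$, and hence the appeal to the inductive hypothesis, to be handled cell by cell (using Lemma \ref{open-supercone} to see that $\pi(J)$ meets an open cell in a supercone); the paper avoids any selection because its leaf argument needs only $\cal L$-definability of $h$, not continuity. Finally, both your proof and the paper's rest on the same implicit hypothesis that you should make explicit: as literally stated the lemma fails when the relevant fibre of $Z$ is disconnected (take $n=1$, $Z=(0,1)\cup(1,2)$ with $1\in P$, $J=(0,2)\sm P$, and $f$ locally constant with two values), so one must take $Z$ to be a cell — as it is wherever the lemma is applied — so that $(h_1(t),h_2(t))\cap Z_t$ really is a single interval and your one-variable claim, and likewise your identification of $G(t)$ with the constant value $g(t)$, goes through.
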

\begin{proof}
We work by induction on $n$. For $n=0$,  the statement is trivial. Now let  $n>1$ and assume we know the statement for all $J\sub M^{k}$ with $k<n$. Let $J\sub M^n$ and $f:Z\to S$ be as in the statement with $f(J)\sub S$.
For every $t\in \pi_1(J)$, by inductive hypothesis applied to $f(t, -):Z_t\to M^m$, there is unique $c_t\in S$ so that $f(\{t\}\times J_t)=\{c_t\}$. Since $f$ is continuous, and by definition of a supercone, for every $t\in \pi_1(Z)$, there is also unique $c_t\in S$ so that $f(\{t\}\times Z_t)=\{c_t\}$. We let $h:\pi_1(Z)\to M^m$ be the map given by $t\mapsto c_t$. If $f$ is not constant on $J$, there must be an interval $I\sub \pi_1(Z)$ on which $h$ is injective. But $I\cap \pi_1(J)\sub M$ is a supercone by Lemma \ref{open-supercone}, and  $h(I\cap \pi_1(J))\sub S$,  a contradiction. Therefore, $f$ is constant on $J$.
\end{proof}

\subsection*{Counterexample to the Strong Structure Theorem} We consider two closely related o-minimal structures: $\cal M=\la \R, <, +,  1, x\mapsto \pi x_{\res [0, 1]}\ra$ and its expansion $\cal M'= \la\R, <, +, 1, x\mapsto \pi x\ra$. It is well-known that $\Cal M$ does not define unrestricted multiplication by $\pi$ and that the theory of $\cal M'$ is the theory of ordered $\Q(\pi)$-vector spaces. We denote the language of $\cal M$ by $\cal L$ and the language of $\cal M'$ by $\cal L'$.\\

We now set $P:=\dcl(\emptyset)$. We first observe that $P=\Q(\pi)=\dcl_{\cal L'}(\emptyset)$. Indeed, since $\pi$ is $\Cal L_{\emptyset}$-definable, it is easy to see that $\Q(\pi)\subseteq P$. Note that $\Q(\pi)$ is a $\Q(\pi)$-vector space and therefore a model of the theory of $\Cal M'$. Thus $\dcl_{\cal L'}(\emptyset)\subseteq \Q(\pi)$.\\

Since $P=\Q(\pi)=\dcl_{\cal L'}(\emptyset)$, $\widetilde{\cal M}=\la \cal M, P\ra$ is a dense pair of models of the theory of $\cal M$ and $\la\cal M', P\ra$ is a dense pair of models of the theory of $\cal M'$. We will now show that the Strong Structure Theorem fails in $\widetilde{\cal M}$.
Being able to work in the two different dense pairs will be crucial. In the following, whenever we say a set is definable without referring to a particular language, we mean definable in $\widetilde{\cal M}$.\\

For $t\in M$, we denote by $l_t$ the straight line of slope $\pi$ that passes through $(t, 0)$. Define
$$U=\bigcup_{g\in P} l_g.$$
We will prove that $U$ is definable but not a finite union of strong cones.
By an \emph{endpart of $l_t$}, we mean $l_t \cap ([a, \infty)\times \R)$, for some $a\in \R$.\\

\noindent\textbf{Claim 1.} \emph{$U$ is definable.}
\begin{proof}[Proof of Claim 1]
For every $a\in M$, let $C_a=M\times [a, a+1)$ and $E_a\sub C_a\times C_a$ given by:
$$(x,y)E_a(x',y')\,\,\Lrarr\,\, y'-y=\pi(x'-x) \,\text{ and }\, |x'-x|\le 1.$$
Thus, if $(x, y)\in l_t\cap C_a$, then $[(x,y)]_{E_a}$ is the segment of $l_t$ that lies in $C_a$.
Define $p_a: C_a\to M^2$ via
$$p_a(x, y)= \text{ the midpoint of } [(x,y)]_{E_a},$$
and let $$Y_a=p_a(C_a\cap P^2).$$
Clearly, for $t\in P$, we have $l_t\cap P^2=\{(g, \pi (g-t)): g\in P\}$, and for $t\not\in P$, we have $l_t\cap P^2=\emptyset$. We claim that
$$U=\bigcup_{a\in M} Y_a,$$
and hence $U$ is definable.\smallskip

\noindent $(\subseteq)$. Let $(x, y)\in l_t$, $t\in P$. We claim that $(x, y)\in p_a(C_a\cap P^2)$, for $a=y-\frac{1}{2}$. Indeed, $(x, y)$ is the midpoint of $[(x, y)]_{E_a}=l_t\cap C_a$, and hence all we need is to find a point $(g_1, g_2)\in l_t\cap C_a\cap P^2$. Take any $g_2\in [a, a+1)\cap P$ and let $g_1=t+\frac{g_2}{\pi}\in P$. Then clearly $(g_1, g_2)\in l_t\cap C_a\cap P^2$ and hence $p_a(g_1, g_2)=(x,y)$.\smallskip

\noindent $(\supseteq)$. Let $(x, y)=p_a(g_1, g_2)\in p_a(C_a\cap P^2)$. Then $y-g_2=\pi(x-g_1)$. Hence, for $t=g_1-\frac{g_2}{\pi}$, we have $(x, y)\in l_t$.
\end{proof}

\noindent\textbf{Claim 2.} \emph{$U$ is not a finite union of strong cones.}
\begin{proof}[Proof of Claim 2]

First we observe that $\ldim(U)=1$. Indeed, $U$ contains infinite $\cal L$-definable sets, so $\ldim(U) \ge 1$. It cannot be $\ldim(U)=2$, by Lemma \ref{ldimproj} and since each vertical fiber is small (it contains at most one element of each $l_t$, $t\in P$). Therefore $\ldim(U)=1$.

Now assume, towards a contradiction, that $U$ is a finite union of strong cones. Let $h(\cal J)$ be one of them, where $\cal J=\bigcup_{g\in S} \{g\}\times J_g$, and $h:Z\to M^2$. In particular, $h$ is  injective on  $\cal J$. In the next two subclaims we make use of the expansion $\cal M'$ of $\cal M$ and the dense pair $\la\cal M', P\ra$.
\\

\noindent\textbf{Subclaim 1.} \emph{For every $g\in S$, $h(g,Z_g)$ must be contained in a unique $l_t$.}
\begin{proof}[Proof of Subclaim] Each of $l_t$ and the family $\{l_t\}_{t\in M}$ is now $\cal L'$-definable. Consider the $\cal L'$-definable and continuous map $f:Z_g\to M$ where
$$f(x)= t \,\,\Lrarr\,\, h(g, x)\in l_t.$$
By Lemma \ref{embedsupercone_cont} applied to $J=J_g$, $S=P$ and $f$, it follows that $h(g, J_g)$ must be contained in a unique $l_t$. By continuity of $h$, so does $h(g, Z_g)$.
\end{proof}

\noindent\textbf{Subclaim 2.} \emph{For every $t\in P$, there are only finitely many $g\in S$ such that $h(g, Z_g)\sub l_t$.}
\begin{proof}[Proof of Subclaim]
Assume, towards a contradiction, that for some $t\in P$ there are infinitely many $g\in S$ with $h(g, Z_g)\sub l_t$. For each $g\in S$, denote by $a_g$ the infimum of the projection of $h(g, Z_g)$ onto the first coordinate. By injectivity of $h$, for every two $g_1, g_2\in S$, we have  $h(g_1, J_{g_1})\cap h(g_2, J_{g_2})= \emptyset$. By Lemma \ref{closure of cones}, $h(g_1, Z_{g_1})\cap h(g_2, Z_{g_2})$ is finite (in fact, a singleton). Therefore, the set
$$\{a_g : g\in S \text{ and } h(g,Z_g)\sub l_t\}$$
is an infinite discrete $\cal L'(P)$-definable subset of $\R$, a contradiction.
\end{proof}
Since the subclaims hold for each of the finitely many strong cones, it turns out that for one of them, say $h(\cal J)$,  there is some $g\in \pi(\cal J)$ such that $h(g, Z_g)$ contains an endpart of $l_0$. So some endpart of $l_0$ is definable in $\widetilde{\cal M}$. But then its closure, which equals that endpart, is $\cal L$-definable. It follows easily that the full multiplication $x\mapsto \pi x$ is $\cal L$-definable, a contradiction.
\end{proof}

\subsection{Future directions}\label{sec-future} We now point out a key `choice property' which guarantees the Strong Structure Theorem. Indeed, together with Corollary \ref{uniform small} it implies a strengthened version of Lemma \ref{philippcor} below, which is enough.\\

 \noindent\textbf{Choice Property:} Let $h:Z\sub M^{n+k}\to M^l$ be an $\cal L_A$-definable continuous map and $S\sub M^n$   $A$-definable and small. Then there are $p, m \in \N$,  $\Cal L_A$-definable continuous maps $h_i :Z_i\subseteq M^{m+k}\to M^l$, $Y_i\sub M^m$ $A$-definable and small, and $A$-definable families $X_i\sub M^{m+k}$ with $X_{ia}\sub Y_i$, $i=1, \dots, p$, such that for every $a\in \pi(Z)$,
\begin{enumerate}
\item $h_i(-, a): X_{ia}\to M^l$ is injective, and

\item $h(S \cap Z_a, a)=\bigcup_i h_i(X_{ia}, a)$,
\end{enumerate}
where $\pi(Z)$ denotes the projection of $Z$ onto the last $k$ coordinates.





\begin{lemma}\label{philippcor2}
If $\widetilde M$ satisfies the Choice Property, then Lemma \ref{philippcor} holds with the additional conclusion that each $h_i: Z_i \to M^{n+1}$ is injective.
\end{lemma}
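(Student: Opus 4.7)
The plan is to mimic the proof of Lemma \ref{philippcor}, inserting one extra refinement step in which the Choice Property is invoked in order to upgrade the fiberwise injectivity of $h_i(g,-)$ (which is free) to the fiberwise injectivity of $h_i(-,a)$ --- the genuinely new piece of information needed for global injectivity of the map $\hat h_i(g,a) = (a, h_i(g,a))$. Indeed, for this $\hat h_i$ to be injective on its domain, the only non-trivial case is $a_1 = a_2$ and $g_1 \ne g_2$, and that is precisely the content of $h_i(-,a)$ being injective.

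Concretely, I would first run the proof of Lemma \ref{philippcor} up to the point where one has $\Cal L_A$-definable continuous maps $h_i : V_i \subseteq M^{m+n} \to M$, $i = 1, \dots, p$, and an $A$-definable family $\{Y_{ia}\}_{a \in C}$ of small subsets of $P^m$ with $X_a = \bigcup_i h_i(Y_{ia}, a)$. To each $h_i$ I would then apply the Choice Property with the fixed ambient small set $S := P^m$. This yields, for each $i$, finitely many $\Cal L_A$-definable continuous maps $h_{i,j} : Z_{i,j} \subseteq M^{m_i + n} \to M$, small $A$-definable sets $Y_{i,j} \subseteq M^{m_i}$, and an $A$-definable family $\{X_{i,j,a}\}_{a}$ of subsets of $Y_{i,j}$, such that $h_i(P^m \cap V_{i,a}, a) = \bigcup_j h_{i,j}(X_{i,j,a}, a)$ and each $h_{i,j}(-, a) : X_{i,j,a} \to M$ is injective.

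Next, since $Y_{ia} \subseteq P^m \cap V_{i,a}$, I would cut each $X_{i,j,a}$ down to $X'_{i,j,a} := \{g \in X_{i,j,a} : h_{i,j}(g,a) \in h_i(Y_{ia}, a)\}$; this family is still $A$-definable, still contained in the small set $Y_{i,j}$, and $h_{i,j}(-, a)$ remains injective on $X'_{i,j,a}$ by restriction. The refined decomposition reads $X_a = \bigcup_{i,j} h_{i,j}(X'_{i,j,a}, a)$. After disjointifying exactly as in the proof of Lemma \ref{philippcor} (recursively deleting from $X'_{i,j,a}$ the elements $g$ whose image under $h_{i,j}(-,a)$ has already appeared in an earlier piece), I would set $\hat h_{i,j}(g,a) := (a, h_{i,j}(g,a))$, $S_{i,j} := Y_{i,j}$, and $Z_{(i,j)g} := \{a \in C : g \in X'_{i,j,a}\}$. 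Global injectivity of $\hat h_{i,j}$ then reduces to a two-line check: if $\hat h_{i,j}(g_1, a_1) = \hat h_{i,j}(g_2, a_2)$, then $a_1 = a_2 =: a$ from the first coordinate, and then $g_1, g_2 \in X'_{i,j,a}$ together with fiberwise injectivity force $g_1 = g_2$.

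The main obstacle is a shape mismatch between what the Choice Property supplies and what is needed: the property is stated with a fixed ambient small set $S \subseteq M^n$, whereas the relevant small sets $Y_{ia}$ arising from Corollary \ref{uniform small} vary with $a$ and, worse, their union cannot be taken as the single $S$ without losing $\Cal L_A$-definability of the domain. I circumvent this by taking the coarsest available choice $S := P^m$, which contains every $Y_{ia}$, and then paying for the over-generation with the routine restriction step to $X'_{i,j,a}$ above. Once this translation is in place, the remaining steps --- disjointification, the definition of $\hat h_{i,j}$, and the verification of conclusions (1)--(3) of Lemma \ref{philippcor} --- transfer verbatim from the proof of \ref{philippcor}, now with the added global injectivity of each $\hat h_{i,j}$ supplied by clause (1) of the Choice Property.
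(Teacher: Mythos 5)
Your proposal is correct and follows essentially the same route as the paper: apply the Choice Property to the maps produced by Corollary \ref{uniform small} to gain fiberwise injectivity of $h_{i}(-,a)$, disjointify recursively, and then observe that the maps $\hat h_i(g,a)=(a,h_i(g,a))$ from the proof of Lemma \ref{philippcor} become globally injective. Your explicit restriction step (cutting $X_{i,j,a}$ down to $X'_{i,j,a}$ to reconcile the fixed ambient small set $S=P^m$ of the Choice Property with the $a$-dependent sets $Y_{ia}$) is a detail the paper's terser proof passes over silently, and it is handled correctly.
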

\begin{proof} We first claim that there are $m, p\in \N$, and for each $i=1,\dots,p$,
 an $\cal L_A$-definable continuous function $h_i:Z_i \subseteq M^{m+n}\to M$, an $A$-definable small set $S_i \sub M^m$ and an $A$-definable family $Y_i\sub S_i\times C$,  such that for all $a\in I$,
\begin{enumerate}
\item $h_i(-, a): Y_{ia}\to M$ is injective,

\item $X_a=\bigcup_i h_i(Y_{ia}, a)$,

\item $\{h_i(Y_{ia}, a)\}_{i=1, \dots, p}$ are  disjoint.
\end{enumerate}
Indeed, apply the Choice Property to each $h_i$ from Corollary \ref{uniform small} to get (1) and (2). 
For (3), recursively replace $Y_{ia}$, $1<i\le l$, with the set consisting of all $z\in Y_{ia}$ such that $h_i(z, a)\not\in h_j(Y_{ja}, a)$, $0<j<i$.
We now have:
$$X=\bigcup_{a\in C} \{a\} \times X_a =\bigcup_i \bigcup_{a\in C}  \{a\} \times h_i (Y_{ia}, a).$$
From this point on the argument continues identically with the corresponding part of Lemma \ref{philippcor}, noting in the end that, by (1), each $\hat{h}_i:\bigcup_{g\in S_i} \{g\}\times V_{ij}\to M^{n+1} $ turns out to be injective.
\end{proof}

\begin{theorem}\label{strongstrthm}
If $\widetilde M$ satisfies the Choice Property, then  the Structure Theorem holds with cones replaced  by strong cones. Moreover, the unions of cones in Structure Theorem are disjoint.
\end{theorem}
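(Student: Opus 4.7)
The plan is to redo the proof of the Structure Theorem \ref{str-thm-sets} almost verbatim, making two systematic changes: replace every invocation of Lemma \ref{philippcor} by its strong form Lemma \ref{philippcor2}, and strengthen the inductive hypothesis so that every cone produced is a \emph{strong} cone and every union of cones is \emph{disjoint}. The base case $n=0$ is trivial, since $0$-cones coincide with small sets, and Lemma \ref{lem:2n0} already (or is easily adjusted to) produce a finite disjoint union of strong $0$-cones via Lemma \ref{philippcor2}. The cases I and II of $(1)_n$ can be combined into a single disjoint union because, by Remark \ref{definability}(a), the set of $a\in\pi(X)$ with $X_a$ small is $A$-definable, so $\pi(X)$ is partitioned into the two cases.

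In the inductive step for $(1)_n$, Case I (all fibers $X_a$ co-small) proceeds unchanged, using the following strong form of Lemma \ref{cone+1}: if $C=h(\cal J)$ is a strong $k$-cone, meaning $h$ is injective on all of $\cal J$, then the map $\tau((g,t),x)=(h(g,t),x)$ in the proof of Lemma \ref{cone+1} is automatically injective on $\bigcup_g\{g\}\times J'_g$, since the $x$-coordinate is preserved and equality of the first $n$ coordinates forces $(g,t)=(g',t')$ by injectivity of $h$. Case II (all fibers small) invokes Lemma \ref{philippcor2} to write $X=\bigsqcup_i h_i\bigl(\bigcup_{g\in S_i}\{g\}\times Z_{ig}\bigr)$ with each $h_i$ \emph{globally} injective on its domain and with the $U_i$ disjoint. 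Applying the inductive (strong, disjoint) form of $(3)_{n-1}$ to decompose each $Z_g$ into a uniform family of strong cones, and then invoking the strong version of Lemma \ref{largefibers} below, expresses $X$ as a finite disjoint union of strong cones.

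The key auxiliary strengthening is a strong form of Lemma \ref{largefibers}: if $\{C_t\}_{t\in K}$ is a uniform family of strong $k$-cones and $\tau$ is injective on $\bigcup_{t\in K}\{t\}\times C_t$, then $\tau\bigl(\bigcup_{t\in K}\{t\}\times C_t\bigr)$ is a strong $k$-cone. The proof is the original composition $\sigma(t,g,a)=\tau(t,h(t,g,a))$: if $\sigma(t,g,a)=\sigma(t',g',a')$, injectivity of $\tau$ on the union gives $(t,h(t,g,a))=(t',h(t',g',a'))$, and then the global injectivity of $h$ on each slice gives $(g,a)=(g',a')$. For $(2)_n$, one repeats the sub-induction on $\ldim(X)$, using the strong form of $(1)_n$ together with Lemma \ref{lem:2n0} (upgraded via Lemma \ref{philippcor2}) for the base case; the Corollary \ref{philipp2uniform} step is unaffected. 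For $(3)_n$, the compactness argument of the original proof carries over. The main bookkeeping obstacle is maintaining simultaneously \emph{global} injectivity (as opposed to fiberwise) and disjointness at every step; Lemma \ref{philippcor2} supplies both at the base of Case II, and the strong versions of Lemmas \ref{cone+1} and \ref{largefibers} propagate them through the inductive construction.
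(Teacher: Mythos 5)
Your proposal is correct and follows essentially the same route as the paper's proof: substitute Lemma \ref{philippcor2} for Lemma \ref{philippcor}, verify that Lemmas \ref{lem:2n0}, \ref{cone+1} and \ref{largefibers} go through with strong cones (with the injectivity of the composed maps checked exactly as you do), and rerun the induction with the strengthened hypothesis that all cones are strong and all unions disjoint. The paper leaves these verifications as a routine to the reader, so your explicit checks of the strong versions of Lemmas \ref{cone+1} and \ref{largefibers} are a welcome filling-in of the same argument rather than a departure from it.
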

\begin{proof} The reader can check that Lemmas \ref{cone+1} and \ref{largefibers} hold with cones replaced everywhere by strong cones, with identical proofs. Moreover, the Choice Property for $k=0$ implies that every $0$-cone is a finite union of strong $0$-cones, and hence it is easy to obtain Lemma \ref{lem:2n0} with strong $0$-cones in place of $0$-cones, as well. It is then a (rather lengthy) routine to check that the proof of the current statement is, again, identical with that of the Structure Theorem, with cones replaced everywhere by strong cones and with the further condition that the unions of cones can be taken to be disjoint. In the proof, Lemma \ref{philippcor} has to be replaced by Lemma \ref{philippcor2} in order to get strong cones and not just cones. The injectivity of the $h_i$'s in Lemma \ref{philippcor2} guarantees the disjointness of the cones. We leave the details to the reader.
\end{proof}



The counterexample to the Strong Structure Theorem relies on a somewhat unnatural condition on \cal M. In \cite{egh2}, we establish the Choice Property for a collection of structures $\widetilde{\cal M}=\la \cal M, P\ra$, such as when $\cal M$ is a real closed field, or when $P$ is a dense independent set. More generally, we can ask the following question.

\begin{question}
Under what assumptions on $\cal M$ or $\widetilde{\cal M}$ does the Choice Property hold? 
\end{question}

There are other ways in which one could try to improve the Structure Theorem. In general, a supercone $J\sub M^n$ does not contain a product of supercones in $M$. For example, let $\widetilde{\cal M}=\la \cal M, P\ra$ be a dense pair of real closed fields and  $J\sub M^2$ with
$$J=\bigcup_{a\in M} \{a\}\times (M\sm aP).$$
It is natural to ask whether $J$ contains an image of such product under $\cal L$-definable map. More generally, one could ask the following question.

\begin{question}\label{productcones}
Would the Structure Theorem remain true if we defined:
\begin{enumerate}
  \item supercones in $M^k$ to be products $J_1\times \dots \times J_k$, where each $J_i$ is a supercone in $M$?

  \item $k$-cones to be of the form $h(S\times J)$? (That is,   $h$ and $S$ are as before, but $J_g=J$ in Definition \ref{def-cone} is fixed.)
\end{enumerate}
\end{question}

In subsequent work \cite{el-opt}, we refute both questions, showing that our definitions and Structure Theorem are optimal in yet another way.

\section{Large dimension versus $\scl$-dimension}\label{sec-equaldim}

In this section we use our Structure Theorem to establish the equality of the large dimension with the `$\scl$-dimension' arising from a relevant pregeometry in \cite{beg}. In Section \ref{sec-groups} we use this equality to set forth the analysis of groups definable  in $\widetilde{\cal M}$.

We start by quoting \cite[Definition 28]{beg}, which was given independently from, and in complete analogy with, \cite[Definition 5.2]{el-sbdgroups}. 

\begin{definition} The  \emph{small closure} operator $\scl: \cal P(M)\rightarrow \cal P(M)$ is defined by:
\[
a\in \scl(A) \Lrarr \text{ $a$ belongs to an $A$-definable small set.}
\]
\end{definition}

In \cite{beg} $\scl$ was shown to define a pregeometry under certain assumptions (in addition to their basic tameness conditions). We show that in the current context $\scl$ always defines a pregeometry. This follows from the first equality below, which is proved using only results from Section \ref{sec-small}. In the interests of completeness, we also prove a second equality, using the Structure Theorem. Recall that $\dcl(A)$ denotes the usual definable closure of $A$ in the o-minimal structure $\cal M$.

\begin{lemma}\label{scl-dcl}
$\scl(A)= \dcl (P\cup A)= \dcl_{\cal L(P)}(P\cup A)$.
\end{lemma}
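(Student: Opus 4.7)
The plan is to prove the chain of inclusions
\[
\scl(A) \;\subseteq\; \dcl(P\cup A) \;\subseteq\; \dcl_{\cal L(P)}(P\cup A) \;\subseteq\; \scl(A),
\]
where the middle inclusion is trivial since $\cal L\subseteq \cal L(P)$.

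For the first inclusion, suppose $a\in\scl(A)$, so $a$ lies in some $A$-definable small set $X\subseteq M$. By Lemma \ref{lem:beg2}, $X$ is $P$-bound over $A$; i.e.\ there exists an $\cal L_A$-definable function $f:M^m\to M$ with $X\subseteq f(P^m)$. Writing $a=f(p)$ for some $p\in P^m$ gives $a\in\dcl(P\cup A)$.

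For the reverse chain, the interesting step is $\dcl_{\cal L(P)}(P\cup A)\subseteq \scl(A)$. Take $a\in\dcl_{\cal L(P)}(P\cup A)$, so there is an $\cal L(P)$-formula isolating $\{a\}$ with parameters from $P\cup A$; separating these, we obtain an $\cal L(P)_A$-formula $\varphi(x,y)$, a tuple $p\in P^m$ and the fact that $\{a\}=\{x\in M:\varphi(x,p)\}$. Consider the $A$-definable set
\[
Y := \{\,x\in M : \exists p'\in P^m\,\bigl[\varphi(x,p')\,\wedge\,\forall y(\varphi(y,p')\to y=x)\bigr]\,\}.
\]
Clearly $a\in Y$. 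Moreover, $Y$ is the image of $P^m$ under the partial $A$-definable (in $\widetilde{\cal M}$) function sending $p'$ to the unique witness of $\varphi(\cdot,p')$ (and to $0$ if no such unique witness exists), so $Y$ is $P$-internal over $A$. By Corollary \ref{internal}(1), $Y$ is then $P$-bound over $A$, hence small by Lemma \ref{lem:beg2}. Therefore $a\in\scl(A)$.

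For completeness one can also give a direct argument for $\dcl(P\cup A)\subseteq\scl(A)$: if $a = f(p,b)$ with $f$ an $\cal L_\emptyset$-definable function, $p\in P^m$ and $b\in A^n$, then $a$ lies in the $A$-definable set $\{f(q,b):q\in P^m\}$, which is the image of $P^m$ under an $\cal L_A$-definable function and hence small. The main obstacle, and the heart of the proof, is the passage from $\cal L(P)$-definability to smallness in the last inclusion, which is exactly where the equivalence of $P$-internality and $P$-boundness (Corollary \ref{internal}) is needed; this is the place where Assumption (III) is implicitly used.
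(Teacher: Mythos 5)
Your proof is correct, and it takes a genuinely different route from the paper's on the one nontrivial step. Both proofs agree on the easy parts: $\scl(A)\subseteq\dcl(P\cup A)$ comes from the fact that $A$-definable small sets are $P$-bound over $A$ (you cite Lemma \ref{lem:beg2}; the paper unwinds this via Remark \ref{definability}(a) and Corollary \ref{uniform small}, which is what Lemma \ref{lem:beg2} is built from), and $\dcl(P\cup A)\subseteq\scl(A)$ is the trivial direction you give for completeness. The difference is in the third equality. The paper proves $\dcl_{\cal L(P)}(P\cup A)\subseteq\dcl(P\cup A)$ by appealing to the Structure Theorem: given $b=f(a)$ with $f$ definable and $a\in(P\cup A)^k$, it covers $a$ by a cone on which $f$ is fiber $\cal L$-definable and then unwinds the cone data to place $b$ in $\dcl(P\cup A)$. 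You instead close the cycle with $\dcl_{\cal L(P)}(P\cup A)\subseteq\scl(A)$, observing that the $\dcl_{\cal L(P)}$-locus over $P^m$ of an isolating formula is manifestly $P$-internal over $A$ (as the image of $P^m$ under the $A$-definable Skolem-type function), hence $P$-bound by Corollary \ref{internal}(1), hence small; Assumption (III) enters only through Corollary \ref{internal}, not through the Structure Theorem. Your argument is therefore noticeably more elementary: it establishes all three equalities using only the material of Section \ref{sec-small}, whereas the paper's proof of the last equality needs Theorem \ref{str-thm-sets}. This is actually in the spirit of the paper's own remark before Corollary \ref{scl-preg} (that the first equality, the one driving the pregeometry, only needs Section \ref{sec-small}); you have shown the same is true of the full lemma.
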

\begin{proof}
$\scl ( A)\sub \dcl(P\cup A).$ Let $b\in \scl(A)$. Then there are an $\cal L(P)$-formula $\varphi(x, y)$ and $a\in A^l$, such that $\varphi(\cal M, a)$ is small and contains $b$. Consider the $\emptyset$-definable family $\{\varphi(\cal M, t)\}_{t\in M^l}$. By Remark \ref{definability}(a), the set $I$ consisting of all $t\in M^l$ such that $\varphi(\cal M, t)$ is small is $\emptyset$-definable. Of course, $I$ contains $a$. By Corollary \ref{uniform small}, there is an $\cal L_\emptyset$-definable function $h:M^{m+l}\to M$ such that for all $t\in I$, $\varphi(\cal M, t)\sub h(P^m, t)$. Therefore $b\in h(P^m, a)$, and $b\in \dcl  (P\cup A)$.\smallskip

$\scl ( A)\supseteq \dcl(P\cup A).$ Let $b\in \dcl (P\cup A)$. Then there is an $\cal L_\emptyset$-definable $h:M^{m+l}\to M$ and $a\sub A^l$ such that $b\in h(P^l, a)$. But the latter set is small, hence $b\in \scl(A)$.\smallskip



$\dcl (P\cup A)= \dcl_{\cal L(P)}(P\cup A)$. It suffices to show $\dcl_{\cal L(P)}(P\cup A)\sub \dcl (P\cup A)$. Let $b=f(a)$, where $a\sub P\cup A$ and $f$ is $\emptyset$-definable. By Structure Theorem, there is a $\emptyset$-definable cone $h(\cal J)$, where $h$ is $\cal L_\emptyset$-definable, containing $a$ on which $f$ is fiber $\cal L_\emptyset$-definable. Denote $S=\pi(\cal J)$. Let $g\in S$ and $t\in J_g$ be so that $a=h(g, t)$. Since $h(g, -):M^k\to M^n$ is $\cal L_{g}$-definable and injective, $t\in \dcl(P\cup A\cup S)$. Moreover, $S$ is $P$-bound over $\emptyset$ (Lemma \ref{lem:beg2}) and hence $t\in \dcl(A\cup P)$. Since $f h(g, -)$ agrees with an $\cal L_{A\cup P}$-definable map on $J_g$, it follows that
$$b=f(h(g, t))\in \dcl (A\cup P).$$
\end{proof}

\begin{remark} In general $\dcl(P\cup A)\neq \dcl_{\cal L(P)}(A)$. For example, let $\la \Cal M,\Cal N \ra$ be a dense pair of real closed fields and let $\Cal N_0$ be a real closed subfield of $\Cal N$. Then $\dcl_{\cal L(P)}(\Cal N_0) = \Cal N_0$ by \cite[Lemma 3.2]{vdd-dense}.
\end{remark}


The following corollary is then immediate.

\begin{cor}\label{scl-preg}
The small closure operator $\scl$ defines a pregeometry.
\end{cor}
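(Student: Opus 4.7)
The plan is to deduce Corollary \ref{scl-preg} directly from Lemma \ref{scl-dcl} together with the fact that $\dcl$ is a pregeometry on $M$ (stated in Section \ref{sec-prelim}). By Lemma \ref{scl-dcl} we have $\scl(A) = \dcl(P \cup A)$ for every $A \subseteq M$, so verifying the pregeometry axioms for $\scl$ reduces to verifying them for the operator $A \mapsto \dcl(P \cup A)$, and this will follow formally from the pregeometry axioms for $\dcl$.

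Concretely, I would check each of the four axioms in turn. Reflexivity $A \subseteq \scl(A)$ is immediate since $A \subseteq P \cup A \subseteq \dcl(P \cup A)$. For idempotence, one computes
\[
\scl(\scl(A)) = \dcl\bigl(P \cup \dcl(P \cup A)\bigr) = \dcl\bigl(\dcl(P \cup A)\bigr) = \dcl(P \cup A) = \scl(A),
\]
using idempotence of $\dcl$. Finite character of $\scl$ follows from that of $\dcl$: if $a \in \scl(A) = \dcl(P \cup A)$, then $a \in \dcl(H \cup A_0)$ for some finite $H \subseteq P$ and finite $A_0 \subseteq A$, hence $a \in \dcl(P \cup A_0) = \scl(A_0)$.

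The only axiom requiring a brief argument is exchange. Suppose $a \in \scl(Ab) \setminus \scl(A)$, i.e.\ $a \in \dcl(P \cup A \cup \{b\})$ but $a \notin \dcl(P \cup A)$. Applying the exchange property of $\dcl$ with parameter set $P \cup A$ yields $b \in \dcl(P \cup A \cup \{a\}) = \scl(Aa)$, as required.

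I do not foresee any real obstacle here; once Lemma \ref{scl-dcl} is in hand, the corollary is essentially a transfer of the pregeometry axioms from $\dcl$ to $\scl$, with the role of the base set simply being enlarged by the fixed predicate $P$. The only thing to be slightly careful about is ensuring the finite character argument keeps the finitely many parameters from $P$ separate from the finite subset of $A$, but this is automatic from the form of the equality $\scl(A) = \dcl(P \cup A)$.
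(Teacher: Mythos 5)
Your proof is correct and is exactly the route the paper takes: the paper derives the corollary as an immediate consequence of Lemma \ref{scl-dcl} (which gives $\scl(A)=\dcl(P\cup A)$) together with the fact, recorded in Section \ref{sec-prelim}, that $\dcl$ is a pregeometry on $M$. You have simply written out the routine axiom-by-axiom transfer that the paper leaves implicit.
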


\begin{definition}
Let $A,B\subseteq M$. We say that $B$ is \emph{$\scl$-independent over $A$} if for all $b\in B$, $b\not\in \scl\big(A\cup (B\setminus \{b\})\big)$.
A maximal $\scl$-independent  subset of $B$ over $A$ is called \emph{a basis for $B$ over $A$}.
\end{definition}

By the Exchange property for $\scl$, any two bases for $B$ over $A$ have the same cardinality. This allows us to define the
\emph{rank of $B$ over $A$}:
\[\rank(B/A)=\text{ the cardinality of any basis of $B$ over $A$}.\]




In complete analogy with the corresponding fact for $acl$ in a pregeometric theory, we can prove:

\begin{lemma}
If $p$ is a partial type over $A\sub M$ and $a\models p$ with $\rank(a/A)=m$, then for any set $B\supseteq A$ there is $a^\prime \models p$ such that $\rank(a^\prime/B)\ge m$.
\end{lemma}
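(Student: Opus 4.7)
The plan is to construct $a'$ as a realization of the complete $\Cal L(P)$-type $\operatorname{tp}(a/A)$ (which extends the partial type $p$, so any such $a'$ automatically satisfies $p$), chosen so that a basis of $a$ over $A$ is transported to a tuple that remains $\scl$-independent over $B$. Since $\rank$ depends only on the $\Cal L(P)$-type over the base and is monotone in the base, achieving $\rank(a'/B)\ge m$ amounts to preserving the independence of the $m$ basis coordinates when passing from $A$ to $B$.

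The crucial step is the single-element case: if $c\in M$ with $c\notin \scl(A')$ and $B'\supseteq A'$, then there exists $c'\models \operatorname{tp}(c/A')$ with $c'\notin \scl(B')$. By sufficient saturation of $\widetilde{\Cal M}$, it suffices to verify consistency of the partial type $\operatorname{tp}(c/A')\cup\{\neg\phi(x,b): \phi \text{ an }\Cal L(P)\text{-formula}, b\in (B')^*, \phi(M,b) \text{ small}\}$. By compactness, suppose a finite subset, involving some $\theta(x)\in \operatorname{tp}(c/A')$ and finitely many small $B'$-definable sets $S_1,\ldots,S_N$, is inconsistent; then $\theta(M)\sub \bigcup_j S_j$, and Corollary \ref{cor-union} makes the latter small. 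Hence $\theta(M)$ is a small $\Cal L(P)_{A'}$-definable set, so by the very definition of $\scl$ we obtain $\theta(M)\sub \scl(A')$, forcing $c\in \scl(A')$, contradicting the hypothesis.

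With this single-element case in hand, the remainder is a standard pregeometric iteration. Choose $a_1,\ldots,a_m$ forming a basis for $\{a_1,\ldots,a_n\}$ over $A$, and inductively construct $a'_1,\ldots,a'_m$: given $(a'_1,\ldots,a'_{i-1})$ realizing the same type over $A$ as $(a_1,\ldots,a_{i-1})$ and $\scl$-independent over $B$, use saturation to obtain an $A$-automorphism $\sigma$ of $\widetilde{\Cal M}$ with $\sigma(a_j)=a'_j$ for $j<i$; set $\bar a_i:=\sigma(a_i)$ and apply the single-element case with $A':=A\cup\{a'_1,\ldots,a'_{i-1}\}$, $B':=B\cup\{a'_1,\ldots,a'_{i-1}\}$ to produce $a'_i$ realizing $\operatorname{tp}(\bar a_i/A')$ with $a'_i\notin \scl(B')$. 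After $m$ steps, a final $A$-automorphism extends $(a'_1,\ldots,a'_m)$ to a full tuple $a'=(a'_1,\ldots,a'_n)\models \operatorname{tp}(a/A)$, and transitivity of the pregeometry $\scl$ (Corollary \ref{scl-preg}) yields $\rank(a'/B)\ge \rank(\{a'_1,\ldots,a'_m\}/B)=m$. The only delicate point is the single-element step, where the definability of smallness in families (Remark \ref{definability}), via Corollary \ref{cor-union}, is precisely what lets a finite union of small $B'$-definable sets that covers $\theta(M)$ force $c$ into $\scl(A')$.
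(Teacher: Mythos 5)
Your proof is correct and follows essentially the same route as the paper's, which simply refers to the standard pregeometric argument in Gagelman (for $\operatorname{acl}$-rank) and observes that it carries over verbatim with ``algebraic'' replaced by ``small'' and ``$\operatorname{acl}$-independence'' replaced by ``$\scl$-independence''; you have written out precisely that argument in full — the single-element lifting via compactness and Corollary~\ref{cor-union}, then iteration through a basis using automorphisms and the exchange property. (One minor remark: the appeal to Remark~\ref{definability} is not actually needed — what the compactness step uses is only that a finite union of small sets is small — and the opening appeal to ``monotonicity in the base'' is not used in the argument; neither affects correctness.)
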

\begin{proof}
The proof of the analogous result for the rank coming from $acl$ in a pregeometric theory is given, for example, in \cite[page 315]{g}. The proof of the present lemma is word-by-word the
same with that one, after replacing an `algebraic formula' by a `formula defining a small set' in the definition of $\Phi_B^m$ (\cite[Definition 2.2]{g}) and
the notion of `algebraic independence' by that of `$\scl$-independence' we have here.
\end{proof}


It follows that the corresponding dimension of partial types and definable sets is well-defined and independent of the choice of the parameter set.

\begin{definition}
Let $p$ be a partial type over $A\subset M$. The \emph{$\scl$-dimension of $p$} is defined as follows:
\[
\cldim(p)= \max\{\rank(\bar{a}/A): \bar{a}\subset M \text{ and } \bar{a}\models p\}.
\]
Let $X$ be a definable set. Then the \emph{$\scl$-dimension of $X$}, denoted by $\cldim(X)$ is the dimension of its defining formula.
\end{definition}

We next prove the equivalence of the $\scl$-dimension and large dimension of a definable set. First, by a standard routine, using the saturation of $\widetilde{\cal M}$, we observe the following fact about supercones.

\begin{fact}\label{rankJ}
Let $J\sub M^k$ be an $A$-definable supercone. Then $J$ contains a tuple of rank $k$ over $A$.
\end{fact}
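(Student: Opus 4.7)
The plan is to proceed by induction on $k$, following exactly the recursive structure of Definition \ref{def-supercone}. The base case $k=0$ is vacuous, since the empty tuple trivially has rank $0$.

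For the inductive step, suppose the result holds for $k$ and let $J\subseteq M^{k+1}$ be an $A$-definable supercone. By Definition \ref{def-supercone}, $\pi(J)\subseteq M^k$ is an $A$-definable supercone, and there exist $\cal L$-definable continuous $h_1,h_2$ with $h_1<h_2$ such that for every $a\in \pi(J)$, $J_a$ is co-small in $(h_1(a),h_2(a))$. By the inductive hypothesis, $\pi(J)$ contains a tuple $a=(a_1,\dots,a_k)$ of rank $k$ over $A$. It suffices to find $a_{k+1}\in J_a$ with $a_{k+1}\notin \scl(A\cup a)$, since then $(a_1,\dots,a_{k+1})$ has rank $k+1$ over $A$ and lies in $J$.

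To produce $a_{k+1}$, I will use the saturation of $\widetilde{\cal M}$. Consider the partial type over $A\cup a$ consisting of the formula $x\in J_a$ together with the formulas $x\notin S$, as $S$ ranges over $Aa$-definable small subsets of $M$. Finite satisfiability is immediate: given finitely many $Aa$-definable small sets $S_1,\dots,S_n$, their union $S$ is small by Corollary \ref{cor-union}; since $(h_1(a),h_2(a))\setminus J_a$ is small and $S$ is small, the set
\[
J_a\setminus S \;=\; (h_1(a),h_2(a))\setminus\bigl(((h_1(a),h_2(a))\setminus J_a)\cup S\bigr)
\]
is co-small in the nonempty interval $(h_1(a),h_2(a))$ and hence nonempty. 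By saturation, the type is realized by some $a_{k+1}\in J_a$. By Lemma \ref{scl-dcl} (which identifies $\scl$ with $\dcl_{\cal L(P)}$), every element of $\scl(Aa)$ belongs to some $Aa$-definable small set, so $a_{k+1}\notin \scl(Aa)$, as required.

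The only mild subtlety is ensuring saturation can be invoked: the exclusion of small sets must be expressible as a partial type, but by Remark \ref{definability}(a) the property of being small is uniformly definable in families, so each clause of the type is a single $\Cal L(P)$-formula with parameters from $A\cup a$. Everything else is just a direct unwinding of Definition \ref{def-supercone} and standard smallness closure properties from Section \ref{sec-small}; no further machinery is needed.
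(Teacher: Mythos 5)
Your proof is correct, and it is precisely the ``standard routine using saturation'' that the paper waves at without spelling out: induct along the recursive structure of Definition \ref{def-supercone}, at each stage use that a co-small fiber minus finitely many small sets is still co-small (hence large, hence nonempty) to get a consistent partial type, and realize it by saturation. Two small quibbles on presentation, neither affecting correctness: (i) the statement ``every element of $\scl(Aa)$ belongs to some $Aa$-definable small set'' is literally the definition of $\scl$ from Section \ref{sec-equaldim}, so citing Lemma \ref{scl-dcl} here is unnecessary and slightly misleading (that lemma identifies $\scl(A)$ with $\dcl(P\cup A)$, not with $\dcl_{\cal L(P)}(A)$); (ii) the appeal to Remark \ref{definability}(a) at the end is not actually needed, because you are not asserting that ``$S$ is small'' is expressed by a single formula in the type — you are simply selecting, for each $\cal L(P)$-formula $\varphi(x,y)$ and tuple from $Aa$, whether to include the clause $\neg\varphi(x,\cdot)$, which is a set-theoretic choice and poses no problem for compactness. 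With those cosmetic fixes the argument is exactly right, and matches what the paper intends.
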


\begin{prop}\label{equaldim}
For every definable $X\sub M^n$.
$$\ldim(X)=\cldim(X).
$$
\end{prop}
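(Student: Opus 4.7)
The plan is to prove both inequalities, leveraging Lemma \ref{scl-dcl} which gives $\scl(A)=\dcl(A\cup P)$: this identifies $\scl$-$\rank$ of a tuple over $A$ with $\dcl$-$\rank$ of that tuple over $A\cup P$. We may assume $X$ is $A$-definable.

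For $\cldim(X)\le \ldim(X)$, pick $a\in X$ realizing the $\scl$-dimension, so $\scl$-$\rank(a/A)=\cldim(X)=:k$, and equivalently $\dcl$-$\rank(a/AP)=k$. By the Structure Theorem \ref{str-thm-sets}(1), $X$ is a finite union of $A$-definable cones $C_1,\dots,C_p$, say with $C_i$ a $k_i$-cone. Then $a\in C_i$ for some $i$, and Lemma \ref{lem-rankcone} gives
\[
k=\dcl\text{-}\rank(a/AP)\le k_i\le \ldim(X).
\]

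For $\ldim(X)\le \cldim(X)$, let $C=h(\cal J)\subseteq X$ be an $A$-definable $k$-cone with $\cal J=\{J_g\}_{g\in S}$, where $S\subseteq M^m$ is small and $A$-definable, and let $V$ be an $\cal L_A$-definable shell for $\cal J$. Fix any $g=(g_1,\dots,g_m)\in S$. Each coordinate $g_i$ lies in the projection $\pi_i(S)$, which is small and $A$-definable, so $g_i\in\scl(A)$ for every $i$, and hence $\scl(A\cup g)=\scl(A)$. The fiber $J_g$ is an $Ag$-definable supercone in $M^k$ (its closure is $cl(V_g)$, which is $\cal L_{Ag}$-definable since $V$ is $\cal L_A$-definable), so Fact \ref{rankJ} produces $t\in J_g$ with $\scl$-$\rank(t/Ag)=k$. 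Using $g\subseteq\scl(A)$, this yields $\scl$-$\rank(t/A)=k$, equivalently $\dcl$-$\rank(t/AP)=k$. Set $a:=h(g,t)\in X$. Since $h(g,-)\colon V_g\to M^n$ is $\cal L_g$-definable and injective on $V_g\supseteq J_g$, the tuples $t$ and $a$ are interdefinable over $g$, so
\[
\dcl\text{-}\rank(a/AP)=\dcl\text{-}\rank(t/AP)=k,
\]
giving $\cldim(X)\ge k=\ldim(X)$.

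The only subtle point is the parameter bookkeeping, namely the verification that the index parameter $g$ drawn from the small set $S$ contributes nothing to $\scl$-rank; this is immediate from the exchange property for the $\scl$-pregeometry (Corollary \ref{scl-preg}) together with the coordinatewise inclusion $S\subseteq\scl(A)$.
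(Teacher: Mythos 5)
Your proposal is correct and follows essentially the same route as the paper: both directions hinge on the Structure Theorem, Fact \ref{rankJ}, the interdefinability of a tuple and its preimage under the fiberwise-injective cone map, and the identification $\scl(A)=\dcl(A\cup P)$ from Lemma \ref{scl-dcl}. The only cosmetic differences are that for $\cldim(X)\le\ldim(X)$ you cite Lemma \ref{lem-rankcone} rather than re-deriving the relevant inequality inline, and for $\ldim(X)\le\cldim(X)$ you work directly with a cone $h(\cal J)$ and fix a single index $g\in S$ (discharging its contribution to rank via $g\subseteq\scl(A)$), whereas the paper starts from a strong embedding of a supercone (invoking Corollary \ref{embed} implicitly through the definition of $\ldim$) and so never has to account for the small-parameter $g$ separately; the two bookkeeping choices are interchangeable.
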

\begin{proof} We may assume that $X$ is $\emptyset$-definable.

$\le$.  Let $f:M^k\to M^n$ be an $\cal L$-definable injective function and  $J\sub M^k$ a supercone, such that $f(J)\sub X$. Suppose both $f$ and $J$ are defined over $A$. We need to show that $f(J)$ contains a tuple $b$ with $\rank(b/\emptyset)\ge k$. By Fact \ref{rankJ}, $J$ contains a tuple  $a$ of rank $k$ over $A$. Let $b=f(a)$. Since $f$ is injective, we have $a\in \dcl (Ab)$ and $b\in \dcl (Aa)$. In particular, $a\in scl(Ab)$ and $b\in \scl(Aa)$. So $a$ and $b$ have the same rank over $A$. Hence,
$$\rank(b/\emptyset)\ge \rank(b/A)=\rank(a/A)=k.$$

$\ge$. Let $b\in X$ be a tuple of rank $k$. By the Structure Theorem, $b$ is contained in some $l$-cone $C\sub X$. We prove that $l\ge k$. Let $C=h(\cal J)$, where $\cal J$ is a uniform family of supercones in $M^l$. Suppose $b=h(g, a)$, for some $g\in \pi(\cal J)$ and $a\in J_g$. Since $h(g, -)$ is $\cal L_g$-definable and injective, we have $a\in \dcl (gb)$ and $b\in \dcl (ga)$. In particular, $a\in \scl(gb)$ and $b\in \scl(ga)$. Hence $a$ and $b$ have the same rank over $g$. But $a\in J\sub M^l$ and, hence,
$$k=\rank(b/g)=\rank(a/g)\le l.$$
\end{proof}

We next record several properties of the rank and large dimension, for future reference. By $\dcl$-$\rank$ we denote the usual rank associated to $\dcl$.

\begin{lemma}\label{prop-rank} For every $a\in M$ and $A\sub M$, we have
\begin{enumerate}
  \item $\scl(A\cup P)=\scl(A)$
  \item $\rank(a/AP)=\rank(a/A)=\text{$\dcl$-$\rank(a/AP)$}.$
\end{enumerate}

\end{lemma}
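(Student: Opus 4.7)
The plan is to derive everything directly from Lemma~\ref{scl-dcl}, which gives $\scl(B) = \dcl(B \cup P)$ for any parameter set $B$.

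For part (1), I would simply compute: $\scl(A \cup P) = \dcl(A \cup P \cup P) = \dcl(A \cup P) = \scl(A)$, using Lemma~\ref{scl-dcl} twice (first to move from $\scl$ to $\dcl(\cdot \cup P)$, then back).

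For part (2), the first equality $\rank(a/AP) = \rank(a/A)$ is immediate from (1): $\scl$-independence of a tuple over a parameter set depends only on the $\scl$-closure of that set, and by (1) the closures agree. For the second equality $\rank(a/A) = \dcl\text{-}\rank(a/AP)$, the key observation is that, by Lemma~\ref{scl-dcl}, for any set $B \supseteq P$ (in particular $B = AP$) we have $\scl(B) = \dcl(B \cup P) = \dcl(B)$. Hence the $\scl$-closure operator, restricted to parameter sets containing $P$, coincides with $\dcl$; in particular, $\scl$-independence of the coordinates of $a$ over $AP$ coincides with $\dcl$-independence over $AP$, so the two ranks agree. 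Combined with the first equality this yields $\rank(a/A) = \rank(a/AP) = \dcl\text{-}\rank(a/AP)$.

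There is essentially no obstacle here: the whole lemma is a bookkeeping consequence of the identification $\scl(B) = \dcl(B \cup P)$ already established in Lemma~\ref{scl-dcl}. The only mild subtlety worth pointing out in the write-up is that $\rank$ and $\dcl\text{-}\rank$ a priori refer to different pregeometries, so one should briefly note that on parameter sets containing $P$ the two closure operators literally agree, which is what makes the passage from $\scl$-rank to $\dcl$-rank legitimate.
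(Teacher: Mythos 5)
Your proposal is correct and is exactly the argument the paper intends: its proof of this lemma is simply ``Immediate from Lemma~\ref{scl-dcl} and the definitions,'' and your write-up fills in precisely that bookkeeping, including the relevant observation that on parameter sets containing $P$ the operators $\scl$ and $\dcl$ literally coincide.
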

\begin{proof}
Immediate from Lemma \ref{scl-dcl} and the definitions.
\end{proof}

\begin{lemma}  \label{ldimlem} Let $X, Y, X_1, \dots, X_k$ be definable sets. Then:
\begin{enumerate}
\item $\ldim(X)\le \dim(cl(X))$. Hence, if $X$ is $\cal L$-definable, $\ldim X= \dim X$.

\item $X\sub Y\sub M^n\,\Rarr \, \ldim(X)\le\ldim(Y)\le n$.

\item $X$ is small if and only if $\ldim(X)=0$.

\item If $C$ is a $k$-cone, then $\ldim(C)=k$.

\item $\ldim(X_1\cup\dots\cup X_l)=\max\{\ldim(X_1), \dots, \ldim(X_l)\}$.

\item $\ldim(X\times Y)=\ldim(X)+\ldim(Y)$.
\end{enumerate}
\end{lemma}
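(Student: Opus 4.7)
The plan is to observe that parts (2)--(5) are restatements of results already established in Section \ref{sec-large dimension}: (2) is the monotonicity lemma stated immediately after Definition \ref{def-large}, (3) is Corollary \ref{smallprojection}, (4) is Corollary \ref{conedim}, and (5) is Corollary \ref{union}. So only (1) and (6) require argument, and (6) will follow quickly from Corollary \ref{fibers}.

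For (1), set $k=\ldim(X)$ and, by Corollary \ref{embed}(3), pick an $\cal L$-definable injective $f:M^k\to M^n$ together with a supercone $L\sub M^k$ with $f(L)\sub X$. My plan is to pass to an open cell on which $f$ is well-behaved and then push through a density argument. Let $V$ be the interior of $cl(L)$; by Lemma \ref{open-supercone}, $V\cap L$ is still a supercone with closure $cl(V)$, so I may replace $L$ by $V\cap L$ and assume $L\sub V$. By o-minimal cell decomposition I may further shrink $V$ to an open cell on which $f$ is continuous (invoking Lemma \ref{open-supercone} once more to keep $L$ a supercone). Since small subsets of $M$ are nowhere dense (they are $P$-bound, and by Assumption (III) their closure is $\cal L$-definable, hence must be finite by Assumption (I)), an inductive unravelling of Definition \ref{def-supercone} shows that $L$ is dense in $V$. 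Continuity of $f$ then gives $f(V)\sub cl(f(L))\sub cl(X)$; as $f$ is $\cal L$-definable and injective on $V$, $\dim f(V)=\dim V=k$, and so $\dim cl(X)\ge k$. For the second part, if $X$ is $\cal L$-definable, then $cl(X)$ is $\cal L$-definable with $\dim cl(X)=\dim X$, so the first part gives $\ldim X\le \dim X$; conversely, o-minimal cell decomposition produces an open $\cal L$-definable cell $U\sub X$ with $\dim U=\dim X$, and $U$ is itself a $(\dim U)$-cone (take $S=\{0\}$, $J_0=U$, and $h$ the projection to the last coordinates), so $\ldim X\ge \dim X$ by (2) and (4).

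For (6), I view $X\times Y\sub M^{m+n}$ and project onto the first $m$ coordinates: the projection is $X$ and every fiber is exactly $Y$, so all fibers have the same large dimension $\ldim Y$. Corollary \ref{fibers}(2) then yields $\ldim(X\times Y)=\ldim X+\ldim Y$.

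The only non-mechanical step is the density argument in (1); the potential obstacle is ensuring that, after shrinking, the supercone $L$ is genuinely dense in the open cell $V$ where continuity of $f$ holds. This is settled by the observation above that smallness implies nowhere density in $M$, propagated through the recursive definition of a supercone by induction on the ambient dimension.
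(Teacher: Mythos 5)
Your handling of (2)--(5) coincides with the paper's (it simply cites the same earlier results). For (1) and (6) you take a genuinely different route: the paper, writing this lemma after Proposition \ref{equaldim}, picks $a\in X$ with $\rank(a/A)=\ldim(X)$ and computes $\ldim(X)=\rank(a/A)=\dcl\text{-}\rank(a/A\cup P)\le \dcl\text{-}\rank(a/A)\le \dim cl(X)$, and it disposes of (6) in one line by additivity of rank in the $\scl$-pregeometry. You instead prove (1) geometrically, via density of a supercone in the interior of its closure, and (6) via Corollary \ref{fibers}(2). Both routes are legitimate; yours avoids the pregeometry machinery of Section \ref{sec-equaldim}, though since Corollary \ref{fibers} already rests on the full Structure Theorem nothing is really saved, and the paper's rank computation for (1) is shorter.

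There is one false intermediate claim in your proof of (1): small subsets of $M$ are \emph{not} in general nowhere dense, and their closures need not be finite --- $P$ itself is small and dense (in a dense pair $cl(P)=M$), so Assumption (I) yields no contradiction with the closure containing an interval. What your density argument actually needs is only the weaker (and true) fact that a small subset of $M$ has empty interior: an open interval is large (take $m=1$ and $f$ the identity in Definition \ref{def-small}), so it cannot be contained in a small set. With that substitution your induction goes through: if some open box in $V$ missed $L$, then unravelling Definition \ref{def-supercone} one finds an open interval inside the small set $V_a\sm L_a$ for some fiber, a contradiction; this density of a supercone in the interior of its closure is essentially the content of the remark following Definition \ref{def-supercone}. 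The remainder of your argument for (1) is sound: continuity of $f$ on the shrunken cell gives $f(V)\sub cl(f(L))\sub cl(X)$, and global injectivity of the strong embedding gives $\dim f(V)=k$, whence $\dim cl(X)\ge k$.
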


\begin{proof}
(1). Assume $X$ is $A$-definable and let $a\in X$ with $\rank(a/A)=\ldim(X)$. Since $a\in cl(X)$, we have
$$\ldim(X)=\rank(a/A)=\text{$\dcl$-$\rank(a/A\cup P)$}\le \text{$\dcl$-$\rank(a/A)$}\le \dim cl(X).$$


Now, if $X$ is $\cal L$-definable, $\ldim(X)\le \dim cl(X)=\dim X$. On the other hand, if $\dim X=k$, one can $\cal L$-definably embed a $k$-box in $X$ which of course is a $k$-cone.

 \noindent (2)-(5) were proved in Section \ref{sec-cones}, and (6) is by virtue of $\scl$ defining a pregeometry.
\end{proof}


\subsection{$\scl$-generics}


For a treatment of the classical notion of $\dcl$-generic elements, see, for example, \cite{pi-groups}. Here we introduce the corresponding notion for $\scl$.

\begin{defn}
Let $X\sub M^n$ be an $A\cup P$-definable set, and let $a\in X$. We say that $a$ is a \emph{$\scl$-generic element of $X$ over $A$} if it
does not belong to any $A$-definable set of large dimension $<\ldim(X)$. If $A=\emptyset$, we call $a$ a \emph{$\scl$-generic element of
$X$}.
\end{defn}

By saturation,  \emph{$\scl$-generic elements always exist}. More precisely, every $A\cup P$-definable set $X$ contains an $\scl$-generic element over $A$. Indeed, by Compactness and Lemma \ref{ldimlem}(5), the collection of all formulas which express that $x$ belongs to $X$ but not to any $A$-definable set of large dimension $<\ldim(X)$ is consistent.

Two $\scl$-generics are called \emph{independent} if one (each) of them is $\scl$-generic over the other. The facts that $\scl$ defines a pregeometry and that the $\cldim$ agrees with $\ldim$ imply:

\begin{fact}\label{generic} Let $G=\la G, *\ra$ be a $\emptyset$-definable group. If $a, b\in G$ are independent $\scl$-generics, then so are $a$ and $a* b^{-1}$.
\end{fact}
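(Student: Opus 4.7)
The plan is to deduce Fact \ref{generic} purely from the pregeometry properties of $\scl$ established in Corollary \ref{scl-preg}, together with the identification $\ldim = \cldim$ from Proposition \ref{equaldim}. The claim is really a standard pregeometric fact about definable groups, once one knows that rank is additive with respect to $\scl$ and that $\scl$-generic means maximal $\scl$-rank in $G$. Let $k = \ldim(G) = \cldim(G)$.

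First I would observe the symmetry of the coordinate change. Since $a*b^{-1}$ lies in $\dcl(a,b) \subseteq \scl(a,b)$ and, conversely, $b = (a*b^{-1})^{-1}*a \in \scl(a, a*b^{-1})$, we obtain
\[
\scl(a, b) = \scl(a, a*b^{-1}).
\]
In particular any basis of $\{a,b\}$ over $\emptyset$ has the same size as any basis of $\{a, a*b^{-1}\}$ over $\emptyset$, so $\rank(a, b/\emptyset) = \rank(a, a*b^{-1}/\emptyset)$.

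Next I would compute $\rank(a,b/\emptyset)$ using the additivity of rank (the standard consequence of the exchange property for the pregeometry $\scl$). Since $a,b$ are independent $\scl$-generics,
\[
\rank(a,b/\emptyset) = \rank(a/b) + \rank(b/\emptyset) = k + k = 2k,
\]
and hence $\rank(a, a*b^{-1}/\emptyset) = 2k$. On the other hand, since $a*b^{-1} \in G$ we have $\rank(a*b^{-1}/\emptyset) \le k$, and similarly $\rank(a/a*b^{-1}) \le k$ (again because $a \in G$). Additivity gives
\[
2k = \rank(a, a*b^{-1}/\emptyset) = \rank(a/a*b^{-1}) + \rank(a*b^{-1}/\emptyset),
\]
which forces both summands to be exactly $k$. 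By Proposition \ref{equaldim}, $\rank(a*b^{-1}/\emptyset) = k$ means that $a*b^{-1}$ lies in no $\emptyset$-definable set of large dimension less than $k$, i.e.\ $a*b^{-1}$ is $\scl$-generic in $G$; and $\rank(a/a*b^{-1}) = k$ means $a$ is $\scl$-generic over $a*b^{-1}$. By the symmetric argument (or by exchange) $a*b^{-1}$ is also $\scl$-generic over $a$, so the two are independent $\scl$-generics.

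There is no genuine obstacle here beyond bookkeeping: the only thing that could go wrong would be if rank fails additivity, but this is guaranteed by Corollary \ref{scl-preg}. The subtle point worth emphasizing is that ``$\scl$-generic'' was defined via non-membership in small-$\ldim$ definable sets, not directly via maximal rank, so the translation relies on Proposition \ref{equaldim} and the saturation remark that $\scl$-generics always exist.
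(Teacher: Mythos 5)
Your proof is correct and is essentially the paper's argument: the paper's own proof is the one-line observation that $\rank(b/a)=\rank(a*b^{-1}/a)$ (since $b$ and $a*b^{-1}$ are interdefinable over $a$), so genericity of $b$ over $a$ transfers to $a*b^{-1}$, with independence then following from the pregeometry. Your more detailed route via $\rank(a,b/\emptyset)=\rank(a,a*b^{-1}/\emptyset)=2k$ and additivity is just a fuller bookkeeping of the same idea.
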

\begin{proof} We have
$$\rank(b/a)=\rank(a* b^{-1}/ a).$$
So if $b$ is $\scl$-generic over $a$, then so is $a*b^{-1}$.
\end{proof}

Note that none of the notions `$\dcl$-generic element' and `$\scl$-generic element' implies the other, but, by Lemma \ref{prop-rank}, if $X$ is $A\cup P$-definable  and $a\in X$, we have:
$$\text{$a$ is $\scl$-generic over $A\cup P$} \Lrarr \text{$a$ is $\scl$-generic over $A$}\Lrarr \text{$a$ is $\dcl$-generic over $A\cup P$}.$$


\section{Definable groups}\label{sec-groups}

In this section we obtain our  main application of the Structure Theorem. We fix a $\emptyset$-definable group  $G=\la G, *, 0_G\ra$  with $G \sub M^n$ and $\ldim(G)=k$ and prove a local theorem for $G$: around $\scl$-generic elements the group operation is given by an $\cal L$-definable map.\\

\noindent\textbf{A convention on terminology.} When we say that $h(J)$ is a $k$-cone, we mean that there is a $k$-cone $h'(\cal J)$ and $g\in \pi(\cal J)$, such that $J=J_g$ and $h(-)=h'(g, -)$. We call $h(J)$ $A\cup P$-definable, if $h'(\cal J)$ is $A$-definable. Likewise, when we say that $\cal T=\{\tau_t(J_t)\}_{t\in X}$ is a  uniform family of $k$-cones, we mean that there is a uniform family $\cal C=\{C_t\}_{t\in X}$ of $k$-cones as in  Definition \ref{def-unifcones} and $g\in \bigcap_{t} S_t$, such that for every $t\in X$, $J_t=Y_{t, g}$ and $\tau_t(-)=h(t, g, -)$. We call $\cal T$ $A\cup P$-definable if $\cal C$ is $A$-definable. We write $\cal T=\{\tau(J_t)\}_{t\in X}$, if for all $t, s$, we have $\tau_t=\tau_s$.

\begin{lemma}\label{conekl}
Let $\{C_t=\tau_t(J_t)\}_{t\in \Gamma}$ be a uniform family of $k$-cones in $M^n$ and $\Gamma\sub M^m$ a $k'$-cone. Then
$$C=\bigcup_{t\in \Gamma} \{t\}\times C_t$$
is a $k'+k$-cone.
\end{lemma}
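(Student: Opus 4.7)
The plan is to unpack both inputs into explicit witnesses and assemble them into a single $k'+k$-cone representing $C$. Using the convention preceding the lemma, write $\Gamma = h_1(\cal K)$ as a $k'$-cone in the technical sense, with $\cal K = \bigcup_{g_1\in T_1}\{g_1\}\times K_{g_1}$, $T_1\sub M^{l_1}$ small, $\{K_{g_1}\}$ a uniform family of supercones in $M^{k'}$ with cell shell $U_1$, and $h_1 : U_1\to M^m$ $\cal L$-definable continuous with $h_1(g_1,-)$ injective on each $(U_1)_{g_1}$. On the other side, the uniform family $\{C_t=\tau_t(J_t)\}_{t\in\Gamma}$ gives us, via the convention, a common inside parameter $g_0\in\bigcap_{t\in\Gamma} S_t$, an $\cal L$-definable continuous $H : Z\sub M^{m+l_2+k}\to M^n$ with $H(t,g_0,-)$ injective in each fiber, and a uniform family of supercones $\{Y_{t,g_0}\}_{t\in\Gamma}$ in $M^k$ with shell $W_1\sub M^{m+k}$, such that $C_t = H(t,g_0,Y_{t,g_0})$.

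The candidate cone structure for $C$ is: small parameter $T_1$; for $g_1\in T_1$, set
\[
J^*_{g_1} := \bigcup_{a\in K_{g_1}} \{a\}\times Y_{h_1(g_1,a),g_0} \,\sub\, M^{k'+k};
\]
take $V^* := \{(g_1,a,y)\in U_1\times M^k : (h_1(g_1,a),y)\in W_1\}$ as the prospective shell; and define $h^*(g_1,a,y):=(h_1(g_1,a),\,H(h_1(g_1,a),g_0,y))$. Then $h^*\big(\bigcup_{g_1\in T_1}\{g_1\}\times J^*_{g_1}\big)=C$ by construction, $h^*$ is $\cal L$-definable and continuous (with $g_0$ as parameter), and $h^*(g_1,-,-)$ is injective on $V^*_{g_1}$ by composing the injectivities of $h_1(g_1,-)$ and $H(t,g_0,-)$.

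It remains to verify (a) that $V^*$ is a cell in $M^{l_1+k'+k}$, and (b) that the family $\{J^*_{g_1}\}_{g_1\in T_1}$ is a uniform family of supercones with shell $V^*$. For the supercone condition in (b), I would induct on $k$ via Definition \ref{def-supercone}: the base $k=0$ gives $J^*_{g_1}=K_{g_1}$, a supercone; in the inductive step, $\pi_{k'+k-1}(J^*_{g_1})$ has the same form as $J^*_{g_1}$ with $Y_{t,g_0}$ replaced by $\pi_{k-1}(Y_{t,g_0})$ (still a uniform family of supercones, with shell $\pi_{m+k-1}(W_1)$), while the last-coordinate fibers of $J^*_{g_1}$ are co-small in intervals whose endpoints come from the supercone structure of $Y_{h_1(g_1,a),g_0}$ composed with the continuous $h_1$. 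The shell conditions $cl(\pi_{l_1+j}(\cal J^*)_{g_1})=cl(\pi_{l_1+j}(V^*)_{g_1})$, where $\cal J^*:=\bigcup_{g_1\in T_1}\{g_1\}\times J^*_{g_1}$, then split into $j\le k'$ (handled by uniformity of $\{K_{g_1}\}$) and $j>k'$ (handled by uniformity of $\{Y_{t,g_0}\}$ together with continuity of $h_1$).

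The main obstacle I anticipate lies in (a): since $W_1$ is the middle-coordinate slice of the shell $W\sub M^{m+l_2+k}$ of the full uniform family $\{Y_{t,g'}\}$ at $g'=g_0$, it need not itself be a cell but only a finite union of cells. If so, the construction above produces $C$ as a finite union of $k'+k$-cones, and consolidating these into a single $k'+k$-cone requires taking a disjoint union of the $T_1$-factors (padded to a common arity) with a common shell---routine bookkeeping, but the step that requires the most care.
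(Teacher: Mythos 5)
Your proof is essentially the paper's: you unpack $\Gamma$ as a $k'$-cone and the uniform family as a slice at the common inner parameter $g_0$, form the composite map $h^*(g_1,a,y)=(h_1(g_1,a),H(h_1(g_1,a),g_0,y))$, and stack supercones to get $J^*_{g_1}=\bigcup_{a\in K_{g_1}}\{a\}\times Y_{h_1(g_1,a),g_0}$. The paper's version does exactly this (with $s,x,\tau,h$ in place of your $g_1,a,h_1,H$) and then states ``the reader can verify that $C=h'(\bigcup_s\{s\}\times J_s)$ is a $k'+k$-cone'' without addressing either of your points (a) or (b).

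Your caution in the last paragraph is a genuine catch that the paper elides. The shell $W$ for the full family $\{Y_{t,g}\}_{t,g}$ in Definition \ref{def-unifcones} is a cell in $M^{m+l+k}$, and its $g_0$-slice $W_1=\{(t,y):(t,g_0,y)\in W\}$, and hence your $V^*$, need not be a cell --- a fixed-middle-coordinate slice of a cell is generally only a finite union of cells. Three remarks on how this is resolved in practice. First, in the one place where Lemma \ref{conekl} is invoked (the proof of Theorem \ref{arounda}), the family is produced by Lemma \ref{Ct} with a \emph{single} map $h_1$ and a uniform family $\{Y_t\}_{t\in\Gamma}$ already equipped with a cell shell $W_1\subseteq M^{m+k}$, so the slicing issue does not arise; one only needs $\tau(U)\subseteq\pi_m(W_1)$, which can be arranged by shrinking $U$ via Lemma \ref{shell-canonical}. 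Second, in general the partition you describe does produce a finite union of $k'+k$-cones rather than one; your proposed consolidation (pad the small index sets to a common arity and re-embed disjointly) does not directly yield a single cell shell, since the pieces live over disjoint components of the new index set. Third, for the application to Theorem \ref{arounda} a finite union of $2k$-cones would suffice, since $(a,a)$ lies in the closure of at least one of them. So the conclusion as literally stated deserves either a mild strengthening of the convention preceding the lemma (requiring the $g_0$-slice to carry a cell shell) or the weaker ``finite union'' conclusion; your analysis correctly locates this as the one non-routine point.
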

\begin{proof} Assume $\Gamma=\tau(\cal I)$, where $\cal I=\bigcup_{s\in S}\{s\}\times I_s$ and $S\sub M^{p}$, and for every $t\in \Gamma$,
$$C_t=h(t, g, Y_{t, g}),$$ for some fixed $g\in \bigcap_t S_t$, and $h$, $\{Y_{t, g}\}_{t\in \Gamma}$ as in Definition \ref{def-unifcones}. We define
$$ h':Z\sub M^{p+k'+k}\to M^{m+n}: (s, x, y)\mapsto (\tau(s,x), h(\tau(s, x), g, y)),$$
for a suitable $Z$, and, for every $s\in S$,
$$J_{s}=\bigcup_{x\in I_{s}} \{x\}\times Y_{\tau(s, x), g}.$$
The reader can verify that
$$C= h'\left(\bigcup_{s\in S} \{s\}\times J_{s}\right)$$
is a $k'+k$-cone, as required.
\end{proof}

 \begin{lemma}\label{Ct} Let $h(J)$ be a $k$-cone, and $\{D_t\}_{t\in \Gamma}$ a definable family of sets, such that for each $t\in \Gamma$, $\ldim(D_t)=k$ and $D_t\sub h(J)$. Then there is a uniform definable family of $k$-cones $\{C_t=h(Y_t)\}_{t\in \Gamma}$ with $C_t\sub D_t$.
 \end{lemma}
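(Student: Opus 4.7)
The plan is to transfer the problem to the supercone $J \subseteq M^k$ via the injective $\cal L$-definable map $h$, build supercones there, and push them forward by $h$. Set $E_t := h^{-1}(D_t) \cap J \subseteq J$. Since $h$ is continuous, $\cal L$-definable, and injective on the shell of $J$, Corollary \ref{bijection} gives $\ldim(E_t) = \ldim(D_t) = k$ for every $t$. It therefore suffices to produce a uniform definable family $\{Y_t\}_{t\in\Gamma}$ of supercones in $M^k$ with $Y_t \subseteq E_t$; the desired family is then $C_t := h(Y_t)$, with the common $h$ providing the uniform $k$-cone structure (trivial outer small set, inner supercones $Y_t$).

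I would then induct on $k$; the base $k = 0$ is immediate. For $k \geq 1$, I project to the first $k-1$ coordinates: $\pi(J)$ is a supercone in $M^{k-1}$, and the set
\[
F_t := \{a \in \pi(J) : (E_t)_a \text{ is large}\}
\]
is definable by Remark \ref{definability}(a). Splitting $E_t$ into its small-fiber and large-fiber parts and invoking Lemma \ref{ldimproj} and Corollary \ref{fibers}(2) forces $\ldim(F_t) = k-1$. By Remark \ref{definability}(b) there are definable $\alpha_t, \beta_t$ with $(E_t)_a$ co-small in $(\alpha_t(a), \beta_t(a))$ for $a \in F_t$. Applying the uniform version of Theorem \ref{stII}(2) to $\{\alpha_t\}$ and $\{\beta_t\}$ and partitioning $\Gamma$ definably, I may assume there are $\cal L_{AP}$-definable functions $A, B$ and a definable family $\{Z_t\}$ with $\ldim(Z_t) < k-1$ such that $\alpha_t = A$ and $\beta_t = B$ off $Z_t$. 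By cell decomposition of $A, B$ and a further definable partition of $\Gamma$, I may further assume there is one open $\cal L_{AP}$-definable cell $U \subseteq M^{k-1}$, sitting inside the shell of $\pi(J)$, on which $A, B$ are continuous with $A < B$, and with $\ldim((F_t \setminus Z_t) \cap U) = k-1$ for every $t$.

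Next I apply the inductive hypothesis to the supercone $\pi(J) \cap U$ (a supercone by Lemma \ref{open-supercone}), the identity map, and the family $\{(F_t \setminus Z_t) \cap U\}_{t \in \Gamma}$, obtaining a uniform definable family $\{Y'_t\}_{t \in \Gamma}$ of supercones in $M^{k-1}$ with $Y'_t \subseteq (F_t \setminus Z_t) \cap U$. Define $Y_t := \bigcup_{a \in Y'_t} \{a\} \times (E_t)_a$. Then $\pi(Y_t) = Y'_t$ is a supercone, and each fiber $(Y_t)_a = (E_t)_a$ is co-small in $(A(a), B(a))$ with $A, B$ $\cal L$-definable continuous on $U \supseteq Y'_t$, so $Y_t$ is a supercone in $M^k$ contained in $E_t$. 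Uniformity of $\{Y_t\}$ is inherited: if $V'$ is the common shell of $\{Y'_t\}$, then
\[
V := \{(t, a, x) : (t, a) \in V',\, A(a) < x < B(a)\}
\]
is an $\cal L_{AP}$-definable cell serving as the common shell, and $\{h(Y_t)\}_{t\in\Gamma}$ is the required uniform family of $k$-cones.

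The main obstacle will be establishing the uniform version of Theorem \ref{stII}(2) in the form required above: a single $\cal L_{AP}$-definable function $A$ (after a finite definable partition of $\Gamma$) that agrees with $\alpha_t$ off an $\ldim < k-1$ set simultaneously for all $t$ in the piece, and likewise for $B$. I expect this to follow by parametrizing the proof of Theorem \ref{stII}(2) through the already-uniform Corollary \ref{philipp2uniform} and Structure Theorem (3), then applying a routine compactness and definable-partition argument to reduce to finitely many candidate pairs $(A, B)$.
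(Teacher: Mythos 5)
Your overall strategy is the same as the paper's: pull $D_t$ back through $h$ to $E_t=h^{-1}(D_t)\cap J\sub J$, note $\ldim(E_t)=k$ by invariance, produce a uniform definable family of supercones $Y_t\sub E_t$, and push forward by $h$. The paper's entire proof consists of citing a \emph{uniform version of Theorem \ref{stII}(1)}, asserted to follow "by a standard compactness argument"; you instead try to prove that uniform version by inducting on $k$ and parametrizing the proof of Theorem \ref{stII}(1). That is where the gap lies, and it is exactly the step you flag. After a finite definable partition of $\Gamma$ you cannot replace $\alpha_t,\beta_t$ by $t$-\emph{independent} $\cal L_{AP}$-definable functions $A,B$: the fibers $(E_t)_a$ genuinely move with $t$ (take $D_t$ to be the part of $h(J)$ lying above a graph depending on $t$), so $\alpha_t$ cannot agree off a lower-dimensional set with a fixed function of $a$ alone, no matter how $\Gamma$ is partitioned into finitely many definable pieces. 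Even reading $A$ as $A(t,u,a)$, which is what Corollary \ref{philipp2uniform} actually delivers, the parameter $u\in P^{t'}$ varies with $t$ (non-$\cal L$-definably, and without definable choice it cannot be selected as a function of $t$), and the index $i$ choosing among the finitely many $f_1,\dots,f_p$ varies with $a$; so the reduction to "finitely many candidate pairs $(A,B)$" is not available, and the set $V=\{(t,a,x): (t,a)\in V',\ A(a)<x<B(a)\}$ you propose as a shell need not be an $\cal L$-definable cell. (A separate, minor point: the fiber of $Y_t$ over $a$ should be $(E_t)_a\cap(A(a),B(a))$ rather than $(E_t)_a$, since a supercone's fibers must be \emph{contained in} the bounding interval.)

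The way the paper intends this to be repaired is not by uniformizing the boundary functions but by the compactness scheme of $(1)_n\Rightarrow(3)_n$ in Theorem \ref{str-thm-sets}: assume the uniform statement fails, use saturation to extract a single $t_0$ (and a point of $X_{t_0}$) witnessing failure, apply Theorem \ref{stII}(1) to $X_{t_0}$ alone to get one supercone with closure $\cal L_{t_0u_0}$-definable for some tuple $u_0$ over $P$, spread its defining data into $\cal L$-definable families over the extra variables $(t,u)$, and then use the definability of "being a supercone with prescribed closure" (Remark \ref{rem-supercone}(2)) together with Lemma \ref{uniform family} to cut down definably to the locus where the construction still yields supercones inside $X_t$ — contradicting the choice of $t_0$. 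If you want to keep your inductive route, you must carry the parameters $(t,u)$ inside the shell (so the shell is a cell in $M^{m+t'+k}$ and the supercone family is indexed by pairs $(t,u)$, the $u$ being absorbed into the small index set of the resulting cone), rather than eliminating them by a finite partition of $\Gamma$.
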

 \begin{proof} This follows from a uniform version of Theorem \ref{stII}(1), which can be proved easily via a standard compactness argument. Indeed, for every $t\in\Gamma$, let $X_t=h^{-1}(D_t)\sub J$. So $\ldim(X_t)=k$. By the uniform Theorem \ref{stII}(1), we can find a uniform family of supercones $Y_t\sub X_t$. Then $C_t=\{h(Y_t)\}_{t\in\Gamma}$ is as required.
 \end{proof}



\begin{lemma}\label{projcone} Let $X\sub M^n$ be a $\emptyset$-definable set of large dimension $k$, $(a, b)$ an $\scl$-generic element of $X\times X$, and $D\sub X\times X$ a $\emptyset$-definable $2k$-cone containing $(a, b)$. Then there is a $P$-definable uniform family of $k$-cones $\{E_t=\tau_t( J_t)\}_{t\in T}$, where $T$ is a $k$-cone containing $a$, such that $b\in\bigcap_{t\in T} cl(E_t)$ and
$$(a, b)\in \bigcup_{t\in T} \{t\}\times E_t \sub D.$$
\end{lemma}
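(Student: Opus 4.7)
The plan is to exploit the uniform Structure Theorem to re-express the $2k$-cone $D$ in the fibered form demanded by the conclusion, and then identify the parameter $t$-coordinate with a $P$-definable $k$-cone through $a$. First, I would apply Corollary \ref{uniform str2} to decompose $D$ as a finite union of sets of the form $\bigcup_{t \in \Gamma_i}\{t\} \times C^i_t$, with $\Gamma_i$ a $\emptyset$-definable cone in $M^n$ and $\{C^i_t\}_{t \in \Gamma_i}$ a $\emptyset$-definable uniform family of $k_i$-cones. The pair $(a,b)$ lies in some such piece; since $\Gamma_i$ and each $C^i_t$ sit inside $X$, both $\ldim(\Gamma_i)$ and $k_i$ are at most $k$. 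By Lemma \ref{conekl} combined with the $\scl$-genericity of $(a,b)$, the piece containing $(a,b)$ has large dimension $\ldim(\Gamma_i)+k_i = 2k$, forcing $\ldim(\Gamma_i) = k_i = k$. Replacing $D$ by this piece, I may assume $D = \bigcup_{t \in \Gamma}\{t\}\times C_t$ with $\Gamma$ a $\emptyset$-definable $k$-cone containing $a$ and $\{C_t\}_{t \in \Gamma}$ a $\emptyset$-definable uniform family of $k$-cones, with $b \in C_a$.

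Next, I would unfold $\{C_t\}$ as $C_t = h\bigl(\{t\}\times\bigcup_{g \in S_t}\{g\}\times Y_{t,g}\bigr)$ as in Definition \ref{def-unifcones}, using the uniform analogue of Remark \ref{rem-supercone}(7) to arrange $S_t \subseteq P^l$ for each $t$. Since $b \in C_a$, pick $g_0 \in S_a$ and $y_0 \in Y_{a,g_0}$ with $h(a,g_0,y_0)=b$; then $g_0 \in P^l$. The set $T_0 := \{t \in \Gamma : g_0 \in S_t\}$ is $P$-definable and contains $a$, and since $\rank(a/P) = \rank(a) = k$ by Lemma \ref{prop-rank}, $\ldim(T_0) = k$. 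The Structure Theorem then provides a $P$-definable $k$-cone $T \subseteq T_0$ containing $a$. Defining $E_t := h(t,g_0,Y_{t,g_0})$ for $t \in T$ yields a $P$-definable uniform family of $k$-cones with $E_t \subseteq C_t$, so that $\{t\} \times E_t \subseteq D$, and $b \in E_a$ ensures $(a,b) \in \bigcup_{t \in T}\{t\} \times E_t$.

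The remaining and, I expect, main obstacle is verifying $b \in cl(E_t)$ for every $t \in T$. For $t = a$ this is immediate. In general, since $cl(E_t)$ coincides (up to shell boundary) with $h(t,g_0,V_{t,g_0})$ where $V$ is the $\Cal L_\emptyset$-definable shell of $\{Y_{t,g}\}$, the condition is equivalent to $b$ lying in this $\Cal L_{t,g_0}$-definable image. The set $W := \bigcup_{t \in T}\{t\}\times cl(E_t)$ contains $(a,b)$ and has large dimension $2k$, and by Lemma \ref{prop-rank} the pair $(a,b)$ has $\dcl$-rank $2k$ over $P$. A fiber-dimension argument in the o-minimal reduct, applied to the projection of $W$ onto the second factor, forces the fiber $\{t \in T : b \in cl(E_t)\}$ to have $\dcl$-dimension $k$ at $a$. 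The delicate step will be passing from this fiber---which a priori requires $b$ as a parameter---to a $P$-definable $k$-sub-cone of $T$ on which $b$ lies in the closure of $E_t$ uniformly; I plan to achieve this by applying Theorem \ref{str-thm-sets}(3) to the ambient $\emptyset$-definable family $\{\{t \in \Gamma : b' \in cl(E_t)\}\}_{b' \in M^n}$, selecting the piece whose uniform sub-family of $k$-cones contains $a$ at $b'=b$, and invoking the $\scl$-genericity of $(a,b)$ to ensure that this piece contributes a $P$-definable $k$-cone through $a$.
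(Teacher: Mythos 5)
Your first two paragraphs follow essentially the paper's route: decompose $D$ via Corollary \ref{uniform str2}, use the $\scl$-genericity of $(a,b)$ to force the base $\Gamma$ and the fibers $C_t$ to be $k$-cones, fix the parameter $g_0\in S_a$ with $b\in h(a,g_0,Y_{a,g_0})$, and set $E_t=h(t,g_0,Y_{t,g_0})$. That part is sound.

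The gap is in your third paragraph, and it is exactly the point the lemma exists for. The set $T_b:=\{t : b\in cl(E_t)\}$ is definable only with $b$ as a parameter, and your proposed remedy --- applying Theorem \ref{str-thm-sets}(3) to the family $\{\{t\in\Gamma : b'\in cl(E_t)\}\}_{b'}$ and selecting the piece through $a$ at $b'=b$ --- produces a cone $C^i_b$ that is a \emph{$b$-definable member} of a uniform family; it is $Pb$-definable, not $P$-definable, and genericity of $(a,b)$ does not remove the parameter $b$. This is not cosmetic: the statement requires $\{E_t\}_{t\in T}$ to be $P$-definable, and in the proof of Theorem \ref{arounda} one needs $\bigcap_{t\in T}cl(E_t)$ to be $P$-definable in order to find an $\Cal L_P$-definable set $B$ of dimension $k$ around the point playing the role of $b$ (which is $\dcl$-generic over $P$); if $T$ carried $b$ as a parameter that step would collapse. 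The paper's mechanism for eliminating $b$ is the missing idea: since $\Gamma$ is a $k$-cone, write $\Gamma=f(J_0)$ with $f$ an $\Cal L_P$-definable continuous injection on an open subset of $M^k$ and $J_0\sub M^k$ a supercone, and let $\hat a=f^{-1}(a)$. Then $\hat a$ has $\dcl$-rank $k$ over $P\cup\{b,g_0\}$ \emph{inside $M^k$}, so the $\Cal L_{Pbg_0}$-definable set $\{x\in M^k : b\in cl(h(f(x),g_0,V_{f(x),g_0}))\}$ (here $V$ is the shell, so closures are $\Cal L$-definable by Assumption (III)) contains an open box around $\hat a$; by density of $P$ that box may be shrunk to one with corners in $P$, hence $\Cal L_P$-definable, and $T:=f(J_0\cap B)$ is the required $P$-definable $k$-cone. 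Without this pullback to full-rank coordinates plus the $P$-rational box, your construction only yields a $Pb$-definable cone, which does not prove the lemma.
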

\begin{proof}
By Corollary \ref{uniform str2}, and since $(a,b)$ is  $\scl$-generic of $X\times X$, it is contained  in a $\emptyset$-definable set of the form
$$\bigcup_{t\in \Gamma} \{t\}\times C_t\sub D,$$
where $\Gamma\sub X$ is a cone and there is $l$ such that $\{C_t\}$ is an $\emptyset$-definable uniform family of $l$-cones contained in $X$. Write
$$C_t=h\left(\{t\} \times \left(\bigcup_{g\in S_t} \{g\}\times Y_{t,g}\right)\right),$$
as in Definition \ref{def-unifcones} where $h: Z \to M^{n}$.  Since $a\in \Gamma\subseteq X$ and $a$ is a $\scl$-generic element of $X$, $\Gamma$ must be a $k$-cone. Thus there is a supercone $J_0\subseteq M^k$ and an $\Cal L_P$-definable, continuous and injective map $f: U \subseteq M^k \to M^n$ such that $f(J_0)=\Gamma$. Let $\hat a \in M^k$ such that $f(\hat a)=a$.  Because $(a,b)$ is an $\scl$-generic element of $X\times X$, $\hat a$ is $\scl$-generic over $b$. Since $b\in C_a\sub X$ and $b$ is a $\scl$-generic element of $X$ over $a$, $C_a$ must be a $k$-cone, and hence $l=k$. Fix $g \in S_a$ such that $b \in h(a,g,Y_{a,g})$. Because $\hat a$ is $\scl$-generic over $b$, there is an open box $B\subseteq M^k$ containing $\hat a$ such that $b \in cl(h(f(x),g,Z_{f(x),g}))$ for every $x\in B$.
By density of $P$ we can assume that $B$ is $\cal L_{P}$-definable. By Lemma \ref{open-supercone}, $J_0 \cap B$ is a supercone. Hence
\[
(a,b) \in \bigcup_{t \in f(J_0\cap B)} \{t\} \times h(t,g,Y_{t,g})
\]
and $b\in \bigcap_{t \in f(J_0\cap B)} cl(h(t,g,Y_{t,g}))$. Set $E_t=h(t,g,Y_{t,g})$.
\end{proof}


\begin{remark}
In general, there is no $\{E_t\}_{t\in T}$ as above so that $b\in\bigcap_{t\in T} E_t$. For example, let $\widetilde{\cal M}=\la \cal M, P\ra$ be a dense pair of real closed fields, $X= M$
$$D=\bigcup_{c\in M} \{c\}\times (M\sm cP),$$
and $(a,b)$ any element of $D$.
\end{remark}

\begin{cor}\label{loclinear} Let $X\sub M^n$ be a $\emptyset$-definable set of large dimension $k$. Let $(a,b)$ be an $\scl$-generic element of $X\times X$ and $f:X\times X\rarr X$ a $\emptyset$-definable function. Then there is a $P$-definable uniform family of $k$-cones $\{E_t=\tau_t(J_t)\}_{t\in T}$, where $T$ is a $k$-cone containing $a$, such that $b\in \bigcap_{t\in T} cl(E_t)$ and $f$ agrees with an $\cal L_P$-definable continuous map on
$$E=\bigcup_{t\in T} \{t\}\times E_t.$$
\end{cor}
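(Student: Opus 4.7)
The plan is to combine Structure Theorem~\ref{str-thm-sets} with the argument from the proof of Lemma~\ref{projcone}. First, I would apply the uniform version of Structure Theorem~\ref{str-thm-sets}(2) (stated in the remark after its proof) to the $\emptyset$-definable family $\{f_t : X \to X\}_{t\in X}$ given by $f_t(y) = f(t,y)$, and Structure Theorem~\ref{str-thm-sets}(1) to the resulting index sets. This will decompose $X\times X$ into finitely many $\emptyset$-definable sets of the form $\bigcup_{t\in Y_i}\{t\}\times C_t^i$, where each $Y_i$ is a cone, each $\{C_t^i\}_{t\in Y_i}$ is a uniform family of cones, and $f_t$ is fiber $\Cal L_\emptyset$-definable with respect to $C_t^i$ uniformly in $t$, witnessed by a single $\Cal L_\emptyset$-definable continuous map $F^i$. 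Since $(a,b)$ is $\scl$-generic in $X\times X$, it will lie in one such piece, with data that I denote $Y$, $\{C_t\}_{t\in Y}$, $F$. The $\scl$-genericity of $a$ in $X$ forces $Y$ to be a $k$-cone, and the $\scl$-genericity of $b$ in $X$ over $a$ forces $C_a$ (and hence the whole uniform family) to consist of $k$-cones.

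Next, I would repeat verbatim the argument from the proof of Lemma~\ref{projcone} applied to this pair $(a,b)$, $Y$ and $\{C_t\}$: write $Y = \sigma(J_0)$ with $\sigma$ an $\Cal L_P$-definable continuous injection from a supercone $J_0\subseteq M^k$, write $C_t = h(t,g,Y_{t,g})$ as in Definition~\ref{def-unifcones}, pick $g \in S_a \cap P^l$ (possible by Remark~\ref{rem-supercone}(7)) such that $b \in h(a,g,Y_{a,g})$, and exploit the $\scl$-genericity of $\hat a := \sigma^{-1}(a)$ over $b$ to find an $\Cal L_P$-definable open box $B \ni \hat a$ with $b \in cl\bigl(h(\sigma(x),g,Y_{\sigma(x),g})\bigr)$ for every $x\in B$. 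Setting $T = \sigma(J_0\cap B)$, $\tau_t = h(t,g,-)$, $J_t = Y_{t,g}$, $E_t = \tau_t(J_t)$ will then produce a $k$-cone $T$ containing $a$, a $P$-definable uniform family of $k$-cones $\{E_t\}_{t\in T}$, and the inclusion $b \in \bigcap_{t\in T}cl(E_t)$.

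Finally, to check that $f$ agrees on $E = \bigcup_{t\in T}\{t\}\times E_t$ with an $\Cal L_P$-definable continuous map, note that every point of $E$ has the form $(t, h(t,g,y))$ for some $t \in T$ and $y \in Y_{t,g}$, and by the uniform fiber $\Cal L_\emptyset$-definability we have $f(t, h(t,g,y)) = F(t,g,y)$. Since $g \in P^l$ is fixed, the composition $(t,b') \mapsto F\bigl(t,g,h(t,g,-)^{-1}(b')\bigr)$ is $\Cal L_P$-definable; moreover $h(t,g,-)$ is a continuous injection on the open cell $V_{t,g}$ of the shell of $\{C_t\}$, so its inverse on the image is continuous by the o-minimal invariance of domain, and hence the composition is continuous and agrees with $f$ on $E$ by construction. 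The main technical point will be ensuring that the fiber $\Cal L_\emptyset$-definability of $f$ genuinely survives the manipulations of Lemma~\ref{projcone}, which is precisely why I would extract the witness $F$ in the uniform fashion from the outset rather than first passing to a single $2k$-cone and then refining via Corollary~\ref{uniform str2}.
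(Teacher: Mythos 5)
Your proposal is correct and uses essentially the same ingredients as the paper: the Structure Theorem applied to $f$, the genericity of $(a,b)$ to land in a piece of large dimension $2k$, and the box argument from the proof of Lemma~\ref{projcone}. The paper's proof is shorter because it first observes that the Structure Theorem already yields a single $\emptyset$-definable $2k$-cone $D\ni(a,b)$ on which $f$ agrees with an $\Cal L_P$-definable continuous map, and then cites Lemma~\ref{projcone} as a black box (the resulting $E$ lies inside $D$, so the agreement needs no further tracking); your inlining of that lemma's proof with the uniform witness $F$ carried along is therefore valid but more work than necessary.
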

\begin{proof}
By  the Structure Theorem, there is a $\emptyset$-definable $2k$-cone $D\sub G\times G$ that contains $(a, b)$ and such that $f$ agrees with an $\cal L_P$-definable continuous map on $D$. The statement then follows from Lemma \ref{projcone}.
\end{proof}

We are now ready to prove the local theorem for definable groups.

\begin{thm}[Local theorem for definable groups]\label{arounda}
Let $a$ be an $\scl$-generic element of $G$. Then there is a $2k$-cone $C\sub G\times G$, whose closure contains $(a, a)$, and an $\cal L$-definable continuous map $F:Z\sub M^n\times M^n\to M^n$, such that for every $(x, y)\in C$,
\[
x  * a^{-1} *  y=F(x, y).
\]
Moreover, $F$ is a homeomorphism in each coordinate.
\end{thm}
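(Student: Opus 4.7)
The plan is to apply Structure Theorem (2) directly to the $\{a\}$-definable map $f(x,y):=x*a^{-1}*y$ and extract, via a saturation-based genericity argument, a $2k$-cone in the resulting decomposition whose closure contains $(a,a)$. By Structure Theorem (2), applied coordinatewise to $f:G\times G\to G\subseteq M^n$, there is a finite collection $\mathcal{C}$ of $\{a\}$-definable cones covering $G\times G$ such that $f$ is fiber $\mathcal{L}_a$-definable with respect to each $C\in\mathcal{C}$. By Definition \ref{defn:fiber}, each such $C$ supplies a continuous $\mathcal{L}_a$-definable map $F$ (defined on the shell of the supercone family and transported to $M^n\times M^n$ via the cone's parameter-to-ambient map) agreeing with $f$ on $C$. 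The task thus reduces to locating some $C\in\mathcal{C}$ with $\ldim(C)=2k$ and $(a,a)\in cl(C)$.

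To carry out this extraction, I first establish that every nonempty open subset of $G\times G$ has large dimension $2k$. Picking a $k$-cone $D$ in a Structure Theorem (1) decomposition of $G$, Lemma \ref{open-supercone} ensures that every open neighborhood in $G$ of a point of $D$ contains a $k$-supercone, so has $\ldim=k$. Group homogeneity---left translations are $\widetilde{\mathcal{M}}$-definable homeomorphisms preserving $\ldim$ by Corollary \ref{bijection}---propagates this to every nonempty open subset of $G$, and Lemma \ref{ldimlem}(6) upgrades it to $\ldim=2k$ for nonempty open subsets of $G\times G$. Now for any open neighborhood $U$ of $(a,a)$, the partial type over $\{a\}$ asserting ``$(x,y)\in U$'' together with ``$(x,y)\notin V$'' for every $\{a\}$-definable $V\subseteq G\times G$ with $\ldim(V)<2k$ is consistent (since $U\setminus\bigcup V_i$ retains $\ldim=2k$ for every finite subfamily), and by saturation is realized by some $(p,q)\in U$ that is $\scl$-generic over $\{a\}$. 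Such $(p,q)$ lies in some cone $C_U\in\mathcal{C}$, and since $(p,q)$ avoids all $\{a\}$-definable sets of $\ldim<2k$, necessarily $\ldim(C_U)=2k$. By finiteness of $\mathcal{C}$ and a pigeonhole argument over a countable neighborhood basis of $(a,a)$, a single $C\in\mathcal{C}$ with $\ldim(C)=2k$ contains such generic points in arbitrarily small neighborhoods of $(a,a)$, forcing $(a,a)\in cl(C)$.

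On this $C$, the map $F$ supplied by Structure Theorem (2) satisfies $F(x,y)=x*a^{-1}*y$, and the homeomorphism-in-each-coordinate property is immediate: for fixed $x\in M^n$, $y\mapsto F(x,y)$ agrees on $C$ with $y\mapsto(x*a^{-1})*y$, the left translation of $G$ by $x*a^{-1}$, a homeomorphism; the symmetric argument handles fixed $y$. The main obstacle is the geometric claim that nonempty open subsets of $G\times G$ have large dimension exactly $2k$, which rests on group homogeneity via Corollary \ref{bijection}, combined with Structure Theorem (1), Lemma \ref{open-supercone}, and Lemma \ref{ldimlem}(6); a minor subtlety arises if $\dim(G)>\ldim(G)$ in $M^n$, handled by an initial reduction via a definable bijection to the case where $G$ sits in $M^k$ with full ambient dimension.
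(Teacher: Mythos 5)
Your reduction collapses at the step ``the task thus reduces to locating some $C\in\mathcal{C}$ with $\ldim(C)=2k$ and $(a,a)\in cl(C)$.'' Fiber $\cal L$-definability of $f$ with respect to a cone $C=h(\cal J)$ (Definition \ref{defn:fiber}) supplies an $\cal L$-definable continuous map $F$ on the \emph{parameter space} $V\sub M^{m+2k}$ with $f\circ h=F$ on $\cal J$; it does not supply what the theorem asserts, namely an $\cal L$-definable $F:Z\sub M^n\times M^n\to M^n$ agreeing with $f$ pointwise on $C$. To transport $F$ to the ambient space one must compose with $h(g,-)^{-1}$ for a \emph{fixed} parameter $g$ in the small set $S=\pi(\cal J)$; the resulting map is $\cal L_{Pg}$-definable and agrees with $f$ only on the single fiber $h(g,J_g)$, and the family of these maps over $g\in S$ is not one $\cal L$-definable map (Example \ref{exa-fiber} is exactly this phenomenon). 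So even after extracting a $2k$-cone $C$ with $(a,a)\in cl(C)$, you would still need $(a,a)$ to lie in the closure of a \emph{single fiber}, and your pigeonhole over neighborhoods gives no control over the parameter $g_U$ of the fiber containing the generic witness $(p_U,q_U)$ as $U$ shrinks. Since $(a,a)$ lies on the diagonal, an $\emptyset$-definable set of large dimension $k<2k$, it is not $\scl$-generic in $G\times G$, so the genericity mechanism that pins down a single fiber (Lemma \ref{projcone}, Corollary \ref{loclinear}) cannot be run at $(a,a)$ itself. This is precisely why the paper factors $a=a_1*a_2$ with $a_1,a_2$ independent $\scl$-generics, writes $x*a^{-1}*y=f(f_2(x),f_1(y))$, and applies the Structure Theorem only at genuinely generic points ($a$ over each $a_i$, and $(a_1,a_2)$).

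Two further problems. Your claim that \emph{every} nonempty open subset of $G\times G$ has large dimension $2k$ rests on translations being homeomorphisms for the topology induced from $M^n$; nothing in the paper gives continuity of $*$ for that topology, so this is unjustified (it is also unnecessary: for neighborhoods of $(a,a)$ the estimate follows from $\scl$-genericity of $a$, a $k$-cone through $a$, and Lemma \ref{open-supercone}). The same objection defeats your proof of the ``moreover'' clause, which again invokes continuity of translations; the paper instead derives the homeomorphism property from genericity combined with injectivity of the relevant $\cal L$-definable maps. Finally, the ``initial reduction via a definable bijection'' to full ambient dimension cannot be performed: the statement concerns closures in $M^n$, and definable bijections need not respect the topology.
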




\begin{proof} 
Let $a_1\in G$ be $\scl$-generic over $a$, and let $a_2=a_1^{-1}  *  a$. By Fact \ref{generic}, $a, a_1, a_2$ are pairwise independent.
By the Structure Theorem, for $i=1, 2$, there is a $Pa_i$-definable $k$-cone $C_i=h_i(J_i)\sub G$ containing $a$,
and $\cal L_{P a_i}$-definable continuous $f_i:Z_i\sub M^n\to M^n$ such that for every $x\in C_1$,
$$x *a_2^{-1}=f_2(x)$$
and for every $y\in C_2$,
$$a_1^{-1} *y  =f_1(y).$$
Observe that $f_2(a)=a_1$ and $f_1(a)=a_2$.


We now look at the independent $\scl$-generic elements $a_1$ and $a_2$. By Corollary \ref{loclinear},  there is a $P$-definable uniform family of $k$-cones $\{E_t=\tau_t(J_t)\}_{t\in T}$ in $G$, where $T\sub G$ is a $k$-cone containing $a_1$ and $a_2\in\bigcap_{t\in T} cl(E_t)$, such that $*$ agrees with an $\cal L_P$-definable continuous map $f:Z\sub M^n\times M^n\to M^n$ on
$$E=\bigcup_{t\in T} \{t\}\times E_t.$$
 Observe that  $(a, a_i)$ is also $\scl$-generic of $G\times G$. Moreover, since $a_2$ is $\dcl$-generic of $G$ over $P$, there is an $\cal L_{P}$-definable $B$ of dimension $k$ with
$$a_2\in B\sub  \bigcap_{t\in T} cl(E_t).$$

\noindent\textbf{Claim.} \emph{For every $t\in T$, $f_1^{-1}(E_t)\cap h_1(J_1)$ has large dimension $k$.}

 \begin{proof}[Proof of Claim] Let $F_t= f_1^{-1}\tau_t$. Since  $a$ belongs to the $\cal L_{Pa_1}$-definable set $f_1^{-1}(B)\cap h_1(cl(J_1))$ and it is $\scl$-generic over $a_1$, the set
$$f_1^{-1}(B)\cap h_1(cl(J_1)) \sub f_1^{-1}(cl(\tau_t(J_t))\cap h_1(cl(J_1))$$
has dimension $k$. This implies that $F(cl(J_t))\cap h_1(cl(J_1))$ has dimension $k$. By Lemma \ref{closure of cones}, $f_1^{-1}(E_t)\cap h_1(J_1)=F(J_t)\cap h_1(J_1)$ has large dimension $k$.
 \end{proof}

Now, since $a$ belongs to the $Pa_2$-definable set $f_2^{-1}(T)\cap h_2(J_2)$ and it is $\scl$-generic over $a_2$, it must also belong to a $Pa_2$-definable $k$-cone
$$\Gamma\sub f_2^{-1}(T)\cap h_2(J_2).$$
For every $t\in \Gamma$, we let
$$D_t= f_1^{-1}(E_{f_2(t)})\cap h_1(J_1).$$
By  Claim, $\ldim(D_t)=k$.  Since every $D_t\sub h_1(J_1)$, by Lemma \ref{Ct}, we can find a uniform definable family of $k$-cones
$$C_t=h_1(Y_t)\sub D_t,\,\, t\in \Gamma,$$
where $Y_t\sub J_1$ is a supercone in $M^k$, and $a\in \bigcap_{t\in \Gamma} C_t$. By Lemma \ref{conekl}, the set
$$C=\bigcup_{t\in \Gamma} \{t\}\times C_t$$
is a $2k$-cone. 
We can now conclude as follows. For every $(x, y)\in C$,
$$x *a^{-1}* y= (x *a^{-1}* a_1)* (a_1^{-1}* y)= f_2(x) *f_1(y) =f(f_2(x), f_1(y)).$$
Set $$F(x, y)=f(f_2(x) , f_1(y)) : M^n\times M^n \to M^n.$$

For  the ``moreover" clause, we need to check that (a) each $f_i$ can be chosen to be a homeomorphism, and (b) $f$ can be chosen to be a homeomorphism in each coordinate.  The former fact follows from the $\scl$-genericity of $a$ over each $a_i$ and the injectivity of each $x\mapsto x* a_i^{-1}$, and the latter fact from the $\scl$-genericity of $(a_1, a_2)$ and the injectivity of $*$ in each coordinate.
\end{proof}

\begin{remark}
We observe that we cannot always have $C=C'\times C''$, where $C', C''$ are $k$-cones containing $a$. For example, consider the group $\CH = \la H=[0, 1),  +\, mod\, 1\ra$ in the real field, and  let $T=\Q^{rc}\cap H$. Now let $g:H\to M$ be the translation $x\mapsto 2+x$ on $T$, and identity elsewhere. Let $G$ be the induced group on $(H\sm T)\cup g(T)$. Clearly, $G$ is definable in $\widetilde M =\la\R, \Q^{rc}\ra$, and it is easy to verify that the above observation holds for every $a\in G$. Of course, the conclusion of Theorem \ref{arounda} holds for every $a\in H\sm T$, by letting $\Gamma=H\sm T$, $C_t=H\sm (T\cup (T-t)\cup (1+T-t))$ and $f= +\, mod\, 1$. Moreover, we can achieve $C=C'\times C'$, but only up to definable isomorphism. It is reasonable to ask whether that is always true, and we include some relevant (in fact, stronger) questions at the end of this section.
\end{remark}

We expect that the above local theorem will play a crucial role in forthcoming analysis of  groups definable in $\widetilde M$. The ultimate goal would be to understand definable groups in terms of $\cal L$-definable groups and small groups. Motivated by the successful analysis of semi-bounded groups in \cite{ep-sel2} and the more recent \cite{bm}, we conjecture the following statement. This is a reformulation of \cite[Conjecture 1]{el-bsl}.

\begin{conj}\label{conjecture}
Let $\la G, *\ra$ be a definable group.  Then there is a short exact sequence
$$
\begin{diagram}
\node{0}\arrow{e}\node{\mathcal B} \arrow{e}\node{\cal U} \arrow{s,r}{\tau}\arrow{e}\node{K} \arrow{e} \node{0}\\
\node[3]{G}
\end{diagram}
$$
where
\begin{itemize}
\item $\cal U$ is $\bigvee$-definable
  \item $\mathcal B$ is $\bigvee$-definable in $\mathcal L$ with $\dim(B) =\ldim (G)$.
  \item $K$ is definable and small
  \item $\tau:\cal U\to G$ is a surjective group homomorphism and
  \item all maps involved are $\bigvee$-definable.
\end{itemize}
\end{conj}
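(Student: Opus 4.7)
The plan is to extract an $\cal L$-definable group $\cal B$ of dimension $k=\ldim(G)$ from the local structure around $\scl$-generic elements provided by Theorem \ref{arounda}, then build $\cal U$ as a $\bigvee$-definable cover of $G$ by gluing $\cal L$-definable charts, and identify the transition cocycle between charts as a small quotient $K=\cal U/\cal B$.

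First, I would fix an $\scl$-generic $a_0\in G$ and use Theorem \ref{arounda} to obtain an $\cal L$-definable continuous map $F_{a_0}$ with $F_{a_0}(x,y) = x * a_0^{-1} * y$ on a $2k$-cone whose closure contains $(a_0,a_0)$. On a suitable $\cal L$-definable open set $V\sub M^n$, $F_{a_0}$ defines an associative partial group chunk with inverses (using that $F_{a_0}$ is a homeomorphism in each coordinate). A variant of Pillay's o-minimal group-chunk theorem should then produce an $\cal L$-definable connected group $\cal B_0$ of $\cal L$-dimension $k$ together with an $\cal L$-definable open embedding of $(V,F_{a_0})$ into $\cal B_0$. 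Take $\cal B:=\cal B_0$, or $\cal B:=\bigcup_n \cal B_0^n$ viewed as a $\bigvee$-definable group, depending on whether $\cal B_0$ is already complete.

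Second, as $a$ varies over $\scl$-generics of $G$, Theorem \ref{arounda} gives $\cal L$-definable charts $\varphi_a$ identifying a neighborhood of $a$ in $G$ with an open subset of $\cal B$. Two charts $\varphi_a,\varphi_{a'}$ at $\scl$-independent generics $a,a'$ should differ on their overlap by translation by an element $\kappa(a,a')\in\cal B$; by Lemma \ref{scl-dcl} this element lies in $\dcl(P\cup\{a,a'\})$, and as $(a,a')$ ranges, the set of $\kappa(a,a')$ should lie in a $P$-bound subgroup $K\sub\cal B$, which is small by Lemma \ref{lem:beg2}. Define $\cal U$ as the $\bigvee$-definable quotient of $\bigsqcup_a \dom(\varphi_a)\times\cal B$ under the equivalence identifying chart translates, equip it with the $\bigvee$-definable group law inherited from $\cal B$, and let $\tau:\cal U\to G$ project to the $G$-coordinate. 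Then $\cal B$ embeds as the fiber of $\tau$ over the identity, and quotienting by the $\cal B$-action yields the sequence $0\to\cal B\to\cal U\to K\to 0$.

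The hard part will be verifying that the local $\cal L$-definable chunks genuinely glue into a $\bigvee$-definable group $\cal U$ with a small transition group $K$. This requires a uniform version of Theorem \ref{arounda} in which the $\cal L$-definable charts vary $\bigvee$-definably with $a$, together with a coherence/cocycle argument showing that $\kappa$ satisfies the cocycle identity and factors through a small definable group, rather than merely a small set. A secondary difficulty is extending coverage from $\scl$-generics of $G$ to all of $G$: non-generic elements must be reached by iterated products of generic chart neighborhoods, which in the semi-bounded setting of \cite{ep-sel2} requires explicit connectedness arguments and must be adapted here to respect the $\bigvee$-definability of $\cal B$ in $\cal L$.
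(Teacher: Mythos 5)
The statement you are trying to prove is Conjecture \ref{conjecture}: the paper explicitly leaves it open and offers no proof, so there is nothing in the text to compare your argument against. Your proposal is a reasonable \emph{strategy outline} modeled on the semi-bounded case of \cite{ep-sel2}, and the paper itself signals that Theorem \ref{arounda} is intended as the starting point for exactly this kind of analysis. But as written it is a plan, not a proof, and the gaps you defer to "the hard part" are precisely the obstructions that make this a conjecture rather than a theorem.

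The most serious concrete gap is in your first step. Theorem \ref{arounda} gives you $F(x,y)=x*a_0^{-1}*y$ only on a $2k$-cone $C$, and a cone is in general a \emph{thin} subset of its $\cal L$-definable closure: its complement inside $cl(C)$ can be dense and of full o-minimal dimension (compare $M\sm P$ inside $M$). Pillay--Weil style group-chunk arguments in the o-minimal category require the partial associativity and injectivity identities to hold on a set whose complement has strictly smaller \emph{o-minimal} dimension, so that generic triples exist and the identities propagate. You only know the identities on $C$, and there is no mechanism in the paper for transferring an identity from a cone to an open $\cal L$-definable neighborhood (indeed, Example \ref{exa-fiber} shows an $\cal L$-definable function can agree with a wild definable function exactly on a small-indexed family of fibers). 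Secondary but also genuine gaps: you assert without argument that two charts at independent generics differ by a \emph{translation} of $\cal B$ rather than by an arbitrary partial isomorphism of group chunks; you pass from "$\kappa(a,a')$ lies in a $P$-bound set" to "$K$ is a small definable \emph{group}", but $P$-bound sets are not closed under the group operation and smallness of a set does not yield a group quotient; and the coverage of non-generic elements by products of generic neighborhoods needs a connectedness argument that is not available here. Each of these would need to be supplied before this could count as a proof, and the paper gives no indication that they are routine.
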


The conjecture is in a certain sense optimal: we next produce an example of a definable group $G$ which is \emph{not} a direct product of an $\cal L$-definable group by a small group. Using known examples of $\cal L$-definable groups $B$ from \cite{pes, str}, which are not direct products of one-dimensional subgroups, it would be easy to provide such an $G$ by restricting some of the one-dimensional subgroups of the universal cover of $B$ to the subgroup $P$ (say, in a dense pair). Our example below, however, is not constructed in this way, as it is \emph{not} a subgroup of the examples in \cite{pes, str}.

\begin{example} Let $\widetilde{\mathcal M}=\la \mathcal M, P\ra\models  T^d$. Let $G=\la P\times [0,1), \oplus, 0\ra$, where $x\oplus y= x+y \mod (1, 1)$; that is,
$$
x\oplus y=\begin{cases}
x+y, & \text{ if $x+y\in P\times [0, 1)$}\\
x+y-(1, 1), & \text{ otherwise}
\end{cases}
$$
Then $G$ is clearly not small. But it cannot contain any non-trivial $\cal L$-definable subgroup. Indeed, by o-minimality, every $\cal L$-definable subset of $P\times [0, 1)$ must be contained in a finite union of fibers $\{g\}\times [0,1)$, $g\in P$. On the other hand, an $\cal L$-definable subgroup of $G$ is a topological group containing some $\cal L$-definable neighborhood of $0$ and, thus, also every fiber $\{n\}\times [0,1)$, $n\in \Z$.

The reader can verify that for $\mathcal B=Fin(M)$, $K=P$, $\mathcal U=\mathcal B\times K$ and $\tau(x, y)= (x, y) \mod (1, 1)$, we obtain the diagram of Conjecture \ref{conjecture}. 

Finally, observe that $G$ is a subgroup of the $\cal L$-definable group $B$, which \emph{is} the direct product $B= S\times \la M, +\ra$, where $S$ has domain $\{(x, x): 0\le x<1\}$ and operation $(x, y)\mapsto x+y \mod (1, 1)$.
\end{example}

We finish with some  open questions which we expect our local theorem to have an impact on.

\begin{question}
Does $G$, up to definable isomorphism, contain an $\cal L$-definable local subgroup (in the sense of \cite[\S 23 (D)]{pon}) whose dimension  equals $\ldim G$?
\end{question}
\begin{question}
Assume $\ldim G=\dim cl(G)$. Is $G$, up to definable isomorphism,  $\cal L$-definable?
\end{question}

If Conjecture \ref{conjecture} is true,  it would be nice to know what the small groups are.

\begin{question}
Is every small definable group/set  definably isomorphic to a group/set definable in the induced structure on $P$?
\end{question}

\begin{question}
Is $G$, up to definable isomorphism, a subgroup of an $\cal L$-definable group (whose dimension might be bigger than $\ldim(G)$)?
\end{question}

\end{document}